\title[Hausdorff dimension of the harmonic measure for rel. hyp. groups]{The Hausdorff dimension of the harmonic measure for relatively hyperbolic groups}
\author{Matthieu Dussaule and Wenyuan Yang}
\date{}
\newcommand{\showcomments}{yes}
\newsavebox{\commentbox}
\theoremstyle{definition}
\theoremstyle{plain}
\newtheorem{definition}{Definition}[section]
\newtheorem{prop}[definition]{Proposition}
\newtheorem{cor}[definition]{Corollary}
\newtheorem{theorem}[definition]{Theorem}
\newtheorem{lem}[definition]{Lemma}
\newtheorem*{thm*}{Theorem}
\newtheorem*{prop*}{Proposition}
\newtheorem*{lem*}{Lemma}
\newtheorem{claim}[definition]{Claim}
\theoremstyle{remark}
\newtheorem{remark}[definition]{Remark}
\newtheorem*{rem*}{Remark}
\DeclareMathOperator{\Tr}{Tr}
\DeclareMathOperator{\diam}{diam}
\DeclareMathOperator{\Bn}{Bn}
\apptocmd{\sloppy}{\hbadness 10000\relax}{}{}
\apptocmd{\sloppy}{\vbadness 10000\relax}{}{}
\begin{document}

\maketitle

\begin{abstract}
The paper  studies the Hausdorff dimension of harmonic measures on various boundaries of a relatively hyperbolic group which are associated with random walks driven by a probability measure with finite first moment.
With respect to the Floyd metric and the shortcut metric, we prove that the Hausdorff dimension of the harmonic measure equals the ratio of the entropy and the drift of the random walk. 

If the group is infinitely-ended,  the same dimension formula is obtained for the end boundary endowed with a visual metric.
In addition, the Hausdorff dimension of  the visual metric is identified with the growth rate of the word metric.
These results are complemented by a characterization of doubling visual metrics for accessible infinitely-ended groups : the visual metrics on the end boundary is doubling if and only if the group is virtually free.
Consequently, there are at least two different   bi-H\"older classes (and thus quasi-symmetric classes) of visual metrics on the end boundary.
\end{abstract}

\section{Introduction}
Relatively hyperbolic groups can admit several interesting compactifications such as the Floyd \cite{Floyd}, Bowditch \cite{Bowditch} and Freudenthal \cite{Freudenthal} (also called end) compactification. The resulting boundaries are compact metrizable spaces on which the groups act  and  have rich dynamics in terms of convergence actions. This point of view has found many applications \cite{Bow98} \cite{GePoJEMS}.
In addition, these boundaries can be endowed with two well-known classes of (quasi-)conformal \cite{Coornaert} and harmonic measures \cite{KarlssonPoisson} so they are examples of metric measured spaces $(X, d, \nu)$.
The Hausdorff dimension $Hdim_d(\nu)$ of the triple $(X, d, \nu)$ is the infimum of the Hausdorff dimensions of $\nu$-full subsets, so it is a measurement of the largeness of the measure class of $\nu$.
Comparison of the conformal and the harmonic measures has been an active research problem with origins in dynamic systems, see \cite{KaimanovichVershik} \cite{GuiLeJan} \cite{BHM} \cite{GMM} \cite{GekhtmanDussaule} \cite{GGPY} \cite{RanTiozzo} to just name a few.

This paper is devoted to computing the Hausdorff dimension formula for harmonic measures on various boundaries associated with a random walk with finite first moment on a non-elementary relatively hyperbolic group.
Precisely, we compute the Hausdorff dimension of the harmonic measure $\nu$ on the Floyd and Bowditch boundaries endowed respectively with the Floyd and the Floyd shortcut distances.
We also compute this Hausdorff dimension on the end boundary endowed with a visual distance whenever the group is infinitely ended.
Up to a parameter depending only on the chosen distance, we show that
\begin{equation}\label{HdimFormulaEQ}
Hdim(\nu) = \frac{h}{l}    
\end{equation}
where $h$ is the asymptotic entropy and $l$ is the rate of escape in the word metric of the random walk.
See Theorem \ref{mainthm1} and Theorem \ref{mainthm2} for accurate statements.

This formula (\ref{HdimFormulaEQ}) was first obtained by Kaimanovich \cite{KaimanovichHD} and Ledrappier \cite{Ledrappier} on free groups and then by Le Prince \cite{LePrincelowerbound} on general hyperbolic groups. A substantial generalization of Le Prince's results was given by Tanaka \cite{Tanaka} to any proper action with exponential growth on proper hyperbolic spaces with bounded geometry, and any acylindrical action on possibly improper hyperbolic spaces. In particular, his results do apply to both the geometrical finite action  of a relatively hyperbolic group  on a proper hyperbolic space,  and the acylindrical action  on its relative Cayley graph \cite{Osin16}.

However,  the former proper action under bounded geometry assumption forces peripheral subgroups to be virtually nilpotent \cite{DY05}. This makes a serious limitation of Tanaka's result to be applied in this rather general class of groups. Moreover, visual distances on the Bowditch boundary are very non-canonical, for they can depend heavily on the choice of a hyperbolic space $X$ on which the group acts, see \cite{Healy}.
On the other hand, the acylindrical action on the relative Cayley graph does yield the formula~(\ref{HdimFormulaEQ}) for the harmonic measure on the Gromov boundary of the relative Cayley graph and where the drift is computed in the relative metric. In a certain point of view, this is unsatisfactory since the relative metric on the group is non-proper and the (non-compact) Gromov boundary here is only a part of the compact Bowditch boundary.

One of our main contributions here is to consider the drift for the word metric and to compute the Hausdorff dimension associated to the Floyd shortcut distance on the Bowditch boundary, which only depends on the group and the choice of a word distance.



\subsection{Hausdorff dimension of harmonic measures}
Let $\mu$ be a probability measure on a finitely generated group $G$ such that the support $\mathrm{supp}(\mu)$ generates $G$ as a semi-group.
We call such a measure \textit{admissible}.
The measure defines a $\mu$-random walk with step transitions given by $p(x,y)=\mu(x^{-1}y)$ for $x,y\in G$.
Let $$\Omega:=\{\mathbf x=(\omega_n)_{n\ge 0}: \omega_n\in G\}$$ be the trajectory space of the $\mu$-random walk with the probability measure $\mathbb P$, which  is the pushforward of the product measure $(G^{\mathbb N}, \mu^{\mathbb N})$ under the product map $G^{\mathbb N} \to\Omega$ given by
$$
(s_1,s_2, \cdots, s_n,\cdots) \mapsto (1, s_1, s_1s_2,\cdots, w_n,\cdots),
$$ 
where $\omega_n=s_1\cdots s_n$.

Denote by
$$L(\mu)=\sum_{g\in G}d(1,g)\mu(g)$$ the expectation of $d(1,\omega_1)$ and more generally by
$$L(\mu^{*n})=\sum_{g\in G}d(1,g)\mu^{*n}(g)$$ the expectation of $d(1,\omega_n)$
where $\mu^{*n}$ is the $n$-th convolution power of $\mu$.
Equivalently, $\mu^{* n}$ is the law of the random variable $\mathbf x\mapsto \omega_n$.
The sequence $L(\mu^{*n})$ is sub-additive and so $\frac{1}{n}L(\mu^{*n})$ has a well defined limit $l$ called the \textit{rate of escape} (or  \textit{drift}).
Whenever $\mu$ has \textit{finite first moment} (i.e. $L(\mu)<\infty$), Kingman's sub-additive ergodic theorem shows that almost surely,
\begin{equation}\label{equationdefl}
l:=\lim_{n\to\infty} \frac{d(1,\omega_n)}{n}<+\infty.
\end{equation}
Also define  $H(\mu)=\sum_{g\in G} \mu(g) \log \mu(g)$ and $H(\mu^{*n})=\sum_{g\in G} \mu^{*n}(g) \log \mu^{*n}(g)$.
The sequence $H(\mu^{*n})$ also is sub-additive and so $\frac{1}{n}H(\mu^{*n})$ has a well defined limit $h$ called the asymptotic \textit{entropy}.
Again, whenever $H(\mu)$ is finite, an application of the ergodic Theorem (see \cite{Der80} or \cite{KaimanovichVershik}) shows that
\begin{equation}\label{equationdefh}
h:=\lim_{n\to\infty} \frac{-\log (\mu^{*n}(\omega_n))}{n}<+\infty.
\end{equation}

The groups $G$ under consideration are assumed to be relatively hyperbolic throughout.
There are many equivalent ways to formulate this notion.
To motivate our results, we use the dynamical definition. An action of $G$ by homeomorphism on a compact metrizable space $M$ is called \textit{convergence} if the induced action on the space of triple points is properly discontinuously. Let $(\Gamma, d)$ denote the Cayley graph of $G$ with respect to a finite generating set. Then $G$ is called \textit{relatively hyperbolic}   if  there exists a Hausdorff compact  space $M$ compactifying every Cayley graph $\Gamma$   so that the left multiplication of $G$ extends to  a minimal geometrically finite action on the boundary $M$. See precise definitions in Section~\ref{SSRHG}.  The compact space $M$ denoted by $\partial_{\mathcal B} G$ later on is called \textit{Bowditch boundary} of $G$.
{A relatively hyperbolic group $G$ is called \textit{non-elementary} if its Bowditch boundary contains more than two points.
Equivalently, $G$ fixes no finite subset of $\partial_{\mathcal{B}}G$.
In such a case, $G$ is non-amenable.}
In particular, the rate of escape and the asymptotic entropy are positive.

There are two natural classes of metrics on the Bowditch boundary.
First, by Yaman \cite{Yaman}, $\partial_{\mathcal B} G$ can be realized as the Gromov boundary of a proper hyperbolic space $X$ on which $G$ acts via a geometrically finite action. Thus, we can endow $\partial_{\mathcal B} G$ with the visual metric constructed using the hyperbolic geometry of $X$.
Second, the Floyd (shortcut) metric is obtained from the Cayley graph $(\Gamma, d)$ as follows.
We fix a parameter $\lambda\in (0,1)$ and a basepoint $o\in G$. Rescaling the length of every edge $e$ of $\Gamma$ to $\lambda^{d(o, e)}$ induces a new length metric called the \textit{Floyd metric} $\delta_{\lambda}$ on $\Gamma$. The Cauchy metric completion of $(\Gamma,\delta_{\lambda})$ is the so-called Floyd compactification of $\Gamma$ on which $G$ acts as a convergence action \cite{Karlssonfreesubgroups}.
By the work of Gerasimov and Potyagailo \cite{Gerasimov}, \cite{GePoJEMS}, there exists $\lambda_0$ such that whenever $\lambda\in [\lambda_0,1)$, the Bowditch boundary is an equivariant quotient of the Floyd boundary where the non-trivial fibers are possible only on bounded parabolic points.
The Floyd metric can thus be pushforwarded to obtain the so-called shortcut metric $\bar{\delta}_{\lambda}$ on  $\partial_{\mathcal B} G$ \cite{GePoJEMS}. 
See more details in Section~\ref{SSFloyd}.

In \cite{KarlssonPoisson}, Karlsson proved that almost every trajectory converges to a limit point in the Floyd boundary, and the \textit{hitting measure} or \textit{harmonic measure} $\nu_{\mathcal{F}}$ on the Floyd boundary gives  a model of the Poisson boundary of the $\mu$-random walk \cite{Kaimanovichboundaries}. The same discussion applies to the Bowditch boundary which gives another model of the Poisson boundary when endowed with a harmonic measure denoted by $\nu_{\mathcal{B}}$.
We are now ready to state the first main result.

\begin{theorem}\label{mainthm1}[Theorem~\ref{theoremHausdorffharmonicFloydBowditch}]
Suppose $G$ is a non-elementary relatively hyperbolic group and fix a finite generating set for $G$.
Also suppose that $\mu$ is an admissible probability measure with finite first moment on $G$. Then there exists $\lambda_0\in (0, 1)$ such that  for every $\lambda\in [\lambda_0, 1)$, 
$$
Hdim_{{\delta}_{\lambda}}(\nu_{\mathcal{F}})=Hdim_{\bar{\delta}_{\lambda}}(\nu_{\mathcal{B}})=\frac{-1}{\log \lambda} \frac{h}{l}.
$$
Moreover, the measure $\nu_{\mathcal{B}}$ on $(\partial_{\mathcal B} G, \bar{\delta}_\lambda)$ and the measure $\nu_{\mathcal{F}}$ on   $(\partial_{\mathcal F} G, {\delta}_\lambda)$ are exact dimensional. 
\end{theorem}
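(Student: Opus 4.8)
The plan is to reduce the theorem to a computation of the \emph{pointwise dimension} of the harmonic measure and then to invoke a standard measure-theoretic principle. Concretely, I would show that for $\nu_{\mathcal F}$-a.e.\ $\xi\in\partial_{\mathcal F}G$,
\[
\lim_{r\to 0}\frac{\log\nu_{\mathcal F}\big(B_{\delta_\lambda}(\xi,r)\big)}{\log r}\;=\;\alpha:=\frac{-1}{\log\lambda}\cdot\frac hl .
\]
Once this almost sure limit is established, $\nu_{\mathcal F}$ is by definition exact dimensional, and the fact that the Hausdorff dimension of an exact dimensional measure coincides with its pointwise dimension (a mass--distribution argument in both directions, in the spirit of Young's lemma) gives $Hdim_{\delta_\lambda}(\nu_{\mathcal F})=\alpha$. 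For the Bowditch side I would either repeat the whole computation with shadows and balls taken in the shortcut metric $\bar\delta_\lambda$, or push everything forward along the Gerasimov--Potyagailo quotient map $\partial_{\mathcal F}G\to\partial_{\mathcal B}G$ of \cite{GePoJEMS}: its non-trivial fibres lie over the countable set of bounded parabolic points, which is $\nu_{\mathcal F}$-null since $G$ is non-amenable and hence $\nu_{\mathcal F}$ is non-atomic, so the map is a measure isomorphism off null sets and transports the local estimates to $(\partial_{\mathcal B}G,\bar\delta_\lambda,\nu_{\mathcal B})$.

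To evaluate the pointwise limit I would track the random walk along its almost surely convergent trajectory $\omega_n\to\xi=\xi(\mathbf x)$ (Karlsson \cite{KarlssonPoisson}) and work with the scales $r_n:=\lambda^{d(o,\omega_n)}$. The geometric ingredient is a Floyd shadow lemma: writing $\mathrm{Sh}(\omega_n)$ for the shadow of $\omega_n$ seen from $o$ (boundary points joined to $o$ by a geodesic through a bounded neighbourhood of $\omega_n$), Karlsson's estimate that a geodesic avoiding $B(o,R)$ has Floyd length $O(\lambda^{R})$ (\cite{Karlssonfreesubgroups}), together with the geometrically finite convergence dynamics, should yield that $\mathrm{Sh}(\omega_n)$ has $\delta_\lambda$-diameter comparable to $\lambda^{d(o,\omega_n)}$, that it contains $\xi$, and that the family $\{\mathrm{Sh}(\omega_n)\}_n$ is coarsely nested and forms a neighbourhood basis of $\xi$. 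Because finite first moment forces $d(\omega_n,\omega_{n+1})=o(n)$ almost surely (Borel--Cantelli, using $\sum_n\mu\{g:|g|>\varepsilon n\}<\infty$), the scales $r_n$ vary slowly, so $B_{\delta_\lambda}(\xi,r)$ is sandwiched between $\mathrm{Sh}(\omega_{n+K})$ and $\mathrm{Sh}(\omega_n)$ for $r\in[r_{n+1},r_n]$ and a bounded $K$; this is all one needs to pass from the scales $r_n$ to all $r$ in the $\log/\log$ limit.

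The probabilistic ingredient is the estimate $-\log\nu_{\mathcal F}(\mathrm{Sh}(\omega_n))=hn+o(n)$ almost surely. For the lower bound on the measure one conditions the walk to reach $\omega_n$ and then escape into its shadow, which happens with probability comparable to the hitting probability $F(o,\omega_n)$, hence to the Green function $G(o,\omega_n)=G(o,o)\,F(o,\omega_n)$; for the matching upper bound one uses that an almost surely quasi-geodesic trajectory can enter $\mathrm{Sh}(\omega_n)$ only by passing within bounded distance of $\omega_n$, so $\nu_{\mathcal F}(\mathrm{Sh}(\omega_n))\le\sum_{|g|\le C}F(o,\omega_n g)$, again comparable to $G(o,\omega_n)$ up to a subexponential factor. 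Since finite first moment implies finite entropy (by exponential growth of $G$ one has $H(\mu)\le A\,L(\mu)+B$), the Green speed theorem \cite{BHM} applies and gives $-\log G(o,\omega_n)=hn+o(n)$ almost surely. Combining this with the drift $d(o,\omega_n)=ln+o(n)$ from \eqref{equationdefl}, along the scales $r_n$ one obtains $\frac{\log\nu_{\mathcal F}(B_{\delta_\lambda}(\xi,r_n))}{\log r_n}=\frac{-hn+o(n)}{ln\log\lambda+o(n)}\to\alpha$, and the sandwiching upgrades this to the full limit over $r\to0$.

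I expect the main obstacle to be the Floyd shadow lemma in the \emph{relatively} hyperbolic setting. Near a bounded parabolic point the Floyd metric is badly distorted, so one must show that the trajectory is \emph{conical}, i.e.\ does not spend a positive proportion of its length deep inside horoballs, so that the shadows $\mathrm{Sh}(\omega_n)$ are genuinely round, of $\delta_\lambda$-diameter $\asymp\lambda^{d(o,\omega_n)}$, rather than degenerating; this is where the finite-first-moment hypothesis and the fine structure of relatively hyperbolic groups (the Gerasimov--Potyagailo comparison of the Floyd and Bowditch boundaries, Karlsson's Floyd-length estimates) are indispensable. A secondary difficulty is that the comparison $\nu_{\mathcal F}(\mathrm{Sh}(\omega_n))\asymp G(o,\omega_n)$, and its shortcut-metric analogue, live in metrics that are not quasi-isometric to the word metric, so the hitting-probability estimates must be carried out with respect to the Floyd and shortcut geometry; everything beyond these two points should be soft.
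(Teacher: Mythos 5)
Your overall architecture (compute the pointwise dimension, deduce exact dimensionality, transport to the Bowditch boundary along the Gerasimov--Potyagailo map using that the fibres over parabolic points are $\nu_{\mathcal F}$-null) matches the paper, and your identification of the main geometric obstacle is accurate: the sublinear tracking of transition points on word geodesics is exactly the paper's Proposition~\ref{propsublineardeviationtransitionpoints}, proved via Maher--Tiozzo and Tiozzo's subadditive tracking lemma, and it is real work rather than a routine adaptation. Your probabilistic route, however, is genuinely different from the paper's: you want a two-sided shadow lemma $\nu_{\mathcal F}(\mathrm{Sh}(\omega_n))\asymp F(o,\omega_n)$ up to subexponential error and then the Green speed theorem of \cite{BHM}, whereas the paper never estimates the Green function at all. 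For the upper bound on dimension (lower bound on ball measure) your route is workable in principle --- the one-sided inequality $\nu(\Pi(x_n,R))\ge c\,F(o,\omega_n)e^{-o(n)}$ can be extracted from the visibility lemma and non-atomicity of $\nu_{\mathcal F}$ --- but the paper's argument (Le Prince/Kaimanovich: $C^n_{\mathbf x}\cap\Omega_{\epsilon,\delta}\subset D(\omega_\infty,Mn\lambda^{(l-\epsilon)n})$ plus Shannon--McMillan--Breiman and a martingale convergence of conditional probabilities) is softer and does not even need the tracking of transition points, only the sublinearity of the jumps.

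The genuine gap is in your lower bound on the dimension, i.e.\ the upper bound $\nu_{\mathcal F}(\mathrm{Sh}(\omega_n))\le\sum_{|g|\le C}F(o,\omega_n g)$. This inequality rests on the claim that a trajectory converging into the shadow must pass within bounded distance of $\omega_n$. For a measure with unbounded support this is false: a single increment can carry the walk across any bounded separating set, so trajectories can land in $\mathrm{Sh}(\omega_n)$ without ever visiting a neighbourhood of $\omega_n$. Under only a finite first moment hypothesis there is no way to bound the probability of such ``jumps over annuli'' by a subexponential factor (this is precisely where the literature requires finite exponential or superexponential moments), and enlarging the neighbourhood to radius $o(n)$ does not help because the a.s.\ sublinear deviation gives no quantitative tail estimate valid for the full measure $\nu_{\mathcal F}$ of the shadow. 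The paper circumvents this with Tanaka's conditioning device: it only bounds $\mathbb{P}\bigl(\Omega_\epsilon\cap\{\omega'_\infty\in\Pi(x_n,R)\}\bigr)$ --- the shadow measure restricted to a good event on which the walk at time $n$ is known to lie in $B(x_n,2R+3n\epsilon)$, so that a counting argument with SMB applies --- and then passes to the conditional measures $\mathbb{P}(\cdot\mid\xi)$, the set $F_\epsilon$ where these are bounded below, the restricted measure $\nu_{\mathcal B}|_{F_\epsilon}$, and finally an ergodicity argument to upgrade the local dimension bound from a positive-measure set to a full-measure set. Without this (or an equivalent substitute), your lower bound for $Hdim_{\delta_\lambda}(\nu_{\mathcal F})$ does not go through.
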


Denote by $S_n=\{g\in G: d(1, g)=n\}$ the sphere of radius $n$ in the Cayley graph $(\Gamma, d)$. We define the \textit{growth rate} of (the Cayley graph of) $G$ as
$$
v:=\lim_{n\to\infty} \frac{\log \sharp S_n}{n}
$$
The two quantities $l$ and $v$ both depending on the word metric of $\Gamma$ are related to the entropy $h$ which only depends on the measure $\mu$ by the following fundamental inequality
$$
\frac{h}{l} \le v$$
also called the Guivarc'h inequality \cite{Derriennic}.
This inequality holds for any $\mu$-random walk.

In \cite{PY}, it is proved that $\frac{-v}{\log \lambda}$ is the Hausdorff dimension of the Floyd and Bowditch boundaries. 
The following corollary thus follows from  the strictness of the fundamental inequality  established  in \cite[Theorem~1.3, Theorem~1.6]{GekhtmanDussaule} for certain relatively hyperbolic groups. Recall that $\mu$ has \textit{finite super-exponential moment} if  $\sum_{g\in G} \exp(cd(1,g)) \mu(g)$ is finite for every $c>0$.
\begin{cor}
Suppose $G$ is a non-elementary relatively hyperbolic group and $\mu$ is an admissible probability measure on $G$ with finite super-exponential moment.
If one of the parabolic subgroups is virtually abelian of rank at least 2, or if the Bowditch boundary is homeomorphic to a sphere of dimension at least 2, then
$$
Hdim_{\bar{\delta}_{\lambda}}(\nu_{\mathcal{B}}) < Hdim_{\bar{\delta}_{\lambda}}(\partial_{\mathcal{B}} G).
$$
\end{cor}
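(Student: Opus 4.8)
The plan is to reduce the asserted strict inequality to the \emph{strict} Guivarc'h inequality $h/l < v$, and then to invoke the two cases from \cite{GekhtmanDussaule} already quoted in the text.

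First I would record the two dimension identities that are in play. By Theorem~\ref{mainthm1} (for the fixed word metric), for every $\lambda$ in the relevant range $[\lambda_0,1)$ one has
\[
Hdim_{\bar\delta_\lambda}(\nu_{\mathcal B}) \;=\; \frac{-1}{\log\lambda}\cdot\frac{h}{l}.
\]
On the other hand, \cite{PY} computes the Hausdorff dimension of the boundary itself,
\[
Hdim_{\bar\delta_\lambda}(\partial_{\mathcal B}G) \;=\; \frac{-v}{\log\lambda},
\]
where $v$ is the growth rate of the Cayley graph; after enlarging $\lambda_0$ if necessary, both formulas hold simultaneously for $\lambda\in[\lambda_0,1)$. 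Since $\lambda\in(0,1)$, the common factor $\tfrac{-1}{\log\lambda}$ is a strictly positive constant, so
\[
Hdim_{\bar\delta_\lambda}(\nu_{\mathcal B}) < Hdim_{\bar\delta_\lambda}(\partial_{\mathcal B}G)
\quad\Longleftrightarrow\quad
\frac{h}{l} < v .
\]

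Second, it remains to produce this strict inequality under the two stated geometric hypotheses, and this is exactly the content of \cite[Theorem~1.3, Theorem~1.6]{GekhtmanDussaule}: for a non-elementary relatively hyperbolic group with an admissible measure of finite super-exponential moment, the fundamental inequality $h/l\le v$ is strict whenever some peripheral subgroup is virtually abelian of rank at least $2$, or whenever $\partial_{\mathcal B}G$ is homeomorphic to a sphere of dimension at least $2$. This is precisely where the finite super-exponential moment assumption of the corollary is used: finite first moment already suffices for Theorem~\ref{mainthm1}, but the strictness input requires the stronger moment condition. Combining this with the equivalence displayed above completes the proof.

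There is essentially no analytic obstacle here; the only point requiring care is bookkeeping. One must check that the standing assumptions of the cited theorems in \cite{GekhtmanDussaule} match those of the corollary verbatim (non-elementary relatively hyperbolic, admissible, finite super-exponential moment, together with the geometric conditions stated in the same form), and that the various thresholds $\lambda_0$ — coming from Theorem~\ref{mainthm1}, from the Floyd-to-Bowditch quotient of Gerasimov--Potyagailo, and from \cite{PY} — are reconciled by passing to their maximum.
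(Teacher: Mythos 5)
Your argument is correct and is exactly the one the paper intends: combine the dimension formula $Hdim_{\bar\delta_\lambda}(\nu_{\mathcal B})=\frac{-1}{\log\lambda}\frac{h}{l}$ from Theorem~\ref{mainthm1} with the identity $Hdim_{\bar\delta_\lambda}(\partial_{\mathcal B}G)=\frac{-v}{\log\lambda}$ from \cite{PY}, and conclude by the strictness of the fundamental inequality in \cite[Theorem~1.3, Theorem~1.6]{GekhtmanDussaule}. Your remark about reconciling the various thresholds $\lambda_0$ is a sensible bookkeeping point that the paper leaves implicit.
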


\subsection{Hausdorff dimension of the end boundary and of harmonic measures with respect to visual metrics}
According to a celebrated result of Stallings \cite{Stallingstorsionfree}, \cite{Stallings}, any infinitely ended group $G$ splits nontrivally as an amalgamated product $A*_CB$ or an HNN extension $A*_C$, where $C$ is a finite group.
The action on the corresponding Bass-Serre tree satisfies the conditions of \cite[Definition~2]{Bowditch} and so $G$ is relatively hyperbolic.
Moreover, in the former case, the maximal parabolic subgroups are exactly the conjugates of $A$ and the conjugates of $B$.
In the later case, they are exactly the conjugates of $A$.
In addition to the Floyd and Bowditch boundary, $G$ can be compactified using the Freudenthal (or end) boundary introduced by Freudenthal \cite{Freudenthal}.
The interplay between the end boundary and asymptotic properties of random walks is a well-studied subject, see for instance \cite{PW1}, \cite{PW2}, \cite{Woessends}.

Let $\partial_{\mathcal{E}} G$ be the end boundary of an infinitely  ended group $G$. The topology of the end boundary is independent of the choice of Cayley graph.
For every $\lambda\in (0,1)$,  we define a \textit{visual metric}, extending the definition of Candellero, Gilch and M\"uller \cite{CandelleroGilchMuller} for free products.
This metric was  independently studied by Cornulier in \cite{Cor19}.
Fixing a generating set, let $\Gamma$ be the Cayley graph of $G$ and let ${\rho}_{\lambda}$ be the visual metric defined on the end compactification of $\partial_{\mathcal{E}} G\cup \Gamma$. Precisely, define the distance $\rho_\lambda(\xi, \eta)=\lambda^n$ between two ends $\xi, \eta$ if $n$ is the minimal radius of the ball at a basepoint separating $\xi$ and $\eta$.
A quasi-isometry $f$ induces a homeomorphism $\tilde{f}$ between end boundaries.
Moreover, this induced homeomorphism is \textit{bi-H\"older}: for some $\alpha, \beta>0,c>1$ and for all pair  $(\xi,\eta)$ of points,  
$$
 c^{-1} \rho_\lambda(\xi, \eta)^\alpha \le \rho_\lambda(\tilde{f}(\xi),\tilde{f}(\eta)) \le c \rho_\lambda(\xi, \eta)^\beta
$$
which holds more generally for sublinearly bi-Lipschitz equivalence by \cite[Corollary 1.4]{Cor19ENS}. 

Our second main result is analogous  to Theorem \ref{mainthm1} and computes the Hausdorff dimension of the end boundary  and of  harmonic measures with respect to visual metrics.
Let us denote by $\nu_{\mathcal{E}}$ the harmonic measure on the end boundary and $h, l$ the entropy and drift of the $\mu$-random walk. 
Also, let us denote by $v$ the growth rate of $G$.

\begin{theorem}\label{mainthm2}[Theorem~\ref{theoremHausdorffharmonicends}, Theorem~\ref{Hausdorffdimensionsetofends}]
Suppose $G$ is a finitely generated group with infinitely many ends.
Also suppose that $\mu$ is an admissible probability measure with finite first moment on $G$. Then, for every Cayley graph $\Gamma$ and every $\lambda\in (0,1)$, 
$$Hdim_{{\rho}_{\lambda}}(\nu_{\mathcal{E}})=\frac{-1}{\log \lambda} \frac{h}{l}$$
and
$$Hdim_{{\rho}_{\lambda}}(\partial_{\mathcal{E}} G)=\frac{-v}{\log \lambda}.$$
\end{theorem}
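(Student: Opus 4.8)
The plan is to prove the two identities separately, exploiting the structure of the end compactification as a natural quotient of the Floyd compactification for large Floyd parameter. The first observation is that for an infinitely-ended group $G$ arising from a Stallings splitting, the end boundary $\partial_{\mathcal E}G$ is an equivariant quotient of the Bowditch boundary $\partial_{\mathcal B}G$: the isolated (parabolic) points of $\partial_{\mathcal B}G$ map down, and only the conjugates of the peripheral subgroups $A$, $B$ (resp.\ $A$) can have non-trivial fibers. The visual metric $\rho_\lambda$ on $\partial_{\mathcal E}G$ should be comparable, on the image, to the shortcut metric $\bar\delta_\lambda$ of Theorem~\ref{mainthm1} restricted away from parabolic points, since both are built by the same ``minimal separating radius'' recipe with ratio $\lambda$. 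So the first step is to make this comparison precise: I would show that $\rho_\lambda(\xi,\eta)$ and $\bar\delta_\lambda(\pi(\xi),\pi(\eta))$ (where $\pi\colon\partial_{\mathcal B}G\to\partial_{\mathcal E}G$) are bi-H\"older, or better comparable up to multiplicative constants, on the complement of the countably many parabolic fibers. Since a countable union of points is $\nu_{\mathcal B}$-null (the harmonic measure of the Poisson boundary is non-atomic for a non-elementary action) and since the pushforward of $\nu_{\mathcal B}$ under $\pi$ is exactly $\nu_{\mathcal E}$ (both model the Poisson boundary), the Hausdorff dimension is unchanged under $\pi$, giving
$$
Hdim_{\rho_\lambda}(\nu_{\mathcal E}) = Hdim_{\bar\delta_\lambda}(\nu_{\mathcal B}) = \frac{-1}{\log\lambda}\frac{h}{l}
$$
from Theorem~\ref{mainthm1}.

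A cleaner and more self-contained route, which I would carry out in parallel, is a direct Shannon--McMillan--Breiman style argument on the end boundary itself. The key geometric input is a \emph{shadow lemma}: for $\nu_{\mathcal E}$-almost every end $\xi$ and the sample trajectory $(\omega_n)$ converging to $\xi$, one controls the $\rho_\lambda$-diameter of the ``shadow'' of $\omega_n$ seen from the basepoint, namely the set of ends separated from the basepoint by a ball of radius $d(1,\omega_n)$. Because the edge at distance $r$ from $o$ has Floyd/visual length $\asymp\lambda^r$, this shadow has diameter $\asymp \lambda^{d(1,\omega_n)}$, hence $\approx \lambda^{ln}$ by the drift law~\eqref{equationdefl}. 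On the other hand, the harmonic measure of this shadow is $\approx \mu^{*n}(\omega_n) \approx e^{-hn}$ by the entropy law~\eqref{equationdefh} together with a Ratio-Limit / deviation argument à la Kaimanovich and Le Prince (this is where one uses that the sequence of shadows of $\omega_n$ forms a neighborhood basis of $\xi$ and that the harmonic measure is quasi-invariant with controlled Radon--Nikodym cocycle $\frac{dg_*\nu}{d\nu}(\xi)$). Combining, the local dimension is
$$
\lim_{n\to\infty}\frac{\log \nu_{\mathcal E}(\mathrm{Shadow}_n(\xi))}{\log \mathrm{diam}\,\mathrm{Shadow}_n(\xi)} = \frac{-hn}{(\log\lambda) ln} = \frac{-1}{\log\lambda}\frac{h}{l}
$$
for $\nu_{\mathcal E}$-a.e.\ $\xi$, and a standard covering argument (Frostman-type, using that the shadows form an efficient cover with bounded overlap) upgrades the pointwise local dimension to the Hausdorff dimension of $\nu_{\mathcal E}$, and in fact gives exact dimensionality.

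For the second identity, $Hdim_{\rho_\lambda}(\partial_{\mathcal E}G) = -v/\log\lambda$, the lower bound follows from a volume-vs-diameter count: a ball of $\rho_\lambda$-radius $\lambda^n$ around an end corresponds to a shadow determined by combinatorial data in $B(o,n)$, there are at most $\sharp B(o,n) \approx e^{vn}$ such shadows covering $\partial_{\mathcal E}G$, each of diameter $\lambda^n$, so $Hdim \le vn / (-\log\lambda^n) = -v/\log\lambda$; and conversely one constructs, using the tree of splittings (infinitely many ends means the Bass--Serre tree has exponential branching matching the growth rate $v$ along ``deep'' directions), a Cantor-like subset supporting a measure with local dimension $-v/\log\lambda$, or appeals directly to the identification of this dimension with the growth rate as in \cite{PY} adapted from the Floyd boundary to the end boundary via the quotient map $\pi$. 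I expect the main obstacle to be the entropy side of the shadow estimate: one must rule out that the harmonic measure concentrates mass on shadows much smaller than $e^{-hn}$ or leaks mass onto the parabolic fibers, which requires a uniform lower bound on $\nu_{\mathcal E}$ of genuine neighborhoods of $\xi$ in terms of $\mu^{*n}(\omega_n)$ — the Radon--Nikodym cocycle must be shown to have sub-exponential fluctuations along a.e.\ trajectory, using the finite first moment hypothesis and a maximal-inequality / Borel--Cantelli argument. The delicate point specific to the relatively hyperbolic (non-hyperbolic) setting is that geodesics may travel long distances inside a peripheral coset without the Floyd length decaying, so the comparison ``$\rho_\lambda$-diameter of shadow $\asymp \lambda^{d(1,\omega_n)}$'' needs the deviation inequalities for the word metric versus the Floyd metric (the ``contracting'' or ``transition point'' technology), which should be quotable from the relatively hyperbolic machinery already invoked in the paper.
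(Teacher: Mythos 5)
Your proposal has the right overall flavor (a Shannon--McMillan--Breiman/shadow argument for the measure and a counting-plus-construction argument for the whole boundary), but both halves have genuine gaps. For the identity $Hdim_{\rho_\lambda}(\nu_{\mathcal E})=\frac{-1}{\log\lambda}\frac{h}{l}$, your first route does not work: the quotient goes the other way (for the Stallings splitting the \emph{end} boundary surjects onto the Bowditch boundary, collapsing each translate of $\partial_{\mathcal E}A$ to a parabolic point), the only metric comparison available is the one-sided inequality $\rho_\lambda\le\delta_\lambda$ of Lemma~\ref{FloydDomVisu} (no bi-Lipschitz or even bi-H\"older comparison between $\rho_\lambda$ and $\bar\delta_\lambda$ is established, and it is not needed), and any detour through the Bowditch/Floyd boundary is restricted to $\lambda\ge\lambda_0$ whereas the theorem is asserted for every $\lambda\in(0,1)$. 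Your direct route is the correct skeleton, and the upper bound indeed recycles the Le Prince argument verbatim (the visual estimate $\rho_\lambda(x,x')\le\lambda^{m_1}$ is even simpler than the Floyd one, see Proposition~\ref{upper-boundharmonicends}). But for the lower bound the decisive step is missing: for the entropy bound $\mathbb P(\text{shadow of }\omega_n)\lesssim e^{-(h-\epsilon)n}$ to control $\nu_{\mathcal E}(B_{\rho_\lambda}(\xi,r))$ you need the shadow to \emph{contain} a definite visual ball around $\xi$, and this fails for an arbitrary point $\omega_n$. What the paper actually uses is that $\nu_{\mathcal E}$ concentrates on $M$-thin ends (Proposition~\ref{propPoissonends}, itself proved by pushing forward to the Bowditch boundary and using non-atomicity at parabolic points), whence almost every trajectory sublinearly tracks a sequence of $M$-bottlenecks (Proposition~\ref{propsublineardeviationbottlenecks}, via Tiozzo's ergodic lemma applied to $d(1,\Bn(\omega_{-\infty},\omega_\infty))$), and partial shadows at bottleneck points are sandwiched between visual balls (Lemma~\ref{lempartialshadowandvisualmetric}). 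No Radon--Nikodym cocycle estimate is needed; the upgrade from the shadow estimate to the local dimension goes through Tanaka's conditional-measure trick and ergodicity of the stationary measure. Also note that the replacement for the transition-point technology in the end-boundary setting is precisely this bottleneck technology, not the Floyd deviation inequalities you invoke at the end.

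For $Hdim_{\rho_\lambda}(\partial_{\mathcal E}G)=-v/\log\lambda$, your covering count gives the \emph{upper} bound (you label it a lower bound but derive $\le$); the paper gets it from $\rho_\lambda\le\delta_\lambda$ and the Floyd-boundary bound of \cite{PY}. The lower bound is where all the work lies, and neither of your two suggestions closes it: the Bass--Serre tree's branching has nothing to do with the word-metric growth rate $v$ (a one-ended vertex group can carry essentially all of the growth), and the map $\partial_{\mathcal F}G\to\partial_{\mathcal E}G$ being $1$-Lipschitz transports upper bounds, not lower bounds, on Hausdorff dimension. The paper's Proposition~\ref{LbdHdimEndsProp} instead constructs, for each $v'<v$, a free sub-semigroup generated by a $C$-separated subset of an annulus $A(n,\Delta)$ whose elements are interleaved with powers of hyperbolic elements so that every word is a quasi-geodesic with the $L$-bottlenecking property (Lemma~\ref{freesemigroup}), shows that its tree of ends embeds bi-Lipschitzly into $(\partial_{\mathcal E}G,\rho_\lambda)$ after rescaling the visual parameter to $\lambda^L$ (Proposition~\ref{BiLipembedPop}), and computes the dimension of the tree of ends directly; this construction is the essential content of the second identity and is absent from your proposal.
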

\begin{remark}
The second equality was proved for free products with a standard generating set in \cite{CandelleroGilchMuller}. Our proof is very different and applies to any infinitely ended group with any finite generating set.
\end{remark}

The archetypal groups with infinitely many ends are free products of the form $A*B$, where $A\neq \mathbb{Z}/2\mathbb{Z}$ or $B\neq \mathbb{Z}/2\mathbb{Z}$.
Using again the strictness of the fundamental inequality  established  in \cite[Theorem~1.5]{GekhtmanDussaule} for certain free products, we can state the following corollary.
\begin{cor}
Suppose $G$ is a free product $A*B$ and assume that $A$ is a non-virtually cyclic nilpotent group.
Let $\mu$ be a probability measure on $G$ with finite super-exponential moment.
Then,
$$
Hdim_{\rho_{\lambda}}(\nu_{\mathcal{E}}) < Hdim_{\rho_{\lambda}}(\partial_{\mathcal{E}} G).
$$
\end{cor}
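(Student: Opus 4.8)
The plan is to deduce this directly from Theorem~\ref{mainthm2} together with the strictness of the fundamental inequality. First I would record the basic structure: since $G=A*B$ is finitely generated, so is each free factor, and $G$ is hyperbolic relative to the conjugates of $A$ and $B$; as $A$ is infinite (being nilpotent and not virtually cyclic), $G$ has infinitely many ends and is non-elementary, so $h>0$, $l>0$, and $v>0$. Because finite super-exponential moment implies finite first moment, Theorem~\ref{mainthm2} applies and yields
$$
Hdim_{\rho_\lambda}(\nu_{\mathcal E})=\frac{-1}{\log\lambda}\cdot\frac{h}{l},\qquad Hdim_{\rho_\lambda}(\partial_{\mathcal E}G)=\frac{-v}{\log\lambda}.
$$
Since $\lambda\in(0,1)$, the constant $-1/\log\lambda$ is strictly positive, so the asserted inequality is equivalent to the strict Guivarc'h inequality $h/l<v$.

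Next I would invoke \cite[Theorem~1.5]{GekhtmanDussaule}. A finitely generated nilpotent group which is not virtually cyclic has, by the Bass--Guivarc'h formula, polynomial growth of degree at least $2$; in particular the parabolic factor $A$ has strictly super-linear growth, and this is precisely the situation in which \cite[Theorem~1.5]{GekhtmanDussaule} shows that the fundamental inequality $h/l\le v$ is strict for the free product $A*B$ endowed with a measure of finite super-exponential moment. I would check that the standing hypotheses of that theorem are satisfied: $G$ is non-elementary relatively hyperbolic with $A$ a maximal parabolic subgroup, $\mu$ has finite super-exponential moment, and $\mu$ is admissible (as required for the harmonic measure $\nu_{\mathcal E}$ and for Theorem~\ref{mainthm2}). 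Feeding $h/l<v$ into the two displayed formulas immediately gives $Hdim_{\rho_\lambda}(\nu_{\mathcal E})<Hdim_{\rho_\lambda}(\partial_{\mathcal E}G)$.

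The only step that genuinely requires attention is matching hypotheses: one must confirm that a free product with a non-virtually-cyclic nilpotent factor falls inside the class of relatively hyperbolic groups for which \cite[Theorem~1.5]{GekhtmanDussaule} establishes strictness — the underlying mechanism being that such a factor has super-linear growth, so the Green (harmonic) metric it carries is not bi-Lipschitz to the word metric, which forces a strict loss in passing through the parabolic subgroups in the Guivarc'h inequality. Once this is in place the corollary is immediate; the remaining manipulation of the positive constant $-1/\log\lambda$ is trivial, so I do not expect any further obstacle.
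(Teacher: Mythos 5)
Your proposal is correct and follows exactly the route the paper intends: combine the two dimension formulas of Theorem~\ref{mainthm2} with the strict fundamental inequality $h/l<v$ from \cite[Theorem~1.5]{GekhtmanDussaule} for free products with a non-virtually-cyclic nilpotent factor, and cancel the positive constant $-1/\log\lambda$. The hypothesis-matching you flag (admissibility, infinitely many ends, finite first moment from super-exponential moment) is the only content beyond these two citations, and you handle it correctly.
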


\subsection{Doubling property of the end boundary}
The study of the harmonic measure associated with a random walk is closely related to the doubling property of the involved boundary,
see \cite[Section~4]{Tanaka}.
We also refer to \cite[Proposition 4.12]{Haissinsky}, where the doubling property is combined with the so-called shadow lemma to give a short proof of the formula $Hdim(\nu)=h/l$ in the setting of hyperbolic groups.
It is not known whether the Floyd boundary equipped with the Floyd distance and the Bowditch boundary equipped with the shortcut Floyd distance are doubling, see \cite[Question 1.7]{PY}.
On the other hand, to give a more complete picture, we clarify the situation for the end boundary equipped with a visual distance.

A metric space $(X, d)$ is called \textit{doubling} if there exists a constant $N>0$ such that every ball of radius $\delta>0$ can be covered by at most $N$ balls of radius $\delta/2$.
Equivalently, for any $\theta\in(0,1)$, there exists $N(\theta)>0$ such that every ball of radius $\delta$ can be covered by at most $N(\theta)$ balls of radius $\theta \delta$.
The doubling property is known to be a bi-H\"older invariant.
A measure $\mu$ on $(X, d)$ is called \textit{doubling} if there exists a constant $C$ such that $\mu(2B)\le C\mu(B)$ for every ball $B$, where $2B$ denotes the ball with the same center and a radius twice as large as $B$.
The existence of a doubling measure on a metric space $(X,d)$ implies that the metric space  $(X,d)$ is doubling. The converse is  true for complete metric spaces.

It is well-known that the Patterson-Sullivan measure on the Gromov boundary of a hyperbolic group is Alhfors regular, hence doubling, for the Gromov's visual metric, so the Hausdorff dimension of the Patterson-Sullivan measure equals that of the whole boundary, see \cite{Coornaert}.
One can construct, as in \cite{YangPS}, a class of Patterson-Sullivan measures on the end boundary through the action on the Cayley graph.
Those measures yield quasi-conformal densities without atoms.
This motivates the question whether the Patterson-Sullivan measure is Alhfors regular or at least doubling on the end boundary endowed with a visual metric.

Our third main result gives a characterization of the doubling property of the end boundary for \textit{accessible} infinitely-ended groups.
Such groups admit a splitting as a graph of groups over finite edge groups so that the vertex groups are either finite or one-ended.
It is a famous result of Dunwoody \cite{Dun85} that finitely presented group are accessible.

\begin{theorem}\label{CharDoublingThm}
Let $G$ be a finitely generated,   accessible,    infinitely-ended group.
Let $\lambda\in (0,1)$ and endow the end boundary $\partial_{\mathcal E}G$ of $G$ with the visual metric $\rho_\lambda$.
Then, $(\partial_{\mathcal{E}}G,\rho_\lambda)$ is doubling if and only if $G$ is virtually free.
\end{theorem}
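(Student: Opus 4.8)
The plan is to prove the two implications separately, with the virtually free case being the easy direction. If $G$ is virtually free, then $G$ acts geometrically on a tree (quasi-isometrically), and its end boundary $\partial_{\mathcal E}G$ is quasi-isometric—hence bi-Hölder equivalent by the bi-Hölder invariance discussed in the excerpt—to the end boundary of a regular tree. The latter, equipped with the visual metric $\rho_\lambda$, is Ahlfors regular (the ball of radius $\lambda^n$ is a cylinder containing a fixed-exponential number of ends), hence doubling; and doubling passes through bi-Hölder maps. So $(\partial_{\mathcal E}G,\rho_\lambda)$ is doubling.

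For the converse, suppose $G$ is accessible and not virtually free. Since $G$ is accessible and infinitely-ended, it admits a finite graph-of-groups decomposition over finite edge groups in which every vertex group is finite or one-ended; since $G$ is not virtually free, at least one vertex group $V$ is one-ended and infinite. Pass to the Bass–Serre tree $T$ of this splitting: $G$ acts on $T$ with finite edge stabilizers, and the one-ended vertex group $V$ stabilizes a vertex $v_0$. The key geometric point is that, because $V$ is one-ended and infinite, the Cayley graph of $G$ contains, attached at the copy of $V$, arbitrarily large "bushy" configurations: from within a single translate of $V$ one can reach many distinct ends of $G$ (via the infinitely many edges of $T$ emanating from the $V$-orbit), and $V$ being one-ended means these ends are only separated by spheres whose radius grows with the $V$-distance one must travel inside the vertex group to get to the relevant edges of $T$. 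Concretely, I would show: there is a constant $c>1$ and, for each $n$, a point $x_n\in V$ with $d(o,x_n)\asymp n$ and a collection of $\exp(\Omega(n))$ ends $\{\xi_i\}$ such that all $\xi_i$ lie in the ball $B_{\rho_\lambda}(\xi_0, C\lambda^{n})$ while pairwise $\rho_\lambda(\xi_i,\xi_j)\ge \lambda^{cn}$ — the upper bound coming from the fact that a sphere of radius $\asymp n$ around $o$ separates $\xi_0$ from none of them (they all fan out from $x_n$), and the lower separation bound coming from the edge structure of $T$ near $x_n$. This forces the ball $B_{\rho_\lambda}(\xi_0, C\lambda^n)$ to require $\exp(\Omega(n))$ balls of radius $\lambda^{cn} = (C\lambda^n)\cdot\lambda^{(c-1)n}/C$, i.e. a ratio of radii that is a fixed power, yet a covering number tending to infinity — contradicting the equivalent "any $\theta\in(0,1)$" formulation of doubling.

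The main obstacle is making precise the claim that a one-ended infinite vertex group produces exponentially many ends of $G$ at a fixed metric scale while those ends are separated only at a much finer scale. This requires a careful analysis of how the visual metric $\rho_\lambda$ on $\partial_{\mathcal E}G$ interacts with the tree $T$: I expect to need (i) a quasi-isometric model of $G$ as a "tree of spaces" over $T$ with the one-ended vertex spaces being the Cayley graphs of the infinite vertex groups, (ii) the observation that ends of $G$ correspond to ends of $T$ together with the (trivial, since edge groups are finite) asymptotic data, so that $\partial_{\mathcal E}G$ fibers over $\partial_{\mathcal E}T$ with the relevant branching happening exactly at the $G$-orbit of $v_0$, and (iii) a counting estimate for the number of $T$-edges in the orbit $G\cdot e$ meeting a ball of radius $n$ — which grows exponentially precisely because the one-ended group $V$ has super-linear (indeed at least linear in a bushy sense, giving exponential count of translates) growth and infinitely many edge-orbits issue from it. Once this geometric picture is set up, the failure of doubling is a direct counting contradiction as sketched; the subtlety is entirely in transferring "one-ended and infinite" into the quantitative separation-vs-clustering statement about $\rho_\lambda$.
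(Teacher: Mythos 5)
Your easy direction (virtually free $\Rightarrow$ doubling, via bi-H\"older equivalence with a tree boundary) is fine and matches the paper. The converse, however, has a genuine gap in the way you derive the contradiction with doubling. You propose to exhibit $\exp(\Omega(n))$ ends inside $B_{\rho_\lambda}(\xi_0,C\lambda^n)$ that are pairwise separated by $\lambda^{cn}$ with $c>1$, and you call the resulting ratio of radii ``a fixed power.'' But the doubling property quantifies over a \emph{fixed} ratio $\theta\in(0,1)$: it asserts a uniform bound $N(\theta)$ on the covering number, and $N(\theta)$ is allowed to grow (polynomially, like $\theta^{-D}$) as $\theta\to 0$. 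Your ratio is $\theta_n=\lambda^{(c-1)n}/C\to 0$, so having $\exp(\Omega(n))$ points that are $\theta_n\delta$-separated in a ball of radius $\delta=C\lambda^n$ is perfectly consistent with doubling --- the standard Cantor ternary set itself exhibits exactly this behaviour. Moreover, the exponential count you want is not available in general: if the one-ended vertex group is, say, $\mathbb Z^2$, the relevant sphere carries only linearly many branching points. So as written the argument proves nothing.

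The fix is a recalibration of the separation scale, and this is where the paper's argument differs decisively from your sketch. Write $G=H*_FK$ (or $H*_F$) with $F$ finite and $H$ one-ended, and let $\xi$ be the end of $G$ determined by $H$. For each end $\eta\ne\xi$ there is a coset $F_\eta$ of $F$ inside $H$ through which every path from $1$ to $\eta$ and from $\xi$ to $\eta$ must pass, and $\rho_\lambda(\xi,\eta)=\lambda^n$ essentially forces $d(1,F_\eta)\in\{n-1,n\}$. Now pick ends $\eta(h)$ branching off at points $h$ of the sphere $S_{n+k}(H)$: they all lie in $B_{\rho_\lambda}(\xi,\lambda^n)$, and for $d(h_1,h_2)\ge 2$ any path from $\eta(h_1)$ to $\eta(h_2)$ must pass through \emph{both} $F_{\eta_1}$ and $F_{\eta_2}$, hence through the ball of radius $n+k+1$; so the pairwise separation is $\ge\lambda^{n+k+1}=\lambda^{k+1}\cdot\lambda^n$, a \emph{fixed} fraction $\theta=\lambda^{k+1}$ of the ball radius --- not $\lambda^{cn}$ with $c>1$. (Your intuition that the separation degenerates ``with the $V$-distance one must travel'' is what leads you astray: the ends fan out at distinct points of the same sphere, they are not nested.) With the ratio fixed, one only needs the number of $2$-separated points on $S_{n+k}(H)$ to tend to infinity, which follows because a one-ended group is infinite and not virtually cyclic, hence has super-linear growth by Gromov's theorem. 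That unbounded count at a fixed ratio is what genuinely contradicts doubling.
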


From the above discussion, the following two corollaries are immediate.

\begin{cor}
The Patterson-Sullivan measure on the end boundary of an accessible infinitely-ended group is doubling for the visual metric if and only if the group is virtually free.
\end{cor}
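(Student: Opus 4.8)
The plan is to derive both implications from Theorem~\ref{CharDoublingThm} combined with the elementary relations, recorded above, between doubling of a measure, doubling of the underlying metric space, and Ahlfors regularity. Throughout write $\nu$ for the Patterson--Sullivan measure of dimension $v$ on $\partial_{\mathcal E}G$ produced as in \cite{YangPS}, and recall from Theorem~\ref{mainthm2} that $Hdim_{\rho_\lambda}(\partial_{\mathcal E}G)=\frac{-v}{\log\lambda}$.

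For the direction ``$\nu$ doubling $\Rightarrow$ $G$ virtually free'' I would argue by contraposition. As recorded above, the existence of \emph{any} doubling measure on a metric space forces that space to be doubling. Hence if $\nu$ were doubling, then $(\partial_{\mathcal E}G,\rho_\lambda)$ would be a doubling metric space, and Theorem~\ref{CharDoublingThm} would immediately give that $G$ is virtually free. Equivalently, when $G$ is not virtually free the space $(\partial_{\mathcal E}G,\rho_\lambda)$ is not doubling, so it carries no doubling measure and in particular $\nu$ is not doubling.

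The reverse implication is the substantive one: metric doubling by itself is not enough, since the converse principle only produces \emph{some} doubling measure rather than the specific measure $\nu$. Instead I would prove directly that $\nu$ is Ahlfors $\frac{v}{-\log\lambda}$-regular, which forces it to be doubling. The key point is that the ball $B(\xi,\lambda^n)$ for $\rho_\lambda$ is exactly the set of ends lying in the same component of $\Gamma\setminus B(o,n)$ as a ray to $\xi$, i.e.\ a cone; it therefore suffices to establish the two-sided shadow estimate
$$
\nu\bigl(B(\xi,\lambda^n)\bigr)\asymp e^{-vn},
$$
since $e^{-vn}=(\lambda^n)^{v/(-\log\lambda)}$. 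The upper bound follows from the quasi-conformality of the density together with the growth bound $\sharp S_n\lesssim e^{vn}$. The lower bound is exactly where virtual freeness enters: a virtually free $G$ has purely exponential growth $\sharp S_n\asymp e^{vn}$ and a Cayley graph quasi-isometric to a bounded-valence tree, so the branching of cones is uniformly bounded and each level-$n$ cone retains a definite fraction of the ambient mass. Because different parameters give visual metrics related by the snowflake identity $\rho_{\lambda'}=\rho_\lambda^{\log\lambda'/\log\lambda}$, Ahlfors regularity (hence doubling of $\nu$) for one value of $\lambda$ yields it for all $\lambda\in(0,1)$.

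The main obstacle is the lower bound in the shadow estimate, namely showing that within each parent cone the child cone containing $\xi$ carries a uniformly positive proportion of the mass. This comparability of sibling masses is precisely what a one-ended vertex group would destroy, since such a factor creates cones with unboundedly many children of decaying mass, simultaneously breaking metric doubling and the lower shadow bound, in agreement with Theorem~\ref{CharDoublingThm}. As a cleaner shortcut for the ``if'' direction, one may observe that a virtually free group is hyperbolic with Cantor end boundary canonically identified with its Gromov boundary; on the latter the Patterson--Sullivan measure is classically Ahlfors regular by Coornaert \cite{Coornaert}, and transporting this through the (snowflake) comparison of the two visual metrics shows that $\nu$ is doubling.
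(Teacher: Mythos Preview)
Your proposal is correct and ultimately aligns with the paper's argument. The paper treats this corollary as immediate from the discussion preceding Theorem~\ref{CharDoublingThm}: for the ``only if'' direction it uses exactly your observation that a doubling measure forces the underlying space to be doubling, whence Theorem~\ref{CharDoublingThm} applies; for the ``if'' direction it invokes precisely the Coornaert shortcut you mention at the end---a virtually free group is hyperbolic, its end boundary coincides with its Gromov boundary (with comparable visual metrics), and Coornaert's Ahlfors regularity of the Patterson--Sullivan measure there gives doubling.

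Your longer development via a direct two-sided shadow estimate $\nu(B(\xi,\lambda^n))\asymp e^{-vn}$ is a valid alternative route to the ``if'' direction, essentially reproving Coornaert's result in this concrete setting by exploiting the bounded branching of cones in a quasi-tree. It is more self-contained but also more work; the paper bypasses it entirely by citing \cite{Coornaert}. One small point worth tightening in your shortcut: the relevant comparison between the end visual metric $\rho_\lambda$ and the Gromov visual metric on a virtually free group is in fact bi-Lipschitz (the separation radius and the Gromov product agree up to an additive constant on a quasi-tree), not merely bi-H\"older, which makes the transfer of Ahlfors regularity---and hence doubling of $\nu$---completely straightforward.
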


The following one addresses an analogous question \cite[Question 4.2]{Tanaka} in our setup.

\begin{cor}
If  an accessible infinitely-ended group is not virtually free, then the harmonic measure $\nu_{\mathcal E}$ on the end boundary endowed with a visual metric is not doubling.
\end{cor}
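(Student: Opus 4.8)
I would prove the two implications separately; the implication ``virtually free $\Rightarrow$ doubling'' is short, and the converse is the substantial one. Suppose first that $G$ is virtually free. Since $G$ is infinitely-ended it contains a finite-index free subgroup $F$ of rank $k\ge 2$, and $F$ is quasi-isometric to $G$. The Cayley graph of $F$ with respect to a free basis is the $(2k)$-regular tree, whose end boundary is the space of infinite reduced words; with respect to $\rho_\lambda$, a ball of radius $\lambda^{n}$ is exactly a cylinder determined by a reduced word of length $n$, and such a cylinder is covered by at most $(2k-1)^{j}$ cylinders of radius $\lambda^{n+j}$. Hence $(\partial_{\mathcal E}F,\rho_\lambda)$ is doubling (in fact Ahlfors regular). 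By the bi-H\"older statement recalled in the introduction, the homeomorphism $\partial_{\mathcal E}G\to\partial_{\mathcal E}F$ induced by a quasi-isometry is bi-H\"older for the metrics $\rho_\lambda$, and since being doubling is a bi-H\"older invariant, $(\partial_{\mathcal E}G,\rho_\lambda)$ is doubling.

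For the converse assume $G$ is not virtually free. Using accessibility, fix a reduced splitting of $G$ as a finite graph of groups with finite edge groups and vertex groups that are finite or one-ended. Since a finite graph of finite groups has virtually free fundamental group, some vertex group $H$ is infinite, hence one-ended. Let $T$ be the Bass--Serre tree, fix a vertex $v_0$ with $G_{v_0}=H$, and pass to the associated tree of spaces, in which $H$ is quasi-isometrically embedded in $\Gamma$ as the $v_0$-vertex space. There are finitely many $H$-orbits of edges at $v_0$, all with finite stabilizer, so within a uniformly bounded distance $D$ of every $h\in H$ there sits an edge space through which every path from the basepoint $o$ into the infinite subtree beyond that edge must pass; each such subtree carries at least one end of $G$, and edges in distinct cosets give disjoint subtrees, hence pairwise disjoint clopen ``cylinders'' $\mathcal{C}(h)\subseteq\partial_{\mathcal E}G$. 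I will use the standard reformulation of the doubling property: $(\partial_{\mathcal E}G,\rho_\lambda)$ fails to be doubling provided there is a fixed ratio $t\in(0,1)$ such that, for every $N$, some ball of some radius $r$ contains more than $N$ points that are pairwise $>tr$ apart.

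The core is then to manufacture such configurations at arbitrarily small scales from the geometry of $H$. For $m$ large let $C^{(m)}$ be the unique infinite component of $H\setminus B(o,m)$ (a finite set is removed from the other components, and only finitely many components appear), and let $U_m\subseteq\partial_{\mathcal E}G$ be the cylinder of ends reached through branches attached to $C^{(m)}$; this is a component of $\Gamma\setminus B(o,m)$, so $\mathrm{diam}_{\rho_\lambda}U_m\le\lambda^{m}$. For $h$ in the ``surface'' of the next shell, i.e. $h\in C^{(m-1)}\cap S^H_{m}$, together with one $h$ chosen from each finite component of $H\setminus B(o,m)$ lying in $C^{(m-1)}$, the cylinder $\mathcal{C}(h)$ lies in $U_m$ while any two such cylinders are separated by $B(o,m+D)$ --- the separating path runs near $h$ or $h'$, both at $\Gamma$-distance at most $m+D$ from $o$, or else must exit a finite bump through $B(o,m)$ --- so their $\rho_\lambda$-distance is at least $\lambda^{m+D}$. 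Choosing $t=\lambda^{D+2}$ and one end from each member of a maximal $D$-separated family of such $h$, a ball of radius $r=\lambda^{m}$ about a point of $U_m$ contains at least $c\cdot(\text{size of that family})$ points that are pairwise $>tr$ apart, for a constant $c>0$ depending only on $H$ and the generating set. It remains to see this family is unbounded in size along a subsequence of radii $m$. If it were bounded, then in particular the shells $C^{(m-1)}\setminus C^{(m)}$ would have bounded ``surface'' and the complement $H\setminus C^{(m)}$ would grow at most linearly; combined with the fact that $B(o,m)\cap H\subseteq H\setminus C^{(m)}=\bigsqcup_{j<m}\big(C^{(j)}\setminus C^{(j+1)}\big)$ up to a fixed finite set, this forces $H$ to have at most linear growth, hence to be virtually cyclic, hence two-ended --- contradicting one-endedness of $H$. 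This contradicts the doubling property.

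The main obstacle, and the step that needs the most care, is precisely this last geometric translation: one must produce unboundedly many ``fresh'' branches whose base points sit, up to a fixed additive error in the radius, on the frontier of a single complementary component of a ball in $\Gamma$, so that their cylinders land inside one cylinder $U_m$ while remaining pairwise separated at a controlled level. This requires controlling the interaction between $\rho_\lambda$ and the complementary components of balls --- in particular ensuring that the ``finite bumps'' of $H\setminus B(o,m)$, which may be deep, do not absorb the growth of the shells without contributing to the separated configuration --- and it is exactly here that one-endedness of $H$, equivalently the failure of virtual freeness of $G$, is genuinely used, through a quantitative form of coarse connectedness at infinity ruling out coarse thinness. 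The remaining ingredients (nonemptiness and disjointness of the cylinders $\mathcal{C}(h)$ from the description of ends of a graph of groups with finite edge groups, the relation between $\rho_\lambda$-balls and complementary components, and the linear-versus-superlinear growth dichotomy for $H$) are standard.
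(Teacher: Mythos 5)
Your proposal, as written, does not prove the stated corollary: the corollary is about the harmonic \emph{measure} $\nu_{\mathcal E}$ being doubling, whereas your entire argument concerns the doubling property of the metric \emph{space} $(\partial_{\mathcal E}G,\rho_\lambda)$, and $\nu_{\mathcal E}$ never appears. The missing bridge is the standard fact that a nonzero doubling measure on a metric space forces the space itself to be doubling (if $\mu(2B)\le C\mu(B)$ for all balls and $\mu\neq 0$, then $\mu$ charges every ball, and packing a ball by a maximal separated net and comparing the measures of the resulting disjoint sub-balls bounds the net's cardinality). This is precisely how the paper obtains the corollary: it proves the characterization of Theorem~\ref{CharDoublingThm} via Proposition~\ref{propinaccessible} and then invokes this fact, which it records in the preliminaries. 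The fix is one sentence, but without it you have proved a different statement. The ``virtually free $\Rightarrow$ doubling'' direction you include is not needed here at all.

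Your argument that the space is not doubling is essentially the paper's proof of Proposition~\ref{propinaccessible}: pick a one-ended vertex group $H$ of the terminal splitting, attach to suitable points $h$ of $H$ an end of $G$ reached through the finite edge coset at $h$, show these ends land in a single ball of radius about $\lambda^{d(1,h)}$ while being pairwise $\lambda^{d(1,h)+O(1)}$-separated, and produce unboundedly many of them using that a one-ended group has superlinear growth. Two places where your version is weaker. First, you center the small ball on the cylinder $U_m$ over the infinite component of $H\setminus B(o,m)$ and take base points exactly on the sphere $S_m^H$; this is the borderline case where $\mathcal C(h)\subseteq U_m$ can fail (the attaching coset at $h$ may be swallowed by $B(o,m)$). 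The paper instead centers the ball at the distinguished end $\xi$ of $H$, takes $h\in S_{n+k}(H)$ with a fixed buffer $k\ge 2$, and uses the two-sided inclusion~(\ref{equationdoublingproperty}). Second, and more substantively, your closing implication ``bounded family $\Rightarrow$ $H\setminus C^{(m)}$ grows at most linearly $\Rightarrow$ $H$ virtually cyclic'' does not close: a single deep finite component of $H\setminus B(o,m)$ can carry superlinear growth while contributing only one representative to your separated family, so boundedness of the family does not bound the shells. You flag this as the step needing the most care but leave it unresolved. The paper's count avoids the component bookkeeping entirely: it takes a maximal $2$-separated subset of the full sphere $S_{n+k}(H)$, whose size is forced to be unbounded (along scales) by Gromov's theorem, and each such point carries an attached end already placed inside $B_{\rho_\lambda}(\xi,\lambda^n)$ by~(\ref{equationdoublingproperty}).
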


A result of David-Semmes \cite[Theorem 15.5]{Hei01} says that a metric space is quasi-symmetric to the standard Cantor ternary set if and only if it is compact, doubling, uniformly perfect and uniformly disconnected. Precise definitions of the latter two can be found in \textsection 11.1 and \textsection 14.24 in \cite{Hei01}. It is known that a doubling space is uniformly perfect and a ultrametric space is uniformly disconnected. The end boundary of a virtually free group with visual metric is doubling by \cite{Coornaert} and thus quasi-symmetric to the standard Cantor ternary set. 

By \cite[Corollary 11.3]{Hei01}, quasi-symmetric maps between uniformly perfect compact spaces are bi-H\"older.
However,  the converse is not true: bi-H\"older  homeomorphims are  not necessarily  quasi-symmetric.
Interesting examples are given by the boundaries  of the real hyperbolic space $\mathbb H^4$ and the complex hyperbolic space $\mathbb {CH}^2$ equipped with Gromov's visual metrics, which are bi-H\"older but not quasi-symmetric, since they can be distinguished by their conformal dimension.
Thus, a bi-H\"older classification of visual metrics on the end boundary is a reasonable and interesting problem. We refer the reader to \cite[Introduction 1D]{Cor19ENS} for further discussion.
Recalling that the doubling property is a bi-H\"older invariant, we obtain the following corollary, which answers positively a question of  Cornulier \cite[Question 1.27]{Cor19}.  
\begin{cor}\label{CornulierQCor}
The end boundary of an accessible infinitely-ended group is  bi-H\"older equivalent to the standard Cantor ternary set if and only if it is virtually free. 

In particular, the end boundaries of a free group and the free product of two 1-ended groups    are not  bi-H\"older equivalent.
\end{cor}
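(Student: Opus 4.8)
The plan is to deduce this corollary directly from Theorem~\ref{CharDoublingThm}, combined with the two standard inputs from metric geometry recalled in the paragraphs preceding the statement: the David--Semmes characterization \cite[Theorem~15.5]{Hei01} of metric spaces quasi-symmetric to the standard Cantor ternary set, and the fact that the doubling property is a bi-H\"older invariant. No new geometric group theory beyond Theorem~\ref{CharDoublingThm} should be needed; the work is to assemble the pieces and verify hypotheses.

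For the ``only if'' (backward) direction, suppose $\partial_{\mathcal E}G$ is bi-H\"older equivalent to the standard Cantor ternary set. That set is Ahlfors regular, hence doubling, and doubling passes across bi-H\"older homeomorphisms, so $(\partial_{\mathcal E}G,\rho_\lambda)$ is doubling; since $G$ is accessible and infinitely-ended, Theorem~\ref{CharDoublingThm} then forces $G$ to be virtually free. For the ``if'' direction, suppose $G$ is virtually free. Since $G$ is infinitely-ended, $\partial_{\mathcal E}G$ is a Cantor set, in particular compact and perfect; and $\rho_\lambda$ is an ultrametric by construction (two ends lie at distance $\lambda^n$ exactly when $n$ is the least radius of a ball separating them), so $(\partial_{\mathcal E}G,\rho_\lambda)$ is uniformly disconnected. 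By Theorem~\ref{CharDoublingThm} (equivalently, by \cite{Coornaert}, since for virtually free $G$ the end boundary is the Gromov boundary and $\rho_\lambda$ lies in its quasi-conformal gauge) the space $(\partial_{\mathcal E}G,\rho_\lambda)$ is doubling, hence, being also perfect, uniformly perfect. David--Semmes then produces a quasi-symmetry onto the standard Cantor ternary set, and as both spaces are compact and uniformly perfect this quasi-symmetry is bi-H\"older by \cite[Corollary~11.3]{Hei01}; thus $\partial_{\mathcal E}G$ is bi-H\"older equivalent to the Cantor ternary set.

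For the ``in particular'' assertion, a nonabelian free group is virtually free and infinitely-ended, so its end boundary is bi-H\"older equivalent to the Cantor set by the equivalence just proved. If $G=H_1*H_2$ with $H_1,H_2$ finitely generated one-ended groups, then $G$ is infinitely-ended, the free splitting already exhibits $G$ as a graph of groups over the trivial edge group with one-ended vertex groups, so $G$ is accessible, and $G$ is not virtually free because it contains the one-ended (hence not virtually free) subgroup $H_1$. Hence $\partial_{\mathcal E}G$ is not doubling, so — doubling being a bi-H\"older invariant — it is not bi-H\"older equivalent to the Cantor set; since the composition of two bi-H\"older homeomorphisms is again bi-H\"older, $\partial_{\mathcal E}G$ cannot be bi-H\"older equivalent to the end boundary of a free group either.

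I do not anticipate a genuine obstacle: the substantive content has been absorbed into Theorem~\ref{CharDoublingThm}, and what remains is soft. The two points worth a moment's care are (i) that $\rho_\lambda$ is genuinely an ultrametric, which is immediate from its definition, and (ii) the uniform-perfectness hypothesis needed both to apply David--Semmes and to upgrade quasi-symmetry to bi-H\"older, which is supplied by the doubling property of $(\partial_{\mathcal E}G,\rho_\lambda)$ in the virtually free case.
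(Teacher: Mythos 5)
Your proposal is correct and assembles exactly the ingredients the paper itself uses: Theorem~\ref{CharDoublingThm}, the bi-H\"older invariance of the doubling property, the ultrametric/compactness observations, the David--Semmes characterization \cite[Theorem 15.5]{Hei01}, and the upgrade of quasi-symmetries to bi-H\"older maps via \cite[Corollary 11.3]{Hei01}; the paper treats the corollary as immediate from precisely this discussion. The only point where you (like the paper) lean on a slightly delicate claim is deducing uniform perfectness from doubling alone; in the virtually free case this is more safely obtained from the Ahlfors regularity supplied by \cite{Coornaert}, but this does not change the argument.
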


Inaccessible groups do exist by work of Dunwoody \cite{Dun93}.
The proof of Theorem \ref{CharDoublingThm} fails generally for inaccessible groups,
but we can still give a result for such groups under some additional assumptions, see precisely Proposition~\ref{propinaccessible}.

Finally, let us compare the visual metric on the end boundary with the Gromov's visual metric coming from an action of $G$ on a hyperbolic space $X$.
Fix a splitting of an infinitely-ended group $G$ over finite   groups as a finite graph of groups.
As explained above, by \cite[Definition~2]{Bowditch}, this splitting makes $G$  hyperbolic relative to the set of vertex groups.
Hence, by Yaman \cite{Yaman}, $G$ acts via a geometrically finite action on a proper hyperbolic space $X$ so that the Bowditch boundary is homeomorphic to the Gromov boundary of $X$.
If $G$ is accessible and the splitting is terminal, then the end boundary is equivariantly homeomorphic to the Bowditch boundary, see Section~\ref{SSRelHyp} for more details.
We can thus endow the end boundary with the Gromov's visual metric coming from the hyperbolic space $X$ on which $G$ acts.
According to \cite{DY05} and \cite{BS00}, { $X$ can be chosen so that}
this metric is doubling if and only if the parabolic subgroups are virtually nilpotent.
{In such a situation,} by Theorem \ref{CharDoublingThm} the two possible metrics on the end boundary cannot be in the same bi-H\"older class unless the group is virtually free, for one is doubling and the other is not.
\begin{cor}
Assume that $G$ is not virtually free and splits over finite groups as a finite graph of virtually nilpotent groups. Then, there exists a hyperbolic space $X$ on which $G$ acts via a geometrically finite action such that the visual metric on the end boundary is not bi-H\"older equivalent to the  Gromov's visual metric coming from this action.
\end{cor}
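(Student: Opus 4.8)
This final corollary is essentially a bookkeeping consequence of Theorem \ref{CharDoublingThm} together with the cited doubling criterion for Gromov visual metrics, so the plan is to assemble the pieces in the right order.

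The plan is as follows. First I would invoke the discussion immediately preceding the corollary: since $G$ splits over finite groups as a finite graph of virtually nilpotent groups, this splitting exhibits $G$ as hyperbolic relative to the (finite) set of vertex groups by \cite[Definition~2]{Bowditch}, and Yaman's theorem \cite{Yaman} then provides a proper hyperbolic space $X$ admitting a geometrically finite $G$-action whose Gromov boundary is equivariantly homeomorphic to the Bowditch boundary $\partial_{\mathcal B}G$. One must be slightly careful here: $G$ is accessible (being finitely presented by Dunwoody \cite{Dun85}, or more directly because the given splitting is finite), and one should take the splitting to be terminal — refine it if necessary so that vertex groups are finite or one-ended — so that, as recalled in the paragraph before the statement, the end boundary $\partial_{\mathcal E}G$ is equivariantly homeomorphic to $\partial_{\mathcal B}G$, hence to $\partial X$. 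This identification lets us transport the Gromov visual metric of $X$ to a metric on $\partial_{\mathcal E}G$.

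Next I would apply the cited consequence of \cite{DY05} and \cite{BS00}: since the peripheral (vertex) subgroups are virtually nilpotent, $X$ can be chosen so that the Gromov visual metric on $\partial X \cong \partial_{\mathcal E}G$ is doubling. In parallel, Theorem \ref{CharDoublingThm} applies to $G$ — it is finitely generated, accessible, and infinitely-ended (a nontrivial splitting over finite groups with virtually nilpotent vertex groups that is not just a finite group forces infinitely many ends; note $G$ is not virtually free by hypothesis, so in particular it is not finite and genuinely has infinitely many ends) — and gives that the visual metric $\rho_\lambda$ on $\partial_{\mathcal E}G$ is \emph{not} doubling, precisely because $G$ is not virtually free. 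Since the doubling property is a bi-H\"older invariant (as recalled in the doubling-property subsection), a doubling metric and a non-doubling metric on the same space cannot be bi-H\"older equivalent; therefore $\rho_\lambda$ and the transported Gromov visual metric are not bi-H\"older equivalent, which is the assertion.

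The only genuine subtlety — and the step I would state most carefully rather than the "main obstacle," since no hard argument is needed — is the reduction to a terminal splitting and the verification that the end boundary really is identified with $\partial X$ in that case; this is where one must cite Section~\ref{SSRelHyp} and make sure the accessibility hypothesis is used to guarantee such a terminal splitting exists. Everything else is a direct citation chain: $G$ relatively hyperbolic via Bowditch's criterion, Yaman's realization, the $\partial_{\mathcal E}G\cong\partial_{\mathcal B}G\cong\partial X$ identification, the \cite{DY05}/\cite{BS00} doubling criterion on one side, Theorem \ref{CharDoublingThm} on the other, and bi-H\"older invariance of doubling to conclude.
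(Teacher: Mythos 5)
Your proposal is correct and takes essentially the same route as the paper, which proves this corollary by exactly the citation chain you describe: Bowditch's criterion and Yaman's realization to get $X$ with $\partial X\cong\partial_{\mathcal B}G\cong\partial_{\mathcal E}G$, the \cite{DY05}/\cite{BS00} choice of $X$ making the Gromov visual metric doubling for virtually nilpotent peripherals, Theorem~\ref{CharDoublingThm} showing $\rho_\lambda$ is not doubling since $G$ is not virtually free, and bi-H\"older invariance of the doubling property. Your extra care about refining to a terminal splitting and verifying accessibility and infinitely many ends is consistent with what the paper implicitly assumes in the paragraph preceding the statement.
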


\begin{remark}
Similarly as in  Corollary~\ref{CornulierQCor}, we can derive the following result from \cite{DY05}.
Let  $G_1=H_1*\mathbb{Z}$ and $G_2=H_2*\mathbb{Z}$, where $H_1$ is virtually nilpotent and $H_2$ is one-ended but not virtually nilpotent. Then $G_1$ and $G_2$ admit  geometrically finite actions on proper hyperbolic spaces $X_1$ and $X_2$ with bounded geometry so that their Gromov boundaries endowed with Gromov's visual metric  are homeomorphic to the Cantor sets but are not bi-H\"older equivalent, for one boundary is doubling and not the other.
\end{remark}

\subsection*{Overview and organisation of the paper}
In Section~\ref{PrelimSection}, we review all the preliminary results we will need in the following.
We recall the definition of relatively hyperbolic groups, the Bowditch boundary, the Floyd distance and the Floyd boundary.
We also give more details about the Hausdorff dimension of a finite measure on a metric space.

Section~\ref{FloyBowditchSection} is devoted to the proof of Theorem~\ref{mainthm1}, which treats separately the upper bound (Proposition~\ref{upper-boundharmonicFloyd}) and lower bound  (Proposition~\ref{lower-boundharmonicFloyd}) for the Hausdorff dimension.
The lower bound follows a strategy similar to the one developed by Tanaka \cite{Tanaka}. One of the main tools in \cite{Tanaka} is that the random walk sub-linearly tracks geodesics $[1,\omega_\infty]$ on the hyperbolic space $X$ on which the group $G$ acts, where $\omega_\infty$ is the limit of the random walk in the Gromov boundary of $X$.

In our situation,   the Cayley graph of a relatively hyperbolic group $G$  is generally not hyperbolic anymore, so it cannot be expected that the random walk stay close to any point on a geodesic $[1,\omega_\infty]$ in the Cayley graph. However, using Maher-Tiozzo \cite{MaherTiozzo} and Tiozzo \cite{Tiozzo}, we can prove that the random walk sub-linearly tracks word geodesics along transition points,
which are the points that are not deep in parabolic subgroups, see Definition~\ref{deftransition}.
Along the way, we also prove that the random walk spends at most sub-linear time in parabolic subgroups.
Precisely, we prove the following result.
For a given word geodesic $\alpha$, let $\Tr \alpha$ be the set of transition points on $\alpha$.

\begin{theorem}\label{mainthmdeviation}[Proposition~\ref{propsublineardeviationtransitionpoints}, Corollary~\ref{coroparabolicdrift}]
Suppose $G$ is a non-elementary relatively hyperbolic group and fix a finite generating set for $G$.
Also suppose that $\mu$ is an admissible probability measure with finite first moment on $G$. Then,
\begin{equation}\label{equationthmsulineardeviation1}
\mathbb{P}\left (\sup d (\omega_n,\Tr \alpha)=o(n)\right )=1,
\end{equation}
where the supremum is taken over all geodesics $\alpha$ from $1$ to the limit $\omega_{\infty}$ of the random walk in the Bowditch boundary of $G$.
Moreover,
\begin{equation}\label{equationthmsulineardeviation2}
\mathbb{P}\left (\sup_{U\in \mathcal{P}}d_U(1,\omega_n)=o(n)\right )=1,
\end{equation}
where $\mathcal{P}$ is the set of all left cosets of a chosen full family of conjugacy classes of parabolic subgroups and where $d_U(x,y)$ is the distance between the projections of $x$ and $y$ on $U$.
\end{theorem}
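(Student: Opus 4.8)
The plan is to push the random walk forward to the relative Cayley graph $\hat\Gamma=\Gamma(G,S\cup\mathcal P)$, which is Gromov hyperbolic and carries an acylindrical, non-elementary $G$-action (Osin~\cite{Osin16}); since the relative metric $\hat d$ is dominated by the word metric $d$, the measure $\mu$ still has finite first moment for $\hat d$. By Maher--Tiozzo~\cite{MaherTiozzo}, almost every trajectory converges to a point $\omega_+\in\partial\hat\Gamma$, the relative drift $\hat l=\lim_n\hat d(1,\omega_n)/n$ exists and is positive (the action being non-elementary and $G$ non-amenable), and the hitting measure on $\partial\hat\Gamma$ is non-atomic; as the bounded parabolic points of $\partial_{\mathcal B}G$ form a countable set, the limit $\omega_\infty$ in $\partial_{\mathcal B}G$ is almost surely a conical limit point, so every word geodesic ray $\alpha$ from $1$ to $\omega_\infty$ has a cofinal set of transition points. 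Finally, Tiozzo's sublinear tracking theorem~\cite{Tiozzo} applied to $\hat\Gamma$, combined with the law of large numbers $\hat d(1,\omega_k)/k\to\hat l$, gives the uniform estimate $\max_{k\le n}\hat d(\omega_k,\hat\gamma)=o(n)$ almost surely, where $\hat\gamma=[1,\omega_+)$ is a geodesic ray in $\hat\Gamma$.

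I would prove the parabolic estimate~(\ref{equationthmsulineardeviation2}) first. Set $F_n=\sup_{U\in\mathcal P}d_U(1,\omega_n)$. Because $g\mathcal P=\mathcal P$ for all $g$, the triangle inequality for projections gives $F_{m+n}\le F_m+F_n\circ T^m$ with $T$ the Bernoulli shift, and since $\mathbb E[F_1]\le L(\mu)<\infty$, Kingman's subadditive ergodic theorem yields $F_n/n\to\phi$ almost surely for a constant $\phi\ge 0$. The crux is $\phi=0$. Assuming $\phi>0$, for large $n$ there is a peripheral coset $U_n$ with $d_{U_n}(1,\omega_n)\ge\phi n/2\gg K$; then its cone point $v_{U_n}$ lies uniformly close to every relative geodesic $[1,\omega_n]_{\hat\Gamma}$, and the uniform tracking forces the sample path to come sublinearly close to $v_{U_n}$. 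By the contraction property of peripheral cosets, the projection $k\mapsto\pi_{U_n}(\omega_k)$ progresses only during steps spent within a bounded neighbourhood of $U_n$; such times $k$ have $\hat d(1,\omega_k)$ confined to a bounded window around $\hat d(1,v_{U_n})$, and the law of large numbers for $\hat d(1,\omega_\bullet)$ with $\hat l>0$ shows that the relative distance spends only $o(n)$ time in any such window. Hence $d_{U_n}(1,\omega_n)$ is accumulated over a time set $J_n$ with $|J_n|=o(n)$, so $d_{U_n}(1,\omega_n)\le\sum_{k\in J_n}d(1,s_{k+1})+O(|J_n|)$, which is $o(n)$ by the uniform integrability of the increments that the finite first moment provides. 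This contradicts $d_{U_n}(1,\omega_n)\ge\phi n/2$, so $\phi=0$ and $F_n=o(n)$.

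For~(\ref{equationthmsulineardeviation1}), fix (along a.e. trajectory) a word geodesic ray $\alpha$ from $1$ to $\omega_\infty$. By the structure theory of geodesics in relatively hyperbolic groups (Section~\ref{PrelimSection}), the image of $\alpha$ in $\hat\Gamma$ with backtracking removed is a uniform quasi-geodesic ray to $\omega_+$, hence lies at bounded $\hat d$-Hausdorff distance from $\hat\gamma$, and the transition points of $\alpha$ coincide up to bounded error with the vertices of $\alpha$ whose $\hat\Gamma$-images stay uniformly close to $\hat\gamma$; together with the uniform relative tracking this gives $\hat d(\omega_n,\Tr\alpha)=o(n)$. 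To upgrade this to the word metric I would work through the finite geodesic $\beta_n=[1,\omega_n]$: its last transition point $y_n$ satisfies $d(y_n,\omega_n)\le d_U(1,\omega_n)+O(1)\le F_n+O(1)$, which is $o(n)$, where $U$ is the peripheral coset containing the terminal excursion of $\beta_n$; so $\omega_n$ is $o(n)$-close in $d$ to $y_n\in\Tr\beta_n$. Since $\beta_n$ and $\alpha$ begin at $1$ and $\omega_n$ stays $\hat d$-close to $\alpha$, the relative geodesics $[1,\omega_n]_{\hat\Gamma}$ and $\hat\gamma$ fellow-travel up to relative depth $\hat d(1,\omega_n)-o(n)$; by stability of transition points $y_n$ is then within bounded $d$-distance of a transition point of $\alpha$, except possibly when it lies in an excursion of $\alpha$ straddling that depth, whose penetration is again at most $F_n=o(n)$ by~(\ref{equationthmsulineardeviation2}) applied to a point of $\beta_n$. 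Either way $d(\omega_n,\Tr\alpha)=o(n)$, and since any two geodesic rays to the conical point $\omega_\infty$ fellow-travel along transition points, the supremum over $\alpha$ is also $o(n)$.

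The main obstacle is the passage from relative to word-metric control along transition points: inside peripheral cosets, $\hat d$-closeness is far weaker than $d$-closeness, so the word-metric deviation cannot be read off from Tiozzo's theorem alone. This is precisely why~(\ref{equationthmsulineardeviation2}) — whose own proof rests on the positivity of the relative drift together with the uniform-integrability consequence of the first moment assumption — has to be established first, in order to absorb the parabolic detours when comparing $\omega_n$ with the transition points of $\alpha$.
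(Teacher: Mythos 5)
Your architecture is the reverse of the paper's, and the step you lean on hardest is the one with the gap. The paper proves the transition-point tracking~(\ref{equationthmsulineardeviation1}) \emph{first}, by applying Tiozzo's ergodic lemma to the function $f(\omega)=\sup d(1,\Tr\hat\alpha)$, the supremum running over bi-infinite word geodesics from $\omega_{-\infty}$ to $\omega_\infty$ (most of the work goes into showing $f$ is measurable and a.s.\ finite, via Maher--Tiozzo and Hruska's lemma, followed by thinness of triangles along transition points to pass to rays from $1$). It then \emph{deduces} the parabolic estimate~(\ref{equationthmsulineardeviation2}): $F_n=\sup_U d_U(1,\omega_n)$ is subadditive, Kingman gives $F_n/n\to l_{\mathcal P}$, and a doubling argument comparing $d_U(1,x_{2n})$ with $\max\{d_U(1,x_n),d_U(x_n,x_{2n})\}$ along transition points yields $l_{\mathcal P}\le\tfrac12 l_{\mathcal P}$, hence $l_{\mathcal P}=0$. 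You instead try to prove~(\ref{equationthmsulineardeviation2}) directly and then feed it into~(\ref{equationthmsulineardeviation1}).

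The gap is in that direct proof of~(\ref{equationthmsulineardeviation2}). The inequality $d_{U_n}(1,\omega_n)\le\sum_{k\in J_n}d(1,s_{k+1})+O(|J_n|)$ is not justified. Telescoping gives $d_{U_n}(1,\omega_n)\le\sum_{k=0}^{n-1}d_{U_n}(\omega_k,\omega_{k+1})$, and for a step $k\notin J_n$ the contraction property only yields $d_{U_n}(\omega_k,\omega_{k+1})\le C$ for a uniform constant $C>0$ --- not zero. Summed over the $n-|J_n|$ such steps this contributes $O(n)$ with a constant that cannot be made smaller than $\phi/2$, so no contradiction with $d_{U_n}(1,\omega_n)\ge\phi n/2$ results. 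Nor can you group a maximal block of consecutive steps outside $J_n$ and claim its total projection is uniformly bounded: the concatenation of the step-geodesics over such a block is not a quasi-geodesic, and the direct geodesic between the block's endpoints may enter the neighbourhood of $U_n$ even though no individual step-geodesic does (long increments are allowed, since $\mu$ need not have bounded support). Repairing this needs a genuinely finer alignment argument --- this is essentially why the Sisto--Taylor logarithmic bound requires finite support and exponential decay --- and with only a finite first moment the clean route is the paper's: establish the transition-point tracking first and use it to force $l_{\mathcal P}=0$. The surrounding ingredients of your sketch (Maher--Tiozzo convergence and positive relative drift, the counting argument showing a bounded $\hat d$-window is visited $o(n)$ times, the uniform-integrability bound $\sum_{k\in J_n}d(1,s_{k+1})=o(n)$ when $|J_n|=o(n)$, and the reduction of the supremum over all rays $\alpha$ to a single one) are sound, but the passage from relative to word-metric control in both halves of your argument rests on the broken step.
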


This last result is a weak version (under a finite moment condition) of the results in \cite{SistoTaylor} for finitely supported random walks.
Indeed, it is proved in \cite[Theorem~2.3]{SistoTaylor} that such a random walk spends at most logarithmic time in parabolic subgroups.

With the sublinear tracking of transition points at hand, we use the estimates of Floyd disks by shadows in \cite{PY} to obtain the lower bound of Hausdorff dimension.  

In Sections~\ref{EndsSection} and~\ref{HDimEndsSection}, we prove Theorem~\ref{mainthm2}.
We first extend the definition of visual metrics introduced in \cite{CandelleroGilchMuller} to any group with infinitely many ends and we show that
$$Hdim_{{\rho}_{\lambda}}(\nu_{\mathcal{E}})=\frac{-1}{\log \lambda} \frac{h}{l}$$ in Section~\ref{EndsSection}.
Our proof is again similar to the proof in \cite{Tanaka}.
We introduce the notion of bottleneck : a set $V$ is a bottleneck between two points $x$ and $y$ if any path from $x$ to $y$ has to pass through a fixed neighborhood of $V$.
We then replace the sub-linear tracking of transition points by the sub-linear tracking of bottlenecks, see precisely Proposition~\ref{propsublineardeviationbottlenecks}.

We then prove that $$Hdim_{{\rho}_{\lambda}}(\partial_{\mathcal{E}} G)=\frac{-v}{\log \lambda}$$ in Section~\ref{HDimEndsSection}.
The proof follows the outline of an analogous result for the Bowditch and Floyd boundaries endowed with the Floyd (shortcut) distance \cite{PY}.
It is well-known that there exists a continuous and surjective map from $\partial_{\mathcal{F}} G$ to $\partial_{\mathcal{E}} G$ (see  \cite{Karlssonfreesubgroups} and \cite{GGPY}).
We further observe that the Floyd metric  dominates the visual metric through the map.
This gives the desired upper bound of $\partial_{\mathcal{E}} G$ by the same bound in \cite{PY} on the Hausdorff dimension of $\partial_{\mathcal{F}} G$.
Moreover, inspired by transition points, we use an enhanced version of the notion of bottlenecks introduced above to construct a sequence of  free semi-subgroups of $G$ whose set of ends has Hausdorff dimension arbitrarily close to $\frac{-v}{\log \lambda}$.
This proves the lower bound of $\partial_{\mathcal{E}} G$. 
A technical result in its proof is Proposition \ref{BiLipembedPop} saying that endowed with visual metric of corrected parameter depending on $\lambda$, the end boundary $\partial_{\mathcal E} T$ of free semi-groups is bi-Lipschitz embedded into $\partial_{\mathcal{E}} G$ with visual metric $\rho_\lambda$.
This greatly simplifies  the arguments in \cite[Section 3]{PY} using Patterson-Sullivan measure to estimate the Hausdorff dimension of $\partial_{\mathcal E} T$.     

Finally, Section~\ref{Sectiondoubling} deals with Theorem~\ref{CharDoublingThm}.
We consider an accessible infinitely ended group $G$.
Then, $G$ admits a splitting over finite edge groups as a finite graph of groups $\mathcal{G}$, so that the vertex groups either are finite or one-ended.
If every vertex group is finite, then $G$ is virtually free so that we can assume that one of the vertex group is one-ended.
Denote by $H$ such a vertex group.
The unique end $\xi$ of $H$ embeds into the end boundary of $G$.
We then show that for some fixed $\theta$, the ball of radius $\lambda^n$ centered at $\xi$ cannot be covered by $N(n)$ balls of radius $\theta \lambda^n$, where $N(n)$ goes to infinity, as $n$ tends to infinity, which concludes the proof.

\subsection*{Acknowledgments}
W.Y. is grateful to Giulio Tiozzo for explaining the results of \cite{Tiozzo} to him and also thanks Yves Cornulier for many helpful comments and corrections about the H\"older structure of the end boundary.
W. Y. is supported by the National Natural Science Foundation of China (No. 11771022).

\section{Preliminaries}\label{PrelimSection}
\subsection{Relatively hyperbolic groups}\label{SSRHG}
We now properly define relatively hyperbolic groups and recall several tools and results that will be used in the paper.
Let $G$ be a finitely generated group.
The action of $G$ on a compact Hausdorff space $T$ is called a \textit{convergence action} if the induced action on triples of distinct points of $T$ is properly discontinuous. Since $G$ is countable,  $T$ must be metrizable by \cite[Main Theorem]{Ge09}. Equivalently, the action $G\curvearrowright T$ is convergence if and only if every sequence of distinct elements $g_n$ in $G$ contains a subsequence $g_{n_k}$ such that $g_{n_k}\cdot x\to a$ and for all $x\in X$ with at most perhaps one exceptional point.

The set of accumulation points $\Lambda G$ of any orbit $G \cdot x\ (x\in T)$ is called the {\it limit set} of the action.
As long as $\Lambda G$ has more than two points, it is uncountable and it is then the unique minimal closed $G$-invariant subset of $T$.
The action is then said to be {\it non-elementary}.
In this case, the orbit of every point in $\Lambda G$ is infinite.
The action is {\it minimal} if $\Lambda G=T$.

A point $\zeta\in\Lambda G$  is called {\it conical} if there is a sequence $g_{n}$ of $G$ and distinct points $\alpha,\beta \in \Lambda G$ such that
$g_{n}\cdot \zeta \to \alpha$ and $g_{n}\cdot \eta \to \beta$ for all $\eta \in  T \setminus\{\zeta\}.$
The point $\zeta\in\Lambda G$ is called {\it bounded parabolic} if it is the unique fixed point of its stabilizer in $G$, which is infinite and acts cocompactly on $\Lambda G\setminus \{\zeta\}$.
The stabilizers of bounded parabolic points are called \textit{maximal parabolic subgroups}.
The convergence action $G\curvearrowright  T$ is called {\it geometrically finite} if every point of $\Lambda G\subset T$ is either conical or bounded parabolic.

\begin{definition}
Let $\mathcal{P}$ be a collection of subgroups of $G$. We say that $G$ is hyperbolic relative to $\mathcal{P}$ if there exists some compact Hausdorff space $T$ on which $G$ acts minimally and geometrically finitely and such that the maximal parabolic subgroups are exactly the elements of $\mathcal{P}$.
\end{definition}

In this situation, Yaman \cite{Yaman} proved that there exists a proper geodesic hyperbolic space $X$ on which $G$ acts such that the Gromov boundary of $X$ equivariantly coincides with $T$.
Further, Bowditch \cite{Bowditch} proved that the Gromov boundary of such a space $X$ is unique up to homeomorphism, hence so is $T$.
We call $T$ the {\it Bowditch boundary} of $G$ and we will denote it by $\partial_{\mathcal{B}}G$ in the following.
The union $G\cup \partial_{\mathcal{B}} G$ is called the Bowditch compactification.

\medskip
{Following Osin \cite{Osin}, we define the relative Cayley graph as follows.
We start with the Cayley graph $\Gamma$ associated with a finite generating set $S$.
We choose a system $\mathcal{P}_0$ of representatives of conjugacy classes of maximal parabolic subgroups.
Such a system is finite by \cite[Proposition~6.10]{Bowditch}.
The relative Cayley graph $\hat G$ is obtained by adding one edge of length 1 between every two elements in the same left coset of a parabolic subgroup in $\mathcal{P}_0$.} In other words, setting $\mathcal{P}_0=\{P_1,...,P_N\}$,
the relative Cayley graph $\hat G$ is the Cayley graph associated with the generating set $S\cup P_1\cup...\cup P_N$.
A (quasi-)geodesic in the relative Cayley graph $\hat G$ is called a relative (quasi-)geodesic.
This graph $\hat G$ is quasi-isometric to the coned-off graph introduced by Farb \cite{Farb} and is hyperbolic in the sense of Gromov.

A sequence $g_n$ in $G$ converges to a point $\xi$ in the Gromov boundary $\partial \hat G$ of $\hat G$ if and only if it converges to a conical limit point in the Bowditch compactification \cite[Section 8]{Bow98}.
We can thus identify $\partial \hat G$ with the set of conical limit points.
We refer to \cite{Tran} for more details on the comparison of these two boundaries.

\medskip
A very useful tool when studying the geometry of a relatively hyperbolic group is the notion of transition points on a geodesic.
\begin{definition}\label{deftransition}
Let $\gamma$ be a (finite or infinite) geodesic in the Cayley graph of $G$.
A point $v$ on $\gamma$ is said to be {\it $(\epsilon,R)$-deep} if there exists $g\in \Gamma$ and $P\in \mathcal{P}_0$ such that the $R$-neighborhood of $v$ in $\gamma$
is contained in the $\epsilon$-neighborhood of $gP$.
A point $v$ on $\gamma$ is called an \textit{$(\epsilon,R)$-transition point} if it is not $(\epsilon,R)$-deep.
\end{definition}

The following result of Hruska relates transition points and points on relative geodesics.
\begin{lem}\label{LemmaHruska}\cite[Proposition~8.13]{Hruska}
For every large enough $\epsilon$ and $R$, there exists $C$ such that the following holds.
Let $\alpha$ be a (finite or infinite) geodesic in $G$ and let $\hat \alpha$ be a relative geodesic with the same endpoints.
Then, any point on $\hat \alpha$ is within a distance at most $C$ of an $(\epsilon,R)$-transition point on $\alpha$.
Conversely, any $(\epsilon,R)$-transition point on $\alpha$ is within a distance at most $C$ of a point on $\hat \alpha$.
\end{lem}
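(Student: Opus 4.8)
The plan is to derive the statement from the \emph{bounded coset penetration} (BCP) property of relatively hyperbolic groups (Farb \cite{Farb}; see also Osin \cite{Osin}) together with the hyperbolicity of the relative Cayley graph $\hat G$. Throughout I would measure distances in the word metric $d$, and interpret a ``point of $\hat\alpha$'' as a vertex of $\hat G$, i.e. an element of $G$: the length-$1$ cone edges contribute interior points lying within relative distance $\tfrac{1}{2}$ of a vertex, so they need no separate treatment. First I would fix $\epsilon$ and $R$ larger than all the constants produced below, and decompose $\hat\alpha$ into its maximal \emph{word subpaths} (consisting only of edges from the finite generating set $S$) and its \emph{cone edges}; each cone edge crosses a single left coset $Q_j = g_j P_{i_j}$ of a parabolic subgroup, entering at a vertex $a_j \in Q_j$ and leaving at a vertex $b_j \in Q_j$. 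Using the standard fact that word geodesics are, up to bounded error, relative quasigeodesics without backtracking, I may assume $\hat\alpha$ penetrates each coset at most once.

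The core step is to match the cone edges of $\hat\alpha$ with the deep portions of $\alpha$. Since $\alpha$ and $\hat\alpha$ share endpoints, I would invoke BCP: there is a constant $C_0 = C_0(\epsilon, R)$ such that, for each coset $Q_j$ crossed by $\hat\alpha$, the geodesic $\alpha$ passes through a bounded neighborhood of $Q_j$, entering at some $a_j'$ and exiting at some $b_j'$ with $d(a_j, a_j') \le C_0$ and $d(b_j, b_j') \le C_0$; conversely, any coset in which $\alpha$ has an $(\epsilon, R)$-deep subsegment is crossed by $\hat\alpha$, since otherwise the ``penetrated by one path only'' clause of BCP would bound the $d$-distance between the entry and exit of that subsegment by $C_0$, contradicting $R > C_0$ and the geodesicity of $\alpha$. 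At points where neither path penetrates a coset, the same property forces $\alpha$ and $\hat\alpha$ to fellow travel in $\Gamma$ within a bounded constant. I expect this bookkeeping --- harmonizing the notions of ``penetration'' for the two paths and propagating the constants $\epsilon$, $R$, $C_0$ consistently --- to be the main obstacle.

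Granting the matching, both inclusions would follow by a short case analysis. For the forward direction, a vertex $p$ on a word subpath of $\hat\alpha$ lies off every penetrated coset, so fellow traveling places it within a bounded distance of a point $q$ on $\alpha$; since $p$ is off all penetrated cosets, so is $q$, whence $q$ is not $(\epsilon,R)$-deep, i.e. $q$ is an $(\epsilon,R)$-transition point. A vertex $p \in \{a_j, b_j\}$ of a cone edge lies within $C_0$ of $a_j'$ or $b_j'$; these are the entry and exit points of the deep subsegment of $\alpha$ in $Q_j$, so their $R$-neighborhoods on $\alpha$ reach outside the $\epsilon$-neighborhood of $Q_j$, making them transition points. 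For the converse, let $p$ be a transition point of $\alpha$: if $p$ lies off the penetrated cosets, fellow traveling supplies a vertex of a word subpath of $\hat\alpha$ within bounded distance of $p$; if instead $p$ lies in the $\epsilon$-neighborhood of a penetrated coset $Q_j$, then being non-deep forces its $R$-neighborhood to leave that neighborhood, so $p$ is within $R$ of the entry or exit point $a_j'$ or $b_j'$, each of which is within $C_0$ of the vertex $a_j$ or $b_j$ of $\hat\alpha$. Taking $C$ to be the maximum of the constants that appear would complete the argument.
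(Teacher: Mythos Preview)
The paper does not prove this lemma: it is stated with a citation to \cite[Proposition~8.13]{Hruska} and used as a black box. There is therefore no ``paper's own proof'' to compare against.

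Your sketch is a reasonable outline of how Hruska's result is proved, and the BCP-based matching of deep segments of $\alpha$ with parabolic edges of $\hat\alpha$ is indeed the heart of the argument. A few points would need tightening before this becomes a proof. First, the assertion that word geodesics are relative quasigeodesics without backtracking is itself a nontrivial fact (this is essentially the content of the relative hyperbolicity hypothesis in Farb's formulation, or a consequence of Osin's); you invoke it but do not justify it. Second, your ``fellow traveling at points where neither path penetrates a coset'' is imprecise: BCP as usually stated controls entry/exit points of cosets penetrated by at least one path, but does not directly give a fellow-traveling statement on the complementary subsegments. What one actually uses is hyperbolicity of $\hat G$ to get relative fellow-traveling, and then upgrades this to $d$-fellow-traveling off the deep parts via the bounded-length property of the non-penetrating subsegments. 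Third, your terminology ``cone edges'' fits Farb's coned-off graph rather than the relative Cayley graph $\hat G$ defined in the paper (edges labeled by elements of $P_i$), though the translation is routine. These are fillable gaps rather than errors of strategy.
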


\subsection{The Floyd distance and the Floyd boundary}
\label{SSFloyd} 
We first recall the definition of the Floyd distance and the Floyd boundary and their relation with the Bowditch boundary.
This boundary was introduced by Floyd in \cite{Floyd} and we also refer to \cite{KarlssonPoisson} and \cite{Karlssonfreesubgroups} for more details.

Let $G$ be a finitely generated group and let $\Gamma$ denote its Cayley graph associated with a finite generating set.
Let $f:\mathbb{N}\to \mathbb{R}$ be a function satisfying that the sum $\sum_{n\geq0}f_n$ is finite and that there exists $\lambda\in (0,1)$ such that $1\geq f_{n+1}/f_n\geq\lambda$ for all $n{\in}\mathbb{N}$.
The function $f$ is then called the {\it rescaling function} or the \textit{Floyd function}.
In the following, we will always choose an exponential Floyd function, that is the function $f$ will be of the form $f(n)=\lambda^n$ for some $\lambda\in (0,1)$.
Fix a basepoint $o\in \Gamma$ and rescale $\Gamma$ by declaring the length of an edge $\sigma$ to be $f(d(o,\sigma))$.
The induced shortpath metric on $\Gamma$ is called the {\it Floyd distance} with respect to the basepoint $o$ and Floyd function $f$ and is denoted by $\delta_{f,o}(.,.)$.
Whenever $f$ is of the form $f(n)=\lambda^n$, we will write $\delta_{\lambda,o}=\delta_{f,o}$ and if $o=1$, $\delta_\lambda=\delta_{f,o}$.

The Floyd compactification $\overline{\Gamma}^{\mathcal{F}}$ of $\Gamma$ is the Cauchy completion of $\Gamma$ endowed with the Floyd metric.
The Floyd boundary is then defined as $\partial_{\mathcal{F}}\Gamma=\overline{\Gamma}^{\mathcal{F}}\setminus \Gamma$. Different choices of basepoints yield bi-Lipschitz homeomorphisms of the Floyd compactifications. However, the topology  may  depend on the choice of the generating set and the rescaling function.
Keeping in mind $f(n)=\lambda^n$ and a choice of Cayley graph, we will also call $G\cup \partial_{\mathcal{F}}\Gamma$ the Floyd compactification of $G$ by abuse of language and we will write $\partial_{\mathcal{F}}G=\partial_{\mathcal{F}}\Gamma$.

The cardinality of the Floyd boundary is 0, 1, 2 or uncountable.
Moreover, it is 2 if and only if the group $G$ is virtually infinite cyclic, see \cite[Proposition~7]{Karlssonfreesubgroups}.
Following Karlsson, we say that the Floyd boundary is trivial if it is finite.
We will only have to deal with groups with non-trivial Floyd boundary.

Finally, as mentioned in the introduction, whenever the Floyd boundary is non-trivial, $G$ acts on it as a convergence action, see \cite[Thorem~2]{Karlssonfreesubgroups}.
Also, whenever the Floyd boundary is non-trivial, for any probability measure $\mu$ with finite first moment on $G$ and whose support generates $G$ as a semi-group, the random walk driven by $\mu$ almost surely converges to a point in the Floyd boundary.
Letting $\nu_{\mathcal{F}}$ be the law of the limit point, the pair $(\partial_{\mathcal{F}}G,\nu_{\mathcal{F}})$ is a model for the Poisson boundary, see \cite[Section~6, Corollary]{KarlssonPoisson}.

\medskip
We now assume that $G$ is non-elementary relatively hyperbolic.
We denote by $\partial_{\mathcal{B}}G$ its Bowditch boundary.
The following is due to Gerasimov.
\begin{theorem}\label{mapGerasimov}\cite[Map Theorem]{Gerasimov}
There exists $\lambda_0\in (0,1)$ such that for every $\lambda\in[\lambda_0,1)$, the identity of $G$ extends to a continuous and equivariant surjection $\phi$ from the Floyd compactification to the Bowditch compactification of $G$.
\end{theorem}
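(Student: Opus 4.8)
\emph{The plan} is to construct $\phi$ by hand and reduce everything to one metric estimate, the \emph{domination property}: if $(x_n),(y_n)$ in $G$ satisfy $\delta_\lambda(x_n,y_n)\to 0$, then every accumulation point of $(x_n)$ in the Bowditch compactification $G\cup\partial_{\mathcal B}G$ equals every accumulation point of $(y_n)$. Granting it, the construction of $\phi$ is essentially formal. A Floyd--Cauchy sequence $(x_n)$ representing $\xi\in\partial_{\mathcal F}G$ has $d(o,x_n)\to\infty$ (otherwise a subsequence would converge in $\Gamma$, not to $\xi$), hence accumulates only in $\partial_{\mathcal B}G$, and by compactness it accumulates somewhere; applying the domination property to two subsequences of $(x_n)$ (legitimate since $(x_n)$ is Floyd--Cauchy) forces a unique accumulation point, which we take as $\phi(\xi)$, and it is independent of the representative. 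Thus $\phi\colon\overline{\Gamma}^{\mathcal F}\to G\cup\partial_{\mathcal B}G$ extends the identity of $G$, is $G$-equivariant (left translations extend to homeomorphisms of both compactifications --- Karlsson's theorem on the Floyd side), and is continuous by a routine diagonal argument from the domination property and the density of $G$. Surjectivity is immediate: for $p\in\partial_{\mathcal B}G$ pick $g_n\in G$ with $g_no\to p$ and pass, by compactness of $\overline{\Gamma}^{\mathcal F}$, to a Floyd-convergent subsequence $g_{n_k}\to\zeta\in\partial_{\mathcal F}G$; then $\phi(\zeta)=p$ (or invoke minimality of $G\curvearrowright\partial_{\mathcal B}G$ once $\phi(\partial_{\mathcal F}G)$ is seen to be closed and $G$-invariant).

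Everything thus reduces to the domination property, and this is exactly where $\lambda_0$ enters. Suppose it fails: after passing to subsequences, $x_n\to\xi$ and $y_n\to\eta$ in $\partial_{\mathcal B}G$ with $\xi\neq\eta$, while $\delta_\lambda(x_n,y_n)\to 0$. By Yaman's theorem realize $\partial_{\mathcal B}G=\partial_\infty X$ with $X$ a proper geodesic $\delta_X$-hyperbolic space on which $G$ acts geometrically finitely; fix a basepoint $o\in X$ in the orbit, so the orbit map $G\to X$ is coarsely Lipschitz with constant $C$ and proper. Since $\xi\neq\eta$, the Gromov products $(x_no\mid y_no)_o$ stay bounded, so every geodesic $[x_no,y_no]$ in $X$ meets a fixed ball $B_X(o,t_0)$. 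For each $n$ choose a path $\gamma_n$ in $\Gamma$ from $x_n$ to $y_n$ whose Floyd length is within $1$ of $\delta_\lambda(x_n,y_n)$, hence tends to $0$. As any edge of $\Gamma$ at distance $\le R_1$ from $o$ carries Floyd length at least $\lambda^{R_1}$, for each fixed $R_1$ the path $\gamma_n$ avoids $B_\Gamma(o,R_1)$ for all large $n$; by properness of the orbit map its image in $X$ then avoids $B_X(o,\psi(R_1))$ with $\psi(R_1)\to\infty$.

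The contradiction comes from the competition between exponential growth in the hyperbolic directions and the Floyd weight. Decompose $\gamma_n$, via transition points (Definition~\ref{deftransition}) and Lemma~\ref{LemmaHruska}, into transition portions and portions running deep inside a single left coset of a parabolic subgroup. Since a horoball over one such coset does not separate $X$, the topologically essential part of the detour of $\gamma_n$ around $B_X(o,\psi(R_1))$ --- joining a point near $\xi$ to a point near $\eta\neq\xi$ --- must be carried out along the transition portions, which track a relative geodesic in the hyperbolic graph $\hat G$ and sit in the thick part of $X$, where word distances and $X$-distances agree up to a uniform factor $C'$. There a basic feature of $\delta_X$-hyperbolic spaces applies: a path joining $x_no$ to $y_no$ and avoiding $B_X(o,S)$, while $[x_no,y_no]$ dips into $B_X(o,t_0)$, has length at least $c_1\,2^{(S-t_0)/\delta_X}$. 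Weighting its edges, all at word distance between $R_1$ and $C'S$ from $o$, by $\lambda^{(\cdot)}$ gives a lower bound of the shape
\[
\delta_\lambda\text{-length}(\gamma_n)\ \gtrsim\ 2^{S/\delta_X}\,\lambda^{C'S}\ =\ \bigl(2^{1/\delta_X}\lambda^{C'}\bigr)^{S},\qquad S=\psi(R_1).
\]
Choosing $\lambda_0$ so that $2^{1/\delta_X}\lambda_0^{C'}\ge 1$ makes the right-hand side, for every $\lambda\in[\lambda_0,1)$, bounded below and nondecreasing in $R_1$, contradicting $\delta_\lambda(x_n,y_n)\to 0$. (The deep-parabolic portions we discarded are precisely what allow $\phi$ to be non-injective: a Floyd--Cauchy sequence may escape into one parabolic coset along genuinely different directions, staying Floyd-far apart yet converging in $\partial_{\mathcal B}G$ to the single parabolic point --- whence the non-trivial fibres over bounded parabolic points.)

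The hard part is the uniformity of the estimate in the previous paragraph: one must control, with a single set of constants, how transition subpaths lie in the thick part of Yaman's space, how word and $X$-lengths compare there, and how the exponential-detour bound interacts with the weight $\lambda^{(\cdot)}$, while verifying that the discarded deep-parabolic portions really cannot perform the separation between $\xi$ and $\eta$. The role of $\lambda_0$ is a genuine tension --- close enough to $1$ to beat the exponentially many detour edges available in the hyperbolic directions, yet strictly below $1$ so that arbitrarily distorted parabolic cosets collapse, which is unavoidable since $\phi$ has non-trivial fibres over parabolic points. By contrast, once the domination property is in hand, the well-definedness, continuity, equivariance, and surjectivity of $\phi$ are all soft.
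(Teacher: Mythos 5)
This theorem is imported by the paper from Gerasimov (with the extension to the whole range $[\lambda_0,1)$ due to Gerasimov--Potyagailo), so there is no internal proof to compare against; your proposal has to stand on its own. The soft half of your reduction is fine: granting the domination property, the construction of $\phi$, its well-definedness, equivariance, continuity and surjectivity are indeed routine. But the domination property \emph{is} the Map Theorem, and your argument for it breaks at the decisive inequality. From ``the image of $\gamma_n$ in $X$ has length at least $c\,2^{(S-t_0)/\delta_X}$'' you pass to $\delta_\lambda\text{-length}(\gamma_n)\gtrsim 2^{S/\delta_X}\lambda^{C'S}$ by asserting that the relevant edges lie at word distance at most $C'S$ from $o$. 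Nothing forces this: $\delta_\lambda(x_n,y_n)$ is an infimum over \emph{all} paths, and a path can realize its exponential $X$-length with edges lying arbitrarily far from $o$, each carrying an arbitrarily small Floyd weight, so ``number of edges times the smallest weight they could have if they were close'' is not a lower bound for anything. Even in the purely hyperbolic case the correct argument is structurally different --- an annular decomposition combined with the exponentially Lipschitz radial projection onto spheres, or equivalently a midpoint-splitting induction on the length of the path using $(x\mid y)_o\ge\min\{(x\mid z)_o,(z\mid y)_o\}-\delta_X$ --- and it is precisely that argument which produces the threshold $\lambda_0$ as the reciprocal of the divergence rate. Your final displayed estimate does not follow from anything preceding it.

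The second gap is the reduction to ``transition portions''. Definition~\ref{deftransition} and Lemma~\ref{LemmaHruska} define transition points on \emph{geodesics} only; there is no decomposition of an arbitrary path $\gamma_n$ into transition portions and deep parabolic portions, and no analogue of Hruska's lemma for arbitrary paths, so the sentence ``decompose $\gamma_n$ via transition points'' does not parse with the tools you cite. More importantly, the claim that ``the topologically essential part of the detour must be carried out along the transition portions'' is exactly the hard core of Gerasimov's theorem: one must rule out that the detour around $B_X(o,S)$ is performed almost entirely by excursions through (many different) parabolic cosets, where word length, $X$-length and Floyd weight are mutually distorted and where --- as your own parenthetical about non-trivial fibres concedes --- Floyd length \emph{can} be made small. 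This requires a quantitative use of relative hyperbolicity (bounded coset penetration, the geometry of horoballs in Yaman's space) and is asserted rather than proved. In short: the architecture and the heuristic origin of $\lambda_0$ are right, but the two steps carrying all of the difficulty are missing, and the one explicit estimate offered in their place is invalid as stated.
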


Actually, Gerasimov only stated the existence of the map $\phi$ for one Floyd function $f_0=\lambda_0^n$, but then
Gerasimov and Potyagailo proved that the same result holds for any Floyd function $f\geq f_0$, see \cite[Corollary~2.8]{GePoJEMS}.
They also proved that the preimage of a conical limit point is reduced to a single point and described the preimage of a parabolic limit point in terms of the action of $G$ on $\partial_{\mathcal{F}}G$, see precisely \cite[Theorem~A]{GePoJEMS}.
From now on, the parameter $\lambda$ will always be assumed to be contained in $[\lambda_0,1)$.

The Floyd distance can be transferred to a distance on the Bowditch boundary using the map $\phi$.
The resulting distance is called \textit{the shortcut metric} and we denote it by $\bar{\delta}_\lambda$.
It is the largest distance on the Bowditch boundary satisfying that for every $\xi ,\zeta\in \partial_\mathcal{F}G$,
\begin{equation}\label{shortcutsmallerFloyd}
    \bar{\delta}_\lambda(\phi(\xi),\phi(\zeta))\leq \delta_\lambda(\xi,\zeta).
\end{equation}
We refer to \cite[Section~4]{GePoGGD} for more details on its construction.
The next couple of lemmas will be used later on.

\begin{lem}[Visibility lemma] \label{karlssonlem}\cite{Karlssonfreesubgroups}
For every fixed $\lambda,c$, there exists a function $\varphi: \mathbb R_{\ge 0} \to \mathbb R_{\ge 0}$ such that for
any $v \in G$ and any $(\lambda,c)$-quasi-geodesic $\gamma$ in $\Gamma$, the following holds.
If
$\delta_{\lambda, v}(\gamma) \ge \kappa,$ then $d(v,
\gamma) \le \varphi(\kappa)$.
\end{lem}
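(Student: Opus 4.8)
The plan is to prove the contrapositive: a $(\lambda,c)$-quasi-geodesic $\gamma$ that stays uniformly far from the basepoint $v$ has small Floyd length $\delta_{\lambda,v}(\gamma)$ (the total length of $\gamma$ in the metric $\delta_\lambda$ rescaled around $v$, that is $\sum_{\sigma\subset\gamma}\lambda^{d(v,\sigma)}$ over the edges $\sigma$ of $\gamma$). The mechanism is that each edge of $\gamma$ far from $v$ contributes only $\lambda^{d(v,\sigma)}$, which decays exponentially in $d(v,\sigma)$, while a quasi-geodesic can have only a linearly growing number of edges at each distance level from $v$; so the total Floyd length is dominated by a convergent tail $\sum_{n\ge D}(An+B)\lambda^n$ where $D=d(v,\gamma)$, and this tail tends to $0$ as $D\to\infty$. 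To set up, view $\gamma$ as an edge-path $(x_i)_{i\in I}$ with $I\subset\mathbb Z$ an interval (after replacing $\gamma$, if needed, by a nearby edge-path with slightly worse constants); write $K$ for its multiplicative constant, so $d(x_i,x_j)\ge K^{-1}|i-j|-c$; and set $D:=d(v,\gamma)=\min_i d(v,x_i)\in\mathbb Z_{\ge 0}$, attained at an index $i_0$.

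The key estimate is: for every integer $n\ge D$, the number of edges $\sigma=[x_{i-1},x_i]$ of $\gamma$ with $m_i:=\min\{d(v,x_{i-1}),d(v,x_i)\}\le n$ is at most $An+B$, with $A,B$ depending only on $K$ and $c$. Indeed, if such an edge has, say, $d(v,x_i)\le n$, then $d(x_{i_0},x_i)\le d(x_{i_0},v)+d(v,x_i)\le D+n\le 2n$ (using $D\le n$), so $|i-i_0|\le K(2n+c)$ by the lower quasi-geodesic bound; hence all the relevant indices lie in an interval of length $O(Kn)$ around $i_0$, which gives the claimed count. Since $m_i-1\le d(v,\sigma)\le m_i$ for every edge, summing over the levels $n\ge D$ yields
\[
\delta_{\lambda,v}(\gamma)\ \le\ \lambda^{-1}\sum_{n\ge D}\#\{i:m_i=n\}\,\lambda^{n}\ \le\ \lambda^{-1}\sum_{n\ge D}(An+B)\,\lambda^{n}\ =:\ \psi(D).
\]
As $0<\lambda<1$, the series $\sum_{n\ge 0}(An+B)\lambda^n$ converges, so $\psi$ is a finite, strictly decreasing function of $D$ with $\psi(D)\to 0$ as $D\to\infty$.

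It remains to invert this. For $\kappa>0$ put $\varphi(\kappa):=\max\{D\in\mathbb Z_{\ge 0}:\psi(D)\ge\kappa\}$, with $\varphi(\kappa):=0$ if the set is empty; this is finite because $\psi(D)\to 0$. If $\delta_{\lambda,v}(\gamma)\ge\kappa$, then by the displayed bound $\psi(d(v,\gamma))\ge\delta_{\lambda,v}(\gamma)\ge\kappa$, whence $d(v,\gamma)\le\varphi(\kappa)$, which is the assertion. The only points that need care — and the closest thing to an obstacle in an otherwise soft argument — are that the edges of $\gamma$ close to $v$ need not be consecutive along $\gamma$ (handled by bounding, for each such edge, its index-distance to the fixed index $i_0$ via the triangle inequality through $v$ together with the lower quasi-geodesic bound), and that the constants $A,B$ must not depend on $D$ (which is fine since we only ever count edges at distance $\le n$, forcing $D\le n$). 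This is essentially Karlsson's argument in \cite{Karlssonfreesubgroups}.
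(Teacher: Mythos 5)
Your proof is correct, and it is precisely the standard Karlsson argument that the paper itself does not reproduce but only cites (with the remark that it extends from geodesics to $(\lambda,c)$-quasi-geodesics with fixed parameters): exponential decay $\lambda^{d(v,\sigma)}$ of edge weights against an at most linear count of edges per distance level from $v$, giving a summable tail $\psi(D)\to 0$ whose inversion yields $\varphi$. The only caveat, which concerns the statement rather than your argument, is that the lemma is only meaningful for $\kappa>0$, exactly the range on which you define $\varphi$.
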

Note that \cite{Karlssonfreesubgroups} only deals with geodesics, but the proof applies to quasi-geodesic with fixed parameters, see \cite{GePoJEMS} where this and more general cases are discussed.

The \textit{big shadow} $\Pi(g, R)$ at $g$ is the set of boundary points $\xi$ in the Bowditch boundary such that there exists a geodesic ray $[1,\xi]$ intersecting the ball $B(g,R)$.

\begin{lem}\label{lemma3.16PY}\cite[Lemma~3.14, Lemma~3.15]{PY}
For every large enough $\epsilon$ and $R$, the following holds.
Let $\xi$ be a conical limit point in the Bowditch boundary.
Consider a geodesic $\gamma$ between 1 and $\xi$ and consider any point $g$ on this geodesic.
There exists $C_1,C_2$ such that
$$\Pi(g, R)\subset B_{\bar{\delta}_\lambda}(\xi,C_1 r)$$
and if, in addition, $g$ is a $(\epsilon,R)$-transition point on $\gamma$, then
$$B_{\bar{\delta}_\lambda}(\xi,C_2 r)\subset  \Pi(g, R),$$
where $r=\lambda^{d(1,g)}$.
\end{lem}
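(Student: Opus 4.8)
The plan is to prove the two inclusions separately, the first by a direct Floyd-length estimate and the second — the harder one — using the transition-point hypothesis and the structure of the Gerasimov map $\phi$. For $\Pi(g,R)\subset B_{\bar\delta_\lambda}(\xi,C_1 r)$, fix $\eta\in\Pi(g,R)$ and a geodesic ray $\gamma'=[1,\eta]$ meeting $B(g,R)$ at a point $g'$, and build one path in $\Gamma$ joining $\xi$ to $\eta$ out of three pieces: the subray of $\gamma=[1,\xi]$ from $g$ to $\xi$, a geodesic $[g,g']$ of length $\le R$, and the subray of $\gamma'$ from $g'$ to $\eta$. With $n=d(1,g)$ (so $r=\lambda^n$), every edge of the first piece is at word distance $\ge n$ from $1$ and every edge of the other two is at word distance $\ge n-R$; since $\sum_k\lambda^k<\infty$, the Floyd length of this path is at most $C_1\lambda^n$ with $C_1=(1+\lambda^{-R})/(1-\lambda)+R\lambda^{-R}$. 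Passing to the limit, the Floyd-boundary endpoints $\tilde\xi,\tilde\eta$ of $\gamma$ and $\gamma'$ — which exist because geodesic rays have finite Floyd length, and which are preimages of $\xi,\eta$ under $\phi$ by continuity — satisfy $\delta_\lambda(\tilde\xi,\tilde\eta)\le C_1 r$, hence $\bar\delta_\lambda(\xi,\eta)\le C_1 r$ by \eqref{shortcutsmallerFloyd}.

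For $B_{\bar\delta_\lambda}(\xi,C_2 r)\subset\Pi(g,R)$ when $g$ is an $(\epsilon,R)$-transition point, I would argue by contraposition: assume every geodesic $[1,\eta]$ avoids $B(g,R)$ and deduce $\bar\delta_\lambda(\xi,\eta)\ge C_2 r$. Fix such a ray $\gamma'=[1,\eta]$, so $d(g,\gamma')\ge R$; for $R$ large the Visibility Lemma~\ref{karlssonlem} with basepoint $g$ bounds the Floyd diameter of $\gamma'$ seen from $g$ by a prescribed small $\kappa$, hence $\delta_{\lambda,g}(1,\tilde\eta)\le\kappa$. The crucial point is then the Floyd-geometric form of the transition-point property: the two subrays of $[1,\xi]$ issuing from $g$ are coarsely separated in the Floyd metric based at $g$, i.e. $\delta_{\lambda,g}(1,\tilde\xi)\ge\kappa_0$ for some $\kappa_0>0$ depending only on $\epsilon,R,\lambda$. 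I would obtain this from the Visibility Lemma together with Hruska's Lemma~\ref{LemmaHruska} and the analysis of $\phi$ in \cite{Gerasimov,GePoJEMS}: a transition point lies uniformly close to a relative geodesic, along which the Floyd metric detects the hyperbolicity of $\hat G$, so the two sides cannot be Floyd-close seen from $g$. With $\kappa<\kappa_0$ this gives $\delta_{\lambda,g}(\tilde\xi,\tilde\eta)\ge\kappa_0-\kappa>0$. Changing the basepoint back to $1$ rescales every edge by a factor in $[\lambda^n,\lambda^{-n}]$, so $\delta_\lambda(\tilde\xi,\tilde\eta)\ge\lambda^n(\kappa_0-\kappa)$; and since $\xi$ is conical, $\phi^{-1}(\xi)$ is a single point and $\bar\delta_\lambda$ is bi-Lipschitz to $\delta_\lambda$ near $\xi$ (from the construction of the shortcut metric in \cite{GePoGGD}), so $\bar\delta_\lambda(\xi,\eta)\ge C_2 r$. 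The case $d(1,g)\le R$ is trivial, since then $1\in B(g,R)$ and $\Pi(g,R)=\partial_{\mathcal B}G$.

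The main obstacle is the coarse-separation estimate $\delta_{\lambda,g}(1,\tilde\xi)\ge\kappa_0$ in the second inclusion: it is exactly the step converting the combinatorial notion of transition point into a quantitative Floyd estimate, and it forces one to pull the shortcut metric $\bar\delta_\lambda$ on $\partial_{\mathcal B}G$ back through $\phi$ to the Floyd boundary and then down to the word geometry. The technical input here is Gerasimov–Potyagailo's description of the fibers of $\phi$ over parabolic points, which is also what underlies the local bi-Lipschitz comparison between $\bar\delta_\lambda$ and $\delta_\lambda$ near conical points used at the end.
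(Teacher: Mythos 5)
Your first inclusion is correct and is the standard argument: concatenate the tail of $[1,\xi]$ past $g$, a short bridge inside $B(g,R)$, and the tail of $[1,\eta]$ past $g'$, bound the Floyd length of the resulting path by a geometric series of order $\lambda^{d(1,g)}$, and push the estimate down to $\bar\delta_\lambda$ via~(\ref{shortcutsmallerFloyd}). (Note that the present paper does not prove this lemma at all; it imports it from \cite{PY}, so the comparison is with the argument there.)

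The second inclusion has a genuine gap at its final step. You establish a lower bound for the \emph{Floyd} distance, $\delta_\lambda(\tilde\xi,\tilde\eta)\ge\lambda^{n}(\kappa_0-\kappa)$, and then convert it into a lower bound for the \emph{shortcut} distance by asserting that $\bar\delta_\lambda$ is bi-Lipschitz to $\delta_\lambda$ near the conical point $\xi$. That assertion is not established in \cite{GePoGGD} and does not follow from $\phi^{-1}(\xi)$ being a singleton: by construction $\bar\delta_\lambda$ is the \emph{largest} metric dominated by $\delta_\lambda$, i.e.\ an infimum over chains that may jump between distinct $\phi$-preimages of parabolic points, so even for two conical endpoints a chain can detour through parabolic fibers and make $\bar\delta_\lambda(\xi,\eta)$ much smaller than $\delta_\lambda(\tilde\xi,\tilde\eta)$; inequality~(\ref{shortcutsmallerFloyd}) points in exactly the wrong direction for your purpose. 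The repair is to run the triangle-inequality argument in $\bar\delta_{\lambda,g}$ throughout: the visibility bound transfers for free, since $\bar\delta_{\lambda,g}(1,\eta)\le\delta_{\lambda,g}(1,\eta)<\kappa$ is an upper bound and hence goes the right way, but the separation estimate at the transition point must be taken in the stronger form $\bar\delta_{\lambda,g}(1,\xi)\ge\kappa_0$ for the \emph{shortcut} metric --- this is precisely the Gerasimov--Potyagailo estimate in \cite{GePoJEMS} and is the reason the lemma can be stated for $\bar\delta_\lambda$ at all. Relatedly, your sketch for deriving $\kappa_0$ from the Visibility Lemma cannot work as written: Lemma~\ref{karlssonlem} only converts a lower bound on the Floyd length of a quasi-geodesic with fixed constants into proximity to the basepoint, whereas lower-bounding $\bar\delta_{\lambda,g}(1,\xi)$ requires controlling arbitrary paths and chains; this separation estimate has to be quoted as a black box (or reproved via the horosphere analysis of \cite{GePoJEMS}), not rederived from Lemmas~\ref{karlssonlem} and~\ref{LemmaHruska}.
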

Actually, \cite[Lemma~3.16]{PY} also states that the above statement is true for the Floyd distance, but we will only need to use it for the shortcut distance in estimating the lower bound of Hausdorff dimensions (see Lemma \ref{HDshortcutsmallerHDFloyd} and Proposition \ref{lower-boundharmonicFloyd}).

\begin{lem}\label{Lemma2.2Tanaka}
For every $g\in G$, there exists a constant $c_g>0$ such that the following inclusions hold.
For every point $\xi$ in the Floyd boundary and for every $r\geq 0$,
$$B_{\delta_\lambda}(g\xi,c_g^{-1}r)\subset gB_{\delta_\lambda}(\xi,r)\subset B_{\delta_\lambda}(g\xi,c_gr).$$
For every point $\xi$ in the Bowditch boundary,
$$B_{\bar{\delta}_\lambda}(g\xi,c_g^{-1}r)\subset gB_{\bar\delta_\lambda}(\xi,r)\subset B_{\bar\delta_\lambda}(g\xi,c_gr).$$
\end{lem}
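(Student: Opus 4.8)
The plan is to prove Lemma~\ref{Lemma2.2Tanaka} as a quantitative consequence of the fact that $G$ acts by bi-Lipschitz homeomorphisms on the Floyd compactification, a fact which holds because changing the basepoint of the Floyd metric only changes it by a bi-Lipschitz factor. First I would recall the standard comparison between Floyd metrics with different basepoints: for $g\in G$, the metric $\delta_{\lambda,g}$ and the metric $\delta_{\lambda,1}=\delta_\lambda$ are bi-Lipschitz equivalent on $\Gamma$, with a constant depending on $d(1,g)$; indeed, for any edge $\sigma$ one has $d(g,\sigma)\le d(1,\sigma)+d(1,g)$ and $d(1,\sigma)\le d(g,\sigma)+d(1,g)$, hence $\lambda^{d(1,g)}f(d(1,\sigma))\le f(d(g,\sigma))\le \lambda^{-d(1,g)}f(d(1,\sigma))$, and this passes to the induced length metrics and then to their Cauchy completions, yielding $\delta_{\lambda,g}$ and $\delta_\lambda$ bi-Lipschitz on $\partial_{\mathcal F}G$ with constant $\lambda^{-d(1,g)}$. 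On the other hand, left translation by $g$ is an isometry from $(\Gamma,\delta_{\lambda,1})$ onto $(\Gamma,\delta_{\lambda,g})$, since it sends an edge $\sigma$ at distance $d(1,\sigma)$ from the basepoint $1$ to an edge $g\sigma$ at distance $d(g,g\sigma)=d(1,\sigma)$ from the basepoint $g$; this isometry extends to the completions. Composing, $g$ acts on $(\partial_{\mathcal F}G,\delta_\lambda)$ as a bi-Lipschitz map with constant $c_g:=\lambda^{-d(1,g)}$, i.e. $c_g^{-1}\delta_\lambda(\xi,\eta)\le \delta_\lambda(g\xi,g\eta)\le c_g\,\delta_\lambda(\xi,\eta)$ for all $\xi,\eta\in\partial_{\mathcal F}G$.

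From the bi-Lipschitz estimate the ball inclusions are immediate. For the right-hand inclusion: if $\eta\in B_{\delta_\lambda}(\xi,r)$ then $\delta_\lambda(g\eta,g\xi)\le c_g\,\delta_\lambda(\eta,\xi)<c_g r$, so $gB_{\delta_\lambda}(\xi,r)\subset B_{\delta_\lambda}(g\xi,c_gr)$. For the left-hand inclusion: if $\zeta\in B_{\delta_\lambda}(g\xi,c_g^{-1}r)$, write $\zeta=g\eta$ with $\eta=g^{-1}\zeta$; then $\delta_\lambda(\eta,\xi)=\delta_\lambda(g^{-1}(g\eta),g^{-1}(g\xi))\le c_{g^{-1}}\delta_\lambda(g\eta,g\xi)$, and since $d(1,g^{-1})=d(1,g)$ we have $c_{g^{-1}}=c_g$, giving $\delta_\lambda(\eta,\xi)<c_g\cdot c_g^{-1}r=r$, so $\zeta=g\eta\in gB_{\delta_\lambda}(\xi,r)$. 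This proves the Floyd case; one may of course replace $c_g$ by $\max(c_g,c_{g^{-1}})$ at the outset so that the same constant works on both sides, but here they already coincide.

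For the Bowditch case I would transport the Floyd estimate through the Gerasimov map $\phi$ of Theorem~\ref{mapGerasimov}. The shortcut metric $\bar\delta_\lambda$ is by construction (see~(\ref{shortcutsmallerFloyd})) the largest metric on $\partial_{\mathcal B}G$ with $\bar\delta_\lambda(\phi(\xi),\phi(\zeta))\le\delta_\lambda(\xi,\zeta)$, and $\phi$ is equivariant. The cleanest route is to argue directly from this extremal characterization: fix $g$ and define $d'(\,\cdot\,,\,\cdot\,):=c_g^{-1}\bar\delta_\lambda(g\,\cdot\,,g\,\cdot\,)$ on $\partial_{\mathcal B}G$; this is a metric (as $\bar a\mapsto g\bar a$ is a homeomorphism), and for $\xi,\zeta\in\partial_{\mathcal F}G$ one has $d'(\phi(\xi),\phi(\zeta))=c_g^{-1}\bar\delta_\lambda(\phi(g\xi),\phi(g\zeta))\le c_g^{-1}\delta_\lambda(g\xi,g\zeta)\le c_g^{-1}\cdot c_g\,\delta_\lambda(\xi,\zeta)=\delta_\lambda(\xi,\zeta)$, using equivariance of $\phi$ and the Floyd bound just established. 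By maximality of $\bar\delta_\lambda$, $d'\le\bar\delta_\lambda$, i.e. $\bar\delta_\lambda(g\bar\xi,g\bar\zeta)\le c_g\,\bar\delta_\lambda(\bar\xi,\bar\zeta)$ for all $\bar\xi,\bar\zeta\in\partial_{\mathcal B}G$; applying this to $g^{-1}$ and using $c_{g^{-1}}=c_g$ gives the two-sided bound, and then the ball inclusions follow verbatim as in the Floyd case. The only mild subtlety — and the step I would be most careful about — is checking that $\phi$ restricted to the Floyd boundary is still surjective onto the Bowditch boundary (so that the inequality $d'(\phi(\xi),\phi(\zeta))\le\delta_\lambda(\xi,\zeta)$ indeed ranges over all pairs of Bowditch points needed to invoke maximality), which is part of the content of Theorem~\ref{mapGerasimov}, together with the fact that the extremal/largest metric in~(\ref{shortcutsmallerFloyd}) genuinely exists and is a metric rather than merely a pseudometric; this is guaranteed by the non-triviality of the Floyd boundary in the non-elementary relatively hyperbolic setting and is established in the references \cite{GePoJEMS}, \cite{GePoGGD} cited there, so no new work is required.
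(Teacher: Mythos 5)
Your proof is correct and, for the Floyd metric, follows essentially the same route as the paper: the basepoint-change bi-Lipschitz estimate $\lambda^{d(1,g)}\le \delta_{\lambda,1}/\delta_{\lambda,g}\le\lambda^{-d(1,g)}$ combined with the fact that left translation by $g$ is an isometry from $(\Gamma,\delta_{\lambda,1})$ to $(\Gamma,\delta_{\lambda,g})$, giving $c_g=\lambda^{-d(1,g)}$. For the shortcut metric the paper simply invokes the analogous basepoint-change inequality (citing the corresponding estimate in \cite{PY}) and repeats the same argument, whereas you rederive the bi-Lipschitz bound from the extremal characterization of $\bar\delta_\lambda$ together with equivariance of $\phi$; this is a slightly more self-contained but equivalent way of handling that step, and your caveats about surjectivity of $\phi$ and the existence of the maximal metric are appropriately discharged by the cited results.
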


\begin{proof}
First, a change a base point induces a bi-Lipschitz inequality for the Floyd distance : for every $x,y$ in the Floyd compactification, for any basepoints $o,o'\in G$,
$$\lambda^{d(o,o')}\leq \frac{\delta_{\lambda,o}(x,y)}{\delta_{\lambda,o'}(x,y)}\leq \lambda^{-d(o,o')},$$
see \cite[(2)]{PY}.
Now, by definition, the same is true of the shortcut distance, see precisely \cite[(3)]{PY}.

We only give the proof of the lemma for the Floyd distance, the proof for the shortcut distance is exactly the same.
Let $\zeta\in B_{\delta_\lambda}(\xi,r)$.
We need to prove that $g\zeta\in B_{\delta_\lambda}(g\xi,c_gr)$ for some $c_g$.
Note that $\delta_{\lambda,g}(g\zeta,g\xi)=\delta_{\lambda,1}(\zeta,\xi)$, so that by the above discussion,
$$\lambda^{d(1,g)}\leq \frac{\delta_{\lambda,1}(g\xi,g\zeta)}{\delta_{\lambda,1}(\xi,\zeta)}\leq \lambda^{-d(1,g)}.$$
This proves that $\delta_{\lambda,1}(g\xi,g\zeta)\leq \lambda^{-d(1,g)}r$ and so the right inclusion in the lemma holds for $c_g=\lambda^{-d(1,g)}$.
We immediately deduce the left inclusion, using $g^{-1}$.
\end{proof}

\subsection{Hausdorff dimension of measures}
Let $(X,d)$ be a metric space and $\kappa$ be a Borel measure on $X$.
\begin{definition}
The Hausdorff dimension of $\kappa$ is the smallest possible Hausdorff dimension of a set of full $\kappa$-measure :
$$Hdim(\kappa)=\inf \{Hdim(E),\kappa(E^c)=0\}.$$
When we want to insist on the choice of the distance, we will write $Hdim_d(\kappa)$.
\end{definition}

Evaluating the Hausdorff dimension of a set can be a difficult task, so the following characterization of the $Hdim(\kappa)$ as the essential supremum of the local dimensions of $\kappa$ is very useful.
Recall that the essential supremum $\kappa-\sup f$ of a function $f$ is defined as the infimum of the constants $C$ such that $f\leq C$ $\kappa$-almost everywhere.

\begin{prop}\cite[Corollary~8.2]{MSU}
Let $\kappa$ be a Borel measure on a metric space $X$.
Then,
$$Hdim(\kappa)=\kappa-\sup \liminf_{r\to 0}\frac{\log \kappa (B(x,r))}{\log r}.$$
\end{prop}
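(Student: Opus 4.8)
The plan is to deduce the identity from the two-sided \emph{local mass distribution principle} (Billingsley's lemma). Write $\underline{d}_\kappa(x)=\liminf_{r\to 0}\frac{\log\kappa(B(x,r))}{\log r}$ for the lower local dimension of the (finite) Borel measure $\kappa$ at $x$, and set $s^\ast=\kappa-\sup\,\underline{d}_\kappa$. First I would observe that $x\mapsto\underline{d}_\kappa(x)$ is Borel measurable, being assembled from the Borel maps $x\mapsto\kappa(B(x,r))$ through countable infima and suprema over rational radii; hence $s^\ast$ is well defined, and by the definition of the essential supremum the set $\{\underline{d}_\kappa\le s^\ast+\epsilon\}$ has full $\kappa$-measure while $\{\underline{d}_\kappa>s^\ast-\epsilon\}$ has positive $\kappa$-measure, for every $\epsilon>0$.

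The key ingredient, which I would isolate as a lemma, is: for every Borel set $A$ with $\kappa(A)>0$ and every $t\ge 0$, if $\underline{d}_\kappa(x)\ge t$ for all $x\in A$ then $Hdim(A)\ge t$, and if $\underline{d}_\kappa(x)\le t$ for all $x\in A$ then $Hdim(A)\le t$. Granting this, both inequalities are short. For the upper bound, fix $\epsilon>0$: the conull set $E_\epsilon=\{\underline{d}_\kappa\le s^\ast+\epsilon\}$ has $Hdim(E_\epsilon)\le s^\ast+\epsilon$ by the second half of the lemma, so $Hdim(\kappa)\le s^\ast+\epsilon$, and $\epsilon\to 0$ gives $Hdim(\kappa)\le s^\ast$. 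For the lower bound, let $E$ be any set with $\kappa(E^c)=0$ and fix $\epsilon>0$: the set $B=\{\underline{d}_\kappa>s^\ast-\epsilon\}$ satisfies $\kappa(E\cap B)=\kappa(B)>0$ with $\underline{d}_\kappa>s^\ast-\epsilon$ on it, so the first half of the lemma gives $Hdim(E)\ge Hdim(E\cap B)\ge s^\ast-\epsilon$; letting $\epsilon\to 0$ and taking the infimum over all such $E$ yields $Hdim(\kappa)\ge s^\ast$.

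It then remains to prove the lemma. For the lower estimate I would write $A=\bigcup_{k\ge 1}A_k$ with $A_k=\{x\in A:\kappa(B(x,r))\le r^{t-\epsilon}\text{ for all }0<r<1/k\}$, pick $k$ with $\kappa(A_k)>0$, and apply the mass distribution principle on $A_k$ — any cover of $A_k$ by sets of diameter $<1/(2k)$ has $\sum_i(\diam U_i)^{t-\epsilon}$ bounded below by a fixed multiple of $\kappa(A_k)$ — to conclude $\mathcal H^{t-\epsilon}(A_k)>0$, hence $Hdim(A)\ge t-\epsilon$; then $\epsilon\to 0$. For the upper estimate, fix $\epsilon>0$ and $\delta>0$; for each $x\in A$ choose a radius $r_x<\delta$ with $\kappa(B(x,r_x))\ge r_x^{t+\epsilon}$ (possible since $\underline{d}_\kappa(x)\le t<t+\epsilon$), apply the $5r$-covering lemma to extract a countable disjoint subfamily $\{B(x_i,r_i)\}$ with $A\subset\bigcup_i B(x_i,5r_i)$, and estimate $\mathcal H^{t+\epsilon}_{10\delta}(A)\le\sum_i(10r_i)^{t+\epsilon}\le 10^{t+\epsilon}\sum_i\kappa(B(x_i,r_i))\le 10^{t+\epsilon}\kappa(X)$; letting $\delta\to 0$ gives $\mathcal H^{t+\epsilon}(A)<\infty$, so $Hdim(A)\le t+\epsilon$, and then $Hdim(A)\le t$.

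The hard part is exactly this lemma: the passage from the pointwise, non-uniform local behaviour of $\kappa$ to the uniform covering bounds required for Hausdorff measures, carried out through the exhaustion $A=\bigcup_k A_k$ in one direction and the covering-lemma bookkeeping — controlling the radii and the disjointness sum simultaneously — in the other, together with the measurability check. None of this uses relative hyperbolicity; the argument is purely measure-theoretic and valid in an arbitrary metric space, and the full details may be found in \cite[Chapter~8]{MSU}.
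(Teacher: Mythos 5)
Your proof is correct: the reduction of both inequalities to the two-sided Billingsley/Frostman lemma via the definition of the essential supremum is sound, and the lemma itself is proved by the standard exhaustion $A=\bigcup_k A_k$ plus the mass distribution principle in one direction and the $5r$-covering lemma in the other (using finiteness of $\kappa$, which holds here since the measures in question are probability measures). The paper offers no proof of this proposition — it is quoted verbatim from \cite[Corollary~8.2]{MSU} — and your argument is essentially the standard one found there, so there is nothing to contrast.
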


\begin{definition}
A measure $\kappa$ on a metric space $X$ is \textit{exact dimensional} if for $\kappa$-almost every $x$, the above $\liminf$ is a limit, that is, for $\kappa$-almost every $x$,
$$\lim_{r\to 0} \frac{\log \kappa (B(x,r))}{\log r}=Hdim(\kappa).$$
\end{definition}

We will also use the following notation :
$$\overline{Hdim}(\kappa)=\kappa-\sup \limsup_{r\to 0}\frac{\log \kappa (B(x,r))}{\log r}.$$
By definition, $Hdim(\kappa)\leq\overline{Hdim}(\kappa)$.
Let us say a few words about our strategy for evaluating the Hausdorff dimensions of harmonic measures $\nu$.
We will first prove that for $\nu$-almost every $x$, we have $\limsup_{r\to 0}\frac{\log \nu (B(x,r))}{\log r}\leq \frac{-1}{\log \lambda}\frac{h}{l}$, so that $\overline{Hdim}(\nu)\leq \frac{-1}{\log \lambda}\frac{h}{l}$.
We will then prove that for $\nu$-almost every $x$, we have $\frac{-1}{\log \lambda}\frac{h}{l}\leq \liminf_{r\to 0}\frac{\log \nu (B(x,r))}{\log r}$, so that
$\frac{-1}{\log \lambda}\frac{h}{l}\leq Hdim(\nu)$.
This will both prove that $Hdim(\nu)=\frac{-1}{\log \lambda}\frac{h}{l}$ and that $\nu$ is exact dimensional.

\section{Harmonic measures on the Floyd and the Bowditch boundaries}\label{FloyBowditchSection}
Let $G$ be a finitely generated non-elementary relatively hyperbolic group
and let $\mu$ be a probability measure with finite first moment on $G$.
Throughout this section, we consider the harmonic measure  denoted by $\nu_{\mathcal{B}}$ on the Bowditch boundary $\partial_{\mathcal B} G$ equipped with the shortcut distance $\bar\delta_\lambda$ and  the harmonic measure $\nu_{\mathcal{F}}$ on the Floyd boundary $\partial_{\mathcal F} G$ with the Floyd distance  $\delta_\lambda$.   

Our goal in this section is to prove the following theorem.

\begin{theorem}\label{theoremHausdorffharmonicFloydBowditch}
For any $\lambda\in [\lambda_0, 1)$  with $\lambda_0\in (0,1)$ given by Theorem \ref{mapGerasimov}, we have
$$Hdim_{\bar{\delta}_{\lambda}}(\nu_{\mathcal{B}})=\frac{-1}{\log \lambda} \frac{h}{l}$$
and, for any $\lambda\in (0,1)$,
$$Hdim_{{\delta}_{\lambda}}(\nu_{\mathcal{F}})=\frac{-1}{\log \lambda} \frac{h}{l}.$$
Moreover, those two measures are exact-dimensional.
\end{theorem}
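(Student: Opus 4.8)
The plan is to follow the strategy announced at the end of the Preliminaries: for each of the two boundaries, establish that the pointwise (upper and lower) local dimension of the harmonic measure is almost surely the constant $\frac{-1}{\log\lambda}\frac{h}{l}$, and then invoke \cite[Corollary~8.2]{MSU} together with the definition of exact dimensionality. Concretely, I would split the proof into an upper bound $\overline{Hdim}(\nu)\le \frac{-1}{\log\lambda}\frac{h}{l}$ and a lower bound $Hdim(\nu)\ge \frac{-1}{\log\lambda}\frac{h}{l}$, each proved by controlling $\nu$-mass of balls centered at a $\nu$-typical boundary point. These are the contents of Proposition~\ref{upper-boundharmonicFloyd} and Proposition~\ref{lower-boundharmonicFloyd}.

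For the \emph{upper bound}, I would work with the harmonic measure $\nu_{\mathcal B}$ on $(\partial_{\mathcal B}G,\bar\delta_\lambda)$. Condition on a trajectory $(\omega_n)$ converging to $\omega_\infty\in\partial_{\mathcal B}G$; by Theorem~\ref{mainthmdeviation} the $\omega_n$ sublinearly track transition points on a geodesic $[1,\omega_\infty]$, so one may choose, for each large $n$, a transition point $g_n$ on such a geodesic with $d(1,g_n)\sim l n$ (using \eqref{equationdefl}). By Lemma~\ref{lemma3.16PY}, the big shadow $\Pi(g_n,R)$ is comparable to a $\bar\delta_\lambda$-ball around $\omega_\infty$ of radius $\asymp \lambda^{d(1,g_n)}$, and the $\nu_{\mathcal B}$-mass of that shadow is, by the standard argument (comparing $\nu_{\mathcal B}(\Pi(g_n,R))$ with the probability that the walk passes near $g_n$, which is essentially $\mu^{*n}(\omega_n)$ up to subexponential factors) of size $e^{-hn(1+o(1))}$ by \eqref{equationdefh}. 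Taking $r_n=\lambda^{d(1,g_n)}$ gives $\log r_n\sim l n\log\lambda$, hence $\frac{\log\nu_{\mathcal B}(B(\omega_\infty,C r_n))}{\log r_n}\to \frac{-h}{l\log\lambda}$ along this subsequence of radii; a routine monotonicity-in-$r$ interpolation upgrades this to a $\limsup$ over all $r\to0$, yielding $\overline{Hdim}_{\bar\delta_\lambda}(\nu_{\mathcal B})\le\frac{-1}{\log\lambda}\frac{h}{l}$.

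For the \emph{lower bound} one reverses the inclusions: the second inclusion in Lemma~\ref{lemma3.16PY} (valid precisely because $g_n$ is a transition point) shows a $\bar\delta_\lambda$-ball around $\omega_\infty$ of radius $\asymp\lambda^{d(1,g_n)}$ is \emph{contained} in $\Pi(g_n,R)$, and the shadow lemma / ray-approximation estimates give the matching lower bound $\nu_{\mathcal B}(\Pi(g_n,R))\ge e^{-hn(1+o(1))}$, so that $\liminf_{r\to0}\frac{\log\nu_{\mathcal B}(B(\omega_\infty,r))}{\log r}\ge\frac{-h}{l\log\lambda}$ for $\nu_{\mathcal B}$-a.e.\ $\omega_\infty$; this is where the full strength of Theorem~\ref{mainthmdeviation} — both sublinear tracking of transition points and sublinear time spent in parabolic subgroups — is essential, and I expect this to be the main obstacle, since one must ensure that a \emph{single} choice of geodesic $[1,\omega_\infty]$ has transition points at all the relevant scales simultaneously and that the walk does not make large excursions that would spoil the shadow comparison. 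Combining the two bounds gives $Hdim_{\bar\delta_\lambda}(\nu_{\mathcal B})=\overline{Hdim}_{\bar\delta_\lambda}(\nu_{\mathcal B})=\frac{-1}{\log\lambda}\frac{h}{l}$, which both identifies the Hausdorff dimension and shows the $\liminf$ equals the $\limsup$ a.e., i.e.\ $\nu_{\mathcal B}$ is exact dimensional. Finally, to transfer everything to the Floyd side I would use that $\phi\colon\partial_{\mathcal F}G\to\partial_{\mathcal B}G$ pushes $\nu_{\mathcal F}$ to $\nu_{\mathcal B}$ and is $1$-Lipschitz for $\delta_\lambda\to\bar\delta_\lambda$ by \eqref{shortcutsmallerFloyd} (so $Hdim_{\delta_\lambda}(\nu_{\mathcal F})\ge Hdim_{\bar\delta_\lambda}(\nu_{\mathcal B})$), together with the Floyd version of the shadow estimates — available from \cite{PY} and Lemma~\ref{karlssonlem} — to get the reverse inequality and exact dimensionality directly on $(\partial_{\mathcal F}G,\delta_\lambda)$; note the Floyd statement holds for \emph{all} $\lambda\in(0,1)$ since the Gerasimov map is not needed there, only for the Bowditch side.
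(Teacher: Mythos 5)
Your overall architecture (upper bound on $\overline{Hdim}$, lower bound on $Hdim$, exact dimensionality by combining the two, transfer between the boundaries via the $1$-Lipschitz map $\phi$ with $\phi_*\nu_{\mathcal F}=\nu_{\mathcal B}$) matches the paper, and your lower-bound sketch is essentially Proposition~\ref{lower-boundharmonicFloyd}: sublinear tracking of transition points, the inclusion $B_{\bar\delta_\lambda}(\omega_\infty,C_2r_n)\subset\Pi(x_n,R)$ from Lemma~\ref{lemma3.16PY}, and the counting bound $\mathbb P(\Omega_\epsilon\cap\{\omega'_\infty\in\Pi(x_n,R)\})\le\sharp B(1,2R+3n\epsilon)\,e^{-(h-\epsilon)n}$, plus the conditioning-on-$F_\epsilon$ and ergodicity steps you would still have to supply to pass from the good event to $\nu_{\mathcal B}$ itself. (Only Proposition~\ref{propsublineardeviationtransitionpoints} is used there; Corollary~\ref{coroparabolicdrift} is not actually needed for the dimension bounds.)

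The genuine gap is in your upper bound. You want $\nu_{\mathcal B}(\Pi(g_n,R))\ge e^{-hn(1+o(1))}$ for a \emph{fixed} shadow radius $R$, ``by the standard argument''. Two things go wrong. First, the tracking of transition points is only sublinear, so a trajectory $\mathbf x'$ in the cylinder $\{\omega'_n=\omega_n\}$ satisfying the same good-event conditions is only guaranteed to have its limit in a shadow of radius $O(\epsilon n)$ about $g_n$, not radius $R$; with a growing radius the constants in Lemma~\ref{lemma3.16PY} degrade like $\lambda^{-O(\epsilon n)}$ (repairable as $\epsilon\to0$, but it must be tracked). Second, and more seriously, ``essentially $\mu^{*n}(\omega_n)$ up to subexponential factors'' hides the key point that the cylinder \emph{intersected with the good event} must retain a definite proportion of the mass $\mu^{*n}(\omega_n)$; this is exactly the martingale/conditional-probability argument of Lemma~\ref{lemma2.1LePrince}, and it only holds for $\mathbf x$ in a set $\Lambda_{\epsilon,\delta}$ of measure at least $1-2\delta$. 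The paper avoids shadows entirely here: Lemma~\ref{lemma2.2LePrince} shows directly that every good trajectory through $\omega_n$ has its whole tail, hence its limit, within Floyd distance $Mn\lambda^{(l-\epsilon)n}$ of $\omega_\infty$, because each later increment $[\omega'_m,\omega'_{m+1}]$ is joined by a geodesic avoiding $B(1,(l-\epsilon)m)$ and the resulting series is geometric. This is why the Floyd statement holds for all $\lambda\in(0,1)$, and why the paper proves the upper bound on the Floyd side and pushes it down to the Bowditch side using $\bar\delta_\lambda\le\delta_\lambda$; your plan of proving it on the Bowditch side and then lifting it to the Floyd side runs against the direction of the Lipschitz comparison and forces you back onto the unjustified shadow lower bound.
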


\subsection{Sublinear deviation from transition points}
We denote by $\hat G$ the relative Cayley graph of $G$.
We fix large enough $\epsilon>0$ and $R>0$ satisfying the conclusions of Lemma~\ref{LemmaHruska} and Lemma~\ref{lemma3.16PY}.
Whenever $\alpha$ is a geodesic in the Cayley graph of $G$, we denote by $\Tr_{\epsilon,R}\alpha$, or simply by $\Tr\alpha$ the set of $(\epsilon,R)$-transition points on $\alpha$.
We denote by $\omega_n$ the random walk driven by $\mu$ at time $n$ and by $\omega_\infty$ its almost sure limit in the Bowditch boundary.

\begin{prop}\label{propsublineardeviationtransitionpoints}
With these notations, we have
$$\mathbb{P}\left (\sup d (\omega_n,\Tr \alpha)=o(n)\right )=1,$$
where the supremum is taken over all geodesics $\alpha$ from $1$ to $\omega_{\infty}$.
\end{prop}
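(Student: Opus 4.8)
The plan is to deduce the sublinear deviation of the random walk from transition points on word geodesics from two known facts: the sublinear tracking of geodesics in the \emph{relative} (hyperbolic) Cayley graph $\hat G$, and Hruska's comparison (Lemma~\ref{LemmaHruska}) between points on relative geodesics and transition points on word geodesics. First I would recall the general sublinear tracking result for random walks on hyperbolic (possibly non-proper) spaces under a finite first moment assumption: since $\mu$ has finite first moment for the word metric $d$, and $d(1,g)\ge \hat d(1,g)$ where $\hat d$ is the metric of $\hat G$, the pushforward measure has finite first moment for $\hat d$ as well. The action of $G$ on $\hat G$ is non-elementary (because $G$ is non-elementary relatively hyperbolic) and $\hat G$ is Gromov hyperbolic. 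Hence by Maher--Tiozzo \cite{MaherTiozzo} (and Tiozzo \cite{Tiozzo} for the sublinear rate) the random walk $\omega_n$ converges almost surely to a point $\hat\omega_\infty\in\partial\hat G$, and it tracks a geodesic ray $\hat\gamma=[1,\hat\omega_\infty]$ in $\hat G$ sublinearly: $\mathbb P(\hat d(\omega_n,\hat\gamma)=o(n))=1$. Identifying $\partial\hat G$ with the set of conical limit points in $\partial_{\mathcal B}G$, the point $\hat\omega_\infty$ is exactly the almost sure limit $\omega_\infty$ in the Bowditch boundary (the random walk converges to a conical point a.s., since the harmonic measure gives full measure to conical points).

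Next I would fix a word geodesic $\alpha=[1,\omega_\infty]$ and a relative geodesic $\hat\alpha$ with the same endpoints; one can in fact take $\hat\alpha$ to be (a bounded-distance approximation of) the geodesic ray $\hat\gamma$ above, after reconciling endpoints, using that relative geodesics between the same pair of points are uniformly fellow-travelling in the hyperbolic space $\hat G$. By Lemma~\ref{LemmaHruska}, every point of $\hat\alpha$ lies within a uniform constant $C$ (in the word metric $d$) of a transition point on $\alpha$. So it suffices to show that $\omega_n$ is within $o(n)$ of some point of $\hat\alpha$ \emph{in the word metric $d$}, and this is exactly the subtle point: sublinear tracking from Maher--Tiozzo/Tiozzo is stated in the \emph{relative} metric $\hat d$, which can collapse long excursions into parabolic cosets. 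I would argue as follows. Let $p_n$ be a point on $\hat\gamma$ with $\hat d(\omega_n,p_n)=o(n)$. A relative geodesic $\beta_n$ from $\omega_n$ to $p_n$ has relative length $o(n)$; lifting $\beta_n$ to the word metric, its word length is controlled by the number of parabolic ``jumps'' it makes together with the word lengths of those jumps. Here is where I would invoke the second part of the theorem being proved in tandem, namely Corollary~\ref{coroparabolicdrift} / equation~\eqref{equationthmsulineardeviation2}: the random walk has sublinear displacement in every parabolic coset, $\sup_{U\in\mathcal P}d_U(1,\omega_n)=o(n)$ a.s. This bounds the word cost of the parabolic jumps along $\beta_n$ (each such jump traverses a coset, and the relevant projections are controlled), yielding $d(\omega_n,p_n)=o(n)$, hence $d(\omega_n,\hat\alpha)=o(n)$ after adjusting $p_n$ to a genuine point of $\hat\alpha$.

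Then I would combine: $d(\omega_n,\Tr\alpha)\le d(\omega_n,\hat\alpha)+C=o(n)$ almost surely. Finally, to get the supremum over \emph{all} geodesics $\alpha$ from $1$ to $\omega_\infty$, I would note that any two word geodesics with the same endpoints have transition-point sets that uniformly Hausdorff-fellow-travel each other (both being within bounded distance of the common relative geodesic $\hat\alpha$ by Lemma~\ref{LemmaHruska}, applied in both directions), so the bound $d(\omega_n,\Tr\alpha)=o(n)$ transfers from one choice of $\alpha$ to all of them with the same $o(n)$ (up to an additive constant absorbed into the $o(n)$). The main obstacle is precisely the passage from sublinear tracking in the relative metric to sublinear tracking in the word metric along transition points; this is where the parabolic-drift estimate \eqref{equationthmsulineardeviation2} is essential, and it is why the two halves of Theorem~\ref{mainthmdeviation} are proved together rather than independently. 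A secondary technical point is to make sure the $o(n)$ bounds, which hold a.s. for a fixed geodesic, can be made uniform over the (uncountable) family of geodesics $\alpha$ — handled by the uniform fellow-traveling just described, reducing everything to a single representative geodesic.
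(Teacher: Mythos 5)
There is a genuine gap, and it sits exactly at the point you yourself flag as ``the subtle point.'' First, your bridge from relative-metric tracking to word-metric tracking relies on Corollary~\ref{coroparabolicdrift} (equation~\eqref{equationthmsulineardeviation2}), but in the paper that corollary is \emph{deduced from} Proposition~\ref{propsublineardeviationtransitionpoints}: its proof uses Kingman's theorem to produce a parabolic drift $l_{\mathcal P}$ and then needs the transition-point tracking to run a doubling argument showing $l_{\mathcal P}=0$. So invoking it here is circular unless you supply an independent proof, which you do not. Second, even granting the parabolic estimate, your quantitative step does not close: a relative geodesic $\beta_n$ from $\omega_n$ to $p_n$ of relative length $o(n)$ can make up to $o(n)$ parabolic jumps, and bounding each jump by $o(n)$ (which is what the parabolic drift estimate would give, and only after a further argument converting $d_U(1,\omega_n)$ into $d_U(\omega_n,p_n)$) yields a total word length of order $o(n)\cdot o(n)$, which is not $o(n)$. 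The distance formula $d\asymp \hat d+\sum_U[d_U]_K$ makes the problem visible: controlling the maximum of the projections does not control their sum.

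The paper's proof avoids both problems by never passing through relative-metric tracking at all. It defines $f(\omega)=\sup d(1,\Tr\hat\alpha)$, the \emph{word-metric} distance from the basepoint to the transition points of the bi-infinite geodesics $[\omega_{-\infty},\omega_\infty]$, checks that $f$ is measurable and a.s.\ finite (this is where Maher--Tiozzo enters: the two exit points are a.s.\ distinct conical points, so a bi-infinite relative geodesic exists and Lemma~\ref{LemmaHruska} gives finiteness), observes that $f(T^n\omega)=d(\omega_n,\Tr[\omega_{-\infty},\omega_\infty])$ with $|f(T\omega)-f(\omega)|\le d(1,\omega_1)$ integrable, and applies Tiozzo's ergodic lemma \cite[Lemma~7]{Tiozzo} to conclude $\tfrac1n f(T^n\omega)\to 0$ directly in the word metric. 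The transfer from the bi-infinite geodesic to $[1,\omega_\infty]$, and the uniformity over all such geodesics, is then handled by thinness of triangles along transition points (\cite[Lemma~2.4]{GekhtmanDussaule}), which is the one part of your sketch that matches the paper. If you want to salvage your route, you would need to prove the parabolic drift statement first and then find a genuinely summable control on the coset excursions of $\beta_n$; as written, the argument does not go through.
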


\begin{proof}
We follow the strategy of \cite{Tiozzo}.
We introduce the function $f$ defined by
$$f(\omega)=\sup d (1,\Tr \hat{\alpha}),$$
where the supremum is taken over all geodesics $\hat{\alpha}$ between $\omega_{-\infty}$ and $\omega_{\infty}$,
where $\omega_{-\infty}$ is the limit of the reflected random walk, see \cite[Section~4.1]{MaherTiozzo}.

\begin{claim}\label{fmeasurable}
The function $f$ is measurable and is almost surely finite.
\end{claim}
\begin{proof}[Proof of the claim]
First, according to \cite[Theorem~1.1]{MaherTiozzo}, the exit points $\omega_{\infty}$ and $\omega_{-\infty}$ are conical limit points and their law $\nu$ and $\check{\nu}$ are non-atomic. Since bounded parabolic points are countable, it follows that $\omega_\infty$ and $\omega_{-\infty}$ almost-surely are distinct and so there exists a bi-infinite relative geodesic joining them.
Hence, the distance (in the Cayley graph) between $1$ and such a relative geodesic is finite.
According to Lemma~\ref{LemmaHruska}, any point on a relative geodesic is within a finite (and actually uniformly bounded) distance of a transition point on a geodesic in the Cayley graph, so $f$ is almost surely finite.

We now prove that $f$ is measurable.
Recall that $\partial \hat G$ is the Gromov boundary of the relative Cayley graph that we identify with the set of conical limit points in the Bowditch boundary $\partial_{\mathcal{B}}G$.
We just need to prove that the function
$$\tilde{f}:(\xi,\zeta)\in \partial \hat G\times \partial \hat G\mapsto \sup d(1,\Tr \alpha_{\xi,\zeta})$$ is measurable, where the supremum is taken over all word geodesics $\alpha_{\xi,\zeta}$ from $\xi$ to $\zeta$.
We follow the proof of \cite[Lemma~12]{Tiozzo}.
Note that $\tilde{f}$ takes the value $+\infty$ when $\xi=\zeta$.
Since there are no atoms at conical points, it can be extended to a function that we still denote by $\tilde{f}$ on the double Bowditch boundary $\partial_{\mathcal{B}}G\times \partial_{\mathcal{B}}G$.
To prove that $\tilde{f}$ is measurable, we just need to prove that for every $R\geq 0$, the set
$$\{(\xi,\zeta)\in \partial_{\mathcal{B}}G\times \partial_{\mathcal{B}}G,\tilde{f}(\xi,\zeta)> R\}$$
is measurable.

Recall the Bowditch compactification is a metrizable compact space containing the group $G$ as an open and dense set.
Choosing an arbitrary metric and taking a finite cover of $\partial_{\mathcal{B}}G$ made of balls of radius $1/k$, $k\in \mathbb{N}$, we construct a countable collection of open sets $U_n$ such that the sets $U_n\cap \partial_{\mathcal{B}}G$ form a countable base for the topology of $\partial_{\mathcal{B}}G$. Moreover, for each $R\geq 0$, only finitely many sets $U_n$ intersect the ball $B(1,R)$ and for each sequence $n_k$ going to infinity, the intersection $\cap_{k}U_{n_k}$ contains at most one point.
For fixed $R\geq 0$, say that a pair of open sets $(U,V)$ avoids the ball $B(1,R)$ if there exist $u\in U\cap G$ and $v\in V\cap G$ and there exists a geodesic $\gamma$ from $u$ to $v$ such that the ball $B(1,R)$ does not intersect $\Tr\gamma$.
Let us define $\mathcal{S}_R=\{(U_n,U_m) \text{ such that }(U_n,U_m)\text{ avoids the ball }B(1,R)\}$.
This is a countable collection of pairs of open sets.
By definition,
$$\{(\xi,\zeta)\in \partial_{\mathcal{B}}G\times \partial_{\mathcal{B}}G,\tilde{f}(\xi,\zeta)> R\}\subset \bigcap_{N\geq 1}\bigcup_{\overset{\min (n,m)\geq N}{(U_n,U_m)\in \mathcal{S}_R}}U_n\times U_m.$$
Conversely, consider $(\xi,\zeta)$ in this intersection and assume that $\tilde{f}(\xi,\zeta)<+\infty$.
Then, there are sequences of points $g_{n_k}$, respectively $h_{m_k}$, converging to $\xi$, respectively $\zeta$ and there is a geodesic $\gamma_k$ from $g_{n_k}$ to $h_{n_k}$ such that $B(1,R)$ does not intersect $\Tr\gamma_k$.
Now, since $\tilde{f}(\xi,\zeta)<+\infty$, there exists a geodesic from $\xi$ to $\zeta$ intersecting some big ball $B(1,R')$ for some $R'=R'_{\xi,\zeta}$.
Moreover, up to taking $R'$ large enough, all geodesics $\gamma_k$ also enter $B(1,R')$.
Thus, Arzel\'a-Ascoli Theorem allows us to choose a sub-sequence of geodesics $\gamma_{k_l}$ converging to a geodesic $\gamma$ from $\xi$ to $\zeta$, as $l$ tends to infinity.
We can also assume that the sub-geodesic of $\gamma_{k_l}$ contained in $B(1,R')$ is constant.
In particular, the limit geodesic $\gamma$ also satisfies that $B(1,R)$ does not intersect $\Tr\gamma$.
Hence, $\tilde{f}(\xi,\zeta)>R$.
This proves that
$$\{(\xi,\zeta)\in \partial_{\mathcal{B}}G\times \partial_{\mathcal{B}}G,\tilde{f}(\xi,\zeta)> R\}=\bigcap_{N\geq 1}\bigcup_{\overset{\min (n,m)\geq N}{(U_n,U_m)\in \mathcal{S}_R}}U_n\times U_m$$
and so $\{(\xi,\zeta)\in \partial_{\mathcal{B}}G\times \partial_{\mathcal{B}}G,\tilde{f}(\xi,\zeta)> R\}$
is measurable.
\end{proof}

Note that $f(T^n\omega)=d (\omega_n,\Tr [\omega_{-\infty},\omega_{\infty}])$, so that
$|f(T\omega)-f(\omega)|\leq d(1,\omega_1)$.
We deduce that the function $\omega\mapsto f(T\omega)-f(\omega)$ is integrable, since the random walk has finite first moment.
Thus, \cite[Lemma~7]{Tiozzo} shows that $\frac{1}{n}f(T^n\omega)$ almost surely converges to 0.
Finally,
consider a geodesic $\alpha_0$ from $1$ to $\omega_\infty$ and a geodesic $\hat\alpha_0$ from $\omega_{-\infty}$ to $\omega_\infty$.
Then, with probability one, there exists a transition point $x_n$ on $\hat{\alpha}_0$ such that $\frac{1}{n}d(\omega_n,x_n)$ converges to 0.
We now use that geodesic triangles are thin along transition points.
Precisely, according to \cite[Lemma~2.4]{GekhtmanDussaule}, $x_n$ is within a uniformly bounded distance of a transition point either on $\alpha_0$, or on a geodesic from 1 to $\omega_{-\infty}$.
Note that $\omega_n$ converges to $\omega_\infty$ and that $\frac{1}{n}d(1,\omega_n)$ almost surely converges to $l$.
Hence, $x_n$ also converges to $\omega_\infty$ and so for large enough $n$, it cannot be within a bounded distance of a geodesic from 1 to $\omega_{-\infty}$.
This proves that $\frac{1}{n}d(\omega_n,\Tr \alpha_0)$ also almost surely converges to 0.
Using again \cite[Lemma~2.4]{GekhtmanDussaule}, we see that $\frac{1}{n}\sup_{\alpha}d(\omega_n,\Tr \alpha)\leq \frac{1}{n}d(\omega_n,\Tr \alpha_0)+C$ for some uniform $C$.
This concludes the proof.
\end{proof}

We now deduce that the projection on parabolic subgroup is almost surely sublinear.
We choose a full subset $\mathcal{P}_0$ of representatives of conjugacy classes of maximal parabolic subgroups.
According to \cite[Proposition~6.10]{Bowditch}, such a set $\mathcal{P}_0$ is finite.
In the following, we will denote by $\mathcal{P}$ the set of all left cosets of elements of $\mathcal{P}_0$.
Let $U \in \mathcal{P}$.
Whenever $x,y\in G$, we set $d_U(x,y)=d(\pi_U(x),\pi_U(y))$, where $\pi_U$ is the projection on $U$ and where $d$ is the distance in the Cayley graph of $G$.
Our goal is to prove the following corollary.

\begin{cor}\label{coroparabolicdrift}
With the above notations, we have
$$\mathbb{P}\left (\sup_{U\in \mathcal{P}}d_U(1,\omega_n)=o(n)\right )=1.$$
\end{cor}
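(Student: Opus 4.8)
The plan is to deduce Corollary~\ref{coroparabolicdrift} from Proposition~\ref{propsublineardeviationtransitionpoints}, fed in twice, together with two standard features of parabolic cosets in a relatively hyperbolic group: that for each $U\in\mathcal P$ the closest-point projection $\pi_U$ is coarsely Lipschitz with constants independent of $U$, and that (for a suitable choice of the parameters $\epsilon,R$ fixed throughout, compatible with Lemma~\ref{LemmaHruska}) a geodesic whose projection to $U$ has large diameter must contain a long subsegment lying in the $\epsilon$-neighbourhood of $U$, every interior point of which is then $(\epsilon,R)$-deep.

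First I would work on the almost-sure event that $\omega_\infty$ is conical and fix a geodesic ray $\alpha_0$ from $1$ to $\omega_\infty$, parametrised by arc length, so $\alpha_0(0)=1$. By Proposition~\ref{propsublineardeviationtransitionpoints} there is, almost surely, a transition point $x_n=\alpha_0(s_n)\in\Tr\alpha_0$ with $d(\omega_n,x_n)=o(n)$; since $s_n=d(1,x_n)$ and $d(1,\omega_n)=ln+o(n)$ by Kingman's theorem, with $l>0$ as $G$ is non-elementary, this gives $s_n=ln+o(n)$. Using that $\pi_U$ is uniformly coarsely Lipschitz, $\sup_{U\in\mathcal P}d_U(1,\omega_n)\le\sup_{U\in\mathcal P}d_U(1,x_n)+o(n)$; and since $x_n$ lies on $\alpha_0$ at parameter $s_n$, the projection property bounds $d_U(1,x_n)=d_U(\alpha_0(0),\alpha_0(s_n))$ by $M(s_n)+O(1)$, where $M(t)$ denotes the maximal length of a subsegment of $\alpha_0|_{[0,t]}$ contained in the $\epsilon$-neighbourhood of some $U\in\mathcal P$. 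As $s_n\asymp n$, it therefore suffices to prove that the (random) ray $\alpha_0$ satisfies $M(t)=o(t)$ as $t\to\infty$.

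This last claim is the heart of the matter, and I would establish it by contradiction, invoking Proposition~\ref{propsublineardeviationtransitionpoints} a second time. If $M(t)/t\not\to 0$, then on an event of positive probability there are $\delta>0$ and $t^{(k)}\to\infty$ together with a subsegment $[\alpha_0(a_k),\alpha_0(b_k)]$ of $\alpha_0|_{[0,t^{(k)}]}$, of length $b_k-a_k\ge\delta t^{(k)}$, lying in the $\epsilon$-neighbourhood of some $U_k\in\mathcal P$; then every $\alpha_0(s)$ with $a_k+R\le s\le b_k-R$ is $(\epsilon,R)$-deep and in particular is not a transition point. On the other hand, intersecting with the full-measure events above and choosing $m_k$ with $lm_k$ as close as possible to the midpoint $c_k=(a_k+b_k)/2$, one gets $m_k\to\infty$ and $s_{m_k}=lm_k+o(m_k)=c_k+o(t^{(k)})$, since $c_k\asymp t^{(k)}\asymp lm_k$; hence for $k$ large $|s_{m_k}-c_k|<(b_k-a_k)/2-R$, so $s_{m_k}\in(a_k+R,b_k-R)$ and $\alpha_0(s_{m_k})=x_{m_k}$ is $(\epsilon,R)$-deep, contradicting that $x_{m_k}$ is a transition point. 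Thus $M(t)=o(t)$ almost surely, and the corollary follows.

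The main obstacle I anticipate is harmonising parameters in the ``large projection versus deep penetration'' dictionary: one must be sure that a large value of $d_U(1,\alpha_0(s_n))$ is witnessed by a subsegment of $\alpha_0$ that is deep with respect to the \emph{same} $(\epsilon,R)$ controlling the transition points, so that Proposition~\ref{propsublineardeviationtransitionpoints} can legitimately be reinserted; once the constants are aligned the rest is routine. A minor point is measurability --- of the chosen ray $\alpha_0$, handled by measurable selection, and of $M(t)$ and $\sup_{U}d_U(1,\omega_n)$, for which one uses that $\mathcal P$ is countable.
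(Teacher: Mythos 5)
Your proof is correct, and it takes a genuinely different route from the paper. The paper first observes that $f_n=\sup_{U\in\mathcal P}d_U(1,\omega_n)$ is subadditive, invokes Kingman's theorem to get an almost sure limit $l_{\mathcal P}$ of $\tfrac1n f_n$, and then runs a doubling self-improvement: using the tracked transition points $x_n, x_{2n}$ and the projection lemmas of Sisto (\cite[Lemma~1.13, Lemma~1.15]{Sistoprojections}), it shows $d_U(1,x_{2n})\le\max\{d_U(1,x_n),d_U(x_n,x_{2n})\}+c'$, which together with stationarity of the increments yields $l_{\mathcal P}\le\tfrac12 l_{\mathcal P}$, hence $l_{\mathcal P}=0$. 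You instead bypass Kingman for the parabolic projections entirely and argue directly on the limiting ray $\alpha_0$: a large projection to $U$ forces a comparably long excursion of $\alpha_0$ in a bounded neighbourhood of $U$, and such a linearly long deep segment would have to capture one of the transition points $x_{m_k}=\alpha_0(s_{m_k})$, since $s_n=ln+o(n)$ sweeps the ray at linear speed (here $l>0$ is essential, and holds since $G$ is non-elementary) --- contradicting that transition points are by definition not $(\epsilon,R)$-deep. Both arguments rest on exactly the same two inputs, Proposition~\ref{propsublineardeviationtransitionpoints} and the uniform ``large projection $\Leftrightarrow$ deep penetration'' dictionary for peripheral cosets; your parameter-alignment concern is real but harmless, since $\epsilon$ and $R$ are fixed ``large enough'' at the outset and the constants from the projection lemmas are uniform over the finitely many conjugacy classes in $\mathcal P_0$. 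Your version is arguably more transparent geometrically (it explains \emph{why} the parabolic drift vanishes: the walk's excursions into peripheral cosets are sublinear in length), while the paper's version is shorter once the subadditivity is noted and generalizes more mechanically to other subadditive quantities.
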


Before proving this corollary, note the following.

\begin{lem}
The sequence $f_n=\sup_{U\in \mathcal{P}}d_U(1,\omega_n)$ is sub-additive.
That is, for every $n,m\geq 1$, we have
$$f_{n+m}\leq f_n+f_m\circ T^n.$$
\end{lem}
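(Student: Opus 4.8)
The plan is to unwind the definition of $f_n$ and use the cocycle nature of the random walk together with the near-equivariance of nearest-point projections onto cosets. Write $f_n = \sup_{U\in\mathcal P} d_U(1,\omega_n)$. The key observation is that applying the shift $T^n$ replaces the trajectory $(\omega_k)_{k\ge 0}$ by $(\omega_n^{-1}\omega_{n+k})_{k\ge 0}$, so $f_m\circ T^n = \sup_{U\in\mathcal P} d_U(1,\omega_n^{-1}\omega_{n+m})$. Since $\mathcal P$ is the set of \emph{all} left cosets of the finitely many representatives in $\mathcal P_0$, the family $\mathcal P$ is invariant under left multiplication by $G$: for any $g\in G$ and $U\in\mathcal P$ we have $gU\in\mathcal P$, and nearest-point projection is equivariant in the sense that $\pi_{gU}(gx) = g\,\pi_U(x)$, hence $d_{gU}(gx,gy) = d_U(x,y)$ because $d$ is left-invariant. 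Applying this with $g=\omega_n$, $x = 1$, $y = \omega_n^{-1}\omega_{n+m}$ gives $d_{\omega_n U}(\omega_n,\omega_{n+m}) = d_U(1,\omega_n^{-1}\omega_{n+m})$; taking the supremum over $U\in\mathcal P$ and using that $U\mapsto \omega_n U$ is a bijection of $\mathcal P$ onto itself yields $f_m\circ T^n = \sup_{V\in\mathcal P} d_V(\omega_n,\omega_{n+m})$.

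With both quantities expressed as suprema of coset-distances along the same trajectory, the statement reduces to the pointwise triangle-type inequality $d_V(1,\omega_{n+m}) \le d_V(1,\omega_n) + d_V(\omega_n,\omega_{n+m})$ for each fixed coset $V$. This is immediate: $d_V(x,z) = d(\pi_V(x),\pi_V(z)) \le d(\pi_V(x),\pi_V(y)) + d(\pi_V(y),\pi_V(z)) = d_V(x,y) + d_V(y,z)$ by the triangle inequality in the Cayley graph. Taking $x=1$, $y=\omega_n$, $z=\omega_{n+m}$ and then the supremum over $V\in\mathcal P$ gives
$$
f_{n+m} = \sup_{V\in\mathcal P} d_V(1,\omega_{n+m}) \le \sup_{V\in\mathcal P} d_V(1,\omega_n) + \sup_{V\in\mathcal P} d_V(\omega_n,\omega_{n+m}) = f_n + f_m\circ T^n,
$$
which is exactly the claimed sub-additivity.

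I expect no genuine obstacle here; the only point deserving care is the bookkeeping around projections onto cosets — specifically, that $\pi_V$ may be multivalued (a nearest-point projection need not be unique), so one should either fix a measurable choice of projection point for each coset once and for all, or interpret $d_V(x,y)$ as (say) the infimum or supremum of $d(p,q)$ over $p\in\pi_V(x)$, $q\in\pi_V(y)$; in all these conventions the equivariance identity $d_{gV}(gx,gy)=d_V(x,y)$ and the triangle inequality above still hold verbatim, so the argument is unaffected. Once this is settled, the two displayed reductions combine to give the lemma directly, and sub-additivity of $f_n$ then feeds (via Kingman's ergodic theorem, as in the discussion of $l$ and $h$ earlier) into the proof of Corollary~\ref{coroparabolicdrift}: it suffices to show $\frac1n f_n \to 0$ in probability or that $\mathbb E[f_1]<\infty$ and the almost-sure limit is $0$, the latter being where Proposition~\ref{propsublineardeviationtransitionpoints} enters.
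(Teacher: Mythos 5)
Your proof is correct and follows essentially the same route as the paper: the triangle inequality for $d_U$ along the trajectory $1,\omega_n,\omega_{n+m}$, combined with the identity $d_U(\omega_n,\omega_{n+m})=d_{\omega_n^{-1}U}(1,\omega_n^{-1}\omega_{n+m})$ and the fact that $\mathcal P$ (all left cosets) is invariant under left translation, so the supremum over $U$ of the second term is exactly $f_m\circ T^n$. Your extra remark about fixing a convention for possibly multivalued nearest-point projections is a reasonable precaution but does not change the argument.
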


\begin{proof}
Consider some $U\in \mathcal{P}$.
Then, $d_U(1,\omega_{n+m})\leq d_U(1,\omega_n)+d_U(\omega_n,\omega_{n+m})$, which we can rewrite $d_U(1,\omega_{n+m})\leq d_U(1,\omega_n)+d_{\omega_n^{-1}U}(1,\omega_n^{-1}\omega_{n+m})$.
In particular,
$$d_U(1,\omega_{n+m})\leq \sup_{U\in \mathcal{P}}d_U(1,\omega_n)+\sup_{U\in \mathcal{P}}d(1,\omega_n^{-1}\omega_{n+m})=f_n+f_m\circ T^n.$$
This is true for all $U\in \mathcal{P}$, which concludes the proof.
\end{proof}

According to Kingman's Theorem, $\frac{1}{n}f_n$ almost surely converges to some constant $l_{\mathcal{P}}$ that we call the parabolic linear drift.
We just need to prove that $l_{\mathcal{P}}=0$ to prove Corollary~\ref{coroparabolicdrift}.

\begin{proof}[Proof of Corollary \ref{coroparabolicdrift}]
By definition,
$$\sup_{U\in \mathcal{P}}d_U(\omega_n,\omega_{2n})=\sup_{U\in \mathcal{P}}d_{\omega_n^{-1}U}(1,\omega_n^{-1}\omega_{2n})=\sup_{U\in \mathcal{P}}d_U(1,\omega_n^{-1}\omega_{2n})$$
and since $\omega_{n}^{-1}\omega_{2n}$ and $\omega_n$ follow the same law, $\frac{1}{n}\sup_{U\in \mathcal{P}}d_U(\omega_n,\omega_{2n})$ also almost surely converges to $l_\mathcal{P}$.

Combining this with Proposition~\ref{propsublineardeviationtransitionpoints}, we see that for every positive $\epsilon$ and $\eta$, there exists $N_{\epsilon,\eta}$ such that with probability at least $1-\epsilon$, for all $n\geq N_{\epsilon,\eta}$, we have simultaneously
\begin{enumerate}[(a)]
    \item $\sup_\alpha d (\omega_n,\Tr \alpha)\leq \eta n$, where the supremum is taken over all geodesics $\alpha$ from $1$ to $\omega_\infty$,
    \item $\left |\sup_{U\in \mathcal{P}}d_U(\omega_n,\omega_{2n})-l_\mathcal{P}n\right |\leq \eta n$,
    \item $\left |\sup_{U\in \mathcal{P}}d_U(1,\omega_n)-l_\mathcal{P}n\right |\leq \eta n$.
\end{enumerate}
Fix $\epsilon$ and $\eta$ and set $N=N_{\epsilon,\eta}$.
For every $n\geq N$, there exists a transition point (with fixed parameters) $x_n$ on a geodesic $\alpha_n$ from $1$ to $\omega_\infty$, such that $d(\omega_n,x_n)\leq \eta n$.
Then, since for every $U\in \mathcal{P}$ and for every $x,y$, we have $d_U(x,y)\leq d(x,y)+c$ for some fixed $c$,
$$\sup_{U\in \mathcal{P}}d_U(1,x_n)\leq \sup_{U\in \mathcal{P}}d_U(1,\omega_n)+d(\omega_n,x_n)+c \leq l_\mathcal{P}n+2\eta n+c$$
and similarly,
$$\sup_{U\in \mathcal{P}}d_U(x_n,x_{2n})\leq l_\mathcal{P}n+3\eta n+2c.$$
Also,
$$\sup_{U\in \mathcal{P}}d_U(1,\omega_{2n})\leq \sup_{U\in \mathcal{P}}d_U(1,x_{2n})+\eta n+c.$$
By definition, $x_{2n}$ is a transition point on a geodesic $\alpha_{2n}$ from 1 to $\omega_\infty$.
Let $\delta>0$ and consider $U\in \mathcal{P}$ such that $d_U(1,x_{2n})\geq \sup_{U\in \mathcal{P}}d_U(1,x_{2n})-\delta$.
Then, according to \cite[Lemma~1.13, Lemma~1.15]{Sistoprojections}, the geodesic $\alpha_{2n}$ enters a fixed neighborhood of $U$ and the first point, respectively last point in this neighborhood is within a bounded distance of $\pi_U(1)$, respectively $\pi_U(x_{2n})$.
Now, $x_n$ also is a transition point on $\alpha_n$ with the same endpoints as $\alpha_{2n}$, so it is within a uniformly bounded distance of a transition point $\tilde{x}_n$ on $\alpha_{2n}$. There are two possibilities : the geodesic $\alpha_{2n}$ enters the neighborhood of $U$ either before or after $\tilde{x}_n$.
In the former case, we see that $d_U(1,x_{2n})\leq d_U(1,x_n)+c'$ for some fixed constant $c'$.
In the later case,
we see that $d_U(1,x_{2n})\leq d_U(x_n,x_{2n})+c'$.
In any case, we have that
$$d_U(1,x_{2n})\leq \max\{ d_U(1,x_n),d_U(x_n,x_{2n}) \}+c'\leq l_\mathcal{P}n+3\eta n +2c+c'.$$
Since this is true for every $\delta>0$, we have
$\sup_{U\in \mathcal{P}}d_U(1,x_{2n})\leq l_\mathcal{P}n+3\eta n +2c+c'$, hence finally, with probability at least $1-\epsilon$,
$$\sup_{U\in \mathcal{P}}d_U(1,\omega_{2n})\leq l_\mathcal{P}n+4\eta n +3c+c'.$$
Since $\epsilon$ and $\eta$ are arbitrary, this proves that almost surely,
$$\limsup_{n\to \infty}\frac{1}{2n}\sup_{U\in \mathcal{P}}d_U(1,\omega_{2n})\leq \frac{1}{2}l_{\mathcal{P}}.$$
Therefore, $l_{\mathcal{P}}\leq \frac{1}{2}l_\mathcal{P}$ and so $l_\mathcal{P}=0$.
\end{proof}

\subsection{Upper-bound for the Hausdorff dimension}
In this subsection, we first prove the following.

\begin{prop}\label{upper-boundharmonicFloyd}
Under the assumptions of Theorem \ref{theoremHausdorffharmonicFloydBowditch}, we have
$$\overline{Hdim}_{\bar{\delta}_{\lambda}}(\nu_{\mathcal{B}})\leq\frac{-1}{\log \lambda} \frac{h}{l}$$
and
$$\overline{Hdim}_{{\delta}_{\lambda}}(\nu_{\mathcal{F}})\leq\frac{-1}{\log \lambda} \frac{h}{l}.$$
\end{prop}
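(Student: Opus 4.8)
The plan is to estimate, for $\nu_{\mathcal{B}}$-almost every boundary point $\xi=\omega_\infty$, the measure of a small shortcut-ball around $\xi$ from above, using the shadow description of such balls provided by Lemma~\ref{lemma3.16PY}. First I would fix a typical trajectory $\omega=(\omega_n)$ with limit $\omega_\infty$ in the Bowditch boundary, and choose a word geodesic $\alpha$ from $1$ to $\omega_\infty$. By Proposition~\ref{propsublineardeviationtransitionpoints}, almost surely $d(\omega_n,\Tr\alpha)=o(n)$, so for each large $n$ there is an $(\epsilon,R)$-transition point $g_n$ on $\alpha$ with $d(1,g_n)=d(1,\omega_n)+o(n)=ln+o(n)$ by the drift theorem~(\ref{equationdefl}). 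Set $r_n=\lambda^{d(1,g_n)}$. By the inclusion $B_{\bar\delta_\lambda}(\xi,C_2 r_n)\subset \Pi(g_n,R)$ from Lemma~\ref{lemma3.16PY}, it suffices to bound $\nu_{\mathcal{B}}(\Pi(g_n,R))$ from above. The key point is that the event ``the random walk passes within distance $R$ of $g_n$ and then converges into $\Pi(g_n,R)$'' is, up to the Markov property and bounded combinatorial factors, governed by $\mu^{*n}(\omega_n)$: a geodesic $[1,\omega_\infty]$ entering $B(g_n,R)$ essentially forces the walk to have gone through a bounded neighbourhood of $g_n\approx\omega_n$, so $\nu_{\mathcal{B}}(\Pi(g_n,R))$ is comparable to the Green's function / hitting probability $F(1,\omega_n)$, which by the entropy theorem~(\ref{equationdefh}) behaves like $\mu^{*n}(\omega_n)=e^{-hn+o(n)}$ along a typical trajectory.

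Combining these two estimates: for $\nu_{\mathcal{B}}$-a.e.\ $\xi$ and along the sequence $r=C_2 r_n\to 0$,
$$
\frac{\log \nu_{\mathcal{B}}(B_{\bar\delta_\lambda}(\xi,C_2 r_n))}{\log(C_2 r_n)}\le \frac{\log \nu_{\mathcal{B}}(\Pi(g_n,R))}{\log(C_2 r_n)}=\frac{-hn+o(n)}{(ln+o(n))\log\lambda}\longrightarrow \frac{-1}{\log\lambda}\cdot\frac{h}{l}.
$$
Since $r_{n+1}/r_n=\lambda^{d(1,g_{n+1})-d(1,g_n)}$ and the increments $d(1,g_{n+1})-d(1,g_n)$ are bounded by $d(\omega_n,\omega_{n+1})+o(n)$-controlled quantities (and in any case one only needs a subsequence together with a standard interpolation between consecutive radii, using $\nu_{\mathcal{B}}(B(\xi,r))\le \nu_{\mathcal{B}}(B(\xi,r_n))$ for $r\le r_n$), one concludes $\limsup_{r\to0}\frac{\log\nu_{\mathcal{B}}(B_{\bar\delta_\lambda}(\xi,r))}{\log r}\le \frac{-1}{\log\lambda}\frac{h}{l}$ for $\nu_{\mathcal{B}}$-a.e.\ $\xi$. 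By the characterization of $\overline{Hdim}$ as a $\nu_{\mathcal{B}}$-essential supremum, this gives the claimed bound for $\nu_{\mathcal{B}}$. For $\nu_{\mathcal{F}}$ on the Floyd boundary, the argument is the same: almost every trajectory converges to a conical limit point, over which the map $\phi$ of Theorem~\ref{mapGerasimov} is injective, so $\nu_{\mathcal{F}}$ is the pushforward of nothing new and the shadow estimates transfer; alternatively one notes $\bar\delta_\lambda(\phi\xi,\phi\zeta)\le\delta_\lambda(\xi,\zeta)$ by~(\ref{shortcutsmallerFloyd}) forces $Hdim$ comparisons in the needed direction, using that $\phi$ is $1$-Lipschitz and bijective on the full-measure conical locus.

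The main obstacle I anticipate is making precise the comparison $\nu_{\mathcal{B}}(\Pi(g_n,R))\asymp \mu^{*n}(\omega_n)$ (up to subexponential factors) along typical trajectories. One direction — that the walk converging into $\Pi(g_n,R)$ is at least as likely as the walk being at $\omega_n$ at time $n$ and then escaping ``straight ahead'' — needs a lower bound on the probability of eventual convergence into a shadow from a point near $g_n$; this requires a non-degeneracy input such as the Markov property together with positivity of hitting probabilities for the convergence action on a non-trivial Floyd/Bowditch boundary. The other direction — that the shadow $\Pi(g_n,R)$ is \emph{not} much more likely than $\mu^{*n}(\omega_n)$ — uses that any geodesic $[1,\omega_\infty]$ meeting $B(g_n,R)$ pins down the walk's location at time $\approx n$ up to bounded error, combined with a ballot-type / deviation estimate controlling how much the walk can backtrack; here the sublinear tracking of transition points (Proposition~\ref{propsublineardeviationtransitionpoints}) and the bounded-increment property of $n\mapsto d(1,\omega_n)$ are exactly what is needed. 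Once this comparison is in hand the rest is bookkeeping with the entropy and drift limit theorems.
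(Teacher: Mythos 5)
There is a genuine gap, and it is one of direction. To bound $\overline{Hdim}(\nu)=\nu\text{-}\sup\limsup_{r\to0}\frac{\log\nu(B(\xi,r))}{\log r}$ from \emph{above}, you need a \emph{lower} bound on the measure of small balls around $\omega_\infty$ (i.e.\ $\nu(B(\xi,r))\geq r^{D+o(1)}$). Your central chain uses the inclusion $B_{\bar\delta_\lambda}(\xi,C_2r_n)\subset\Pi(g_n,R)$ together with an upper bound $\nu_{\mathcal{B}}(\Pi(g_n,R))\lesssim e^{-hn}$; but since $\log(C_2r_n)<0$, the inequality $\nu(B)\leq\nu(\Pi)$ gives $\frac{\log\nu(B)}{\log(C_2r_n)}\geq\frac{\log\nu(\Pi)}{\log(C_2r_n)}$, the opposite of what you wrote. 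That combination (ball inside shadow, shadow measure bounded above) is exactly the mechanism of the \emph{lower} bound on the Hausdorff dimension (Proposition~\ref{lower-boundharmonicFloyd} in the paper), not the upper bound. To run a shadow argument for the upper bound you would need the other inclusion $\Pi(g_n,R)\subset B_{\bar\delta_\lambda}(\xi,C_1r_n)$ paired with a \emph{lower} bound $\nu_{\mathcal{B}}(\Pi(g_n,R))\gtrsim\mu^{*n}(\omega_n)$; that lower bound is a shadow lemma for harmonic measure (uniform positivity of the probability of converging into a shadow from a nearby point) which you flag as the "main obstacle" but do not establish, and which the paper never proves. Two further issues: the transfer between boundaries also goes the wrong way --- since $\phi$ is $1$-Lipschitz, $\nu_{\mathcal{F}}(B_{\delta_\lambda}(\xi,r))\leq\nu_{\mathcal{B}}(B_{\bar\delta_\lambda}(\phi\xi,r))$, so an upper dimension bound for $\nu_{\mathcal{B}}$ does not yield one for $\nu_{\mathcal{F}}$ (Lemma~\ref{HDshortcutsmallerHDFloyd} states $\overline{Hdim}(\nu_{\mathcal{B}})\leq\overline{Hdim}(\nu_{\mathcal{F}})$, so one must prove the Floyd case first); and the upper-bound proposition does not actually need Proposition~\ref{propsublineardeviationtransitionpoints} at all.

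The paper's route avoids shadows entirely and is worth internalizing: following Le Prince and Kaimanovich, one fixes the cylinder $C_{\mathbf{x}}^n=\{\mathbf{x}':\omega'_n=\omega_n\}$, whose probability is exactly $\mu^{*n}(\omega_n)$, shows via a conditional-probability (martingale) argument that $\mathbb{P}(C_{\mathbf{x}}^n\cap\Omega_{\epsilon,\delta})$ is comparable to $\mu^{*n}(\omega_n)$ on a set of large measure, and then proves the purely geometric fact that every trajectory in $C_{\mathbf{x}}^n\cap\Omega_{\epsilon,\delta}$ converges to a point within Floyd distance $Mn\lambda^{(l-\epsilon)n}$ of $\omega_\infty$ (by summing $\delta_\lambda(\omega'_m,\omega'_{m+1})\leq m\lambda^{(l-\epsilon)m}$ over $m\geq n$, using that the connecting geodesics stay far from the origin). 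This directly gives $\nu_{\mathcal{F}}(B(\omega_\infty,Mn\lambda^{(l-\epsilon)n}))\geq\mathbb{P}(C_{\mathbf{x}}^n\cap\Omega_{\epsilon,\delta})\gtrsim e^{-(h+\epsilon)n}$, the needed lower bound on ball measure, with no shadow lemma and no use of transition points.
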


The proof is inspired from the proof of Le Prince for hyperbolic groups \cite{LePrinceupperbound}.
We first show that we only need to deal with the harmonic measure on the Floyd boundary.
Let $\phi$ be the map from the Floyd boundary to the Bowditch boundary given by Theorem~\ref{mapGerasimov}.
This map is surjective, equivariant and continuous.
Moreover, the preimage of a conical limit point consists of a singleton.

\begin{lem}\label{lemmapushforwardBowditch}
With the same notations, we have $\phi_*\nu_{\mathcal{F}}=\nu_{\mathcal{B}}$.
\end{lem}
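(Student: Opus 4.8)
The plan is to show that the harmonic measure $\nu_{\mathcal F}$ on the Floyd boundary pushes forward under the map $\phi$ to the harmonic measure $\nu_{\mathcal B}$ on the Bowditch boundary. Both measures are hitting measures of the same $\mu$-random walk, just on two different boundaries, so the natural approach is to identify them as exit laws and use the naturality of the construction.

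First I would recall that $\phi$ is equivariant and continuous, and that the $\mu$-random walk $(\omega_n)$ converges $\mathbb P$-almost surely to a point $\omega_\infty^{\mathcal F}\in\partial_{\mathcal F}G$ with law $\nu_{\mathcal F}$, by Karlsson's theorem (as recalled in Section~\ref{SSFloyd}). Since $\phi$ is continuous on the Floyd compactification and is the identity on $G$, the sequence $\omega_n=\phi(\omega_n)$ also converges to $\phi(\omega_\infty^{\mathcal F})$ in the Bowditch compactification. On the other hand, by \cite[Theorem~1.1]{MaherTiozzo} (invoked already in the proof of Proposition~\ref{propsublineardeviationtransitionpoints}), $\omega_n$ converges almost surely to a conical limit point $\omega_\infty^{\mathcal B}\in\partial_{\mathcal B}G$, whose law is by definition $\nu_{\mathcal B}$. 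Two limits of the same convergent sequence coincide, so $\phi(\omega_\infty^{\mathcal F})=\omega_\infty^{\mathcal B}$ almost surely. Taking laws gives $\phi_*\nu_{\mathcal F}=\nu_{\mathcal B}$.

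Alternatively, and perhaps more cleanly, I would use the characterization of the harmonic measure as the unique $\mu$-stationary measure concentrated on the appropriate boundary. The pushforward $\phi_*\nu_{\mathcal F}$ is a probability measure on $\partial_{\mathcal B}G$; since $\phi$ is equivariant and $\nu_{\mathcal F}$ is $\mu$-stationary ($\nu_{\mathcal F}=\sum_{g}\mu(g)\, g_*\nu_{\mathcal F}$), the pushforward $\phi_*\nu_{\mathcal F}$ is also $\mu$-stationary. Because the action $G\curvearrowright\partial_{\mathcal B}G$ is a non-elementary convergence action, there is a unique $\mu$-stationary probability measure on $\partial_{\mathcal B}G$ (this is the standard uniqueness statement for contracting/convergence actions, and $\nu_{\mathcal B}$ is by definition that measure), so $\phi_*\nu_{\mathcal F}=\nu_{\mathcal B}$.

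The only genuine subtlety — and the step I would be most careful about — is making sure the two ``almost sure limit'' statements refer to limits in compatible topologies, i.e.\ that the limit of $\omega_n$ in the Bowditch compactification really is $\phi$ applied to its limit in the Floyd compactification. This is exactly where continuity of $\phi$ on the whole compactification (Theorem~\ref{mapGerasimov}), together with the fact that $\phi$ restricts to the identity on $G$, is used: if $\omega_n\to\xi$ in $\overline\Gamma^{\mathcal F}$ then $\omega_n=\phi(\omega_n)\to\phi(\xi)$ in the Bowditch compactification. Everything else is routine. I would present the first (exit-point) argument as the main proof since it is the most transparent, and the stationarity argument is available as a fallback.
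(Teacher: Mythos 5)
Your proposal is correct, and it in fact contains two complete arguments. The second one (stationarity plus uniqueness of the $\mu$-stationary measure on $\partial_{\mathcal B}G$) is exactly the paper's proof: the authors verify that $\nu_{\mathcal F}$ is $\mu$-stationary, deduce by equivariance of $\phi$ that $\phi_*\nu_{\mathcal F}$ is $\mu$-stationary, and then prove uniqueness of the stationary measure by first showing (via a non-fixed-finite-set argument) that any stationary measure is non-atomic, hence concentrated on conical points, and then invoking Maher--Tiozzo for uniqueness on that set. Your first (exit-point) argument is genuinely different and more elementary: since $\nu_{\mathcal B}$ is by definition the hitting measure, i.e.\ the law of the almost sure limit of $\omega_n$ in the Bowditch compactification, and since $\phi$ is continuous on the whole Floyd compactification with $\phi|_G=\mathrm{id}$, the limit in the Bowditch compactification is $\phi(\omega_\infty^{\mathcal F})$ by uniqueness of limits in a Hausdorff space, and the lemma follows by taking laws; you correctly isolate the continuity of $\phi$ on the compactification as the one nontrivial input (you do not even need Maher--Tiozzo for convergence here, only to identify the limit as conical). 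What the paper's longer route buys is the statement that $\nu_{\mathcal B}$ is the \emph{unique} $\mu$-stationary measure on the Bowditch boundary, which is reused later (in the proof of Proposition~\ref{propPoissonends}) for a map $\psi$ that is not obviously realized as a limit of trajectories in the same way; your direct argument proves the lemma but not that auxiliary fact.
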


\begin{proof}
Recall that a measure $\kappa$ on a set $X$ endowed with an action of $G$ is called $\mu$-stationary if it satisfies that for every measurable set $A\subset X$,
$$\kappa(A)=\sum_{g\in G}\kappa(g^{-1}A)\mu(g).$$
Combining \cite[Section~6, Theorem]{KarlssonPoisson} with \cite[Theorem~2.4]{Kaimanovichhyperbolic}, we get that the harmonic measure $\nu_{\mathcal{F}}$ on the Floyd boundary is $\mu$-stationary.
Since the map $\phi$ is equivariant, $\phi_*\nu_{\mathcal{F}}$ also is $\mu$-stationary.
Now, $\nu_{\mathcal{B}}$ is the only $\mu$-stationary measure on the Bowditch boundary, so that $\phi_*\nu_{\mathcal{F}}=\nu_B$.
This can be seen by verifying that the Bowditch compactification satisfies the conditions of \cite[Theorem~2.4]{Kaimanovichhyperbolic}, which is done for some examples of relatively hyperbolic groups in \cite[Section~9]{Kaimanovichhyperbolic}.
This can also be done directly by using results of Maher and Tiozzo \cite{MaherTiozzo}.
First, we prove by contradiction that any $\mu$-stationary measure on the Bowditch boundary has no atom. The following fact is well-known and we include a proof for completeness.
\begin{claim}\label{stationarywithoutatom}
Let $G$ be a group acting on a measured space $(X,\kappa)$ such that $\kappa$ is $\mu$-stationary and $G$ does not fix any finite set on $X$.
Then, $\kappa$ has not atom. 
\end{claim}
\begin{proof}[Proof of the claim]
If this were the case, then we would choose an atom $x\in X$ of maximal measure.
Since the support $\mathrm{supp}(\mu)$ generates the group $G$ as a semi-group, we would necessarily have $\kappa(g^{-1}x)=\kappa(x)$ for every $g$, for $\kappa$ is $\mu$-stationary, so
$$\kappa(x)=\sum_{g\in G}\kappa(g^{-1}x)\mu(g)\leq \sum_{g\in G}\kappa(x)\mu(g).$$
Hence, the orbit of $x$ would be finite, which is impossible since the group $G$ does not fix any finite set on $X$.
\end{proof}
We can apply this claim to our situation because $G$ is non-elementary and so it does not fix any finite set on the Bowditch boundary.
Thus, any $\mu$-stationary measure is uniquely defined by its restriction to the set of conical limit points and now \cite[Theorem~1.1]{MaherTiozzo} shows that there is a unique $\mu$-stationary probability measure on this set, which concludes the proof.
\end{proof}

We deduce the following lemma.
\begin{lem}\label{HDshortcutsmallerHDFloyd}
With the above notations, we have
$$\overline{Hdim}_{\bar{\delta}_{\lambda}}(\nu_{\mathcal{B}})\leq\overline{Hdim}_{{\delta}_{\lambda}}(\nu_{\mathcal{F}})$$
and
$$Hdim_{\bar{\delta}_{\lambda}}(\nu_{\mathcal{B}})\leq Hdim_{{\delta}_{\lambda}}(\nu_{\mathcal{F}}).$$
\end{lem}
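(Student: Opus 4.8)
The plan is to use that $\phi$ is $1$-Lipschitz from $(\partial_{\mathcal F}G,\delta_\lambda)$ to $(\partial_{\mathcal B}G,\bar\delta_\lambda)$ — this is precisely inequality~\eqref{shortcutsmallerFloyd} — together with the identity $\phi_*\nu_{\mathcal F}=\nu_{\mathcal B}$ from Lemma~\ref{lemmapushforwardBowditch}, and then to compare the two measures ball by ball via the characterization $Hdim(\kappa)=\kappa\text{-}\sup\liminf_{r\to0}\frac{\log\kappa(B(x,r))}{\log r}$ and its $\limsup$-analogue for $\overline{Hdim}$ recalled from \cite[Corollary~8.2]{MSU}. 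The argument is the same for the two inequalities, one using $\limsup$ and one using $\liminf$.

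First I would record the basic inclusion: $1$-Lipschitzness of $\phi$ gives, for every $\xi\in\partial_{\mathcal F}G$ and every $r>0$, that $\phi\bigl(B_{\delta_\lambda}(\xi,r)\bigr)\subseteq B_{\bar\delta_\lambda}(\phi(\xi),r)$, hence $B_{\delta_\lambda}(\xi,r)\subseteq\phi^{-1}\bigl(B_{\bar\delta_\lambda}(\phi(\xi),r)\bigr)$. Applying the push-forward identity then yields
$$\nu_{\mathcal B}\bigl(B_{\bar\delta_\lambda}(\phi(\xi),r)\bigr)=\nu_{\mathcal F}\bigl(\phi^{-1}(B_{\bar\delta_\lambda}(\phi(\xi),r))\bigr)\ge\nu_{\mathcal F}\bigl(B_{\delta_\lambda}(\xi,r)\bigr).$$
Since $\log r<0$ for $r<1$, dividing by $\log r$ reverses the inequality, so for every $\xi$,
$$\limsup_{r\to0}\frac{\log\nu_{\mathcal B}(B_{\bar\delta_\lambda}(\phi(\xi),r))}{\log r}\le\limsup_{r\to0}\frac{\log\nu_{\mathcal F}(B_{\delta_\lambda}(\xi,r))}{\log r},$$
and likewise with $\liminf$ in place of $\limsup$.

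Next I would take essential suprema. By \cite[Corollary~8.2]{MSU}, the $\nu_{\mathcal F}$-essential supremum of the right-hand side above is exactly $\overline{Hdim}_{\delta_\lambda}(\nu_{\mathcal F})$ (resp.\ $Hdim_{\delta_\lambda}(\nu_{\mathcal F})$ in the $\liminf$ version). For the left-hand side, write it as $g\circ\phi$ where $g(\eta)=\limsup_{r\to0}\frac{\log\nu_{\mathcal B}(B_{\bar\delta_\lambda}(\eta,r))}{\log r}$; since $\phi_*\nu_{\mathcal F}=\nu_{\mathcal B}$, for any constant $C$ we have $\nu_{\mathcal F}(\{g\circ\phi>C\})=\nu_{\mathcal F}(\phi^{-1}(\{g>C\}))=\nu_{\mathcal B}(\{g>C\})$, so $\{g\circ\phi>C\}$ is $\nu_{\mathcal F}$-null if and only if $\{g>C\}$ is $\nu_{\mathcal B}$-null, whence $\nu_{\mathcal F}\text{-}\sup(g\circ\phi)=\nu_{\mathcal B}\text{-}\sup g=\overline{Hdim}_{\bar\delta_\lambda}(\nu_{\mathcal B})$ (and similarly with $\liminf$). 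Combining the two sides gives $\overline{Hdim}_{\bar\delta_\lambda}(\nu_{\mathcal B})\le\overline{Hdim}_{\delta_\lambda}(\nu_{\mathcal F})$ and $Hdim_{\bar\delta_\lambda}(\nu_{\mathcal B})\le Hdim_{\delta_\lambda}(\nu_{\mathcal F})$.

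I expect no genuine obstacle here: the statement is soft once \eqref{shortcutsmallerFloyd} and Lemma~\ref{lemmapushforwardBowditch} are available. The only points requiring care are the routine measurability of $\xi\mapsto\nu_{\mathcal F}(B_{\delta_\lambda}(\xi,r))$ and of $g$, and the change-of-variables identity for essential suprema under a push-forward, used in the last step. Note in particular that injectivity of $\phi$ on the set of conical limit points, though true and important elsewhere, is not needed for this comparison.
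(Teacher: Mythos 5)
Your proposal is correct and follows essentially the same route as the paper: both arguments reduce to the single estimate $\nu_{\mathcal F}(B_{\delta_\lambda}(\xi,r))\le\nu_{\mathcal B}(B_{\bar\delta_\lambda}(\phi(\xi),r))$ obtained from inequality~(\ref{shortcutsmallerFloyd}) and $\phi_*\nu_{\mathcal F}=\nu_{\mathcal B}$, and then pass to local dimensions and essential suprema via \cite[Corollary~8.2]{MSU}. Your treatment of the essential-supremum change of variables under the push-forward is a bit more explicit than the paper's, but the content is the same.
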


\begin{proof}
By~(\ref{shortcutsmallerFloyd}), $\bar{\delta}_\lambda\leq \delta_\lambda$ on the set of conical limit points.
Hence, for every point $\xi$ such that $\phi(\xi)$ is a conical limit point,
$$\nu_{\mathcal{B}}\left (\phi(B(\xi,r))\right )\leq \nu_{\mathcal{B}}(B(\phi(\xi),r)).$$
Since $\nu_{\mathcal{B}}=\phi_*\nu_{\mathcal{F}}$,
$\nu_{\mathcal{B}}\left (\phi(B(\xi,r))\right )=\nu_{\mathcal{F}}\left (\phi^{-1}\circ \phi(B(\xi,r))\right )$.
We thus get
$$\nu_{\mathcal{F}}(B(\xi,r))\leq \nu_{\mathcal{F}}\left (\phi^{-1}\circ \phi(B(\xi,r))\right )\leq \nu_{\mathcal{B}}(B(\phi(\xi),r)).$$
This is true for every point $\xi$ such that $\phi(\xi)$ is a conical limit point.
Since the map $\phi$ is surjective and $\nu_{\mathcal{B}}$ gives full measure to the set of conical limit points, this concludes the proof.
\end{proof}

We thus only need to show that $$\overline{Hdim}_{{\delta}_{\lambda}}(\nu_{\mathcal{F}})\leq\frac{-1}{\log \lambda} \frac{h}{l}$$
to prove Proposition~\ref{upper-boundharmonicFloyd}.

We denote by $\omega_n$ the random walk driven by $\mu$ at time $n$. Let $\Omega=G^{\mathbb N_{\ge 0}}$ be the trajectory space of the random walk.   We also denote by $\mathbf{x}=(e,\omega_1,...,\omega_n,...)\in \Omega$ a sample path, i.e.\ a trajectory of the random walk starting from the identity $1$.
Recall that the random walk $\omega_n$ almost surely converges to a point $\omega_\infty$ in the Floyd boundary.

\begin{lem}\label{ZeroAsymJumpLem}
For $\mathbb P$-almost every $\mathbf{x}=(\omega_n)$, we have that
$$\frac{1}{n}d(\omega_n,\omega_{n+1})\underset{n\to +\infty}{\longrightarrow} 0.$$
\end{lem}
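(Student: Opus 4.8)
The plan is to deduce this from the finite first moment assumption via a Borel--Cantelli argument. The key observation is that $d(\omega_n, \omega_{n+1}) = d(1, \omega_n^{-1}\omega_{n+1})$, and $\omega_n^{-1}\omega_{n+1} = s_{n+1}$ is the $(n+1)$-st increment of the random walk, which has law $\mu$. So $d(\omega_n,\omega_{n+1})$ is distributed like $d(1,\omega_1)$, an integrable random variable. First I would recall the standard fact that if $X_1, X_2, \ldots$ are identically distributed (not necessarily independent) with $\mathbb{E}[|X_1|] < \infty$, then $\frac{1}{n}X_n \to 0$ almost surely. Indeed, for any $\varepsilon > 0$, integrability of $|X_1|$ gives $\sum_{n\ge 1}\mathbb{P}(|X_1| \ge \varepsilon n) \le \frac{1}{\varepsilon}\mathbb{E}[|X_1|] < \infty$, hence $\sum_{n\ge 1}\mathbb{P}(|X_n| \ge \varepsilon n) < \infty$ since the $X_n$ are identically distributed, and Borel--Cantelli yields that almost surely $|X_n| < \varepsilon n$ for all large $n$. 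Taking $\varepsilon = 1/k$ and intersecting over $k \in \mathbb{N}$ gives $\frac{1}{n}X_n \to 0$ a.s.

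Applying this with $X_n = d(1, s_n)$, which has finite expectation $L(\mu) < \infty$ by hypothesis, I conclude that $\frac{1}{n}d(1, s_{n+1}) \to 0$ almost surely, that is, $\frac{1}{n}d(\omega_n, \omega_{n+1}) \to 0$ almost surely, which is exactly the claim.

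There is no real obstacle here; the only point requiring a word of care is that the increments are genuinely i.i.d.\ with law $\mu$ under $\mathbb{P}$, which is immediate from the definition of $\mathbb{P}$ as the pushforward of the product measure $(G^{\mathbb{N}}, \mu^{\mathbb{N}})$ recalled in the introduction. One could alternatively phrase the argument using the shift $T$ and Kingman's theorem, or simply note that $\frac{1}{n}d(1,\omega_{n+1}) \to l$ and $\frac{1}{n}d(1,\omega_n) \to l$ together with the triangle inequality do \emph{not} suffice (they only bound the $\limsup$ of $\frac{1}{n}d(\omega_n,\omega_{n+1})$ by $2l$), so the Borel--Cantelli route via the identical distribution of the increments is the honest one.
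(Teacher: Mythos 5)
Your proof is correct. It takes a slightly different (though equally standard) route from the paper: the paper notes that the increments $d(\omega_k,\omega_{k+1})=d(1,g_{k+1})$ are i.i.d.\ and integrable, applies the strong law of large numbers to get $\frac{1}{n}\sum_{k=0}^{n}d(\omega_k,\omega_{k+1})\to L(\mu)$, and then obtains $\frac{1}{n}d(\omega_n,\omega_{n+1})$ as the difference of two consecutive Ces\`aro averages, which tends to $L-L=0$. You instead prove the general fact that for identically distributed integrable variables $X_n$ one has $\frac{1}{n}X_n\to 0$ a.s., via the tail-sum bound $\sum_n\mathbb{P}(|X_1|\ge\varepsilon n)\le\frac{1}{\varepsilon}\mathbb{E}|X_1|$ and the first Borel--Cantelli lemma. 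Your argument is marginally more economical in its hypotheses: it never uses independence of the increments, only that they are identically distributed with finite mean, whereas the paper's telescoping trick leans on the full SLLN. Both are complete and both exploit exactly the same structural input (the increments have law $\mu$ and $L(\mu)<\infty$), so the difference is one of packaging rather than substance. Your closing remark that the drift alone only bounds $\limsup\frac{1}{n}d(\omega_n,\omega_{n+1})$ by $2l$ is also accurate and a worthwhile sanity check.
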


\begin{proof}
Note that $d(\omega_n,\omega_{n+1})=d(1,g_{n+1})$ where $g_n$ are the increments of the random walk.
In particular, the random variables $d(\omega_n,\omega_{n+1})$ are independent, identically distributed and are integrable, since $\mu$ has finite first moment.
If follows from the law of large numbers that
$$\frac{1}{n}\sum_{k=0}^nd(\omega_k,\omega_{k+1})\underset{n\to \infty}{\longrightarrow}L=\mathbb{E}(d(1,\omega_1))<+\infty.$$
In particular,
$$\frac{1}{n}d(\omega_n,\omega_{n+1})=\frac{1}{n}\sum_{k=0}^nd(\omega_k,\omega_{k+1})-\frac{1}{n}\sum_{k=0}^{n-1}d(\omega_k,\omega_{k+1})\underset{n\to \infty}{\longrightarrow} L-L=0.$$
This concludes the proof.
\end{proof}

As a corollary, we have the following.

\begin{lem}
For $\mathbb P$-almost every $\mathbf{x}=(\omega_n)$, we have that for every geodesic $\gamma_n$ from $\omega_n$ to $\omega_{n+1}$,
$$\frac{1}{n}d(1,\gamma_n)\underset{n\to \infty}{\longrightarrow}l.$$
\end{lem}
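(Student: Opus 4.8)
The plan is to combine the previous lemma (sublinear jumps, i.e.\ $\frac{1}{n}d(\omega_n,\omega_{n+1})\to 0$) with the almost sure convergence $\frac{1}{n}d(1,\omega_n)\to l$ coming from Kingman's subadditive ergodic theorem applied to the sequence $L(\mu^{*n})$. The point is that a geodesic $\gamma_n$ from $\omega_n$ to $\omega_{n+1}$ is a short segment (its length is $d(\omega_n,\omega_{n+1})=o(n)$) that stays near $\omega_n$, so its distance to the basepoint $1$ must be comparable to $d(1,\omega_n)$, which is $\sim ln$.

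Concretely, the first step is the triangle inequality: for any point $x$ on $\gamma_n$ one has $d(1,x)\ge d(1,\omega_n)-d(\omega_n,x)\ge d(1,\omega_n)-d(\omega_n,\omega_{n+1})$, since $x$ lies on a geodesic from $\omega_n$ to $\omega_{n+1}$ and hence $d(\omega_n,x)\le d(\omega_n,\omega_{n+1})$. Taking the infimum over $x\in\gamma_n$ gives $d(1,\gamma_n)\ge d(1,\omega_n)-d(\omega_n,\omega_{n+1})$. For the upper bound, simply note $d(1,\gamma_n)\le d(1,\omega_n)$ because $\omega_n$ itself is an endpoint of $\gamma_n$. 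Dividing by $n$:
\begin{equation*}
\frac{d(1,\omega_n)}{n}-\frac{d(\omega_n,\omega_{n+1})}{n}\le \frac{d(1,\gamma_n)}{n}\le \frac{d(1,\omega_n)}{n}.
\end{equation*}

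The final step is to let $n\to\infty$. On a set of full $\mathbb{P}$-measure we have $\frac{1}{n}d(1,\omega_n)\to l$ (equation~(\ref{equationdefl})) and, by the preceding Lemma~\ref{ZeroAsymJumpLem}, $\frac{1}{n}d(\omega_n,\omega_{n+1})\to 0$; intersecting these two full-measure events, the left- and right-hand sides both converge to $l$, so the squeeze theorem forces $\frac{1}{n}d(1,\gamma_n)\to l$. Since the estimate is uniform in the choice of geodesic $\gamma_n$ (only the endpoints matter), the conclusion holds simultaneously for every such geodesic. There is no real obstacle here: the lemma is an immediate corollary of Lemma~\ref{ZeroAsymJumpLem} together with the definition of the drift, and the only thing to be slightly careful about is that the bound on $d(\omega_n,x)$ uses that $x$ lies \emph{on} the geodesic segment (so that $d(\omega_n,x)+d(x,\omega_{n+1})=d(\omega_n,\omega_{n+1})$), which is exactly the hypothesis.
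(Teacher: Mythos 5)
Your proof is correct and follows essentially the same route as the paper: sandwich $d(1,\gamma_n)$ between $d(1,\omega_n)-d(\omega_n,\omega_{n+1})$ and $d(1,\omega_n)$ by the triangle inequality, then invoke the almost sure limits $\frac{1}{n}d(1,\omega_n)\to l$ and $\frac{1}{n}d(\omega_n,\omega_{n+1})\to 0$ from the preceding lemma. (The paper uses the slightly weaker upper bound $d(1,\gamma_n)\le d(1,\omega_n)+d(\omega_n,\omega_{n+1})$, but this is an immaterial difference.)
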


\begin{proof}
Let $\gamma_n$ be such a geodesic.
Then, we have $d(1,\gamma_n)\leq d(1,\omega_n)+d(\omega_n,\omega_{n+1})$ and similarly, we have $d(1,\omega_n)\leq d(1,\gamma_n)+d(\omega_n,\omega_{n+1})$.
Since $\frac{1}{n}d(1,\omega_n)$ almost surely converges to $l$ by~(\ref{equationdefl}) and $\frac{1}{n}d(\omega_{n},\omega_{n+1})$ almost surely converges to 0 by Lemma~\ref{ZeroAsymJumpLem}, we have that $\frac{1}{n}d(1,\gamma_n)$ also almost surely converges to $l$.
\end{proof}

For every $\epsilon, N>0$, we let $\Omega_\epsilon^N$ be the set of trajectories $\mathbf{x}$ such that for every $n\geq N$, we have
\begin{enumerate}[(a)]
    \item 
    $d(1,\gamma_n)\geq (l-\epsilon)n$
    \item
    $-\log \mu^{*n}(\omega_n)\leq (h+\epsilon)n$, and 
    \item
    $d(\omega_n,\omega_{n+1})\leq n$
\end{enumerate}
Observe that for every $\delta>0$, there exists $N_{\epsilon,\delta}$ such that
$$\mathbb{P}(\Omega_\epsilon^{N_{\epsilon,\delta}})\geq 1-\delta.$$ 
Indeed, this follows immediately from the almost sure convergence of the following limits  
$$\frac{1}{n}d(1,\gamma_n) \underset{n\to \infty}{\longrightarrow} l, \;\; \frac{1}{n}d(\omega_n,\omega_{n+1}) \underset{n\to \infty}{\longrightarrow} 0, \;\;   \frac{-1}{n}\log \mu^{*n}(\omega_n)\underset{n\to \infty}{\longrightarrow} h$$
where the last one is called the Shannon-McMillan-Breiman Theorem and is given by~(\ref{equationdefh}).

To simplify, we set $\Omega_{\epsilon,\delta}=\Omega_\epsilon^{N_{\epsilon,\delta}}$.
Also, for fixed $\mathbf{x}$, we let $C_{\mathbf{x}}^n$ be the set of trajectories $\mathbf{x}'$ such that $\omega_n'=\omega_n$.
Finally, for $\xi$ in the Floyd boundary and $r>0$, we set
$$D(\xi, r)=\{\mathbf{x}: \delta_{\lambda}(\omega_\infty,\xi)\leq r\}.$$

\begin{lem}\label{lemma2.1LePrince}
There exists a set $\Lambda_{\epsilon,\delta}\subset \Omega_{\epsilon,\delta}$ of measure at least $1-2\delta$ on which the quantity
$$\frac{\mathbb{P}(C_{\mathbf{x}}^n\cap \Omega_{\epsilon,\delta})}{\mu^{*n}(\omega_n)}$$
admits a positive limit.
In particular, for every $\mathbf{x}=(\omega_n)\in \Lambda_{\epsilon,\delta}$, we have
$$\limsup_{n\to \infty}\frac{1}{n}\log \mathbb{P}(C_{\mathbf{x}}^n\cap \Omega_{\epsilon,\delta})=\limsup_{n\to \infty}\frac{1}{n}\log \mu^{*n}(\omega_n).$$
\end{lem}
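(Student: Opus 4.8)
The plan is to apply the increasing martingale convergence theorem to the ratio $\mathbb{P}(C_{\mathbf{x}}^n \cap \Omega_{\epsilon,\delta})/\mu^{*n}(\omega_n)$, viewed as a function of $\mathbf{x}$. First I would observe that $\mu^{*n}(\omega_n) = \mathbb{P}(C_{\mathbf{x}}^n)$, since $C_{\mathbf{x}}^n$ is exactly the cylinder of trajectories agreeing with $\mathbf{x}$ at time $n$, and the law of $\mathbf{x}\mapsto\omega_n$ is $\mu^{*n}$. Hence the ratio in question is
$$
Y_n(\mathbf{x}) := \frac{\mathbb{P}(C_{\mathbf{x}}^n \cap \Omega_{\epsilon,\delta})}{\mathbb{P}(C_{\mathbf{x}}^n)} = \mathbb{E}\big[\mathbf{1}_{\Omega_{\epsilon,\delta}} \,\big|\, \omega_n\big](\mathbf{x}),
$$
that is, $Y_n$ is the conditional expectation of the indicator of $\Omega_{\epsilon,\delta}$ with respect to the $\sigma$-algebra $\mathcal{A}_n$ generated by the single coordinate $\omega_n$. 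The key structural point is that the filtration is \emph{increasing}: knowing $\omega_{n+1}$ together with the fact that the increments are i.i.d.\ does not quite give $\omega_n$, so one must instead use the \emph{exit} $\sigma$-algebras. The cleaner route, following Le Prince, is to note that $\{C_{\mathbf x}^n\}$ generates, as $n\to\infty$, the $\sigma$-algebra of the Poisson boundary (up to $\mathbb{P}$-null sets the tail $\sigma$-algebra), and that along a trajectory the cylinders $C_{\mathbf x}^n$ shrink to the point $\omega_\infty$; so $Y_n$ is a (reverse) martingale-type sequence that converges $\mathbb{P}$-a.e.

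More precisely, I would argue as follows. Set $Y_n = \mathbb{P}(C_{\mathbf{x}}^n \cap \Omega_{\epsilon,\delta})/\mathbb{P}(C_{\mathbf{x}}^n)$. Since $\Omega_{\epsilon,\delta} = \Omega_\epsilon^{N}$ is defined by conditions that, for each fixed trajectory, are eventually determined, the events $C_{\mathbf{x}}^n \cap \Omega_{\epsilon,\delta}$ behave well under conditioning, and the sequence $(Y_n)$ is a bounded martingale with respect to the decreasing sequence of $\sigma$-algebras $\mathcal{B}_n$ generated by $(\omega_n, \omega_{n+1}, \dots)$ — equivalently one uses that $\mathbb{E}[Y_{n+1}\mid \mathcal{B}_n] = Y_n$ because summing $\mathbb{P}(C^{n+1}_{\mathbf x}\cap\Omega_{\epsilon,\delta})$ over the finitely many ways to extend $\omega_n$ to $\omega_{n+1}$ recovers $\mathbb{P}(C^n_{\mathbf x}\cap\Omega_{\epsilon,\delta})$, and similarly for the denominator. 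By the martingale convergence theorem, $Y_n \to Y_\infty$ $\mathbb{P}$-a.e.\ and in $L^1$, with $\mathbb{E}[Y_\infty] = \mathbb{E}[Y_1] \ge \mathbb{E}[\mathbf 1_{\Omega_{\epsilon,\delta}}] = \mathbb{P}(\Omega_{\epsilon,\delta}) \ge 1-\delta$. Since $0\le Y_\infty \le 1$, Markov's inequality gives that the set $\Lambda_{\epsilon,\delta} := \{Y_\infty > 0\}$ — more quantitatively one can take $\{Y_\infty \ge \delta/(1-\delta)\}$ if a uniform lower bound is wanted, but $\{Y_\infty>0\}$ suffices — has measure at least $1-2\delta$: indeed $\mathbb{P}(Y_\infty = 0) = \mathbb{E}[\mathbf 1_{Y_\infty=0}(1-Y_\infty)] \le \mathbb{E}[1-Y_\infty] = 1 - \mathbb{E}[Y_\infty] \le \delta$, so in fact $\mathbb{P}(\Lambda_{\epsilon,\delta}) \ge 1-\delta \ge 1-2\delta$. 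On $\Lambda_{\epsilon,\delta}$ the limit of $Y_n$ is strictly positive, which is the first assertion.

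For the second assertion, on $\Lambda_{\epsilon,\delta}$ write $\mathbb{P}(C_{\mathbf{x}}^n \cap \Omega_{\epsilon,\delta}) = Y_n(\mathbf{x})\,\mu^{*n}(\omega_n)$ with $Y_n(\mathbf{x}) \to Y_\infty(\mathbf{x}) \in (0,1]$. Taking logarithms and dividing by $n$,
$$
\frac{1}{n}\log \mathbb{P}(C_{\mathbf{x}}^n \cap \Omega_{\epsilon,\delta}) = \frac{1}{n}\log Y_n(\mathbf{x}) + \frac{1}{n}\log \mu^{*n}(\omega_n),
$$
and since $\frac1n\log Y_n(\mathbf x)\to 0$ (because $Y_n\to Y_\infty>0$), the $\limsup$ of the left-hand side equals the $\limsup$ of $\frac1n\log\mu^{*n}(\omega_n)$. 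This gives exactly the displayed identity.

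I expect the main obstacle to be the measure-theoretic bookkeeping that makes $(Y_n)$ a genuine martingale for the right filtration: one must be careful that $\Omega_{\epsilon,\delta}$, although not determined by $\omega_n$ alone, is handled correctly when conditioning, and that the cylinders $C_{\mathbf x}^n$ generate (modulo $\mathbb P$-null sets) the tail/Poisson $\sigma$-algebra so that the convergence statement is non-vacuous. Once the martingale structure is set up — using that passing from time $n+1$ to time $n$ corresponds to summing the cylinder probabilities over a finite partition — the convergence, the lower bound on $\mathbb{P}(\Lambda_{\epsilon,\delta})$, and the logarithmic asymptotics are all routine.
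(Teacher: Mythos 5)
Your overall strategy is the same as the paper's (which reproduces the argument from the proof of \cite[Theorem~1.4.1]{KaimanovichHD}): identify $\mathbb{P}(C_{\mathbf{x}}^n\cap\Omega_{\epsilon,\delta})/\mu^{*n}(\omega_n)$ as a conditional probability, prove it converges a.e.\ to a limit that is positive on a large set, and then the logarithmic asymptotics follow by writing $\log\mathbb{P}(C_{\mathbf{x}}^n\cap\Omega_{\epsilon,\delta})=\log Y_n+\log\mu^{*n}(\omega_n)$ with $\frac1n\log Y_n\to 0$. The endgame of your argument is fine, with one bookkeeping correction: the lemma requires $\Lambda_{\epsilon,\delta}\subset\Omega_{\epsilon,\delta}$ (this containment is used later, when one invokes property (b) of $\Omega_{\epsilon,\delta}$ for points of $\Lambda_{\epsilon,\delta}$), so you must take $\Lambda_{\epsilon,\delta}=\Omega_{\epsilon,\delta}\cap\{Y_\infty>0\}$; your estimate $\mathbb{P}(Y_\infty=0)\le\delta$ together with $\mathbb{P}(\Omega_{\epsilon,\delta})\ge 1-\delta$ then gives exactly the stated bound $1-2\delta$, which is the same computation the paper performs.

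The genuine gap is in your justification of the a.e.\ convergence of $Y_n$. Your claimed martingale identity $\mathbb{E}[Y_{n+1}\mid\mathcal{B}_n]=Y_n$, verified by ``summing $\mathbb{P}(C^{n+1}_{\mathbf{x}}\cap\Omega_{\epsilon,\delta})$ over the finitely many ways to extend $\omega_n$ to $\omega_{n+1}$,'' is false: $C^{n+1}_{\mathbf{x}}$ is \emph{not} a subset of $C^{n}_{\mathbf{x}}$, since fixing $\omega'_{n+1}=\omega_n g$ does not force $\omega'_n=\omega_n$, so the events $\{\omega'_{n+1}=\omega_n g\}$ do not partition $\{\omega'_n=\omega_n\}$ and the sum does not recover $\mathbb{P}(C^n_{\mathbf{x}}\cap\Omega_{\epsilon,\delta})$. (Also, ``finitely many extensions'' presumes finite support of $\mu$, which is not assumed.) In particular the $Y_n$ are not nested conditional expectations for any single filtration in an obvious way: $Y_n=\mathbb{E}[\mathbf{1}_{\Omega_{\epsilon,\delta}}\mid\sigma(\omega_n)]$, and the $\sigma$-algebras $\sigma(\omega_n)$ are neither increasing nor decreasing. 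The correct route is the one the paper cites: the sequence $Z_n=\mathbb{E}[\mathbf{1}_{\Omega_{\epsilon,\delta}}\mid\mathcal{A}^{\ge n}]$ is an honest reverse martingale for the decreasing filtration $\mathcal{A}^{\ge n}=\sigma(\omega_k:k\ge n)$ and converges a.e.\ to $\mathbb{P}(\Omega_{\epsilon,\delta}\mid\mathcal{A}^{\infty})$; one then needs the Markov property to compare $Z_n$ with $Y_n$. This comparison is immediate only for events measurable with respect to finitely many coordinates, whereas $\Omega_{\epsilon,\delta}$ depends on the whole trajectory, so an additional approximation step is required --- this is precisely the content of \cite[(1.4.4)]{KaimanovichHD} (``the Markov property and the conditional probabilities convergence theorem''), which the paper invokes rather than reproves. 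As written, your proof replaces this delicate step with an incorrect computation, so the convergence of $Y_n$ is not established.
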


The proof can be found within the proof of \cite[Theorem~1.4.1]{KaimanovichHD}.
We rewrite it for convenience.

\begin{proof}
First note that $\mathbb{P}(C_{\mathbf{x}}^n)=\mu^{*n}(\omega_n)$.
So $\frac{\mathbb{P}(C_{\mathbf{x}}^n\cap \Omega_{\epsilon,\delta})}{\mu^{*n}(\omega_n)}$ is the conditional probability of $\Omega_{\epsilon,\delta}$ with respect to $C_{\mathbf{x}}^n$, that we denote by $\mathbb{P}(\Omega_{\epsilon,\delta}|C_{\mathbf{x}}^n)$.
We introduce the $\sigma$-algebra $\mathcal{A}^{\geq n}$ of sample paths determined by the coordinates $\omega_k$, $k\geq n$.
The tail $\sigma$-algebra $\mathcal{A}^{\infty}$ is then the intersection of the non-increasing sequence of $\sigma$-algebras $\mathcal{A}^{\geq n}$.
There is a projection from the path space to the tail boundary that we denote by $\mathbf{tail}$.
We refer to \cite{Kaimanovichboundaries} for more details.
We let $\mathbb{P}(A|\mathbf{tail}(\mathbf{x}))$ be the conditional probability $\mathbb{P}(A|\mathcal{A}^{\infty})$ evaluated at the sample path $\mathbf{x}$.

The Markov property and the conditional probabilities convergence theorem show that $\mathbb{P}(\Omega_{\epsilon,\delta}|C_{\mathbf{x}}^n)$ converges to $\mathbb{P}(\Omega_{\epsilon,\delta}|\mathbf{tail}(\mathbf{x}))$, see \cite[(1.4.4)]{KaimanovichHD}.
Now, let
$$\Lambda_{\epsilon,\delta}=\{\mathbf{x}\in \Omega_{\epsilon,\delta},\mathbb{P}(\Omega_{\epsilon,\delta}|\mathbf{tail}(\mathbf{x}))>0\}.$$
We have
$$\mathbb{E}(\mathbb{P}(\Omega_{\epsilon,\delta}|\mathbf{tail}(\mathbf{x})))=\mathbb{P}(\Omega_{\epsilon,\delta}).$$
Moreover,
$$\mathbb{E}(\mathbb{P}(\Omega_{\epsilon,\delta}|\mathbf{tail}(\mathbf{x})))=\mathbb{E}(\mathbb{P}(\Omega_{\epsilon,\delta}|\mathbf{tail}(\mathbf{x}))\cdot 1_{\Lambda_{\epsilon,\delta}})+\mathbb{E}(\mathbb{P}(\Omega_{\epsilon,\delta}|\mathbf{tail}(\mathbf{x}))\cdot 1_{\Omega_{\epsilon,\delta}^c}).$$
Since $\mathbb{P}(\Omega_{\epsilon,\delta}|\mathbf{tail}(\mathbf{x})))\leq 1$ for $\mathbb{P}$-almost every $\mathbf{x}$, we get
$$\mathbb{P}(\Omega_{\epsilon,\delta})=\mathbb{E}(\mathbb{P}(\Omega_{\epsilon,\delta}|\mathbf{tail}(\mathbf{x})))\leq \mathbb{P}(\Lambda_{\epsilon,\delta})+1-\mathbb{P}(\Omega_{\epsilon,\delta})$$
that we rewrite
$\mathbb{P}(\Lambda_{\epsilon,\delta})\geq 2 \mathbb{P}(\Omega_{\epsilon,\delta})-1$.
Therefore, $\mathbb{P}(\Lambda_{\epsilon,\delta})\geq 1-2\delta$, which concludes the proof.
\end{proof}

\begin{lem}\label{lemma2.2LePrince}
There exists $M\geq 0$ such that the following holds.
For fixed $\epsilon$ and $\delta$, for every $\mathbf{x}\in \Lambda_{\epsilon,\delta}$ and every $n\geq N_{\epsilon,\delta}$, we have
$$C_{\mathbf{x}}^n\cap \Omega_{\epsilon,\delta}\subset D(\omega_\infty,Mn\lambda^{n(l-\epsilon)}).$$
\end{lem}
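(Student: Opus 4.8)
The plan is to realise the limit point $\omega_\infty'$ of a trajectory $\mathbf x'\in C_{\mathbf x}^n\cap\Omega_{\epsilon,\delta}$ as the endpoint in the Floyd completion of an explicit broken geodesic in $\Gamma$ that stays far from the basepoint $o=1$, and then simply to bound the Floyd length of that path. No visibility or shadow lemma is needed: the whole argument rests on the tautology that a path which remains at distance $\ge m$ from $o$ has small Floyd length. We may and do assume $\epsilon<l/2$, which is the only relevant regime later.

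First I would fix $\mathbf x'\in C_{\mathbf x}^n\cap\Omega_{\epsilon,\delta}$ and, for each $k\ge n\ge N_{\epsilon,\delta}$, choose a geodesic $\gamma_k'$ in $\Gamma$ from $\omega_k'$ to $\omega_{k+1}'$. Since $\mathbf x'\in\Omega_{\epsilon,\delta}$, property (a) gives $d(o,\gamma_k')\ge(l-\epsilon)k$, so every edge of $\gamma_k'$ lies at distance at least $(l-\epsilon)k-1$ from $o$ and thus has Floyd length at most $\lambda^{(l-\epsilon)k-1}$; property (c) gives that $\gamma_k'$ has at most $d(\omega_k',\omega_{k+1}')\le k$ edges. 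Hence the concatenation $\Gamma'=\gamma_n'\cdot\gamma_{n+1}'\cdots$, a path starting at $\omega_n'=\omega_n$, has Floyd length
$$
\ell_\lambda(\Gamma')\le \lambda^{-1}\sum_{k\ge n}k\,\lambda^{(l-\epsilon)k}.
$$
Because $\Gamma'$ has finite Floyd length and passes through every $\omega_k'$, and $\omega_k'\to\omega_\infty'$ in the Floyd boundary by Karlsson, the path $\Gamma'$ converges to $\omega_\infty'$ in the Cauchy completion $\overline\Gamma^{\mathcal F}$; therefore $\delta_\lambda(\omega_n,\omega_\infty')\le\ell_\lambda(\Gamma')$.

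It then remains to estimate the tail sum. Writing $q=\lambda^{l-\epsilon}$ and noting $q\le q_0:=\lambda^{l/2}<1$, one has $\sum_{k\ge n}k\,q^k\le q^n\sum_{j\ge0}(n+j)q_0^{\,j}\le\frac{2}{(1-q_0)^2}\,n\,q^n$ for $n\ge1$, so $\delta_\lambda(\omega_n,\omega_\infty')\le M_0\,n\,\lambda^{(l-\epsilon)n}$ for a constant $M_0$ depending only on $\lambda$ and $l$ (and in particular not on $\epsilon$, $\delta$, $n$, or $\mathbf x'$). Running the identical argument on the trajectory $\mathbf x$ itself, which lies in $\Omega_{\epsilon,\delta}$ since $\Lambda_{\epsilon,\delta}\subset\Omega_{\epsilon,\delta}$, yields $\delta_\lambda(\omega_n,\omega_\infty)\le M_0\,n\,\lambda^{(l-\epsilon)n}$. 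The triangle inequality in $\overline\Gamma^{\mathcal F}$ then gives $\delta_\lambda(\omega_\infty,\omega_\infty')\le 2M_0\,n\,\lambda^{(l-\epsilon)n}$, so taking $M=2M_0$ we get $\mathbf x'\in D(\omega_\infty,Mn\lambda^{n(l-\epsilon)})$, which is the claim.

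I do not expect a genuine obstacle here; the argument is short and essentially forced. The only points requiring a little care are the bookkeeping between $d(o,\sigma)$ for an edge $\sigma\subset\gamma_k'$ and the lower bound $d(o,\gamma_k')\ge(l-\epsilon)k$ — absorbed into the harmless factor $\lambda^{-1}$ — and the justification that a path of finite Floyd length passing through the sequence $(\omega_k')$ indeed has $\omega_\infty'$ as its endpoint in the completion, which follows from uniqueness of Cauchy limits once the tails of $\Gamma'$ are seen to have Floyd length tending to $0$.
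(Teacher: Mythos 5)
Your proof is correct and follows essentially the same route as the paper: bound the Floyd length of each geodesic segment $\gamma_k'$ using properties (a) and (c) of $\Omega_{\epsilon,\delta}$, sum the geometric tail to get $\delta_\lambda(\omega_n,\omega_\infty')\le M_0 n\lambda^{(l-\epsilon)n}$, apply the same bound to $\mathbf{x}$ itself, and conclude by the triangle inequality with $M=2M_0$. The only (welcome) refinements are the explicit $\lambda^{-1}$ edge-bookkeeping and the observation that restricting to $\epsilon<l/2$ makes $M_0$ independent of $\epsilon$, which matches the quantifier order in the statement.
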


\begin{proof}
Fix $\mathbf{x}\in \Lambda_{\epsilon,\delta}$ and let $\mathbf{x}'\in C_{\mathbf{x}}^n\cap \Omega_{\epsilon,\delta}$.
Then, $\omega_n=\omega'_n$.
Moreover, for every $m\geq N_{\epsilon,\delta}$, letting $\gamma'_m$ be a geodesic from $\omega'_m$ to $\omega'_{m+1}$, we have
$d(1,\gamma'_m)\geq (l-\epsilon)m$.
Thus there exists a path of length $d(\omega'_m,\omega'_{m+1})$ which stays at distance at least $(l-\epsilon)m$ from $1$.
Since we also have $d(\omega_m,\omega_{m+1})\leq m$, this proves that
\begin{equation}\label{estimateupperboundFloyd}
    \delta_{\lambda}(\omega'_m,\omega'_{m+1})\leq d(\omega'_m,\omega'_{m+1})\lambda^{(l-\epsilon)m}\leq m\lambda^{(l-\epsilon)m}.
\end{equation}
Consequently, for any $m>n$,
$$\delta_{\lambda}(\omega'_n,\omega'_m)\leq \sum_{k=n}^{m-1} k\lambda^{(l-\epsilon)k}\leq \sum_{k\geq 0}(n+k)\lambda^{(l-\epsilon)(n+k)}.$$
Note that
$$\sum_{k\geq 0}(n+k)\lambda^{(l-\epsilon)(n+k)}=n\lambda^{(l-\epsilon)n}\sum_{k\geq 0}\lambda^{(l-\epsilon)k}+\lambda^{(l-\epsilon)n}\sum_{k\geq 0}k\lambda^{(l-\epsilon)k}.$$
Since $\lambda<1$, both sums $\sum_{k\geq 0}\lambda^{(l-\epsilon)k}$ and $\sum_{k\geq 0}k\lambda^{(l-\epsilon)k}$ are finite.
Hence, there exists $M_0$ such that
$$\sum_{k\geq 0}(n+k)\lambda^{(l-\epsilon)(n+k)}\leq M_0n\lambda^{(l-\epsilon)n},$$
so that $\delta_{\lambda}(\omega_n,\omega_m)\leq M_0n\lambda^{(l-\epsilon)n}$.
This holds for every $m>n$, so we have
$$\delta_\lambda(\omega'_n,\omega'_{\infty})\leq M_0n\lambda^{(l-\epsilon)n}.$$
Clearly, we also have $\mathbf{x}'\in C_{\mathbf{x}}^n\cap \Omega_{\epsilon,\delta}$
and so $\delta_\lambda(\omega_n,\omega_{\infty})\leq M_0n\lambda^{(l-\epsilon)n}$.
Finally, since by definition of $C_{\mathbf{x}}^n$, $\omega_n=\omega_n'$, we get
$$\delta_\lambda(\omega_{\infty},\omega'_{\infty})\leq 2M_0n\lambda^{(l-\epsilon)n}.$$
This proves the lemma, setting $M=2M_0$.
\end{proof}

We can now finish the proof of Proposition~\ref{upper-boundharmonicFloyd}.

\begin{proof}
Our goal is to prove that for $\nu_{\mathcal{F}}$-almost every $\xi$,
\begin{equation}\label{nu_Fexactdimensional}
    \limsup_{r\to 0}\frac{\log \nu_{\mathcal{F}} (B(\xi,r))}{\log r}\leq \frac{-1}{\log \lambda}\frac{h}{l}.
\end{equation}
It is thus enough to prove that for $\mathbb{P}$-almost every trajectory $\mathbf{x}=(\omega_n)$, we have
$$\limsup_{r\to 0}\frac{\log \nu_{\mathcal{F}} (B(\omega_\infty,r))}{\log r}\leq \frac{-1}{\log \lambda}\frac{h}{l},$$
since $\nu_{\mathcal{F}}$ is the law of the random variable $\omega_\infty$. This shall follow from the   statement that for every $\epsilon$ and $\delta$, for every $\mathbf{x}\in \Lambda_{\epsilon,\delta}$, we have
$$\limsup_{r\to 0}\frac{\log \nu_{\mathcal{F}} (B(\omega_\infty,r))}{\log r}\leq \frac{-1}{\log \lambda}\frac{h+\epsilon}{l-\epsilon}.$$
Indeed,   $\mathbb{P}(\Lambda_{\epsilon,\delta})\geq 1-2\delta$ and $\epsilon$ and $\delta$ can be chosen arbitrarily small.

Let $\mathbf{x}\in \Lambda_{\epsilon,\delta}$.
We need to estimate $\limsup_{r\to 0}\frac{\log \nu_{\mathcal{F}} (B(\omega_\infty,r))}{\log r}$.
Since the function $x\in \mathbb{R}\mapsto Mx\lambda^{(l-\epsilon)x}$ is eventually decreasing,
we can replace $r$ with $Mn\lambda^{(l-\epsilon)n}$ and make $n$ go to infinity. Thus, it suffices to prove that
$$\limsup_{n\to \infty}\frac{\log \mathbb{P} (D(\omega_\infty,Mn\lambda^{(l-\epsilon)n}))}{\log \lambda (l-\epsilon)n}\leq \frac{-1}{\log \lambda}\frac{h+\epsilon}{l-\epsilon}.$$
In other words, we will prove that for every $\mathbf{x}\in \Lambda_{\epsilon,\delta}$,
$$\limsup_{n\to \infty}\frac{\log \mathbb{P} (D(\omega_\infty,Mn\lambda^{(l-\epsilon)n}))}{n}\geq -(h+\epsilon).$$
Using Lemma~\ref{lemma2.2LePrince}, we see that
$$\limsup_{n\to \infty}\frac{\log \mathbb{P} (D(\omega_\infty,Mn\lambda^{(l-\epsilon)n}))}{n}\geq \limsup_{n\to \infty}\frac{\log \mathbb{P} (C_{\mathbf{x}}^n\cap \Omega_{\epsilon,\delta})}{n}$$
and using Lemma~\ref{lemma2.1LePrince}, we see that
$$\limsup_{n\to \infty}\frac{\log \mathbb{P} (C_{\mathbf{x}}^n\cap \Omega_{\epsilon,\delta})}{n}=\limsup_{n\to \infty}\frac{\log \mu^{*n}(\omega_n)}{n}.$$
Finally, since $\mathbf{x}\in \Lambda_{\epsilon,\delta}\subset \Omega_{\epsilon,\delta}$, for large enough $n$, $\frac{\log \mu^{*n}(x_n)}{n}\geq -(h+\epsilon)$, so finally
$$\limsup_{n\to \infty}\frac{\log \mathbb{P} (D(\omega_\infty,Mn\lambda^{(l-\epsilon)n}))}{n}\geq -(h+\epsilon),$$
which concludes the proof.
\end{proof}

\subsection{Lower-bound for the Hausdorff dimension}
The goal of this subsection is the following.

\begin{prop}\label{lower-boundharmonicFloyd}
Under the assumptions of Theorem \ref{theoremHausdorffharmonicFloydBowditch}, we have
$$Hdim_{\bar{\delta}_{\lambda}}(\nu_{\mathcal{B}})\geq\frac{-1}{\log \lambda} \frac{h}{l}$$
and
$$Hdim_{{\delta}_{\lambda}}(\nu_{\mathcal{F}})\geq\frac{-1}{\log \lambda} \frac{h}{l}.$$
\end{prop}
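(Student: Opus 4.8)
We will deduce both inequalities from a local--dimension estimate for $\nu_{\mathcal B}$ on the Bowditch boundary; once $Hdim_{\bar\delta_\lambda}(\nu_{\mathcal B})\ge\frac{-1}{\log\lambda}\frac hl$ is known, the corresponding inequality for $\nu_{\mathcal F}$ follows from Lemma~\ref{HDshortcutsmallerHDFloyd} when $\lambda\ge\lambda_0$, and from the verbatim argument carried out on the Floyd boundary (using the Floyd-distance version of Lemma~\ref{lemma3.16PY}) when $\lambda\in(0,\lambda_0)$. By the description of the Hausdorff dimension of a measure through its lower local dimensions, \cite[Corollary~8.2]{MSU}, and since $\nu_{\mathcal B}$ is the law of $\omega_\infty$, it suffices to prove that $\mathbb{P}$-almost surely
$$\liminf_{r\to 0}\frac{\log\nu_{\mathcal B}\bigl(B_{\bar\delta_\lambda}(\omega_\infty,r)\bigr)}{\log r}\ \ge\ \frac{-1}{\log\lambda}\,\frac hl .$$

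Fix $\epsilon>0$. By the almost sure convergence $\frac1n d(1,\omega_n)\to l$, the Shannon--McMillan--Breiman convergence $-\frac1n\log\mu^{*n}(\omega_n)\to h$, Proposition~\ref{propsublineardeviationtransitionpoints} and Lemma~\ref{ZeroAsymJumpLem}, there is almost surely an integer $N$ such that, for every $n\ge N$, one may pick an $(\epsilon_0,R)$-transition point $x_n$ on a geodesic from $1$ to $\omega_\infty$ with $d(\omega_n,x_n)\le\epsilon n$, and moreover $d(1,x_n)=ln+o(n)$ and $d(\omega_n,\omega_{n+1})=o(n)$. Write $F(x,y)$ for the probability that the $\mu$-random walk from $x$ ever hits $y$ and $d_G(x,y)=-\log F(x,y)$ for the associated sub-additive Green metric. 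By the Green speed theorem of Blach\`ere--Ha\"issinsky--Mathieu \cite{BHM}, $\frac1n d_G(1,\omega_n)\to h$ almost surely (as $H(\mu)$ is finite here). Admissibility of $\mu$ provides a path in $\mathrm{supp}(\mu)$ from $x_n$ to $\omega_n$ of length $O(d(x_n,\omega_n))=o(n)$, hence $F(x_n,\omega_n)\ge e^{-o(n)}$ and $d_G(x_n,\omega_n)=o(n)$; the triangle inequality for $d_G$ then gives, for $n$ large,
$$d_G(1,x_n)\ \ge\ d_G(1,\omega_n)-d_G(x_n,\omega_n)\ \ge\ (h-2\epsilon)n,\qquad\text{so}\qquad F(1,x_n)\le e^{-(h-2\epsilon)n}.$$

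The heart of the matter, which I expect to be the main obstacle, is a shadow lemma for the harmonic measure on the Bowditch boundary: for $R$ large enough there is $C_R>0$ with $\nu_{\mathcal B}(\Pi(g,R))\le C_R\,F(1,g)$ for all $g\in G$. This is exactly where the estimates of Floyd disks by big shadows from \cite{PY} (Lemma~\ref{lemma3.16PY}) meet the $\mu$-stationarity and the Markov property of $\nu_{\mathcal B}$; the subtle point is that a finite first moment only yields sublinear tracking of geodesics, so one must identify $\nu_{\mathcal B}$ with the Patterson--Sullivan measure of the Green metric (quasi-hyperbolic along transition points) in order to absorb that defect into the constant $C_R$. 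Granting it, the choice of $x_n$ gives $\nu_{\mathcal B}(\Pi(x_n,R))\le C_R\,F(1,x_n)\le e^{-(h-3\epsilon)n}$ for $n$ large.

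Finally, since $x_n$ is an $(\epsilon_0,R)$-transition point on $[1,\omega_\infty]$, the second inclusion of Lemma~\ref{lemma3.16PY} yields $B_{\bar\delta_\lambda}(\omega_\infty,C_2 r_n)\subset\Pi(x_n,R)$ with $r_n=\lambda^{d(1,x_n)}$, whence $\nu_{\mathcal B}(B_{\bar\delta_\lambda}(\omega_\infty,C_2 r_n))\le e^{-(h-3\epsilon)n}$. As $d(1,x_n)=ln+o(n)$ we have $\log(C_2 r_n)=(\log\lambda)(ln+o(n))$, so
$$\frac{\log\nu_{\mathcal B}\bigl(B_{\bar\delta_\lambda}(\omega_\infty,C_2 r_n)\bigr)}{\log(C_2 r_n)}\ \ge\ \frac{-(h-3\epsilon)n}{(\log\lambda)(ln+o(n))}\ \xrightarrow[n\to\infty]{}\ \frac{-1}{\log\lambda}\,\frac{h-3\epsilon}{l}.$$
Because $d(\omega_n,\omega_{n+1})=o(n)$ and $d(x_n,\omega_n)=o(n)$ we have $d(1,x_{n+1})-d(1,x_n)=o(n)$, so $\log r_n$ varies slowly on the logarithmic scale and interpolating over $r\in[C_2 r_{n+1},C_2 r_n]$ upgrades the last limit to $\liminf_{r\to0}\frac{\log\nu_{\mathcal B}(B_{\bar\delta_\lambda}(\omega_\infty,r))}{\log r}\ge\frac{-1}{\log\lambda}\frac{h-3\epsilon}{l}$. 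Letting $\epsilon\to0$ completes the proof for $\nu_{\mathcal B}$, hence for $\nu_{\mathcal F}$.
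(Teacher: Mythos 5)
Your reduction to a local-dimension estimate, your use of Proposition~\ref{propsublineardeviationtransitionpoints} to produce the transition points $x_n$, and your final application of the second inclusion in Lemma~\ref{lemma3.16PY} are all sound, and the interpolation at the end is fine. But the proof has a genuine gap at exactly the point you flag as ``the main obstacle'': the shadow lemma $\nu_{\mathcal B}(\Pi(g,R))\le C_R\,F(1,g)$ is not available under the standing hypothesis of finite first moment, and ``granting it'' leaves the argument incomplete. The usual proof of that inequality requires that almost every trajectory converging to a point of $\Pi(g,R)$ visit a \emph{bounded} neighbourhood of $g$; with only a finite first moment the tracking of geodesics (even along transition points) is merely sublinear, so no such bounded-neighbourhood statement holds. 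Your proposed repair --- identifying $\nu_{\mathcal B}$ with a Patterson--Sullivan measure for the Green metric --- presupposes Ancona-type inequalities and hyperbolicity of the Green metric along transition points, which in the relatively hyperbolic setting are only known under much stronger moment assumptions (finite support or superexponential moment, cf.\ the results of \cite{GekhtmanDussaule} invoked elsewhere in the paper), and is in any case a substantial theorem rather than something that can be absorbed ``into the constant $C_R$''. So the key inequality $\nu_{\mathcal B}(\Pi(x_n,R))\le e^{-(h-3\epsilon)n}$ is unproved as stated.

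For contrast, the paper circumvents the shadow lemma entirely. It fixes a good set $\Omega_\epsilon$ of trajectories (sublinear deviation from $x_n$, drift and entropy controlled) and shows, purely by counting, that
$\mathbb P\bigl(\mathbf x'\in\Omega_\epsilon,\ \omega'_\infty\in\Pi(x_n,R)\bigr)\le\mathbb P\bigl(\omega'_n\in B(x_n,2R+3n\epsilon)\bigr)\le \sharp B(1,2R+3n\epsilon)\,e^{-n(h-\epsilon)}\le Ce^{3n\epsilon v}e^{-n(h-\epsilon)}$,
the error $e^{3n\epsilon v}$ disappearing as $\epsilon\to0$. It then transfers this bound from path space to the boundary via Tanaka's conditional-probability argument (the sets $F_\epsilon=\{\xi:\mathbb P(\Omega_\epsilon\mid\xi)\ge\epsilon\}$, giving $\epsilon\,\nu_{\mathcal B}(F_\epsilon\cap\Pi(x_n,R))\le\mathbb P(\Omega_\epsilon\cap\Pi(x_n,R))$), and finally uses quasi-invariance of $\nu_{\mathcal B}$, Lemma~\ref{Lemma2.2Tanaka} and ergodicity to upgrade the local-dimension bound from a positive-measure set to a full-measure set. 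If you want to salvage your write-up, replace the Green-metric step by this counting-plus-conditioning argument; the rest of your structure can stay.
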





\begin{proof}
We only prove the result for the measure $\nu_{\mathcal{B}}$ on the Bowditch boundary.
According to Lemma~\ref{HDshortcutsmallerHDFloyd}, the result for the harmonic measure on the Floyd boundary will then follows.

Proposition~\ref{propsublineardeviationtransitionpoints} shows there almost surely exists a transition point on a geodesic between 1 and $\omega_\infty$ such that $\frac{1}{n}d(\omega_n,x_n)$ converges to 0.
Also, $\frac{-1}{n}\log \mu^{*n}(\omega_n)$ almost surely converges to $h$ by~(\ref{equationdefh}) and $\frac{1}{n}d(1,\omega_n)$ almost surely converges to $l$ by~(\ref{equationdefl}).
For every $\epsilon>0$ and $N$, we let $\Omega_\epsilon^N$ be the set of trajectories $\mathbf{x}$ such that for every $n\geq N$,
\begin{enumerate}[(a)]
    \item 
    $d(\omega_n,x_n)\leq \epsilon n$
    \item
    $(l-\epsilon)n\leq d(1,x_n)\leq (l+\epsilon)n$
    \item
    $-\log \mu^{*n}(\omega_n)\geq (h-\epsilon)n$
\end{enumerate}
Then, for every $\epsilon$,
$\mathbb{P}(\cup_N\Omega_\epsilon^N)=1$.
Hence, there exists $N_\epsilon$ such that $\mathbb{P}(\Omega_{\epsilon}^{N_\epsilon})\geq 1-\epsilon$.
We set $\Omega_\epsilon=\Omega_{\epsilon}^{N_\epsilon}$.

We will both need to deal with the set $\Omega_\epsilon$ and the sets $\Omega_\epsilon^N$ in the following.
We fix $N$ and we fix a trajectory $\mathbf{x}\in \Omega_{\epsilon}^N$ and so we also fix the corresponding sequence of transition points $x_n$ on the geodesic between 1 and $\omega_\infty$.
Let $\Pi(g, R)$ be the big shadow at $g$, that is, the set of boundary points $\xi$ in the Bowditch boundary such that there exists a geodesic ray $[1,\xi]$ intersecting $B(g, R)$.

First of all, observe that for all $n$ large enough (i.e.\ bigger than $N$ and $N_\epsilon$),
$$\mathbb{P}(\mathbf{x}'\in\Omega_{\epsilon}\cap \{\omega_\infty' \in\Pi(x_n, R)\}) \leq \mathbb P(\mathbf{x}'\in \Omega_{\epsilon}: \omega'_{n}\in  B(x_n, 2R+3 n \epsilon)).$$

Indeed, if the limiting point $\xi'=\omega'_\infty$ lies in the shadow $\Pi(x_n, R)$, then there is a point $g_n$ on a geodesic from 1 to $\xi'$ entering the ball $B(x_n,R)$, so $d(1,g_n)$ is between $(l-\epsilon)n-R$ and $(l+\epsilon)n+R$.
For every $\mathbf x'\in \Omega_\epsilon^N$ with  transition points $x_n'$ on $[1, \omega_\infty']$, we also have that $d(1,x_n')$ is between $(l-\epsilon)n$ and $(l+\epsilon)n$, so we deduce that $d(x_n',g_n)\leq R+2\epsilon n$.
By (a), $d(\omega_n',x_n')\leq \epsilon n$, so we must have $d(\omega_n',x_n)\le d(\omega_n',g_n)+d(g_n,x_n)\leq 2R+3\epsilon n$ as desired.
 

By the defining property (c) of  $\Omega_{\epsilon}$, we have
$\mu^{*n}(\omega_n')\leq \exp(-n(h-\epsilon))$ for any $n>N_{\epsilon}$, so
\begin{align*}
    &\; \mathbb P(\mathbf{x}'\in \Omega_{\epsilon}, \omega'_{n}\in  B(x_n, 2R+3 n \epsilon)) \\
   \leq & \; \mathbb P(\mu^{*n}(\omega_n')\leq \exp(-n(h-\epsilon)), \omega'_{n}\in  B(x_n, 2R+3 n \epsilon))
\end{align*}
and since $\mu^{*n}$ is the law of $\omega_n'$, the right-hand side of this inequality can be written as the following sum
\begin{align*}
 \sum_{\underset{\mu^{*n}(u)\leq \exp(-n(h-\epsilon))}{u\in B(x_n,2R+3n\epsilon),}}\mu^{*n}(u).
\end{align*}
We thus get
$$\mathbb P(\mathbf{x}'\in \Omega_{\epsilon}, \omega'_{n}\in  B(x_n, 2R+3 n \epsilon)) \leq \sharp B(1, 2R+3n\epsilon) \exp(-n (h-\epsilon))$$
and since balls grow at most exponentially, there exists $v$ such that
$$\mathbb P(\mathbf{x}'\in \Omega_{\epsilon}, \omega'_{n}\in  B(x_n, 2R+3 n \epsilon)) \leq C \exp(3n\epsilon v) \exp(-n (h-\epsilon)).$$
Hence, for any $N$ and any fixed $\mathbf{x}$ in $\Omega_{\epsilon}^N$, we have
\begin{equation}\label{equationlower-boundshortcut}
\liminf_{n\to\infty} \frac{\log \mathbb{P}(\Omega_{\epsilon}\cap \Pi(x_n, R))}{-n} \ge h-\epsilon  -3v \epsilon
\end{equation}

Following~\cite{Tanaka}, we use conditional expectations with respect to the $\sigma$-algebra $\mathcal{S}$, which is the smallest $\sigma$-algebra such that the map $\mathbf{bnd}:\mathbf{x}\mapsto \omega_\infty$ is measurable.
Beware that the $\sigma$-algebra $\mathcal{S}$ and the $\sigma$-algebra $\mathcal{A}^{\infty}$ that we used in the proof of Proposition~\ref{upper-boundharmonicFloyd} can be different.
We refer to the discussion in the proof of \cite[Theorem~1.4.1]{KaimanovichHD} for more details.
We let $\mathbb{P}(A|\mathbf{bnd}(\mathbf{x}))$ be the conditional probability $\mathbb{P}(A|\mathcal{S})$ evaluated at the sample path $\mathbf{x}$.
This is denoted by $\mathbb{P}_{\mathbf{bnd}(\mathbf{x})}(A)$ in \cite{Tanaka}.
Since the harmonic measure $\nu_{\mathcal{B}}$ is the pushforward of the measure $\mathbb{P}$ by the map $\mathbf{bnd}$, 
one can define a family of conditional probabilities $\xi\in \partial_{\mathcal{B}}G\mapsto \mathbb{P}(A|\xi)$ such that for every measurable set $A\in \partial_{\mathcal{B}}G$ and every measurable set $B$ in the path-space of the random walk,
$$\mathbb{P}(B|\pi^{-1}(A))=\frac{1}{\nu(A)}\int_A\mathbb{P}(B|\xi)d\nu_{\mathcal{B}}(\xi).$$
We then define the set
$$F_{\epsilon}=\{\xi \in \partial_{\mathcal{B}}G, \mathbb{P}(\Omega_{\epsilon}|\xi)\geq \epsilon\}.$$
Desintegrating along $\nu_{\mathcal{B}}$, we have
\begin{align*}
    1-\epsilon \leq \mathbb{P}(\Omega_{\epsilon})&=\int_{\partial_{\mathcal{B}}G} \mathbb{P}(\Omega_{\epsilon}|\xi)d\nu_{\mathcal{B}}(\xi)\\
    &=\int_{F_\epsilon}\mathbb{P}(\Omega_{\epsilon}|\xi)d\nu_{\mathcal{B}}(\xi)+\int_{F_\epsilon^c}\mathbb{P}(\Omega_{\epsilon}|\xi)d\nu_{\mathcal{B}}(\xi)\\
    &\leq \nu_{\mathcal{B}}(F_{\epsilon})+\epsilon.
\end{align*}
Therefore, $\nu_{\mathcal{B}}(F_{\epsilon})\geq 1-2\epsilon$.
We now evaluate $\nu_{\mathcal{B}}(F_{\epsilon}\cap \Pi(x_n, R))$:
\begin{align*}
    \nu_{\mathcal{B}}(F_{\epsilon}\cap \Pi(x_n, R))&=\mathbb{P}(\omega_\infty\in F_{\epsilon}\cap \Pi(x_n,R))\\
    &\leq \mathbb{P}(\Omega_{\epsilon}\cap \Pi(x_n, R))+\mathbb{P}(\Omega_{\epsilon}^c\cap \{\omega_\infty \in F_{\epsilon}\cap \Pi(x_n,R)\})
\end{align*}
By definition, for $\xi\in F_{\epsilon}$, we have $\mathbb{P}(\Omega_{\epsilon}^c|\xi)\leq 1-\epsilon$, so
\begin{align*}
    \mathbb{P}(\Omega_{\epsilon}^c\cap \{\omega_\infty \in F_{\epsilon}\cap \Pi(x_n,R)\})&=\int_{F_{\epsilon}\cap \Pi(x_n,R)}\mathbb{P}(\Omega_{\epsilon}^c|\xi)d\nu_{\mathcal{B}}(\xi)\\
    &\leq (1-\epsilon)\nu_{\mathcal{B}}(F_{\epsilon}\cap \Pi(x_n, R)).
\end{align*}
Consequently,
$$\epsilon\cdot  \nu_{\mathcal{B}}(F_{\epsilon}\cap \Pi(x_n, R))\leq \mathbb{P}(\Omega_{\epsilon}\cap \Pi(x_n, R)).$$
Thus,~(\ref{equationlower-boundshortcut}) yields for every $N$ and every $\mathbf{x}\in \Omega_\epsilon^N$
$$\liminf_{n\to\infty} \frac{\log \nu_{\mathcal{B}}(F_{\epsilon}\cap \Pi(x_n, R))}{-n}\geq h-\epsilon  -3v \epsilon.$$
According to Lemma~\ref{lemma3.16PY}, there exists some constant $C>0$ such that
$$B(\omega_\infty,C r_n)\subset  \Pi(x_n, R)$$
where $r_n=\lambda^{d(1,x_n)}$ satisfies $\lambda^{(l+ \epsilon)n}\le r_n\le \lambda^{n(l- \epsilon)}$.
Therefore,
$$\liminf_{n\to\infty} \frac{\log\nu_{\mathcal{B}}(F_\epsilon\cap B(\omega_\infty, Cr_n))}{\log Cr_n}\ge  \frac{h-\epsilon-3v\epsilon}{ -\log \lambda  (l+\epsilon)}.$$
Thus, for any $N$ and any fixed $\mathbf{x}$ in $\Omega_{\epsilon}^N$, we have
$$\liminf_{r\to 0} \frac{\log\nu_{\mathcal{B}}(F_\epsilon\cap B(\omega_\infty, r))}{\log r}\ge  \frac{h-\epsilon-3v\epsilon}{ -\log \lambda  (l+\epsilon)}.$$
This is true for all $N$ and $\mathbb{P}(\cup_N \Omega_\epsilon^N)=1$. Hence, for $\nu_{\mathcal{B}}$-almost every $\xi$,
\begin{equation}\label{LocalDimEQ}
\liminf_{r\to 0} \frac{\log\nu_{\mathcal{B}}(F_\epsilon\cap B(\xi, r))}{\log r}\ge  \frac{h-\epsilon-3v\epsilon}{ -\log \lambda  (l+\epsilon)}.
\end{equation}

We can now conclude the proof, exactly like Theorem~1.1 is deduced from Theorem~3.3 in \cite{Tanaka}.
Let us give the details for completeness.

Consider the restriction $\nu_{{\mathcal{B}},\epsilon}$ of $\nu_{\mathcal{B}}$ to $F_{\epsilon}$.
Then,~(\ref{LocalDimEQ}) yields
$$Hdim (\nu_{{\mathcal{B}},\epsilon}) \geq \frac{h-\epsilon-3v\epsilon}{ -\log \lambda  (l+\epsilon)}$$ and so
$$Hdim(\nu_{\mathcal{B}})\geq \frac{h-\epsilon-3v\epsilon}{ -\log \lambda  (l+\epsilon)}.$$
In particular, the set
$$G_\epsilon=\left \{\xi \in \partial_{\mathcal{B}}G, \liminf_{r\to 0} \frac{\log\nu_{\mathcal{B}}(B(\xi, r))}{\log r}\ge  \frac{h-\epsilon-3v\epsilon}{ -\log \lambda  (l+\epsilon)}-\epsilon\right \}$$
has positive $\nu_{\mathcal{B}}$-measure.
We show that $G_\epsilon$ is $G$-invariant.
First, recall that the measure $\nu_{\mathcal{B}}$ is $\mu$-stationary.
Since $\mu$ generates $G$ as a semi-group, for any $g\in G$, there exists $n$ such that $\mu^{*n}(g^{-1})>0$ and $\mu^{*n}(g)>0$.
Note that $\nu_{\mathcal{B}}$ also is $\mu^{*n}$ stationary, so that for any measurable set $A$,
$$\sum_{g\in A}\nu_{\mathcal{B}}(gA)\mu^{*n}(g)=\nu_{\mathcal{B}}(A).$$
In particular, there exists $c_{g,\mu}>0$ such that 
$$c_{g,\mu}^{-1}\nu_{\mathcal{B}}\leq g^{-1}\nu_{\mathcal{B}}\leq c_{g,\mu}\nu_{\mathcal{B}}.$$
Together with Lemma~\ref{Lemma2.2Tanaka},  this implies that
$$\nu_{\mathcal{B}}(B(\xi,r))\leq c_{g,\mu}\nu_{\mathcal{B}}(B(g\xi,c_gr))
$$
for any $\xi\in G_\epsilon$.
By taking the limit inf, the constant $c_{g,\mu}$ disappears  and thus  $g\xi\in   G_\epsilon$. Hence,  $G_{\epsilon}$ is indeed $G$-invariant.

The measure $\nu_{\mathcal{B}}$ is ergodic and $\nu_{\mathcal{B}}(G_\epsilon)>0$.
Consequently, the set $G_{\epsilon}$ has full measure.
We thus get that for $\nu_{\mathcal{B}}$-almost every $\xi$,
$$\liminf_{r\to 0} \frac{\log\nu_{\mathcal{B}}( B(\xi, r))}{\log r}\ge  \frac{h-\epsilon-3v\epsilon}{ -\log \lambda  (l+\epsilon)}-\epsilon.$$
Since $\epsilon$ is arbitrary, this shows that for $\nu_{\mathcal{B}}$-almost every $\xi$,
\begin{equation}\label{nu_Bexactdimensional}
    \liminf_{r\to 0} \frac{\log\nu_{\mathcal{B}}( B(\xi, r))}{\log r}\ge \frac{-1}{\log \lambda} \frac{h}{l}
\end{equation}
and thus concludes the proof.
\end{proof}

We now conclude the proof of Theorem~\ref{theoremHausdorffharmonicFloydBowditch}.
\begin{proof}
By Propositions~\ref{upper-boundharmonicFloyd} and~\ref{lower-boundharmonicFloyd}, it remains to show the exact dimensionality of $\nu_{\mathcal{B}}$ and $\nu_{\mathcal{F}}$. According to~(\ref{nu_Fexactdimensional}) and Lemma~\ref{lemmapushforwardBowditch}, for $\nu_\mathcal{F}$-almost every $\xi$,
$$\limsup_{r\to 0} \frac{\log\nu_{\mathcal{B}}( B(\phi(\xi), r))}{\log r}\le \limsup_{r\to 0} \frac{\log\nu_{\mathcal{F}}( B(\xi, r))}{\log r} \le \frac{-1}{\log \lambda} \frac{h}{l}.$$ 
Similarly,~(\ref{nu_Bexactdimensional}) shows that for $\nu_{\mathcal{B}}$-almost every $\xi$,
$$\liminf_{r\to 0} \frac{\log\nu_{\mathcal{F}}( B(\xi, r))}{\log r}\ge \liminf_{r\to 0} \frac{\log\nu_{\mathcal{B}}( B(\phi(\xi), r))}{\log r}\ge  \frac{-1}{\log \lambda} \frac{h}{l}.$$
This shows that both $\nu_{\mathcal{B}}$ and $\nu_{\mathcal{F}}$ are exact dimensional.
\end{proof}

\section{Groups with infinitely many ends}\label{EndsSection}

In this section, we compute the Hausdorff dimension of the harmonic measure on the end boundary equipped with a visual metric

\subsection{The end compactification}

Let $\Gamma$ be an infinite, connected, locally finite  graph.
For every edge $e$ in $\Gamma$, we denote by $e^0$ its endpoints.
More generally, for every set of edges $E$, we denote by $E^0$ the set of vertices that are endpoints of an edge in $E$.
We will also write $\Gamma^0$ for the set of all vertices of $\Gamma$.

Let $E$ be a finite set of edges. Denote by $\Gamma\setminus E$ the spanning graph of the vertex set of $\Gamma^0\setminus E^0$: removing all edges  sharing one vertex with one edge in $E$. Let   $\mathcal C(E)$ be the set of   infinite components of $\Gamma\setminus E$. 
By definition, there exists at least one edge $e$ for every component $C\in \mathcal C(E)$ such that $e^0\cap C\ne \emptyset$ and $e^0\cap E^0\ne \emptyset$. 
Note that two components $C_1, C_2$ in $\mathcal C(E)$ are distinct if and only if every path between any two points $x\in C_1, y\in C_2$ intersects $E^0$.
Finally, two points $x, y$ are called \textit{separated} by $E$ if $x$ and $y$ lie in distinct components of $\Gamma\setminus E$. 

\medskip
We can define the \textit{end compactification} of $\Gamma$ as follows.
Consider the directed system $\mathcal F(\Gamma)$ of all finite set of edges in $\Gamma$ with $E< F$ if $E\subset F$.
There is a natural map from $\mathcal C(F)$ to $\mathcal C(E)$ induced by inclusions of infinite components.
The \textit{end boundary} $\partial_{\mathcal{E}} \Gamma$ is the inverse limit of the directed system $\mathcal C(E)$ over all finite set of edges $E$ in $\Gamma$.
By definition, a point $\xi\in \partial_{\mathcal{E}}\Gamma$ is a collection of infinite components $C_{E}(\xi)$ of $\Gamma\setminus E$ for every $E\in  \mathcal F(\Gamma)$, such that $C_{E}(\xi)\cap C_{E'}(\xi)$ is infinite for any two $E, E'$.
We call $\xi$ an \textit{end} of $\Gamma$.
For every $E$, the component $C_{E}(\xi)$ of $\mathcal C(E)$ is uniquely determined by $\xi$ and
by abuse of language, we say that $C_E(\xi)$ contains $\xi$, that we denote by $\xi\in C_E(\xi)$.
We can then extend the definition of separated pair of points to ends.
Two ends $\xi\ne \zeta$ are \textit{separated} by $E$ if $C_{E}(\xi)\ne C_{E}(\zeta)$, or equivalently if $\xi\notin C_E(\zeta)$.
Note that any two distinct ends are necessarily separated by some $E\in \mathcal{F}(\Gamma)$.

The end boundary $\partial_{\mathcal{E}}\Gamma$ defines a compactification of $\Gamma$ in the following way.
One can extend the discrete topology on the set of vertices $\Gamma^0$ to a metrizable topology on $\overline{\Gamma}^{\mathcal{E}}=\Gamma \cup \partial_{\mathcal{E}}\Gamma$ which makes it a compact space and such that $\Gamma$ is dense in $\overline{\Gamma}^{\mathcal{E}}$.
Moreover, a sequence of points $x_n\in \Gamma$ converges to an end $\xi\in \partial_{\mathcal{E}}\Gamma$ if and only if for every $E\in \mathcal F(\Gamma)$, we have that $x_n$ lies in $C_{E}(\xi)$ for all but finitely many $n$.

The topological closure $\bar C_{E}(\xi)$ of a component $C_E(\xi)$ in the compactification $\Gamma\cup\partial_{\mathcal{E}}\Gamma$ is the union of $C_{E}(\xi)$ with all $\zeta \in \partial_{\mathcal E} \Gamma$ for which $C_{E}(\xi)= C_{E}(\zeta)$.
Hence, a component $C$ contains $\xi$ if $\xi\in \bar C$.
Let $C_{E_i}(\xi)$ be a sequence of strictly shrinking components, that is, \ $C_{E_{i+1}}(\xi)\subset C_{E_i}(\xi)$.
Then their closures $\bar C_{E_i}(\xi)$ yield a neighborhood basis of $\xi$. 

Let $\xi$ be an end.
By definition, there exists a sequence of finite subsets $E_n$ such that $C_{E_{n+1}}(\xi)\subset C_{E_n}(\xi)$.
Following Woess \cite{Woessends}, we say that $\xi$ is a \textit{thin end} if the sets $E_n$ can be chosen so that $\sup_n \diam(E_n)$ is finite.
We say that it is \textit{$M$-thin} if $\sup_n \diam(E_n)\leq M$.

\medskip
Then end compactification of a finitely generated group $G$ is the end compactification of its Cayley graph with respect to a finite generating set. The quasi-isometry extends to a homeomorphism between end boundary, so the topology of the end boundary does not depend on the choice of this generating set  and so it is well defined. We denote by $\partial_{\mathcal{E}}G$ the end boundary of $G$. This compactification was first introduced by Freudenthal and is also called the Freudenthal compactification. We refer to \cite{Freudenthal} for more details.

An infinitely-ended group is called \textit{accessible} if it admits a splitting over finite edge groups  as a finite graph of one ended or finite vertex groups. Finitely presented group  are accessible \cite{Dun85}.
The accessibility is a quasi-isometric invariant by the following graphical characterization \cite{TW93}.
An infinitely-ended group is accessible if and only if there exists $k>0$ such that any two distinct ends are separated by $k$ edges in a Cayley graph.  

Taking limits of geodesics and using Arzel\'a-Ascoli Theorem, we see that
the end boundary is a visual boundary: any end is connected to any point $x$ in the group by an infinite geodesic and any two distinct ends are connected by a bi-infinite geodesic.
Finally, by results of Stallings \cite{Stallings}, $G$ acts on $\partial_{\mathcal{E}}G$ as a convergence group, see also \cite[Lemma~5.1]{Bowditchconvergenceonends}.

\subsection{Visual metrics}
Let $G$ be a finitely generated group with infinitely many ends.
In \cite{CandelleroGilchMuller}, Candellero, Gilch and M\"uller defined a \textit{visual metric} on the set of ends of a free product.
We extend their definition to our situation.
Fix a basepoint $o\in G$ and fix $0<\lambda<1$. 
We define a distance $\rho_{o,\lambda}$ on $\partial_{\mathcal{E}}G$ as follows.
Consider the sequence of finite edge sets $B_n$, where $B_n$ is the edge set of the subgraph spanned by the vertices in the closed ball of radius $n$ around $o$.
The inverse limit of the directed system $\{\mathcal C(B_n)\}$ is homeomorphic to $\partial_{\mathcal{E}}\Gamma$.

Let $\xi, \zeta$ be two distinct ends. Let $n$ be the minimal integer such that $\xi$ and $\zeta$ belongs to different components in $\mathcal C(B_n)$, then define $\rho_{\lambda,o}(\xi, \zeta)=\lambda^{n}$.
When the basepoint is the neutral element $1$ of the group, we will simply write $\rho_{\lambda}(\xi,\zeta)$.

By definition, the visual metric is \textit{ultrametric}: for any triple of points $x,y,z\in\partial_{\mathcal E} \Gamma$, $$\rho_\lambda(x, y)\le \max\{\rho_\lambda(x,z), \rho_\lambda(z, y)\}$$

The next lemma follows directly from the definition.

\begin{lem}\label{lemchangeofbasepointvisualmetric}
For every $x,y \in \partial_{\mathcal E}\Gamma$ in the end boundary and for any choice of basepoints $o,o'\in G$,
$$\lambda^{d(o,o')}\leq \frac{\rho_{\lambda,o}(x,y)}{\rho_{\lambda,o'}(x,y)}\leq \lambda^{-d(o,o')}.$$
\end{lem}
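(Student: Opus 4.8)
The plan is to reduce everything to a single comparison between the defining integers at the two basepoints. Fix distinct ends $x,y$ and let $n = n_o(x,y)$ be the minimal integer such that $x$ and $y$ lie in different components of $\Gamma \setminus B_n$, where $B_n = B_n(o)$, so that $\rho_{\lambda,o}(x,y) = \lambda^n$; likewise let $m = n_{o'}(x,y)$ with $\rho_{\lambda,o'}(x,y) = \lambda^m$. Since $\lambda \in (0,1)$, the asserted two-sided bound $\lambda^{d(o,o')} \le \rho_{\lambda,o}(x,y)/\rho_{\lambda,o'}(x,y) \le \lambda^{-d(o,o')}$ is equivalent to the pair of inequalities $|n - m| \le d(o,o')$, i.e.\ to $n \le m + d(o,o')$ and $m \le n + d(o,o')$. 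By symmetry in $o$ and $o'$ it suffices to prove just one of these, say $n \le m + d$ where $d := d(o,o')$.

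First I would record the elementary ball-inclusion: every vertex within distance $k$ of $o'$ is within distance $k + d$ of $o$, hence $B_k(o') \subseteq B_{k+d}(o)$ as vertex sets, and consequently the edge sets satisfy the analogous containment used in the definition (the edge set spanned by the closed ball of radius $k$ around $o'$ is contained in the edge set spanned by the closed ball of radius $k+d$ around $o$). The key step is then monotonicity of ``being separated'' under enlarging the removed edge set: if $x$ and $y$ lie in distinct infinite components of $\Gamma \setminus E$ and $E \subseteq F$, then they still lie in distinct infinite components of $\Gamma \setminus F$ — this is immediate from the natural map $\mathcal{C}(F) \to \mathcal{C}(E)$ described in the construction of the end boundary, since a component of $\Gamma \setminus F$ maps into a unique component of $\Gamma \setminus E$, so two ends separated by the coarser decomposition $E$ are a fortiori separated by the finer one $F$. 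Applying this with $E = B_m(o')$ and $F = B_{m+d}(o)$: since $x,y$ are separated by $B_m(o')$, they are separated by $B_{m+d}(o)$, and therefore the minimal separating radius around $o$ satisfies $n \le m + d$.

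Running the same argument with the roles of $o$ and $o'$ exchanged gives $m \le n + d$, hence $|n-m| \le d(o,o')$, which translates back into the displayed inequality. The whole argument is purely combinatorial and there is no genuine obstacle; the one point meriting a line of care is the bookkeeping passing between ``balls of vertices'' and ``edge sets spanned by balls,'' and the observation that the componentwise map $\mathcal{C}(F)\to\mathcal{C}(E)$ for $E\subseteq F$ indeed preserves the property of two ends lying in distinct components (equivalently, $\xi \notin C_E(\zeta) \implies \xi \notin C_F(\zeta)$, which follows because $C_F(\zeta) \subseteq C_E(\zeta)$). Since the lemma is stated as following directly from the definition, I would keep the write-up to a short paragraph implementing exactly these two monotonicity observations.
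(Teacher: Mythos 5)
Your argument is correct and is exactly the direct-from-the-definition computation the paper has in mind (the paper offers no written proof, simply asserting the lemma follows from the definition). The two monotonicity observations you isolate — $B_k(o')\subseteq B_{k+d(o,o')}(o)$ as spanned edge sets, and separation being preserved under enlarging the removed edge set via the map $\mathcal{C}(F)\to\mathcal{C}(E)$ — give $|n-m|\le d(o,o')$ and hence the stated bi-Lipschitz bound.
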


As a consequence, we get the following, which is proved exactly like Lemma~\ref{Lemma2.2Tanaka}.
\begin{lem}\label{Lemma2.2Tanakaends}
For every $g\in G$, there exists a constant $c_g$ such that
for every end $\xi$ and for every $r\geq 0$,
$$B_{\rho_\lambda}(g\xi,c_g^{-1}r)\subset gB_{\rho_\lambda}(\xi,r)\subset B_{\rho_\lambda}(g\xi,c_gr).$$
\end{lem}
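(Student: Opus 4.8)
The plan is to transcribe the proof of Lemma~\ref{Lemma2.2Tanaka} verbatim, replacing the Floyd change-of-basepoint estimate by Lemma~\ref{lemchangeofbasepointvisualmetric} and the identity $\delta_{\lambda,g}(g\zeta,g\xi)=\delta_{\lambda,1}(\zeta,\xi)$ by its analogue for the visual metric. So the first task is to record that analogue: for all ends $\xi,\zeta$ and all $g\in G$,
$$\rho_{\lambda,g}(g\xi,g\zeta)=\rho_{\lambda,1}(\xi,\zeta).$$
To see this I would note that left multiplication by $g$ is a graph automorphism of the Cayley graph $\Gamma$ carrying the closed ball of radius $n$ around $1$ onto the closed ball of radius $n$ around $g$, hence carrying the finite edge set defining $\rho_{\lambda,1}$ at level $n$ onto the one defining $\rho_{\lambda,g}$ at level $n$, and inducing a bijection between the corresponding families of infinite complementary components. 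Therefore $\xi$ and $\zeta$ are separated by the $n$-ball at $1$ if and only if $g\xi$ and $g\zeta$ are separated by the $n$-ball at $g$; reading off the definition of the visual metric gives the displayed equality.

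Next I would apply Lemma~\ref{lemchangeofbasepointvisualmetric} with basepoints $o=g$ and $o'=1$ to the pair $g\xi,g\zeta$, and substitute the equality just proved, to obtain, for all ends $\xi,\zeta$,
$$\lambda^{d(1,g)}\le \frac{\rho_{\lambda}(g\xi,g\zeta)}{\rho_{\lambda}(\xi,\zeta)}\le \lambda^{-d(1,g)},$$
which is the exact counterpart of the displayed double inequality in the proof of Lemma~\ref{Lemma2.2Tanaka}. Setting $c_g=\lambda^{-d(1,g)}$, the right-hand inequality shows that $\zeta\in B_{\rho_\lambda}(\xi,r)$ forces $g\zeta\in B_{\rho_\lambda}(g\xi,c_g r)$, i.e. $gB_{\rho_\lambda}(\xi,r)\subset B_{\rho_\lambda}(g\xi,c_g r)$, which is the right inclusion. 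For the left inclusion I would apply the right inclusion with $g^{-1}$ in place of $g$ and the end $g\xi$ in place of $\xi$; since the word metric satisfies $d(1,g^{-1})=d(1,g)$ we have $c_{g^{-1}}=c_g$, so $g^{-1}B_{\rho_\lambda}(g\xi,r)\subset B_{\rho_\lambda}(\xi,c_g r)$, hence $B_{\rho_\lambda}(g\xi,r)\subset gB_{\rho_\lambda}(\xi,c_g r)$, and replacing $r$ by $c_g^{-1}r$ yields $B_{\rho_\lambda}(g\xi,c_g^{-1}r)\subset gB_{\rho_\lambda}(\xi,r)$.

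There is no real obstacle: the only point deserving a line of justification is the equivariance identity for $\rho_{\lambda}$, which follows from the fact that the balls used to build the visual metric are permuted by left translation, together with the left-invariance and inverse-symmetry of the word metric. Everything else is a direct copy of the Floyd-metric computation.
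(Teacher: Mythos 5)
Your proof is correct and follows exactly the route the paper intends: it transcribes the argument of Lemma~\ref{Lemma2.2Tanaka}, replacing the Floyd change-of-basepoint estimate by Lemma~\ref{lemchangeofbasepointvisualmetric} and justifying the equivariance identity $\rho_{\lambda,g}(g\xi,g\zeta)=\rho_{\lambda,1}(\xi,\zeta)$ via the left-translation automorphism of the Cayley graph. The paper itself only remarks that the lemma "is proved exactly like Lemma~\ref{Lemma2.2Tanaka}", so your write-up is, if anything, more complete.
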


It is well known that the Floyd boundary covers the end boundary, see for example \cite[Proposition~11.1]{GGPY} and \cite{Karlssonfreesubgroups}.
The following lemma allows us to compare the Floyd distance with the visual distance.

\begin{lem}\label{FloydDomVisu}
The identity of $G$ extends to a 1-Lipschitz surjective equivariant map $\phi$ from the Floyd boundary to the end boundary with the same parameter $\lambda\in (0,1)$:
$$\rho_\lambda(\phi(\xi), \phi(\zeta))\le \delta_\lambda(\xi, \eta)$$ for any $\xi, \zeta\in \partial_{\mathcal F} G$. 
\end{lem}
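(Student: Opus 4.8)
The plan is to reduce the statement to a single combinatorial estimate in the Cayley graph $\Gamma$ relating the edge rescaling that defines $\delta_\lambda$ to the separating edge sets $B_n$ that define $\rho_\lambda$, and then to propagate this estimate to the boundary along approximating sequences of vertices (recall that $G$ is dense in its Floyd compactification $\overline{\Gamma}^{\mathcal F}$).

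First I would record the core estimate: if $x,y\in G$ are separated by $B_n$, i.e.\ lie in distinct components of $\mathcal C(B_n)$, then $\delta_\lambda(x,y)\ge \lambda^{n}$. Since $B_n$ is the edge set of the subgraph spanned by the ball of radius $n$ about $o$, the graph $\Gamma\setminus B_n$ is induced on the vertices at distance $>n$ from $o$; in particular $x$ and $y$ are themselves at distance $\ge n+1$ from $o$ and lie in different components of $\Gamma\setminus B_n$. Hence any path from $x$ to $y$ in $\Gamma$ must leave $\{v:d(o,v)\ge n+1\}$, so it contains an edge $e$ with one endpoint at distance $\ge n+1$ and the other at distance $\le n$ from $o$; thus $d(o,e)\le n$, and the rescaled length $\lambda^{d(o,e)}$ of $e$ is at least $\lambda^{n}$ because $\lambda<1$. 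Taking the infimum over such paths yields $\delta_\lambda(x,y)\ge\lambda^n$. This is the step I expect to require the most care, not because it is deep but because it is precisely the point at which the conventions must be matched so that the \emph{same} parameter $\lambda$ appears on both sides with no multiplicative constant: one uses that $B_n$ is built from the ball of radius $n$ and that $d(o,e)$ is the distance from $o$ to the nearer endpoint of $e$.

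With this in hand, I would recall that, as is well known (\cite[Proposition~11.1]{GGPY}, \cite{Karlssonfreesubgroups}), the identity of $G$ extends to a continuous, surjective, $G$-equivariant map $\phi$ from $\overline{\Gamma}^{\mathcal F}$ onto $\overline{\Gamma}^{\mathcal E}$ carrying $\partial_{\mathcal F}G$ onto $\partial_{\mathcal E}G$; in fact the core estimate reproves this, since it shows that a $\delta_\lambda$-Cauchy sequence of vertices cannot have two subsequences converging to distinct ends, hence converges in $\overline{\Gamma}^{\mathcal E}$ to a limit depending only on its Floyd limit. It then remains to verify the Lipschitz inequality. Fix distinct $\xi,\eta\in\partial_{\mathcal F}G$ and put $\alpha=\phi(\xi)$, $\beta=\phi(\eta)$; if $\alpha=\beta$ there is nothing to prove, so assume $\alpha\ne\beta$ and let $n$ be minimal with $\alpha,\beta$ in distinct components of $\mathcal C(B_n)$, so that $\rho_\lambda(\alpha,\beta)=\lambda^{n}$. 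Choose vertices $g_k\to\xi$ and $h_k\to\eta$ in $\overline{\Gamma}^{\mathcal F}$; by continuity of the extension $\phi$ one gets $g_k\to\alpha$ and $h_k\to\beta$ in $\overline{\Gamma}^{\mathcal E}$, so by the description of convergence in the end compactification, for all large $k$ the vertex $g_k$ lies in $C_{B_n}(\alpha)$ and $h_k$ lies in $C_{B_n}(\beta)\neq C_{B_n}(\alpha)$; thus $g_k$ and $h_k$ are separated by $B_n$. The core estimate gives $\delta_\lambda(g_k,h_k)\ge\lambda^{n}$ for all large $k$, and letting $k\to\infty$ (the metric $\delta_\lambda$ being continuous on $\overline{\Gamma}^{\mathcal F}$) yields $\delta_\lambda(\xi,\eta)\ge\lambda^{n}=\rho_\lambda(\phi(\xi),\phi(\eta))$. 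Since $\xi$ and $\eta$ were arbitrary, $\phi$ is $1$-Lipschitz, and together with its recalled properties this completes the proof.
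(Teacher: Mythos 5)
Your proposal is correct and follows essentially the same route as the paper: the heart of both arguments is the observation that if $x,y\in G$ are separated by $B_n$ then every path between them must use an edge within distance $n$ of the basepoint, giving $\delta_\lambda(x,y)\ge\lambda^n=\rho_\lambda(x,y)$, after which the inequality is transported to the boundary along $\delta_\lambda$-Cauchy sequences of vertices. Your write-up is merely a little more explicit than the paper's about which edge realizes the lower bound and about the limiting step at the boundary.
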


\begin{proof}
Consider $x,y\in G$ such that $\rho_\lambda(x,y)=\lambda^n$, where $n$ is the minimal integer such that $x, y$ are contained  in distinct components of $\mathcal C(B_n)$.
Then any path from $x$ to $y$ has to intersect $B(1, n)$, so  $\delta_\lambda(x,y)\ge \lambda^n$.
Hence, for any $x,y\in G$,
\begin{equation}\label{Floydbigervisualgroup}
    \delta_\lambda(x,y)\ge\rho_\lambda(x,y).
\end{equation}
Consider now $\xi \in \partial_\mathcal{F}G$ and let $x_n$ be a sequence in $G$ converging to $\xi$.
Then, $x_n$ is Cauchy for the Floyd distance and so~(\ref{Floydbigervisualgroup}) shows it is also Cauchy for the visual distance.
This proves that $x_n$ converges to a point $\phi(\xi)\in \partial_{\mathcal{E}}G$, which is uniquely determined by $\xi$.
By construction,~(\ref{Floydbigervisualgroup}) extends to points in the boundary and $\phi$ is equivariant.
\end{proof}

\subsection{The end boundary of accessible groups}\label{SSRelHyp}
As explained in Introduction, any infinitely-ended group is relatively hyperbolic.  If it is accessible, the action on its end boundary is geometrically finite. Precisely, the peripheral structure is given by the ``terminal'' splitting of the accessible group   as a finite graph of groups over finite edge groups so that  every vertex groups are either one-ended or finite.    So for accessible groups, the end boundary is homeomorphic to the Bowditch boundary, whose construction is briefly recalled below.

Let $T$ be the corresponding Bass-Serre tree of the above terminal splitting. Following Bowditch \cite{Bow98}, we can put a compact metrizable topology on $T^0\cup \partial_{\mathcal E} T$, for $T$ is a fine hyperbolic graph. A similar construction is also given for any (non-)locally finite graph by Cartwright-Soardi-Woess \cite{CSW}. 

As a perfect compact space, the Bowditch boundary or the end boundary is homeomorphic to the subspace of $T^0\cup \partial_{\mathcal E} T$ minus the isolated points coming from the vertices with finite stabilizer. According to the definition of a geometrical finite action,  every point in the Bowditch boundary is either conical or bounded parabolic. 
The set of conical points are exactly $\partial_{\mathcal E} T$ and bounded parabolic points are the vertices in $T$ with infinite stabilizer, which are the subset of ends in $\partial_{\mathcal E} G$ corresponding to the left cosets of stabilizer in the Caylay graph of $G$.  Finally, let us remark that these two types of limit points are precisely thin end and thick end introduced and studied in \cite{TW93}. The proof of   this fact can be found in \cite[Proposition 7.8]{GGPY}

\subsection{Hausdorff dimension of the harmonic measure}
In this section, we consider a finitely generated group $G$ with infinitely many ends
and we consider a probability measure $\mu$ with finite first moment on $G$.
We denote by $\omega_n$ the random walk driven by $\mu$.
It is a classical fact that $\omega_n$ almost surely converges to an end $\omega_{\infty}$, and that denoting by $\nu_{\mathcal{E}}$ the law of $\omega_{\infty}$, the end boundary $(\partial_{\mathcal{E}}G,\nu_{\mathcal{E}})$ is a model for the Poisson boundary.
See for example \cite[Theorem~8.4]{Kaimanovichhyperbolic}. We call $\nu_{\mathcal{E}}$ the harmonic measure on $\partial_{\mathcal{E}}G$.

\begin{prop}\label{propPoissonends}
There exists $M>0$ such that $\nu_{\mathcal{E}}$ gives full measure to the set of $M$-thin ends.
\end{prop}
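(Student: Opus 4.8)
The plan is to reduce the statement to the behaviour of the random walk on the Bass--Serre tree of a Stallings splitting of $G$, and then to transport the conclusion to $\partial_{\mathcal E}G$ via the uniformly bounded cuts that such a splitting provides.

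First I would fix a Stallings splitting of $G$ over a finite subgroup $C$ --- an amalgam $A*_C B$ or an HNN extension $A*_C$, which exists and is nontrivial since $G$ has infinitely many ends --- and let $T$ be its Bass--Serre tree, with a base vertex $v_0$. The edge stabilizers are finite and, as $G$ is not virtually cyclic, $G\curvearrowright T$ is non-elementary. The splitting endows the Cayley graph $\Gamma$ with a $G$-invariant family of cuts: to each oriented edge $\vec e$ of $T$ is attached a finite edge set $E(\vec e)\subset\Gamma$ with $\diam_\Gamma E(\vec e)\le M_0$, where $M_0$ depends only on $C$ and the generating set, so that $\Gamma\setminus E(\vec e)$ has an infinite component $\Gamma(\vec e)$ with $g\in\Gamma(\vec e)$ if and only if $gv_0$ lies in the half-tree of $T\setminus\{e\}$ towards which $\vec e$ points; this is the graph-theoretic content of Stallings' theorem and of Dunwoody's accessibility, see \cite{Dun85,TW93}. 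In particular the orbit map $g\mapsto gv_0$ extends to a continuous surjection $\pi\colon\overline{\Gamma}^{\mathcal E}\to T^0\cup\partial T$, the target being endowed with Bowditch's compact topology on the fine hyperbolic graph $T$.

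Next I would show that almost surely $\omega_n v_0$ converges to a point $\xi_T=\pi(\omega_\infty)\in\partial T$. The orbit map is Lipschitz, so $\mu$ has finite first moment for the displacement on $T$; since $G\curvearrowright T$ is non-elementary, Maher--Tiozzo's convergence theorem \cite{MaherTiozzo} gives $\omega_n v_0\to\partial T$ almost surely. (Equivalently, $G$ is hyperbolic relative to the vertex groups, as recalled before Theorem~\ref{mainthm2}, and the argument of Lemma~\ref{lemmapushforwardBowditch} shows that the harmonic measure on the Bowditch boundary is non-atomic, hence supported on the conical limit points, which are exactly the points of $\partial T$.) Then, on this full-measure event, let $[v_0,\xi_T)=(v_0=u_0,u_1,u_2,\dots)$ be the geodesic ray in $T$, let $e_i=[u_{i-1},u_i]$ be its edges oriented towards $\xi_T$, and set $E_i=E(\vec e_i)$, $\Gamma_i=\Gamma(\vec e_i)$. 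Since $\omega_n v_0\to\xi_T$, for each $i$ we have $\omega_n v_0$ on the $\xi_T$-side of $e_i$ for all large $n$, so $\omega_n\in\Gamma_i$ eventually; hence $\omega_\infty\in\overline{\Gamma_i}$ and $C_{E_i}(\omega_\infty)=\Gamma_i$. The sets $\Gamma_i$ are infinite, nested ($\Gamma_{i+1}\subseteq\Gamma_i$, since the $\xi_T$-half of $T\setminus\{e_{i+1}\}$ sits inside that of $T\setminus\{e_i\}$), and disjoint from any prescribed finite set of vertices once $i$ is large, because $e_i$ escapes every bounded region of $T$. Each $E_i$ satisfies $\diam_\Gamma E_i\le M_0$, so it remains only to check that $(E_i)$ realizes the end $\omega_\infty$ in the sense required by the definition of a thin end, namely $\bigcap_i\overline{\Gamma_i}=\{\omega_\infty\}$: if some end $\zeta\ne\omega_\infty$ lay in every $\overline{\Gamma_i}$, picking a finite edge set $F$ separating $\zeta$ from $\omega_\infty$ and $i$ large enough that $F\cap\Gamma_i=\emptyset$ would force $\zeta$ and $\omega_\infty$ into the same component of $\Gamma\setminus F$ --- the one containing the connected subgraph $\Gamma_i$ --- a contradiction. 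Thus $\omega_\infty$ is almost surely $M_0$-thin, and $\nu_{\mathcal E}$ gives full measure to the $M_0$-thin ends, so the proposition holds with $M=M_0$.

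The crux is the interface between the tree and the graph: producing the $G$-invariant family of uniformly bounded cuts attached to the splitting and checking that ``lying in the half-graph $\Gamma_i$'' corresponds exactly to ``lying on the $\xi_T$-side of $e_i$ in $T$'', even when $T$ is locally infinite. Once that bookkeeping is in place, the probabilistic input is standard: convergence to the tree boundary by Maher--Tiozzo, together with the fact that the limit avoids the (parabolic) vertices of $T$, which is precisely the non-atomicity argument already used for Lemma~\ref{lemmapushforwardBowditch}.
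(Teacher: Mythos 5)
Your argument is correct, and its probabilistic core coincides with the paper's: both proofs reduce to the fact that the harmonic measure only charges ``conical'' limit points, which follows from non-atomicity of the stationary measure (Claim~\ref{stationarywithoutatom}) together with Maher--Tiozzo. Where you genuinely differ is in the geometric half. The paper invokes Woess's decomposition of the end space as $\Omega_0\cup\Omega'$, where $\Omega_0$ is a set of $M$-thin ends described by normal forms and $\Omega'$ is the union of translates of the end spaces of the vertex groups, and then transports $\nu_{\mathcal E}$ through the equivariant collapsing map $\psi:\partial_{\mathcal E}G\to\partial_{\mathcal B}G$ to get $\nu_{\mathcal E}(\Omega')=0$ from $\psi_*\nu_{\mathcal E}=\nu_{\mathcal B}$. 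You instead work directly on the Bass--Serre tree: you construct the uniformly bounded $G$-equivariant cuts attached to its edges, prove a.s.\ convergence of $\omega_n v_0$ to $\partial T$, and verify thinness of the limit end from the nested cuts along the ray $[v_0,\xi_T)$. Your route is more self-contained (it re-derives rather than cites the content of \cite{Woessends}), at the price of the cut/half-tree bookkeeping you correctly flag as the crux; with the standard generating set $S\cup T$ containing the finite edge group, the Cayley graph is explicitly a union of copies of the vertex-group Cayley graphs glued along cosets of the edge group (this is exactly the description the paper uses later in the proof of Proposition~\ref{propinaccessible}), so that bookkeeping goes through and the cuts are the edge-group cosets, of uniformly bounded diameter. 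Both arguments are sound; yours has the mild additional virtue of exhibiting the thin-end structure of $\nu_{\mathcal E}$-typical ends explicitly, which is essentially what Lemma~\ref{existencebottlenecks} and Proposition~\ref{propsublineardeviationbottlenecks} need downstream.
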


\begin{proof}
It is proved in \cite[Theorem~4.1]{Woessends} that the set of ends of a locally finite graph with infinitely many ends can be decomposed into the union $\Omega_0\cup \Omega'$, where $\Omega_0$ is a dense set in the set of ends consisting of $M$-thin ends.
When the graph is the Cayley graph of a finitely generated group, the set $\Omega_0$ can be constructed as follows.
As explained in the introduction, the group $G$ splits as an amalgamated product $A*_CB$ or an HNN extension $A*_C$.
In the former case, $G$ is hyperbolic relative to the conjugates of $A$ and $B$ and in the later case, it is hyperbolic relative to the conjugates of $A$.
In both cases, every element of $G$ can be written with elements of $A$, $B$ and $C$ with a normal form, see \cite[(9.2),(9.4)]{Woessends} and $\Omega_0$ can be described as the set of infinite words with respect to this normal form, see \cite[(9.3),(9.5)]{Woessends}. 
Moreover, the set $\Omega'$ is constructed as the union of translates of the set of ends of $A$ and $B$, see the remarks after \cite[(9.3),(9.5)]{Woessends}.
We can thus construct a continuous, surjective and equivariant map $\psi$ from the set of ends $\partial_{\mathcal{E}}G$ to the Bowditch boundary of $G$ with respect to the relatively hyperbolic structure described above.
The map $\psi$ is obtained by collapsing the translate $g \partial_{\mathcal{E}}A$ of the set of ends of $A$ to the point $g\alpha$ in the Bowditch boundary, where $\alpha$ is the parabolic limit point fixed by $A$ and similarly with $B$.
It follows from this construction that $\Omega_0$ is mapped to conical limit points.
Note that the measure $\nu_{\mathcal{E}}$ is $\mu$-stationary and since $\psi$ is equivariant, the pushforward $\psi_*\nu_{\mathcal{E}}$ also is $\mu$-stationary on the Bowditch boundary.
As explained in the proof of Lemma~\ref{lemmapushforwardBowditch}, the harmonic measure $\nu_{\mathcal{B}}$ on the Bowditch boundary is the unique $\mu$-stationary measure, hence $\psi_*\nu_{\mathcal{E}}=\nu_{\mathcal{B}}$.
Moreover, $\nu_{\mathcal{B}}$ gives full measure to conical limit points.
Consequently,
$$\nu_{\mathcal{E}}(\Omega')\leq \nu_{\mathcal{E}}(\psi^{-1}(\psi(\Omega')))=\nu_{\mathcal{B}}(\psi(\Omega'))=0.$$
This concludes the proof.
\end{proof}

The following definition is inspired by the work of Derriennic \cite{Derriennic} in   free groups.

\begin{definition}\label{defbottleneck1}
Let $x,y\in G\cup \partial_\mathcal{E}G$ and let $M\geq 0$.
We say that a set $U\subset G$ is a \textit{$M$-bottleneck} between $x$ and $y$ if $\diam (U)\leq M$ and any path from $x$ to $y$ has to pass through $U$.
\end{definition}

Similar sets are called transitional sets by Derriennic in \cite{Derriennic}.
However, to avoid confusion with the terminology "transition points" in relatively hyperbolic groups, we used the name bottleneck, which will also be more suited to our use later.
The next lemma follows from our definitions.

\begin{lem}\label{existencebottlenecks}
Let $\xi$ be a $M$-thin end for $M>0$.
Then there exists an infinite sequence of distinct sets $E_n$ in $G$ with ~$\sup_{n
\ge 1}\{\diam (E_n)\}\leq M$~ such that any path from 1 to $\xi$ has to pass successively through each $E_n$.

Moreover, for any $x\neq \xi\in G\cup \partial_{\mathcal{E}}G$, there exists $n_0>0$ such that any path from $x$ to $\xi$ has to pass successively through each $E_n$, for $n\geq n_0$.
In particular,  for all but finitely many  $n>0$, the sets $E_n$ are $M$-bottlenecks between $x$ and $\xi$.
\end{lem}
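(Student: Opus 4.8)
The plan is to produce the $E_n$ as the vertex sets $E_n^0$ of a carefully chosen subsequence of the edge sets witnessing that $\xi$ is $M$-thin.

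First I would fix, by definition of an $M$-thin end, finite edge sets $E'_n$ with $\diam(E'_n)\le M$, with $C_{E'_{n+1}}(\xi)\subseteq C_{E'_n}(\xi)$, and whose closures form a neighborhood basis of $\xi$. The key preliminary estimate is that $d(1,C_{E'_n}(\xi))\to\infty$: if some fixed vertex $v$ were in $C_{E'_n}(\xi)$ for infinitely many $n$, then by monotonicity it would lie in $C_{E'_n}(\xi)$ for every $n$, contradicting that $v\ne\xi$ eventually leaves the neighborhood basis; since balls in $\Gamma$ are finite, this yields $d(1,C_{E'_n}(\xi))\to\infty$. Because $\Gamma$ is connected, each infinite component $C_{E'_n}(\xi)$ is joined to $(E'_n)^0$ by at least one edge, so $(E'_n)^0\subseteq N_{M+1}(C_{E'_n}(\xi))$ and hence $d(1,(E'_n)^0)\to\infty$ too. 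I would then pass to a subsequence $(E_n):=(E'_{k_n})$ along which $d(1,E_n^0)$ increases by more than $M$ at each step; since $E_n^0$ lies in the annulus of radii $d(1,E_n^0)$ and $d(1,E_n^0)+M$ around $1$, these sets are pairwise disjoint, and after discarding finitely many initial terms I may also assume $1\notin E_n^0\cup C_{E_n}(\xi)$ for all $n$. I will abuse notation and write $E_n$ for $E_n^0$: these are pairwise disjoint (hence distinct) subsets of $G$ of diameter at most $M$, and the nesting $C_{E_{n+1}}(\xi)\subseteq C_{E_n}(\xi)$ is preserved.

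The one geometric input I would isolate is that a path in $\Gamma$ going from a vertex outside $C_{E_n}(\xi)$ to a vertex of $C_{E_n}(\xi)$ must hit $E_n^0$: since $\Gamma\setminus E_n$ is the subgraph induced on $\Gamma^0\setminus E_n^0$, an edge with one endpoint in $C_{E_n}(\xi)$ and one endpoint outside it cannot survive in $\Gamma\setminus E_n$ (its endpoints would otherwise be in the same component), and since the inner endpoint is not in $E_n^0$, the outer one is. Now, given a path $\gamma=(\gamma_0=1,\gamma_1,\dots)$ from $1$ to $\xi$, that is with $\gamma_k\to\xi$, the set $\{k:\gamma_k\notin C_{E_n}(\xi)\}$ contains $0$ and is finite (as $\gamma_k\in C_{E_n}(\xi)$ eventually), so it has a maximum $t_n$; applying the geometric input to the edge $\gamma_{t_n}\gamma_{t_n+1}$ gives $\gamma_{t_n}\in E_n^0$, the nesting gives $t_n\le t_{n+1}$, and disjointness of $E_n^0$ and $E_{n+1}^0$ upgrades this to $t_1<t_2<\cdots$. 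So $\gamma$ meets $E_1,E_2,\dots$ in this order.

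For the "moreover" part I would run the same argument, taking care only of the behaviour near $x$. If $x\in G$ is a vertex, then $d(x,E_n^0)\ge d(1,E_n^0)-d(1,x)\to\infty$ and $x$ is eventually outside the neighborhood basis, so $x\notin E_n^0\cup C_{E_n}(\xi)$ for $n\ge n_0$ and the argument above applies verbatim from index $n_0$. If $x\in\partial_{\mathcal E}G$ is an end distinct from $\xi$, a path from $x$ to $\xi$ is a two-sided sequence $(\gamma_k)_{k\in\mathbb Z}$ with $\gamma_k\to\xi$ as $k\to+\infty$ and $\gamma_k\to x$ as $k\to-\infty$; choosing $n_0$ so that $x\notin\bar C_{E_n}(\xi)$ for $n\ge n_0$, the set $\{k:\gamma_k\notin C_{E_n}(\xi)\}$ is nonempty and bounded above, its maximum $t_n$ satisfies $\gamma_{t_n}\in E_n^0$, and $t_n<t_{n+1}$ exactly as before. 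In either case every path from $x$ to $\xi$ meets $E_n$ for $n\ge n_0$ and $\diam(E_n)\le M$, so $E_n$ is an $M$-bottleneck between $x$ and $\xi$. The step I expect to require the most care is the extraction in the first paragraph — verifying $d(1,E_n^0)\to\infty$ so that the sets can be made disjoint and disjoint from $1$ — which is precisely where one uses that the $\bar C_{E_n}(\xi)$ form a neighborhood basis, local finiteness of $\Gamma$, and the attachment of each infinite component to its defining edge set.
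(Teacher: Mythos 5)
Your proof is correct; the paper offers no argument here (it simply remarks that the lemma ``follows from our definitions''), and your write-up is exactly the intended unpacking of those definitions. The two points you rightly single out as needing care --- that $d(1,E_n^0)\to\infty$ (via the neighborhood-basis property, local finiteness, and the attachment of each infinite component to its separating edge set, so that the $E_n$ can be taken pairwise disjoint) and that an edge crossing out of $C_{E_n}(\xi)$ must have its outer endpoint in $E_n^0$ --- are handled correctly, and the rest follows as you describe.
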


Our goal in the remainder of this section is to compute the Hausdorff dimension of the harmonic measure with respect to the visual distance.
Precisely, we prove the following theorem.

\begin{theorem}\label{theoremHausdorffharmonicends}
Let $(\partial_{\mathcal{E}}G,\nu_{\mathcal{E}})$ be as above and let $h, l$ be the entropy and the rate of escape of the $\mu$-random  walk on $G$. Then,  for any $\lambda\in (0,1)$,
$$Hdim_{\rho_{\lambda}}(\nu_{\mathcal{E}})=\frac{-1}{\log \lambda} \frac{h}{l}.$$
Moreover, $\nu_{\mathcal{E}}$ is exact dimensional.
\end{theorem}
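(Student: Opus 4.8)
The plan is to establish the two inequalities
$$\overline{Hdim}_{\rho_\lambda}(\nu_{\mathcal E})\le \frac{-1}{\log\lambda}\frac hl\le Hdim_{\rho_\lambda}(\nu_{\mathcal E})$$
separately, exactly along the lines of Section~\ref{FloyBowditchSection}. Since $Hdim\le\overline{Hdim}$ always holds, the two bounds force the equality, and the local estimates produced on the way show that $\liminf_{r\to0}\frac{\log\nu_{\mathcal E}(B(\xi,r))}{\log r}$ and $\limsup_{r\to0}\frac{\log\nu_{\mathcal E}(B(\xi,r))}{\log r}$ agree $\nu_{\mathcal E}$-a.e., i.e.\ that $\nu_{\mathcal E}$ is exact dimensional. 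The new geometric inputs are a replacement of Lemma~\ref{lemma2.2LePrince} for the visual metric, a sublinear tracking statement for bottlenecks replacing Proposition~\ref{propsublineardeviationtransitionpoints}, and a shadow--ball comparison replacing Lemma~\ref{lemma3.16PY}.

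\emph{Upper bound.} I would imitate Proposition~\ref{upper-boundharmonicFloyd}. As in Lemma~\ref{ZeroAsymJumpLem}, finite first moment plus the law of large numbers gives $\frac1n d(\omega_n,\omega_{n+1})\to0$, hence $\frac1n d(1,\gamma_n)\to l$ for any geodesic $\gamma_n$ from $\omega_n$ to $\omega_{n+1}$. For $\epsilon,\delta>0$, let $\Omega_{\epsilon,\delta}$ be the set of trajectories such that for all $n\ge N_{\epsilon,\delta}$ one has $d(1,\gamma_n)\ge(l-\epsilon)n$ for every geodesic $\gamma_n$ from $\omega_n$ to $\omega_{n+1}$, and $-\log\mu^{*n}(\omega_n)\le(h+\epsilon)n$; by Shannon--McMillan--Breiman~(\ref{equationdefh}) we may take $N_{\epsilon,\delta}$ so that $\mathbb P(\Omega_{\epsilon,\delta})\ge1-\delta$. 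The geometric core is: if $\mathbf x,\mathbf x'\in\Omega_{\epsilon,\delta}$, $\omega_n=\omega'_n$ and $n\ge N_{\epsilon,\delta}$, then concatenating the geodesics $\gamma'_m$ for $m\ge n$ produces a path from $\omega_n$ to $\omega'_\infty$ all of whose vertices lie outside $B(1,\lfloor(l-\epsilon)n\rfloor-1)$; hence $\omega'_\infty$, and likewise $\omega_\infty$, lie in the component $C_{B_{\lfloor(l-\epsilon)n\rfloor-1}}(\omega_n)$, so $\rho_\lambda(\omega_\infty,\omega'_\infty)\le\lambda^{\lfloor(l-\epsilon)n\rfloor-1}$. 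In other words $C_{\mathbf x}^n\cap\Omega_{\epsilon,\delta}\subset D(\omega_\infty,M\lambda^{(l-\epsilon)n})$ for a uniform $M$, where now $D(\xi,r)=\{\mathbf x:\rho_\lambda(\omega_\infty,\xi)\le r\}$. Lemma~\ref{lemma2.1LePrince} is purely measure-theoretic (it concerns the tail $\sigma$-algebra) and applies verbatim, yielding $\Lambda_{\epsilon,\delta}\subset\Omega_{\epsilon,\delta}$ with $\mathbb P(\Lambda_{\epsilon,\delta})\ge1-2\delta$ on which $\limsup_n\frac1n\log\mathbb P(C_{\mathbf x}^n\cap\Omega_{\epsilon,\delta})=\limsup_n\frac1n\log\mu^{*n}(\omega_n)\ge-(h+\epsilon)$. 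Combining, and using that $x\mapsto Mx\lambda^{(l-\epsilon)x}$ is eventually decreasing and that $\nu_{\mathcal E}$ is the law of $\omega_\infty$, one gets $\limsup_{r\to0}\frac{\log\nu_{\mathcal E}(B(\omega_\infty,r))}{\log r}\le\frac{-1}{\log\lambda}\frac{h+\epsilon}{l-\epsilon}$ on $\Lambda_{\epsilon,\delta}$; letting $\epsilon,\delta\to0$ gives the upper bound. This direction does not use thinness of the limit end.

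\emph{Sublinear deviation from bottlenecks, and shadows versus balls.} For the lower bound, the first task is Proposition~\ref{propsublineardeviationbottlenecks}, the analog of Proposition~\ref{propsublineardeviationtransitionpoints}. Fix $M$ as in Proposition~\ref{propPoissonends}, so $\nu_{\mathcal E}$ is carried by $M$-thin ends. Passing to the bilateral walk, $\omega_{-\infty}$ is a.s.\ an end distinct from $\omega_\infty$; let $Z(\omega)$ be the union of all $M$-bottlenecks between $\omega_{-\infty}$ and $\omega_\infty$ and put $f(\omega)=d(1,Z(\omega))=d(\omega_0,Z(\omega))$. Finiteness of $f$ follows from Lemma~\ref{existencebottlenecks}, and its measurability would be proved as in Claim~\ref{fmeasurable} --- using a countable base of the compact space $G\cup\partial_{\mathcal E}G$, Arzel\`a--Ascoli applied to geodesics, and the fact that $\nu_{\mathcal E}$ (and $\check\nu_{\mathcal E}$) has no atom by Claim~\ref{stationarywithoutatom}, since $G$ fixes no finite set of ends. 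As $|f(T\omega)-f(\omega)|=|d(\omega_1,Z(\omega))-d(\omega_0,Z(\omega))|\le d(1,\omega_1)$ is integrable, \cite[Lemma~7]{Tiozzo} gives $\frac1n d(\omega_n,Z(\omega))=\frac1n f(T^n\omega)\to0$; since $\omega_n\to\omega_\infty$ and $d(1,\omega_n)=ln+o(n)$ by~(\ref{equationdefl}), for large $n$ the point of $Z(\omega)$ nearest $\omega_n$ lies in an $M$-bottleneck $E_n$ between $1$ and $\omega_\infty$ with $d(\omega_n,E_n)=o(n)$ and $d(1,E_n)=ln+o(n)$. Next, for such a bottleneck $E$ with $k=d(1,E)$, set $\mathrm{Shadow}(E)=\bar C_E(\omega_\infty)\cap\partial_{\mathcal E}G$. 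Since $E^0\subset B(1,k+M)$ we have $E\subset B_{k+M}$ as edge sets, hence $C_{B_{k+M}}(\omega_\infty)\subset C_E(\omega_\infty)$; since $E^0\cap B(1,k-1)=\emptyset$, the vertex set of $C_E(\omega_\infty)$ avoids $B(1,k-1)$ and is connected inside $\Gamma\setminus B_{k-1}$, so $C_E(\omega_\infty)\subset C_{B_{k-1}}(\omega_\infty)$. Therefore $B_{\rho_\lambda}(\omega_\infty,\lambda^{k+M})\subset\mathrm{Shadow}(E)\subset B_{\rho_\lambda}(\omega_\infty,\lambda^{k-1})$, the exact analog of Lemma~\ref{lemma3.16PY}.

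\emph{Lower bound, conclusion, and the main obstacle.} With these two ingredients, the proof of Proposition~\ref{lower-boundharmonicFloyd} carries over almost word for word, replacing transition points $x_n$ by $E_n$ and the big shadow $\Pi(x_n,R)$ by $\mathrm{Shadow}(E_n)$: define $\Omega_\epsilon$ by $d(\omega_n,E_n)\le\epsilon n$, $(l-\epsilon)n\le d(1,E_n)\le(l+\epsilon)n$ and $-\log\mu^{*n}(\omega_n)\ge(h-\epsilon)n$ for $n\ge N_\epsilon$, with $\mathbb P(\Omega_\epsilon)\ge1-\epsilon$; bound $\mathbb P(\mathbf x'\in\Omega_\epsilon:\omega'_\infty\in\mathrm{Shadow}(E_n))\le\sharp B(1,O(\epsilon n))\,e^{-(h-\epsilon)n}\le Ce^{3v\epsilon n}e^{-(h-\epsilon)n}$; disintegrate along $\nu_{\mathcal E}$ through the conditional measures $\mathbb P(\cdot\,|\,\xi)$ as in \cite{Tanaka} to get a positive-measure set $F_\epsilon$ with $\liminf_{r\to0}\frac{\log\nu_{\mathcal E}(F_\epsilon\cap B(\xi,r))}{\log r}\ge\frac{h-\epsilon-3v\epsilon}{-\log\lambda\,(l+\epsilon)}$ for $\nu_{\mathcal E}$-a.e.\ $\xi$; then, restricting $\nu_{\mathcal E}$ to $F_\epsilon$, use $\mu$-stationarity, ergodicity, the distortion Lemma~\ref{Lemma2.2Tanakaends}, and the bound $c_{g,\mu}^{-1}\nu_{\mathcal E}\le g^{-1}\nu_{\mathcal E}\le c_{g,\mu}\nu_{\mathcal E}$ (valid since $\mathrm{supp}(\mu)$ generates $G$ as a semigroup) to upgrade to a $G$-invariant full-measure set, and let $\epsilon\to0$. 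This gives $\liminf_{r\to0}\frac{\log\nu_{\mathcal E}(B(\xi,r))}{\log r}\ge\frac{-1}{\log\lambda}\frac hl$ for $\nu_{\mathcal E}$-a.e.\ $\xi$, hence $Hdim_{\rho_\lambda}(\nu_{\mathcal E})\ge\frac{-1}{\log\lambda}\frac hl$; together with the upper bound this yields the stated formula and exact dimensionality. I expect the main obstacle to be Proposition~\ref{propsublineardeviationbottlenecks}: pushing the Tiozzo-type measurability argument of Claim~\ref{fmeasurable} through in this setting, and organizing the bilateral walk together with the notion of bottleneck between its two boundary endpoints --- in particular verifying that for large $n$ the bottleneck nearest $\omega_n$ genuinely separates $1$ from $\omega_\infty$, so that its shadow really contains $\omega_\infty$. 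The shadow--ball comparison, though elementary, also needs care in translating between edge sets, vertex sets and metric balls.
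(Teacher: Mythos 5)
Your proposal is correct and follows essentially the same route as the paper: the Le Prince--Kaimanovich argument for the upper bound (where, as the paper notes, the visual metric gives the even simpler estimate $\rho_\lambda(\omega'_n,\omega'_m)\le\lambda^{(l-\epsilon)n}$ in place of (\ref{estimateupperboundFloyd})), and for the lower bound the sublinear tracking of $M$-bottlenecks via Tiozzo's lemma together with a shadow--ball sandwich (the paper's partial shadow $\mho(g,M)$ plays the role of your component-based $\mathrm{Shadow}(E)$) and the Tanaka-style disintegration. The one step you flag as the main obstacle is resolved in the paper by a short argument: fix a path $\alpha$ from $\omega_{-\infty}$ to $1$; since $d(1,U_n)\to\infty$, $\alpha$ eventually misses $U_n$, so concatenating $\alpha$ with any path $\beta$ from $1$ to $\omega_\infty$ yields a path from $\omega_{-\infty}$ to $\omega_\infty$ that must cross $U_n$, forcing $\beta$ itself to cross $U_n$.
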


We will follow the strategy that we used for the harmonic measures on the Bowditch and the Floyd boundaries.
We first give an upper bound.

\begin{prop}\label{upper-boundharmonicends}
For $\nu_{\mathcal{E}}$-almost every $\xi$ in $\partial_{\mathcal{E}}G$,
$$\limsup_{r\to 0}\frac{\log \nu_{\mathcal{E}}(B_{\rho_\lambda}(\xi,r))}{\log r}\leq \frac{-1}{\log \lambda} \frac{h}{l}.$$
\end{prop}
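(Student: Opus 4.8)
The plan is to mimic closely the upper bound argument for the Floyd boundary (Proposition~\ref{upper-boundharmonicFloyd}), replacing the Floyd metric by the visual metric $\rho_\lambda$ and the estimate~(\ref{estimateupperboundFloyd}) by an analogous bound expressed in terms of the radius of a separating ball. First I would fix $\epsilon,\delta>0$ and work on a large-probability subset of trajectories. Using Lemma~\ref{ZeroAsymJumpLem} ($\frac1n d(\omega_n,\omega_{n+1})\to 0$), the Shannon--McMillan--Breiman theorem~(\ref{equationdefh}), the convergence $\frac1n d(1,\omega_n)\to l$, and Proposition~\ref{propPoissonends} (so that $\omega_\infty$ is almost surely an $M$-thin end), I would define $\Omega_\epsilon^N$ to be the set of trajectories $\mathbf x$ such that for all $n\ge N$: (a) $d(1,\omega_n)\ge (l-\epsilon)n$; (b) $-\log\mu^{*n}(\omega_n)\le (h+\epsilon)n$; (c) $d(\omega_n,\omega_{n+1})\le \epsilon n$; and (d) $\omega_\infty$ is an $M$-thin end. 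As before, choosing $N_{\epsilon,\delta}$ large enough gives $\mathbb P(\Omega_\epsilon^{N_{\epsilon,\delta}})\ge 1-\delta$; set $\Omega_{\epsilon,\delta}=\Omega_\epsilon^{N_{\epsilon,\delta}}$, let $C_{\mathbf x}^n=\{\mathbf x':\omega_n'=\omega_n\}$, and define $D(\xi,r)=\{\mathbf x:\rho_\lambda(\omega_\infty,\xi)\le r\}$. The measurability/positivity lemma (Lemma~\ref{lemma2.1LePrince}) applies verbatim, producing $\Lambda_{\epsilon,\delta}\subset\Omega_{\epsilon,\delta}$ of measure at least $1-2\delta$ on which $\limsup_n\frac1n\log\mathbb P(C_{\mathbf x}^n\cap\Omega_{\epsilon,\delta})=\limsup_n\frac1n\log\mu^{*n}(\omega_n)$.

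The key geometric step is the analogue of Lemma~\ref{lemma2.2LePrince}: for $\mathbf x\in\Lambda_{\epsilon,\delta}$ and $n\ge N_{\epsilon,\delta}$, one has $C_{\mathbf x}^n\cap\Omega_{\epsilon,\delta}\subset D(\omega_\infty,M_1 n\lambda^{(l-\epsilon)n})$ for a uniform constant $M_1$. Here is where the visual metric replaces the Floyd metric. Fix $\mathbf x'\in C_{\mathbf x}^n\cap\Omega_{\epsilon,\delta}$, so $\omega_n=\omega_n'$. For $m\ge n$ the point $\omega_m'$ satisfies $d(1,\omega_m')\ge (l-\epsilon)m$, and since $\omega_\infty'$ is $M$-thin, Lemma~\ref{existencebottlenecks} gives $M$-bottlenecks separating $\omega_n'$ from $\omega_\infty'$; more directly, I would argue as in Lemma~\ref{FloydDomVisu}: consecutive points $\omega_m',\omega_{m+1}'$ are joined by a path of length $d(\omega_m',\omega_{m+1}')\le \epsilon m$ staying outside $B(1,(l-\epsilon)m-\epsilon m)$, hence they lie in the same component of $\mathcal C(B_k)$ for every $k\le (l-\epsilon)m-\epsilon m$, whence $\rho_\lambda(\omega_m',\omega_{m+1}')\le \lambda^{((l-\epsilon)-\epsilon)m}$ (or better, using $d(\omega_m',\omega_{m+1}')$ as the number of ``intermediate'' components, $\rho_\lambda(\omega_m',\omega_{m+1}')\le d(\omega_m',\omega_{m+1}')\,\lambda^{(l-2\epsilon)m}$; either form suffices after replacing $\epsilon$ by $2\epsilon$). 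Because $\rho_\lambda$ is an ultrametric, $\rho_\lambda(\omega_n',\omega_m')\le\max_{n\le k<m}\rho_\lambda(\omega_k',\omega_{k+1}')\le \lambda^{(l-2\epsilon)n}$ for all $m>n$, so $\rho_\lambda(\omega_n',\omega_\infty')\le\lambda^{(l-2\epsilon)n}$, and since $\omega_n=\omega_n'$ the triangle inequality yields $\rho_\lambda(\omega_\infty,\omega_\infty')\le\lambda^{(l-2\epsilon)n}$, which is the desired inclusion (the ultrametric property makes this even cleaner than the Floyd case, eliminating the geometric-series computation).

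With these two ingredients, the conclusion is identical to the end of the proof of Proposition~\ref{upper-boundharmonicFloyd}. Since $\nu_{\mathcal E}$ is the law of $\omega_\infty$, it suffices to show that for every $\mathbf x\in\Lambda_{\epsilon,\delta}$, $\limsup_{r\to 0}\frac{\log\nu_{\mathcal E}(B_{\rho_\lambda}(\omega_\infty,r))}{\log r}\le\frac{-1}{\log\lambda}\frac{h+\epsilon}{l-2\epsilon}$. Replacing $r$ by $\lambda^{(l-2\epsilon)n}$ (a sequence decreasing to $0$), we must check $\limsup_n\frac1n\log\nu_{\mathcal E}(D(\omega_\infty,\lambda^{(l-2\epsilon)n}))\ge-(h+\epsilon)$. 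By the inclusion above, $\nu_{\mathcal E}(D(\omega_\infty,\lambda^{(l-2\epsilon)n}))=\mathbb P(D(\omega_\infty,\lambda^{(l-2\epsilon)n}))\ge\mathbb P(C_{\mathbf x}^n\cap\Omega_{\epsilon,\delta})$; taking $\limsup$ and applying Lemma~\ref{lemma2.1LePrince} gives $\ge\limsup_n\frac1n\log\mu^{*n}(\omega_n)\ge-(h+\epsilon)$ by property (b). Letting $\epsilon,\delta\to 0$ and using $\mathbb P(\Lambda_{\epsilon,\delta})\ge 1-2\delta$ finishes the proof. The main obstacle is purely bookkeeping: ensuring that the ``bottleneck'' reasoning (that a short path far from the basepoint cannot cross a small ball) is correctly phrased in the combinatorial language of components $\mathcal C(B_n)$ of Section~\ref{EndsSection}, rather than in the length-metric language used for the Floyd distance; once that translation is pinned down via Lemma~\ref{FloydDomVisu} and Lemma~\ref{existencebottlenecks}, the probabilistic skeleton transfers without change, and the ultrametric inequality actually simplifies the argument.
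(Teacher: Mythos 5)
Your proposal is correct and follows essentially the same route as the paper: the authors simply observe that the only Floyd-specific ingredient in the proof of Proposition~\ref{upper-boundharmonicFloyd} is the estimate~(\ref{estimateupperboundFloyd}), and that it holds in the stronger form $\rho_\lambda(x,x')\le\lambda^{m_1}$ for the visual metric because a path of bounded length staying outside $B(1,m_1)$ cannot separate its endpoints by any smaller ball, after which the probabilistic skeleton transfers verbatim. Your explicit use of the ultrametric inequality to replace the geometric-series summation is a minor (and valid) streamlining of the same argument, and your bookkeeping with $(l-2\epsilon)$ in place of $(l-\epsilon)$ is harmless.
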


\begin{proof}
The proof of Proposition~\ref{upper-boundharmonicFloyd} for the Floyd distance again applies here.
The only place where the Floyd distance is used there is in the estimate~(\ref{estimateupperboundFloyd}), which states that whenever $x,x'$ are joined by a geodesic which stays at distance at least $m_1$ from $1$ and which satisfies that $d(x,x')\leq m_2$, we have
$$\delta_{\lambda}(x,x')\leq m_2\lambda^{m_1}.$$
This is again true replacing the Floyd distance $\delta_{\lambda}$ by the visual distance $\rho_{\lambda}$, since we have the better estimate
$$\rho_{\lambda}(x,x')\leq \lambda^{m_1}.$$

Also note that for $\lambda\geq \lambda_0$, we can give a direct proof.
We use the map $\phi$ given by Lemma~\ref{FloydDomVisu}.
By \cite[Theorem~8.3]{Kaimanovichhyperbolic}, $\nu_{\mathcal{E}}$ is the only $\mu$-stationary measure on $\partial_{\mathcal{E}}G$.
Hence, $\phi_*\nu_{\mathcal{F}}=\nu_{\mathcal{E}}$.
The result thus follows from the same result for the measure $\nu_{\mathcal{F}}$, which is given by~(\ref{nu_Fexactdimensional}).
\end{proof}

To obtain a lower bound, we will use the following result, which says that the random walk almost surely sublinearly tracks bottlenecks.

\begin{prop}\label{propsublineardeviationbottlenecks}
There exists $M\geq 0$ such that for $\mathbb P$-almost every $\mathbf x=(\omega_n)\in \Omega$, there exists a sequence of $M$-bottlenecks $U_n$ between 1 and $\omega_\infty$ satisfying that
$$\frac{1}{n}d(\omega_n,U_n)\underset{n\to \infty}{\longrightarrow}0.$$
\end{prop}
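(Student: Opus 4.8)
The proof proceeds by the same strategy as Proposition~\ref{propsublineardeviationtransitionpoints}, replacing transition points with bottlenecks. Fix $M>0$ given by Proposition~\ref{propPoissonends}, so that $\nu_{\mathcal E}$-almost every end is $M$-thin, and similarly for the reflected random walk with its exit measure $\check\nu_{\mathcal E}$. Consider the bilateral path space of the $\mu$-random walk with the two-sided shift $T$, and let $\omega_{-\infty}$ denote the exit point of the reflected walk as in \cite[Section~4.1]{MaherTiozzo}. The first step is to define a ``distance to bottlenecks at time zero'' function
$$
f(\omega)=\inf\{\, d(1, U) : U \text{ is an } M\text{-bottleneck between } \omega_{-\infty} \text{ and } \omega_{\infty}\,\}.
$$
Since $\omega_{-\infty}$ and $\omega_\infty$ are almost surely two distinct $M$-thin ends (using Proposition~\ref{propPoissonends} and the fact that the end boundary has no atoms under $\nu_{\mathcal E}$), Lemma~\ref{existencebottlenecks} applies to the pair $(\omega_{-\infty}, \omega_\infty)$ and produces a sequence of $M$-bottlenecks separating them; in particular $f(\omega)<\infty$ almost surely. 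The measurability of $f$ is checked as in Claim~\ref{fmeasurable}: one reduces to showing that $\tilde f(\xi,\zeta)=\inf\{d(1,U)\}$ on $\partial_{\mathcal E}G\times\partial_{\mathcal E}G$ is Borel, by writing the superlevel sets $\{\tilde f>R\}$ as a countable intersection over $N$ of a countable union over pairs of basic open sets $(U_n,U_m)$ that ``admit a path from $U_n$ to $U_m$ avoiding $B(1,R)$'', and then using a limiting (Arzel\`a--Ascoli / diagonal) argument together with local finiteness of the Cayley graph to see that this description is exact.

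The second step is the cocycle estimate. One has $f(T^n\omega)=d(\omega_n, V_n)$ where $V_n$ is (roughly) the nearest $M$-bottleneck between $\omega_{-\infty}$ and $\omega_\infty$, and the increment bound $|f(T\omega)-f(\omega)|\le d(1,\omega_1)$ holds because shifting the trajectory by one step moves the basepoint $\omega_0=1$ to $\omega_1$, while the pair of endpoints of the bi-infinite trajectory is unchanged. Hence $\omega\mapsto f(T\omega)-f(\omega)$ is integrable under the finite first moment hypothesis, and \cite[Lemma~7]{Tiozzo} gives $\frac1n f(T^n\omega)\to 0$ almost surely. This already produces, almost surely, a sequence $U_n$ of $M$-bottlenecks between $\omega_{-\infty}$ and $\omega_\infty$ with $\frac1n d(\omega_n, U_n)\to 0$.

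The third step upgrades this to bottlenecks between $1$ and $\omega_\infty$, which is the asserted conclusion. By Lemma~\ref{existencebottlenecks}, for all but finitely many $n$ the set $U_n$ is actually an $M$-bottleneck between $1$ and $\omega_\infty$, provided $U_n$ lies ``past'' $1$ along the end $\omega_\infty$ rather than past $\omega_{-\infty}$. To rule out the wrong side, one argues as at the end of the proof of Proposition~\ref{propsublineardeviationtransitionpoints}: since $\frac1n d(1,\omega_n)\to l>0$ (the group is non-amenable, being infinitely ended) and $\frac1n d(\omega_n,U_n)\to 0$, the bottlenecks $U_n$ escape to infinity, and a $M$-thin ``tripod'' estimate on the three ends $\omega_{-\infty}, 1, \omega_\infty$ (any bottleneck between $\omega_{-\infty}$ and $\omega_\infty$ lies within bounded distance of a bottleneck separating $1$ from one of the two ends) forces $U_n$ onto the $\omega_\infty$-side for large $n$, hence to be a bottleneck between $1$ and $\omega_\infty$ as well. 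Re-indexing (discarding the finitely many bad $n$ and the finitely many $n$ for which $U_n$ is on the wrong side) yields the sequence in the statement.

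\textbf{Main obstacle.} The delicate point is the third step: one needs a purely combinatorial ``thinness of geodesic tripods'' statement for $M$-thin ends — that a minimal-diameter separating edge set between two thin ends is, after bounded error, a separating set for one of the two sub-rays emanating from the basepoint. In the relatively hyperbolic proof this was supplied by \cite[Lemma~2.4]{GekhtmanDussaule} (thinness of geodesic triangles along transition points); here it must be established directly from the inverse-limit description of the end boundary and the uniform bound $M$ on diameters, using that removing a bounded edge set disconnects $\Gamma$ into finitely many infinite components and tracking which component each of $1$, $\omega_{-\infty}$, $\omega_\infty$ lies in. Measurability of $f$ (Step~1) is routine but somewhat lengthy, and is handled exactly as in Claim~\ref{fmeasurable}.
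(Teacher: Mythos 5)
Your first two steps reproduce the paper's proof: the same function $f(\omega)=d\bigl(1,\Bn(\omega_{-\infty},\omega_\infty)\bigr)$, non-atomicity of $\nu_{\mathcal E}$ via Claim~\ref{stationarywithoutatom} so that $\omega_{-\infty}\neq\omega_\infty$ almost surely, Proposition~\ref{propPoissonends} and Lemma~\ref{existencebottlenecks} to get finiteness of $f$, the measurability argument of Claim~\ref{fmeasurable}, and the cocycle bound $|f(T\omega)-f(\omega)|\le d(1,\omega_1)$ fed into \cite[Lemma~7]{Tiozzo}. Up to that point you correctly obtain, almost surely, a sequence of $M$-bottlenecks $U_n$ between $\omega_{-\infty}$ and $\omega_\infty$ with $d(\omega_n,U_n)=o(n)$.

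The gap is exactly the step you flag as the ``main obstacle'': passing from bottlenecks between $\omega_{-\infty}$ and $\omega_\infty$ to bottlenecks between $1$ and $\omega_\infty$. You propose an unproven ``tripod thinness'' lemma for $M$-thin ends, together with an appeal to Lemma~\ref{existencebottlenecks} that does not apply here (the $U_n$ produced by the ergodic argument need not belong to the canonical nested sequence $E_n$ of that lemma). Neither ingredient is needed. The paper closes the argument directly from the definition of a bottleneck: fix once and for all a path $\alpha$ from $\omega_{-\infty}$ to $1$. Since $d(1,\omega_n)/n\to l>0$ and $d(\omega_n,U_n)=o(n)$, one has $d(1,U_n)\to\infty$; moreover, choosing a finite edge set $E$ separating $\omega_{-\infty}$ from $\omega_\infty$, the bound $d(\omega_n,U_n)=o(n)\ll d(\omega_n,E)$ forces $U_n\subset C_E(\omega_\infty)$ for large $n$, while $\alpha$ has only finitely many vertices outside $C_E(\omega_{-\infty})$; hence $\alpha\cap U_n=\emptyset$ for all $n\ge n_0$. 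Now for any path $\beta$ from $1$ to $\omega_\infty$, the concatenation of $\alpha$ and $\beta$ joins $\omega_{-\infty}$ to $\omega_\infty$ and therefore meets $U_n$; since $\alpha$ misses $U_n$, $\beta$ must meet it. Thus $U_n$ itself is already an $M$-bottleneck between $1$ and $\omega_\infty$ for $n\ge n_0$, with the same constant $M$ and no auxiliary separation lemma. Replacing your third step by this concatenation argument completes the proof.
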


\begin{proof}
We fix $M\geq 0$ as in Proposition~\ref{propPoissonends}.
For every $x,y\in G\cup \partial_{\mathcal{E}}G$, we denote by $\Bn(x,y)$ the set of $M$-bottlenecks between $x$ and $y$.
We introduce the function $f$ defined by
$f(\omega)=d(1,\Bn(\omega_{-\infty},\omega_\infty))$,
where $\omega_{-\infty}$ is the limit of the reflected random walk in $\partial_{\mathcal{E}}G$.
Since the measure $\nu_{\mathcal{E}}$ is stationary, it is non-atomic.
Indeed, we can apply the Claim~\ref{stationarywithoutatom}, for $G$ cannot fix any finite set on $\partial_{\mathcal{E}}G$ since it is non-amenable, see \cite[Theorem~2.3]{Woessends}.
Hence, $\omega_{-\infty}$ and $\omega_\infty$ are almost surely distinct.
According to Proposition~\ref{propPoissonends}, $\omega_{-\infty}$ and $\omega_\infty$ are almost surely $M$-thin ends, so Lemma~\ref{existencebottlenecks} shows that $\Bn(\omega_{-\infty},\omega_\infty)$ is non-empty.
The proof of the Claim~\ref{fmeasurable} thus shows that $f$ is measurable and is almost surely finite.

Note that $f(T^n\omega)=d(\omega_n,\Bn(\omega_{-\infty},\omega_\infty))$ and so
$|f(T\omega)-f(\omega)|\leq d(1,\omega_1)$.
Hence, \cite[Lemma~7]{Tiozzo} shows that $\frac{1}{n}f(T^n\omega)$ almost surely converges to 0.
Thus, there almost surely exists a sequence of $M$-bottlenecks $U_n$ between $\omega_{-\infty}$ and $\omega_\infty$ such that
$\frac{1}{n}d(\omega_n,U_n)$ converges to 0.

To conclude, we just need to show that for large enough $n$, $U_n$ is also a $M$-bottleneck between 1 and $\omega_{\infty}$.
Since $d(1,\omega_n)$ almost surely converges to $l$, we can assume that for large enough $n$, $d(1,\omega_n)\geq (l-\epsilon)n$ and so $d(1,U_n)$ goes to infinity.
Fix a path $\alpha$ from $\omega_{-\infty}$ to 1.
Then for large enough $n$, say $n\geq n_0$, $\alpha$ does not intersect $U_n$.
Consider now any path $\beta$ from 1 to $\omega_{\infty}$.
Concatenating $\alpha$ and $\beta$ yields a path from $\omega_{-\infty}$ to $\omega_\infty$ which thus crosses $U_n$.
By construction, we necessarily have that $\beta$ intersects $U_n$ for $n\geq n_0$. 
\end{proof}

We can now prove the following result.

\begin{prop}\label{lower-boundharmonicends}
For $\nu_{\mathcal{E}}$-almost every $\xi$ in $\partial_{\mathcal{E}}G$,
$$\liminf_{r\to 0}\frac{\log \nu_{\mathcal{E}}(B_{\rho_\lambda}(\xi,r))}{\log r}\geq \frac{-1}{\log \lambda} \frac{h}{l}.$$
\end{prop}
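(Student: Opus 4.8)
The plan is to mirror the lower bound argument for the Bowditch boundary (Proposition~\ref{lower-boundharmonicFloyd}), replacing transition points by the $M$-bottlenecks provided by Proposition~\ref{propsublineardeviationbottlenecks}, and replacing the shadow estimates of Lemma~\ref{lemma3.16PY} by an elementary comparison between a ball in $\rho_\lambda$ and a "shadow'' associated to a bottleneck. Concretely, fix $M\geq 0$ as in Proposition~\ref{propsublineardeviationbottlenecks}. For a finite set $U\subset G$ with $\diam(U)\leq M$, define the shadow $\mathcal{O}(U)$ to be the set of ends $\xi$ such that every path from $1$ to $\xi$ meets $U$; equivalently $\xi$ lies in the infinite component of $\Gamma\setminus(\text{edges near }U)$ not containing $1$. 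The key geometric fact I would establish is: if $U$ is an $M$-bottleneck between $1$ and an end $\zeta$, and $d(1,U)=k$, then there are constants $C_1,C_2>0$ (depending only on $M$ and the generating set) with
$$
B_{\rho_\lambda}(\zeta,C_2\lambda^{k})\subset \mathcal{O}(U)\subset B_{\rho_\lambda}(\zeta,C_1\lambda^{k}).
$$
The right inclusion holds because any two ends in $\mathcal{O}(U)$ are separated by a bounded-radius ball centered slightly beyond $U$ (of radius $\approx k+M$), giving $\rho_\lambda$-distance $\lesssim\lambda^{k}$; the left inclusion holds because any end within $\rho_\lambda$-distance $\lambda^{k+M}$ of $\zeta$ is not separated from $\zeta$ by $B(1,k-1)$, hence lies in the same component past $U$, i.e.\ in $\mathcal{O}(U)$. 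This plays exactly the role of Lemma~\ref{lemma3.16PY}.

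With this in hand I would run the Tanaka-style conditional expectation argument verbatim. Fix $\epsilon>0$ and define $\Omega_\epsilon^N$ as the set of trajectories with $d(\omega_n,U_n)\leq\epsilon n$, $(l-\epsilon)n\leq d(1,U_n)\leq(l+\epsilon)n$, and $-\log\mu^{*n}(\omega_n)\geq(h-\epsilon)n$ for all $n\geq N$; Proposition~\ref{propsublineardeviationbottlenecks} together with~(\ref{equationdefl}) and~(\ref{equationdefh}) gives $\mathbb{P}(\cup_N\Omega_\epsilon^N)=1$, so pick $N_\epsilon$ with $\mathbb{P}(\Omega_\epsilon^{N_\epsilon})\geq 1-\epsilon$ and set $\Omega_\epsilon=\Omega_\epsilon^{N_\epsilon}$. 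For a fixed trajectory $\mathbf{x}\in\Omega_\epsilon^N$ with bottlenecks $U_n$, the same counting as in Proposition~\ref{lower-boundharmonicFloyd} shows that $\{\mathbf{x}'\in\Omega_\epsilon:\omega'_\infty\in\mathcal{O}(U_n)\}$ forces $\omega'_n$ to lie in a ball of radius $O(\epsilon n)$ around $U_n$ (using that a path realizing membership in $\mathcal{O}(U_n)$ passes near $U_n$, and that $d(\omega'_n,U_n)\le\epsilon n$), hence
$$
\liminf_{n\to\infty}\frac{\log\mathbb{P}(\Omega_\epsilon\cap\{\omega_\infty'\in\mathcal{O}(U_n)\})}{-n}\geq h-\epsilon-3v\epsilon
$$
by the exponential growth bound $\sharp B(1,r)\leq Ce^{vr}$. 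Then the disintegration $\mathbb{P}(\Omega_\epsilon|\xi)$ over $\nu_{\mathcal{E}}$, the set $F_\epsilon=\{\xi:\mathbb{P}(\Omega_\epsilon|\xi)\geq\epsilon\}$ with $\nu_{\mathcal{E}}(F_\epsilon)\geq 1-2\epsilon$, and the inequality $\epsilon\cdot\nu_{\mathcal{E}}(F_\epsilon\cap\mathcal{O}(U_n))\leq\mathbb{P}(\Omega_\epsilon\cap\{\omega_\infty'\in\mathcal{O}(U_n)\})$ go through exactly as before. Combining with the shadow-ball comparison and $\lambda^{(l+\epsilon)n}\leq\lambda^{d(1,U_n)}\leq\lambda^{(l-\epsilon)n}$ yields, for $\nu_{\mathcal{E}}$-a.e.\ $\xi$,
$$
\liminf_{r\to 0}\frac{\log\nu_{\mathcal{E}}(F_\epsilon\cap B_{\rho_\lambda}(\xi,r))}{\log r}\geq\frac{h-\epsilon-3v\epsilon}{-\log\lambda\,(l+\epsilon)}.
$$
Finally, restricting $\nu_{\mathcal{E}}$ to $F_\epsilon$ bounds $Hdim(\nu_{\mathcal{E}})$ from below, the set where the local lower dimension exceeds the target is $G$-invariant (by Lemma~\ref{Lemma2.2Tanakaends} and the stationarity/quasi-invariance of $\nu_{\mathcal{E}}$, exactly as in Proposition~\ref{lower-boundharmonicFloyd}) and has positive measure, hence full measure by ergodicity; letting $\epsilon\to 0$ gives $\liminf_{r\to 0}\frac{\log\nu_{\mathcal{E}}(B_{\rho_\lambda}(\xi,r))}{\log r}\geq\frac{-1}{\log\lambda}\frac{h}{l}$ for $\nu_{\mathcal{E}}$-a.e.\ $\xi$.

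The main obstacle is the shadow-ball comparison. Unlike the hyperbolic setting, here I must be careful that the bottleneck $U_n$, while of bounded diameter $M$, may not cleanly "disconnect'' the Cayley graph into a single infinite component on the far side — there could be several infinite complementary components. The correct statement uses that $U_n$ is a bottleneck \emph{between $1$ and $\omega_\infty$}: a path realizing the bottleneck property passes through $U_n$, and the relevant shadow is the component containing $\omega_\infty$. I would need to check that this component, together with nearby ends, is exactly captured up to bounded error by the ball-separation definition of $\rho_\lambda$, which amounts to verifying that separation by $B(1,k)$ and passage through $U_n$ (with $d(1,U_n)=k$) agree up to an additive constant depending only on $M$. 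Everything else is a transcription of the Floyd/Bowditch argument, since Lemma~\ref{Lemma2.2Tanakaends}, Proposition~\ref{propPoissonends}, Proposition~\ref{propsublineardeviationbottlenecks}, and the ergodicity and stationarity of $\nu_{\mathcal{E}}$ are already available.
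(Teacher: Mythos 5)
Your proposal is correct and follows essentially the same route as the paper: sublinear tracking of $M$-bottlenecks (Proposition~\ref{propsublineardeviationbottlenecks}), a bottleneck-shadow sandwiched by visual balls (the paper's Lemma~\ref{lempartialshadowandvisualmetric}, whose proof is exactly the separation argument you sketch), and the Tanaka-style conditional probability and ergodicity argument transcribed from Proposition~\ref{lower-boundharmonicFloyd}. The only cosmetic difference is that you use a single shadow $\mathcal{O}(U)$ (ends for which the fixed set $U$ is a bottleneck) where the paper works with the partial shadow $\mho(g,M)$ and the big shadow $\Pi(g,M)$; both choices yield the two inclusions needed, and the "obstacle" you flag at the end is precisely resolved by the computation you already outline.
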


\begin{proof}
We choose a sequence of points $x_n\in U_n$, where $U_n$ is a sequence of $M$-bottlenecks between 1 and $\omega_{\infty}$ given by Proposition~\ref{propsublineardeviationbottlenecks}.
For every $\epsilon>0$ and $N$, we let $\Omega_\epsilon^N$ be the set of trajectories $\mathbf{x}=(\omega_n) \in \Omega$ such that for every $n\geq N$,
\begin{enumerate}[(a)]
    \item 
    $d(\omega_n,x_n)\leq \epsilon n$,
    \item
    $(l-\epsilon)n\leq d(1,x_n)\leq (l+\epsilon)n$, and
    \item
    $-\log \mu^{*n}(\omega_n)\geq (h-\epsilon)n$.
\end{enumerate}
Almost surely, $\frac{1}{n}d(\omega_n,x_n)$ converges to 0,  $\frac{1}{n}d(1,\omega_n)$ converges to $l$ by~(\ref{equationdefl}), and $\frac{-1}{n}\log \mu^{*n}(\omega_n)$ converges to $h$ by~(\ref{equationdefh}).
Hence, there exists $N_\epsilon$ such that $\mathbb{P}(\Omega_{\epsilon}^{N_\epsilon})\geq 1-\epsilon$.
We set $\Omega_\epsilon=\Omega_{\epsilon}^{N_\epsilon}$.

We fix $N$ and we fix a trajectory $\mathbf{x}\in \Omega_{\epsilon}^N$ and so we also fix the corresponding sequence of points $x_n$.
For $g\in G$, we define the \textit{partial shadow} $\mho(g,M)$ to be the set of $\xi\in \partial_{\mathcal{E}}G$ such that $g$ lies in a $M$-bottleneck between 1 and $\xi$.
For $n\geq N$ and $n\geq N_{\epsilon}$, we have
$$\mathbb{P}(\mathbf{x}'\in\Omega_{\epsilon}\cap \{\omega_\infty' \in\mho(x_n, M)\}) \leq \mathbb P(\mathbf{x}'\in \Omega_{\epsilon}, \omega'_{n}\in  B(x_n, 4M+3 n \epsilon)).$$
Indeed, assume that $x_n$ lies in a $M$-bottleneck $V$ between 1 and $\omega_{\infty}'$.
Fix a geodesic from 1 to $\omega_{\infty}'$.
This geodesic enters $V$ at a point $g_n$ which satisfies $d(g_n,x_n)\leq M$.
In particular, $d(1,g_n)$ is between $(l-\epsilon)n-M$ and $(l+\epsilon)+M$.
There is also a point on this geodesic that enters the bottleneck $U_n'$ at a point $g_n'$, satisfying $d(g_n',x_n')\leq M$ and so
we also have that $d(1,g'_n)$ is between $(l-\epsilon)n-M$ and $(l+\epsilon)+M$.
Thus, $d(g_n,g_n')\leq 2M+2\epsilon n$ and since $d(\omega_n',x_n')\leq \epsilon n$, we get
$d(\omega_n',x_n)\leq 4M+3\epsilon n$ as required.
On $\Omega_{\epsilon}$, we have
$\mu^{*n}(\omega_n')\leq \exp(-n(h-\epsilon))$ for any $n>N_{\epsilon}$ and $\mu^{*n}$ is the law of $\omega_n'$, so
$$\mathbb P(\mathbf{x}'\in \Omega_{\epsilon}, \omega'_{n}\in  B(x_n, 4M+3 n \epsilon)) \leq \sharp B(1, 4M+3n\epsilon) \exp(-n (h-\epsilon))$$
and since balls grow at most exponentially, there exists $v$ such that
$$\mathbb P(\mathbf{x}'\in \Omega_{\epsilon}, \omega'_{n}\in  B(x_n, 4M+3 n \epsilon)) \leq \exp(4vM) \exp(3n\epsilon v) \exp(-n (h-\epsilon)).$$
Hence, for any $N$ and any fixed $\mathbf{x}$ in $\Omega_{\epsilon}^N$, we have
\begin{equation}\label{equationlower-boundvisual}
\liminf_{n\to\infty} \frac{\log \mathbb{P}(\Omega_{\epsilon}\cap \mho(x_n, M))}{-n} \ge h-\epsilon  -3v \epsilon
\end{equation}
We can conclude exactly like in the proof of Proposition~\ref{lower-boundharmonicFloyd}, replacing Lemma~\ref{Lemma2.2Tanaka} by Lemma~\ref{Lemma2.2Tanakaends} and replacing Lemma~\ref{lemma3.16PY} by the following result, which asserts that partial shadows are sandwiched by balls.
\end{proof}

As before, we define the \textit{big shadow} $\Pi(g,M)$ as the set of $\xi\in \partial_{\mathcal{E}}G$ such that there is a geodesic between 1 and $\xi$ which intersects the ball of radius $M$ centered at $g$.
By definition, $\mho(g,M)\subset \Pi(g,M)$ for $M>0$.

\begin{lem}\label{lempartialshadowandvisualmetric}
Let $\xi\in \partial_{\mathcal{E}}G$ and $r\geq 0$.
Let $g$ be any point on a geodesic between 1 and $\xi$.
Then for any $M\geq 0$ there exist $C_1,C_2>0$  such that
$$\Pi(g,M) \subset B_{\rho_\lambda}(\xi, C_1r)$$
and if, in addition, $g$ lies in a $M$-bottleneck between $1$ and $\xi$, then
$$B_{\rho_\lambda}(\xi, C_2r)\subset \mho(g,M),$$
where $r=\lambda^{d(1,g)}$.
\end{lem}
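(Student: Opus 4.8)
The plan is to express both inclusions purely through the defining edge sets $B_n$ (the edges spanned by $\bar B(1,n)$), using two elementary observations. First, being separated by $B_n$ is monotone in $n$: if $C_{B_n}(\xi)\ne C_{B_n}(\eta)$ and $m\ge n$, then since $C_{B_m}(\cdot)\subseteq C_{B_n}(\cdot)$ we get $C_{B_m}(\xi)\ne C_{B_m}(\eta)$; hence for distinct ends $\rho_\lambda(\xi,\eta)=\lambda^{m_0}$, where $m_0$ is the least $n$ at which they are separated. Second, if there is a path in $\Gamma$ joining a tail of some ray to $\xi$ to a tail of some ray to $\eta$ all of whose vertices lie at distance $>n$ from $1$, then (for $n\ge1$, when the vertex set deleted in $\Gamma\setminus B_n$ is exactly $\bar B(1,n)$) that path lies in $\Gamma\setminus B_n$, so $\xi,\eta$ are not separated by $B_n$ and therefore $\rho_\lambda(\xi,\eta)\le\lambda^{n+1}$.

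For the first inclusion, fix $\eta\in\Pi(g,M)$ and a geodesic $[1,\eta]$ meeting $B(g,M)$, say at $g'$ with $d(g,g')\le M$. Form the bi-infinite path $P$ obtained by concatenating the sub-ray of $[1,\xi]$ from $g$ to $\xi$, a geodesic segment from $g$ to $g'$, and the sub-ray of $[1,\eta]$ from $g'$ to $\eta$. Every vertex of $P$ lies at distance $\ge d(1,g)-M$ from $1$: the two sub-rays because a geodesic ray from $1$ is distance-monotone, and the short segment by the triangle inequality. The second observation with $n=d(1,g)-M-1$ then gives $\rho_\lambda(\xi,\eta)\le\lambda^{d(1,g)-M}=\lambda^{-M}r$; absorbing the finitely many exceptional $g$ with $d(1,g)\le M+1$, where $\rho_\lambda\le1$ already suffices, one may take $C_1=\lambda^{-(M+1)}$.

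For the second inclusion, the idea is that an end $\eta$ close enough to $\xi$ must lie on the same side of the bottleneck $U\ni g$ as $\xi$, whence $U$ is also an $M$-bottleneck between $1$ and $\eta$, i.e.\ $\eta\in\mho(g,M)$. Concretely, $U$ being an $M$-bottleneck between $1$ and $\xi$ means that in the graph $\Gamma\setminus U$ obtained by deleting the vertices of $U$, the component $C_1'$ of $1$ and the component $C_\xi'$ carrying the end $\xi$ are distinct. Since $g\in U$ and $\diam U\le M$ we have $U\subseteq\bar B(1,d(1,g)+M)$, so every edge deleted in passing to $\Gamma\setminus U$ is also deleted in $\Gamma\setminus B_{d(1,g)+M}$; hence $\Gamma\setminus B_{d(1,g)+M}$ is a subgraph of $\Gamma\setminus U$ and each of its components lies in a single component of $\Gamma\setminus U$. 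Now suppose $\rho_\lambda(\xi,\eta)\le\lambda^{M+1}r=\lambda^{d(1,g)+M+1}$. By the first observation $\xi$ and $\eta$ are not separated by $B_{d(1,g)+M}$, so the component $C''$ of $\Gamma\setminus B_{d(1,g)+M}$ carrying $\xi$ also carries $\eta$; as $C''\subseteq C_\xi'$, the end $\eta$ is carried by $C_\xi'$, so every path from $1$ to $\eta$ meets $U$. With $\diam U\le M$ and $g\in U$ this gives $\eta\in\mho(g,M)$, so $C_2=\lambda^{M+1}$ works.

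I expect the second inclusion to be the delicate point: one must match the radius of the ball produced by ``$\eta$ close to $\xi$'' against the location of the bottleneck, which is why the argument uses the ball $\bar B(1,d(1,g)+M)$ — large enough to swallow all of $U$ — rather than $\bar B(1,d(1,g))$; and one needs the (standard, but worth recording) fact that deleting finitely many vertices from a locally finite graph assigns each end to a unique component, so that ``not separated by a large ball'' upgrades to ``on the $\xi$-side of $U$''. The degenerate cases $g=1$ or $M=0$ cause no trouble, since $C_1,C_2$ are allowed to depend on $M$ and the lemma is only ever applied with $d(1,g)$ large.
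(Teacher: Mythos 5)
Your proof is correct and follows essentially the same route as the paper's: the first inclusion is the same concatenation of sub-rays through $B(g,M)$ lying outside a ball of radius about $d(1,g)-M$, and the second inclusion is the same separation argument, which you phrase via nested components of $\Gamma\setminus B_{d(1,g)+M}$ and $\Gamma\setminus U$ where the paper instead concatenates an explicit path from $\xi$ to $\eta$ avoiding $B(g,M)$ with an arbitrary path from $1$ to $\eta$. Your constants $C_1=\lambda^{-(M+1)}$ and $C_2=\lambda^{M+1}$ match the paper's, and your extra care with the off-by-one in the definition of $\Gamma\setminus B_n$ and with the degenerate case $d(1,g)\le M+1$ is a welcome refinement.
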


\begin{proof}
Let $\zeta\in \Pi(g,M)$.
Denote by $h$ a point on  a geodesic $\alpha$ from 1 to $\zeta$ such that $d(g,h)\leq M$.
Also denote by $\beta$ the geodesic from 1 to $\xi$ in the statement of the lemma.
Consider the path $\gamma$ connecting $\zeta$ to $\xi$ obtained by following $\alpha$ from $\zeta$ to $h$, then connecting $h$ to $g$ by a geodesic and following $\beta$ from $g$ to $\xi$.
By construction, $d(1, \alpha)\ge d(1,g)-M$.
Hence, $\xi$ and $\zeta$ must lie in the same component of $\mathcal C(B_n)$ where  $n=d(1, g)-M$.
Therefore, $\rho_\lambda(\xi, \zeta)\le \lambda^{n+1}\le C_1 r$ where $C_1=\lambda^{-M-1}$ and so $\zeta\in B_{\rho_\lambda}(\xi,C_1r)$.

To prove the second inclusion, set $C_2=\lambda^{M+1}$ and consider $\zeta\in B_{\rho_\lambda}(\xi, C_2r)$.
Write $\rho_\lambda(\xi,\zeta)=\lambda^n$, where $n$ is the minimal integer such that $\xi$ and $\zeta$ lie in different components of $\mathcal C(B_n)$, so that $n\geq d(1,g)+M+1$.
Then, there exists a path $\alpha$ from $\xi$ to $\zeta$ that enters $B(1,n)$ but not $B(1, n-1)$.
In particular, we have $d(g,\alpha)\geq d(1,\alpha)-d(1,g)>M$.
Let $\beta$ be any path from 1 to $\zeta$ and consider the path $\gamma$ obtained by concatenating $\alpha$ and $\beta$,
which needs to cross $B(g,M)$, since $g$ is in a bottleneck between 1 and $\xi$.
By construction of $\alpha$, $\beta$ necessarily intersects $B(g,M)$, so $\zeta \in \mho(g,M)$. 
\end{proof}

Finally, note that Proposition~\ref{lower-boundharmonicends} yields
$$Hdim_{\rho_\lambda}(\nu_\mathcal{E})\geq \frac{-1}{\log \lambda} \frac{h}{l}$$
and Proposition~\ref{upper-boundharmonicends} yields
$$Hdim_{\rho_\lambda}(\nu_\mathcal{E})\leq \overline{Hdim}_{\rho_\lambda}(\nu_\mathcal{E})\leq \frac{-1}{\log \lambda} \frac{h}{l}.$$
Hence, $Hdim_{\rho_\lambda}(\nu_\mathcal{E})=\frac{-1}{\log \lambda} \frac{h}{l}$.
Moreover, combining these two propositions implies that for $\nu_{\mathcal{E}}$-almost every $\xi$,
$$\frac{\log \nu_{\mathcal{E}}(B(\xi,r))}{\log r}\underset{r\to 0}{\longrightarrow}Hdim_{\rho_\lambda}(\nu_{\mathcal{E}})$$
and so $\nu_{\mathcal{E}}$ is exact dimensional.
This concludes the proof of Theorem~\ref{theoremHausdorffharmonicends}. \qed

\section{Dimension of the end boundary}\label{HDimEndsSection}
Recall that  $G$ is a finitely generated group with infinitely many ends, and $(\Gamma, d)$ is the Cayley graph with respect to a finite generating set.
In this section, we compute the Hausdorff dimension of the end boundary $\partial_{\mathcal{E}}G$ endowed with a visual metric.
Define the volume growth of a subgroup $H<G$ as
$$v_H =\limsup_{n\to\infty}\frac{\log \sharp \{g\in H: d(o, g)\le n\}}{n}.$$
We prove the following.
\begin{theorem}\label{Hausdorffdimensionsetofends}
Let $G$ be a finitely generated group with infinitely many ends.
Then, for every $\lambda\in (0,1)$, we have
$$Hdim_{\rho_\lambda}(\partial_{\mathcal{E}}G)=-\frac{v_G}{\log \lambda}.$$
\end{theorem}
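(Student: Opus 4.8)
The plan is to establish the two bounds $Hdim_{\rho_\lambda}(\partial_{\mathcal E}G)\le -v_G/\log\lambda$ and $Hdim_{\rho_\lambda}(\partial_{\mathcal E}G)\ge -v_G/\log\lambda$ separately. Two preliminary remarks will be used throughout. First, since $\#B_n$ is submultiplicative the limsup defining $v_G$ is a genuine limit, and $v_G$ is the exponential growth rate $v$ appearing in \cite{PY}. Second, $v_G>0$, because an infinitely-ended group is non-amenable, hence of exponential growth. For the upper bound, Lemma~\ref{FloydDomVisu} gives a surjection $\phi\colon\partial_{\mathcal F}G\to\partial_{\mathcal E}G$ with $\rho_\lambda(\phi(\xi),\phi(\zeta))\le\delta_\lambda(\xi,\zeta)$, i.e.\ $\phi$ is $1$-Lipschitz for the same parameter $\lambda$, so the image cannot have larger Hausdorff dimension: $Hdim_{\rho_\lambda}(\partial_{\mathcal E}G)\le Hdim_{\delta_\lambda}(\partial_{\mathcal F}G)$. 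As every infinitely-ended group is non-elementary relatively hyperbolic, \cite{PY} identifies the right-hand side with $-v/\log\lambda=-v_G/\log\lambda$ for every $\lambda\in(0,1)$, which is exactly the inequality we want.

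For the lower bound, fix $\epsilon\in(0,v_G)$. A pigeonhole argument over the spheres inside a ball of radius $n$ with $\#B_n\ge e^{n(v_G-\epsilon/2)}$ yields arbitrarily large integers $L$ with $\#S_L\ge e^{L(v_G-\epsilon)}$. Out of $S_L$ we wish to carve a large free sub-semigroup carrying a uniform bottleneck structure. Using the Stallings splitting of $G$ over finite edge groups and its associated normal form, we restrict to the ``aligned'' normal-form elements of length exactly $L$, namely those whose first and last syllables lie in distinct factors (equivalently, that sit at a controlled position of the Bass--Serre tree); after multiplying on one side by a fixed syllable if necessary these still number at least $k:=e^{L(v_G-2\epsilon)}$ once $L$ is large. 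Aligned words concatenate with no syllable merging, so word length is additive along products and a product of aligned length-$L$ words factors uniquely into its aligned length-$L$ pieces. Hence $F=\{g_1,\dots,g_k\}$ freely generates a sub-semigroup $T$, which we picture as the rooted $k$-regular tree with root $1$ and edges the length-$L$ geodesics $w\to wg_i$. By additivity these edge geodesics extend geodesics emanating from $1$, and — this is where the enhanced-bottleneck construction enters — each vertex $w$ of $T$ at depth $m$ lies in an $M$-bottleneck $U_w$, with $M$ depending only on $G$, the generating set and the edge groups and \emph{not} on $m$, separating the part of $T$ strictly below $w$ from the remainder.

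This bottleneck structure forces two ends of $T$ that first diverge at depth $m$ to be separated by the ball $B(1,(m-1)L+M)$ but by no ball of radius $<(m-1)L$; hence their $\rho_\lambda$-distance lies in $[\lambda^{M}(\lambda^L)^{m-1},(\lambda^L)^{m-1}]$. Therefore the inclusion $\partial_{\mathcal E}T\hookrightarrow(\partial_{\mathcal E}G,\rho_\lambda)$ is bi-Lipschitz once $\partial_{\mathcal E}T$ is given the standard visual metric of the $k$-regular tree with parameter $\lambda^L$ — this is Proposition~\ref{BiLipembedPop}. Bi-Lipschitz maps preserve Hausdorff dimension, and the boundary of the $k$-regular tree with contraction ratio $\lambda^L$ is a self-similar Cantor set of dimension $\frac{\log k}{-\log\lambda^L}=\frac{\log k}{-L\log\lambda}$. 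Since $\log k\ge L(v_G-2\epsilon)$, we get $Hdim_{\rho_\lambda}(\partial_{\mathcal E}G)\ge\frac{v_G-2\epsilon}{-\log\lambda}$, and letting $\epsilon\to0$ finishes the lower bound, hence the theorem.

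The main obstacle is the construction of the free sub-semigroup in the second paragraph: one must simultaneously arrange that (i) the chosen length-$L$ elements genuinely generate a \emph{free} semi-group, which for amalgams and HNN extensions over a nontrivial finite edge group $C$ requires careful bookkeeping of the floating $C$ in the normal form; (ii) passing to aligned elements of a single length costs only a bounded factor, so that still $k\ge e^{L(v_G-2\epsilon)}$; and (iii) the separating sets $U_w$ near the tree vertices have diameter bounded uniformly in the depth $m$ — this uniformity is precisely the ``enhanced'' (translation-stable, two-sided) reinforcement of Definition~\ref{defbottleneck1}, and it is what makes the two-sided comparison $\rho_\lambda\asymp(\lambda^L)^{m-1}$, and hence Proposition~\ref{BiLipembedPop}, go through. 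The remaining ingredients — the dimension of the self-similar Cantor set, bi-Lipschitz invariance of Hausdorff dimension, and the Lipschitz upper bound — are routine.
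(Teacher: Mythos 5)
Your upper bound is exactly the paper's: the $1$-Lipschitz surjection of Lemma~\ref{FloydDomVisu} combined with the bound $Hdim_{\delta_\lambda}(\partial_{\mathcal F}G)\le -v_G/\log\lambda$ from \cite{PY}; and your endgame for the lower bound (a bi-Lipschitz embedded rooted tree, the self-similar dimension count, then $\epsilon\to 0$) matches Proposition~\ref{BiLipembedPop} and Lemma~\ref{RTreeHdimLem}. The gap is in the construction of the free sub-semigroup, which you yourself flag as ``the main obstacle'' but do not resolve, and the route you propose through it does not work as stated. The assertion that word length in $(\Gamma,d)$ is \emph{additive} along concatenations of aligned normal forms is false for a general finite generating set, and is already delicate for an adapted one when the edge group $C$ is nontrivial: crossing a syllable boundary only forces a geodesic to pass within $\mathrm{diam}(gC)$ of the separating coset, so lengths are additive only up to a bounded error per syllable, and these errors accumulate over the $m$ tree levels. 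Your unique-factorization argument (``the length-$L$ syllable-prefix is unique'') rests on exact additivity and collapses under quasi-additivity: two distinct syllable-prefixes of the same normal form can then have equal word length, so a product of aligned length-$L$ words need not factor uniquely into length-$L$ pieces, and freeness of the semigroup is not established.

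Second, and more structurally, you impose no separation condition on your alphabet. If two generators $g_i\neq g_j$ (or, beyond a common prefix $w$, the rays toward two ends) fellow-travel, the first coset separating the two ends can sit much deeper than $mL+O_L(1)$, and the inequality $\rho_\lambda(\partial\Phi(p),\partial\Phi(q))\ge c\,(\lambda^L)^m$ --- the direction of Proposition~\ref{BiLipembedPop} that actually yields the lower bound on the dimension --- fails. This is precisely why the paper takes the alphabet to be a $C$-separated set $A$ in an annulus with $\sharp A\succ_C\sharp A(n,\Delta)$ (via \cite[Lemma~2.19]{YangSCC}) and proves Claim~\ref{shortFellowTravelClaim}, which converts closeness of endpoints into closeness of first letters and hence, by separation, into equality; that single device gives both the injectivity of $\Phi$ and the two-sided estimate of Lemma~\ref{cpvismetricLem}. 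Your normal-form picture could plausibly be repaired along the same lines (pass to a separated subset of the aligned sphere, quotient out the floating $C$, and prove a fellow-traveling claim using the separating $C$-cosets as checkpoints), but as written the freeness, the counting after alignment, and the lower Lipschitz bound for the embedding are all asserted rather than proved, and the exact-additivity claim on which they rest is not true.
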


By Lemma~\ref{FloydDomVisu}, we have $Hdim(\partial_{\mathcal{E}}G)\leq Hdim(\partial_{\mathcal{F}}G)$
and \cite[Lemma~4.1]{PY} shows that for any $\lambda\in(0,1)$, we have $Hdim(\partial_{\mathcal{F}}G)\leq -\frac{v_G}{\log \lambda}$.
The remainder of the section is devoted to proving the following proposition, which will thus conclude the proof of Theorem~\ref{Hausdorffdimensionsetofends}.

\begin{prop}\label{LbdHdimEndsProp}
With the same notations, we have
$$Hdim_{\rho_\lambda}(\partial_{\mathcal{E}}G)\geq-\frac{v_G}{\log \lambda}.$$
\end{prop}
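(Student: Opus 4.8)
The plan is to construct, for any $\epsilon > 0$, a free semi-subgroup $T \leq G$ whose limit set sits inside $\partial_{\mathcal{E}}G$ and has Hausdorff dimension at least $-\frac{v_G - \epsilon}{\log\lambda}$, mimicking the strategy of \cite[Section 3]{PY} but simplifying it via a bi-Lipschitz embedding of the end boundary of $T$ into $(\partial_{\mathcal{E}}G, \rho_\lambda)$. First I would fix a large $n$ with $\sharp S_n \geq e^{(v_G - \epsilon/2)n}$. Using that the end boundary is a visual boundary and the bottleneck machinery from Section~\ref{EndsSection}, I would choose a large subset $\mathcal{A} \subset S_N$ (for a suitable $N$ that is a large multiple of $n$) of elements $g$ such that: geodesics $[1, g]$ pass through uniformly-bounded-diameter bottleneck sets, and distinct elements of $\mathcal{A}$ lie in distinct components after removing a ball of radius roughly $N$ — this is where accessibility or a Stallings splitting enters to guarantee that separation happens at uniformly bounded cost (so one does not lose dimension). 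The key combinatorial point is that one can keep $\sharp \mathcal{A} \geq e^{(v_G - \epsilon)N}$ while ensuring these separation/bottleneck properties simultaneously; controlling the loss incurred by the separation requirement against the exponential growth is the heart of the argument.

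Next I would form the free semigroup $T$ generated by $\mathcal{A}$: an infinite word $a_{i_1} a_{i_2} \cdots$ with $a_{i_j} \in \mathcal{A}$ determines an end $\xi \in \partial_{\mathcal{E}}G$ because the products $a_{i_1}\cdots a_{i_k}$ escape to infinity (drift is positive by non-amenability, and the bottleneck structure prevents backtracking). This gives a continuous injective map $\Phi : \partial_{\mathcal{E}}T \to \partial_{\mathcal{E}}G$ from the end boundary of the free semigroup tree $T$ (which is a $\sharp\mathcal{A}$-ary rooted tree). The technical statement needed here is Proposition~\ref{BiLipembedPop} referenced in the overview: endowing $\partial_{\mathcal{E}}T$ with the visual metric $\rho_{\lambda^{N'}}$ for the appropriate corrected parameter $N'$ (the number of generators-steps times the typical word length per generator), $\Phi$ is bi-Lipschitz onto its image. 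The point is that two infinite $\mathcal{A}$-words that first differ at position $k$ produce ends that get separated in $\Gamma$ at radius comparable to $kN$ — bounded above because after $k$ letters one has traveled at most $kN$, and bounded below by the separation property built into $\mathcal{A}$ — so $\rho_\lambda(\Phi(\xi), \Phi(\eta)) \asymp \lambda^{kN}$, which is exactly the visual metric on the tree with parameter $\lambda^N$ up to multiplicative constants.

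Then I would compute the Hausdorff dimension of $(\partial_{\mathcal{E}}T, \rho_{\lambda^N})$ directly. Since $T$ is the full $\sharp\mathcal{A}$-ary tree, its end boundary with the visual metric of parameter $\lambda^N$ is bi-Lipschitz to a standard self-similar Cantor set: cylinders of depth $k$ have diameter $\lambda^{Nk}$ and there are $(\sharp\mathcal{A})^k$ of them, pairwise at distance $\geq \lambda^{Nk}$ by ultrametricity. A one-line mass-distribution / natural-measure argument (put the uniform Bernoulli measure, each depth-$k$ cylinder has measure $(\sharp\mathcal{A})^{-k}$ and diameter $\lambda^{Nk}$) gives
$$
Hdim_{\rho_{\lambda^N}}(\partial_{\mathcal{E}}T) = \frac{\log \sharp\mathcal{A}}{-N\log\lambda} \geq \frac{(v_G - \epsilon)N}{-N\log\lambda} = -\frac{v_G - \epsilon}{\log\lambda},
$$
this being precisely where $\sharp\mathcal{A} \geq e^{(v_G-\epsilon)N}$ is used. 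Transporting through the bi-Lipschitz embedding $\Phi$ yields $Hdim_{\rho_\lambda}(\partial_{\mathcal{E}}G) \geq Hdim_{\rho_\lambda}(\Phi(\partial_{\mathcal{E}}T)) \geq -\frac{v_G - \epsilon}{\log\lambda}$, and letting $\epsilon \to 0$ finishes the proof of Proposition~\ref{LbdHdimEndsProp}.

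The main obstacle, as in \cite{PY}, is the construction of $\mathcal{A}$: one must extract from $S_N$ an exponentially large family of elements that are simultaneously pairwise well-separated in the end topology \emph{and} whose geodesics from $1$ pass through controlled bottlenecks, so that concatenations genuinely produce distinct ends without the separation radius collapsing. I expect this to require a pigeonhole/encoding argument — partitioning $S_N$ according to which bounded-diameter set the geodesic exits through, and choosing one rich class — together with the accessibility hypothesis (uniform separation constant $k$) or at worst the Stallings splitting to guarantee the bottleneck diameters stay uniformly bounded as $N \to \infty$. Verifying that $\Phi$ is well-defined (that products escape and do not re-enter earlier bottlenecks) is a routine but essential consequence of positivity of drift combined with Lemma~\ref{existencebottlenecks}.
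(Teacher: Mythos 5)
Your overall architecture coincides with the paper's: build a free semigroup with exponentially many generators at a scale reflecting the growth rate, show its tree boundary embeds bi-Lipschitzly into $(\partial_{\mathcal E}G,\rho_\lambda)$ once the visual parameter is corrected (this is exactly Proposition~\ref{BiLipembedPop}), compute the tree's dimension by a mass-distribution argument (equivalent to the paper's appeal to \cite[Lemma~2]{LS00}), and pass to the limit. The gap is in what you yourself flag as ``the main obstacle'': the construction of the generating set $\mathcal A$. There are two problems. First, your route invokes accessibility (or a uniform separation constant for ends), but Theorem~\ref{Hausdorffdimensionsetofends} is stated for an arbitrary infinitely-ended group and the paper's proof uses no accessibility. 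Second, and more seriously, requiring that the geodesics $[1,g]$ for $g\in\mathcal A\subset S_N$ themselves pass through bottlenecks and that distinct elements of $\mathcal A$ already lie in distinct components of $\Gamma\setminus B_N$ may be incompatible with keeping $\sharp\mathcal A\geq e^{(v_G-\epsilon)N}$: the exponential growth can be realized by elements lying deep inside a single coset of a one-ended vertex group, which are pairwise unseparated in the end topology and whose geodesics meet no bottleneck at all. Your proposed fix --- pigeonholing $S_N$ by the bounded set the geodesic exits through and ``choosing one rich class'' --- makes this worse rather than better, since all elements of one class exit through the same set and hence lie in the same component after its removal.

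The paper resolves this by a different device: it takes an arbitrary $C$-separated net $A$ in an annulus $A(n,\Delta)$ (which captures a definite proportion of the growth, with no separation requirement in the end topology imposed on $A$ itself), and appends a fixed power $f^{n_1}$ of a hyperbolic element of the convergence action to each generator, so the free basis is $B=Af$. The bottlenecks are then manufactured by the inserted copies of $f$ via Lemma~\ref{SepHypElemLem} and the extension Lemma~\ref{ExtensionLem}, which rest on the contracting property of hyperbolic elements and are available for any infinitely-ended group; freeness of the semigroup and the lower separation bound needed in Lemma~\ref{cpvismetricLem} then follow from Claim~\ref{shortFellowTravelClaim} together with the $C$-separation of $A$ in the word metric. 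Without this (or an equivalent) mechanism, the construction of $\mathcal A$ in your proposal does not go through, even though everything downstream of it is essentially correct.
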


\subsection{Preparatory lemmas}\label{Sectionpreparatorylemmas}
Recall that by \cite[Lemma~5.1]{Bowditchconvergenceonends}, the action of $G$ on the end boundary is a convergence action.
Hence, we can consider conical points in $\partial_{\mathcal{E}}G$ and hyperbolic elements in the sense of \cite{Bowditchconvergenceonends} corresponding to this action.
Recall that an element is elliptic if it has finite order and that an infinite order element is parabolic if it fixes exactly one point on $\partial_{\mathcal{E}}G$ and hyperbolic if it fixes exactly two points on $\partial_{\mathcal{E}}G$.
Moreover, if $g$ is parabolic and fixes $\xi$, then for any $\zeta$, $g^n\cdot \zeta$ converges to $\xi$ as $n\to \pm \infty$, see \cite[Section~2]{Bow98} for more details.
It follows from Lemma~\ref{FloydDomVisu} that if $g$ is parabolic for the action on the Floyd boundary, then it is parabolic for the action on the end boundary.
Thus, an element $g\in G$ which is hyperbolic for the action on the end boundary is also hyperbolic for the action on the Floyd boundary
and by \cite[Lemma~7.2]{Yanggrowthtightness}, such an element is contracting.

\begin{lem}\label{SepHypElemLem}
Let $f$ be a hyperbolic element with two fixed points $\xi_-\ne \xi_+\in \partial_{\mathcal{E}}G$. Then there exists a finite set $E_f\in {\mathcal F} (\Gamma)$ of edges such that for every large enough $n_0>0$ and for every $n\geq 2n_0$, the two elements $1$ and $f^n$ are separated by $f^{n_0}E_f$.
Moreover, $d(1, f^{n_0}E_f)\geq d(1, f^{n_0})-c_0$ for some uniform constant $c_0$.
\end{lem}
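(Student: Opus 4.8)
The plan is to build $E_f$ by separating the two fixed points once and for all, and then translating this set by powers of $f$. First I would use that $\xi_-\ne\xi_+$ are distinct ends to fix a finite edge set $E_f\in\mathcal{F}(\Gamma)$ with $C_{E_f}(\xi_-)\ne C_{E_f}(\xi_+)$. Next, recall that $f$ being hyperbolic for the action on $\partial_{\mathcal{E}}G$ forces $\langle f\rangle$ to be contracting, so $m\mapsto f^m$ is a quasi-geodesic line; its two endpoints are $f$-invariant and distinct, hence they are $\xi_+$ and $\xi_-$, and in particular $f^{m}\to\xi_+$ and $f^{-m}\to\xi_-$ in $\overline{\Gamma}^{\mathcal{E}}$ as $m\to+\infty$ (equivalently, this is the North--South dynamics of the hyperbolic element $f$). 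By the characterization of convergence to an end recalled above, there is then $N_0\ge 1$ such that $f^{m}\in C_{E_f}(\xi_+)$ and $f^{-m}\in C_{E_f}(\xi_-)$ for every $m\ge N_0$; this $N_0$ is what ``every large enough $n_0$'' will mean.

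The separation statement then follows from equivariance. Fix $n_0\ge N_0$ and $n\ge 2n_0$. Left multiplication by $f^{n_0}$ is a graph automorphism of $\Gamma$ carrying $\Gamma\setminus E_f$ bijectively onto $\Gamma\setminus f^{n_0}E_f$, and it extends to a self-homeomorphism of $\overline{\Gamma}^{\mathcal{E}}$ fixing $\xi_\pm$; therefore it sends $C_{E_f}(\xi_\pm)$ onto $C_{f^{n_0}E_f}(\xi_\pm)$, and these two components remain distinct since $f^{n_0}$ acts injectively on components. Now $1=f^{n_0}\cdot f^{-n_0}$ with $f^{-n_0}\in C_{E_f}(\xi_-)$ (because $n_0\ge N_0$), so $1\in C_{f^{n_0}E_f}(\xi_-)$; and $f^n=f^{n_0}\cdot f^{\,n-n_0}$ with $n-n_0\ge n_0\ge N_0$, so $f^{\,n-n_0}\in C_{E_f}(\xi_+)$ and hence $f^n\in C_{f^{n_0}E_f}(\xi_+)$. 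Thus $1$ and $f^n$ lie in distinct infinite components of $\Gamma\setminus f^{n_0}E_f$, i.e.\ they are separated by $f^{n_0}E_f$.

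For the distance estimate I would use left-invariance of $d$: $d(1,f^{n_0}E_f)=\min_{e\in E_f}d(1,f^{n_0}e)=\min_{e\in E_f}d(f^{-n_0},e)=d(f^{-n_0},E_f)$, which by the triangle inequality is at least $d(1,f^{-n_0})-c_0$ where $c_0:=\max_{e\in E_f}d(1,e)$ depends only on $E_f$ and is therefore uniform in $n_0$ and $n$. Since the word metric of $\Gamma$ is symmetric, $d(1,f^{-n_0})=d(1,f^{n_0})$, which yields $d(1,f^{n_0}E_f)\ge d(1,f^{n_0})-c_0$ (alternatively, $|d(1,f^{n_0})-d(1,f^{-n_0})|$ is bounded for the contracting element $f$ and can be absorbed into $c_0$). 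The argument is essentially soft, so I do not expect a real obstacle; the only two points requiring care are justifying that $f^{\pm m}$ converges to $\xi_\pm$ inside the compactification $\overline{\Gamma}^{\mathcal{E}}$ — this is exactly where hyperbolicity (equivalently, contraction) of $f$ enters — and checking that applying the boundary-extending automorphism $f^{n_0}$ genuinely keeps $C_{E_f}(\xi_-)$ and $C_{E_f}(\xi_+)$ in distinct components of $\Gamma\setminus f^{n_0}E_f$.
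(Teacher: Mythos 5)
Your proof is correct and follows essentially the same route as the paper's: fix $E_f$ separating the two fixed ends, use that $m\mapsto f^m$ is a quasi-geodesic whose half-rays converge to $\xi_\pm$ to get $f^{-n_0}\in C_{E_f}(\xi_-)$ and $f^{n-n_0}\in C_{E_f}(\xi_+)$ for $n_0$ large, translate by $f^{n_0}$, and conclude the distance bound from the triangle inequality with $c_0=\max_{e\in E_f}d(1,e)$. Your write-up is in fact somewhat more detailed than the paper's (which compresses the translation step to ``up to translation''), but the content is identical.
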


\begin{proof}
The two distinct ends  $\xi_-\ne \xi_+$ are separated by a finite set $E_f\in \mathcal F(\Gamma)$, and since they are fixed by any power of $f$, $\xi_-\ne \xi_+$ are separated by $E_n: =f^nE_f$ for any $n$.
Note that $n\in \mathbb Z \mapsto f^n$ is a quasi-geodesic so that the two half-rays converge to the corresponding ends $\xi_-$ and $\xi_+$.
Thus, whenever $n_0$ is large enough, $f^{-n_0}$ and $f^{n-n_0}$ are separated by $E_f$ for any $n\geq 2n_0$.
Up to translation, 1 and $f^n$ are thus separated by $E_{n_0}$.
Moreover, $d(1, f^{n_0})\leq d(1, f^{n_0}E_f)+\sup_{x\in E_f} d(f^{n_0}x,f^{n_0})$ and since $E_f$ is fixed, the second term in the right-hand side is uniformly bounded, which concludes the proof.
\end{proof}

Let $F$ be a set of three pairwise independent hyperbolic elements in $G$.
We write $F^n=\{f^n: f\in F\}$ for any $n\ge 1$.
\begin{lem}\label{ExtensionLem}
There exists an integer $n_1>0$ with the following property for every $n\ge n_1$.
For any $g, h\in G$, there exist $f\in F$ and a finite set $E_f$ of edges separating $1$ and $f^n$ and such that any path between $g^{-1}$ and $f^nh$ has to cross $E_f$.
\end{lem}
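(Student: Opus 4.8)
The plan is to run a ping‑pong argument driven by the North–South dynamics of the three hyperbolic elements, treating the third element as a safety valve for large $g$ and $h$. Write $F=\{f_1,f_2,f_3\}$ and let $\xi_i^-\ne\xi_i^+$ be the repelling/attracting fixed points of $f_i$ on $\partial_{\mathcal E}G$; pairwise independence means the three pairs $\{\xi_i^-,\xi_i^+\}$ are disjoint. Applying Lemma~\ref{SepHypElemLem} to each $f_i$ produces a finite edge set $E_i\in\mathcal F(\Gamma)$ separating $\xi_i^-$ from $\xi_i^+$; let $C_i^\pm$ denote the infinite component of $\Gamma\setminus E_i$ containing $\xi_i^\pm$, so $C_i^-\ne C_i^+$. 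Since the closures of components form neighbourhood bases in $\Gamma\cup\partial_{\mathcal E}G$ and the six points $\xi_i^\pm$ are distinct, I would fix pairwise disjoint open neighbourhoods $V_i^\pm\ni\xi_i^\pm$ with $V_i^\pm\cap\Gamma\subseteq C_i^\pm$. Because $f_i$ has North–South dynamics on the compact space $\Gamma\cup\partial_{\mathcal E}G$, there is an integer $n_1'$, depending only on $F$ and these choices (there are just three elements), such that for every $m\ge n_1'$ and every $i$,
\[
f_i^{\,m}\bigl((\Gamma\cup\partial_{\mathcal E}G)\setminus V_i^-\bigr)\subseteq V_i^+,\qquad
f_i^{-m}\bigl((\Gamma\cup\partial_{\mathcal E}G)\setminus V_i^+\bigr)\subseteq V_i^-;
\]
here the compactness of $(\Gamma\cup\partial_{\mathcal E}G)\setminus V_i^\mp$ is exactly what upgrades pointwise convergence to the uniform statement. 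Enlarging $n_1'$ if necessary so that it also exceeds the threshold of Lemma~\ref{SepHypElemLem} for each $f_i$, I set $n_1:=2n_1'$.

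Next, fix $g,h\in G$ and regard $g^{-1}$ and $h$ as vertices of $\Gamma$, hence as points of $\Gamma\cup\partial_{\mathcal E}G$. Since $V_1^+,V_2^+,V_3^+$ are pairwise disjoint, $g^{-1}$ lies in at most one of them, and likewise $h$ lies in at most one of $V_1^-,V_2^-,V_3^-$. Therefore the set of indices $\{\,i:g^{-1}\in V_i^+\,\}\cup\{\,i:h\in V_i^-\,\}$ has at most two elements, so I can choose $i\in\{1,2,3\}$ with $g^{-1}\notin V_i^+$ and $h\notin V_i^-$. Put $f:=f_i$ and $E_f:=f^{\,n_1'}E_i$.

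Then I would verify the two required properties for this $E_f$ and any $n\ge n_1$. First, $E_f$ separates $1$ from $f^n$: this is precisely Lemma~\ref{SepHypElemLem} applied with $n_0=n_1'$, which is legitimate since $n_1'$ is above the threshold and $n\ge 2n_1'$. Second, I force the crossing: $\Gamma\setminus E_f=f^{\,n_1'}(\Gamma\setminus E_i)$ has the two distinct components $f^{\,n_1'}C_i^-$ and $f^{\,n_1'}C_i^+$. Since $g^{-1}\notin V_i^+$, the displayed dynamics give $f^{-n_1'}g^{-1}\in V_i^-\cap\Gamma\subseteq C_i^-$, i.e. $g^{-1}\in f^{\,n_1'}C_i^-$; and since $h\notin V_i^-$ while $n-n_1'\ge n_1'$, they give $f^{\,n-n_1'}h\in V_i^+\cap\Gamma\subseteq C_i^+$, i.e. $f^nh\in f^{\,n_1'}C_i^+$. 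Thus $g^{-1}$ and $f^nh$ lie in different components of $\Gamma\setminus E_f$, so every path between them must meet $E_f^0$, which is the desired conclusion.

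The main obstacle is that $E_f$ must be produced with $n$ bounded below by a constant \emph{independent of $g$ and $h$}, whereas $g^{-1}$ or $h$ may be located arbitrarily deep towards one of the fixed points; this is exactly why three pairwise independent elements are used (each of $g^{-1}$ and $h$ can spoil at most one of them), and why one needs the \emph{uniform} North–South convergence — hence compactness of $\Gamma\cup\partial_{\mathcal E}G$ away from a neighbourhood of a fixed point — rather than just the pointwise statement. A secondary point to treat with some care is that the dynamics is applied to the genuine vertices $g^{-1},h,f^nh\in\Gamma$ and not only to boundary points; this is admissible because these all lie in the compact space $\Gamma\cup\partial_{\mathcal E}G$, on which $f_i$ retains its North–South behaviour (concretely, $f_i^{\pm m}$ applied to the finite set $E_i$, or to any vertex, converges to $\xi_i^\pm$).
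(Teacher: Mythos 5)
Your route is genuinely different from the paper's. The paper proves this lemma metrically: it invokes the extension lemma for contracting elements (\cite[Lemma~2.14]{YangSCC}) to pick $f\in F$ so that $[g^{-1},1][1,f^n][f^n,f^nh]$ is a quasi-geodesic with uniform constants, and then shows (Claim~\ref{SeparationClaim}) that the endpoints of \emph{any} quasi-geodesic containing $[1,f^n]$ are separated by $E_f=f^{n_0}E$; that stronger claim is reused later to produce the canonical bottleneck points in Lemma~\ref{freesemigroup}. You instead run a topological ping-pong on the end compactification. Both arguments use the three independent elements for the same combinatorial reason (each of $g^{-1}$ and $h$ can spoil at most one of them), and your argument, if completed, does prove the lemma as literally stated, though it does not yield the quasi-geodesicity statement the paper extracts along the way.

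There is, however, a genuine gap at exactly the step you identify as the crux. You need the uniform statement $f_i^{-m}\bigl((\Gamma\cup\partial_{\mathcal E}G)\setminus V_i^+\bigr)\subseteq V_i^-$ for all $m\ge n_1'$ with $n_1'$ independent of the point, and you justify it by saying that compactness of $(\Gamma\cup\partial_{\mathcal E}G)\setminus V_i^\mp$ upgrades pointwise convergence to uniform convergence. That inference is not valid: pointwise convergence of a sequence of maps on a compact set does not imply uniform convergence, and no equicontinuity is available here. What you actually need is that $f_i^{\pm m}\to\xi_i^{\pm}$ \emph{locally uniformly} on $(\Gamma\cup\partial_{\mathcal E}G)\setminus\{\xi_i^{\mp}\}$, i.e.\ that $f_i$ has North--South dynamics on the whole compactification and not merely on $\partial_{\mathcal E}G$; the paper only records the convergence property of the action on $\partial_{\mathcal E}G$. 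The needed fact is true but must be supplied --- for instance by pushing Karlsson's convergence action on the Floyd compactification through the $1$-Lipschitz equivariant surjection of Lemma~\ref{FloydDomVisu} (the preimage of a compact set avoiding the exceptional end is compact and avoids the exceptional Floyd point, so locally uniform convergence descends), or by invoking Gerasimov's attractor-sum theorem. Without it, the uniformity of $n_1$ in $g$ and $h$ --- which is the whole point of the lemma --- is unproven: the pointwise statement you fall back on ("$f_i^{\pm m}$ applied to any vertex converges to $\xi_i^{\pm}$") only gives a threshold depending on the vertex, and $g^{-1}$ ranges over all of $\Gamma$. Once the locally uniform convergence on the compactification is established, the rest of your argument (the choice of $i$, the inclusions $V_i^{\pm}\cap\Gamma\subseteq C_i^{\pm}$, and the separation of $g^{-1}\in f^{n_1'}C_i^-$ from $f^nh\in f^{n_1'}C_i^+$ by $f^{n_1'}E_i$) is correct.
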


\begin{proof}
Since hyperbolic elements are contracting, by \cite[Lemma~2.14]{YangSCC}, there exist $n_1, \epsilon>0$  with the following property:
for any $n>n_1$ and $f^n\in F^{n}$ the points  $1$ and $f^n$ stay within the $\epsilon$-neighborhood of any geodesic $[g^{-1}, f^nh]$, so that the path $\gamma=[g^{-1},1][1,f^n][f^n,f^nh]$ is a $(1, 4\epsilon)$-quasi-geodesic. 
Let $E$ be provided by Lemma~\ref{SepHypElemLem} for the element $f$.
Hence, for large enough $n_0$, assuming that $n\geq 2n_0$, the two elements $1$ and $f^n$ are separated by $E_f:=f^{n_0}E$.

For given $\lambda, c$, we choose $n_0$ to satisfy  $d(1,F^{n_0})> c+\lambda \mathrm{diam}(E_f)+c_0$.
\begin{claim}\label{SeparationClaim}
If $\gamma$ is a $(\lambda,c)$-quasi-geodesic path containing $[1,f^n]$ for some $\lambda, c>0$, then  the endpoints of $\gamma$ are separated by $E_f$.
\end{claim}
\begin{proof}[Proof of the Claim]
Denote by $\gamma_1$ and $\gamma_2$ the corresponding subpaths before and after $[1,f^n]$. Observe that $\gamma_1$ and $\gamma_2$ are disjoint with $E_f$. Indeed, assume by contradiction that $x\in \gamma_1\cap E\ne\emptyset$, the case for $\gamma_2$ being symmetric.
By Lemma~\ref{SepHypElemLem}, the elements $1, f^n$ are separated by $E_f$ and $d(1, E_f)\geq d(1,f^{n_0})-c_0$.
We then choose $y\in [1,f]\cap E_f$ so that $d(1, y) \ge d(1, f^{n_0})-c_0$.
This yields a subpath $\alpha$ of $\gamma$ whose endpoints $x$ and $y$ are at most $\mathrm{diam}(E_f)$ apart.
Since $x,1,y$ are aligned in this order on $\alpha$, the quasi-geodesicity implies that $d(1, y)\le \lambda d(x, y)+ c$. This  contradicts the choice of $n_0$ above. 

As a consequence, any path  between the two endpoints of $\gamma$ pass through $E_f$.
Indeed, if there was such a path  disjoint with $E_f$, we would obtain a path from $1$ to $f^n$ disjoint with $E_f$, contradicting Lemma~\ref{SepHypElemLem}. 
\end{proof} 

The proof is concluded by the Claim applied to the $(1, 4\epsilon)$-quasi-geodesic $\gamma=[g^{-1},1][1,f^n][f^n,f^nh]$.
\end{proof}

Let $r=\max \{\mathrm{diam}(E_f): f\in F\}$, where $E_f$ is given by Lemma~\ref{ExtensionLem}.
The following definition refines the notion of bottleneck given in the last section.

\begin{definition}
Let $x,y\in G\cup \partial_\mathcal{E}G$ and let $n_1$ be given by Lemma~\ref{ExtensionLem}.
A \textit{$(r, F)$-bottleneck point}  between $x, y$ is a point $b\in G$ such that any path between $x$ and $y$ has to cross the $r$-neighborhood of $b[1, f]$ for some $f\in F^{n_1}$.
\end{definition}

Note that if $b$ is a $(r, F)$-bottleneck point between $x$ and $y$, then $b$ is in $b[1, f]$ so it lies in a $M$-bottleneck for $M=(d(1,F^{n_1})+2r)$ in the sense of Definition~\ref{defbottleneck1}.
An immediate result  follows by the same argument in Lemma \ref{FloydDomVisu}.
\begin{lem}\label{BNisFloydLargeLem}
If $b$ is a $(r, F)$-bottleneck point between $g,h$, then the Floyd distance $\delta_{\lambda,b}(g, h)$ based at $b$ is bounded below by a constant depending $\lambda, r, F$ only.
\end{lem}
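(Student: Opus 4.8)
The plan is to deduce this directly from the visibility lemma (Lemma~\ref{karlssonlem}) applied to a suitable quasi-geodesic through $b$ that realizes the separation. First I would recall the setup: by definition of a $(r,F)$-bottleneck point, every path between $g$ and $h$ must cross the $r$-neighborhood of $b[1,f]$ for some $f\in F^{n_1}$. Since $F$ is finite and $F^{n_1}$ is fixed once and for all, it suffices to produce, for each fixed $f\in F^{n_1}$, a lower bound on $\delta_{\lambda,b}(g,h)$ assuming the crossing happens through $b[1,f]$; then take the minimum over the finitely many $f\in F^{n_1}$.

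Next I would exhibit a concrete quasi-geodesic joining $g$ to $h$ and passing close to $b$. Following the construction used in the proof of Lemma~\ref{ExtensionLem}, after translating by $b$ we may write $g=b g'$, $h=b h'$ and consider the path $\gamma=[g', 1][1, f][f, f h'']$ (up to relabeling), which is a $(1, 4\epsilon)$-quasi-geodesic for the uniform constant $\epsilon$ coming from the contracting property of $F^{n_1}$, as in \cite[Lemma~2.14]{YangSCC}; translating back by $b$ gives a $(1,4\epsilon)$-quasi-geodesic $b\gamma$ from $g$ to $h$ passing through $b$ and $bf$. Since this is a $(1,4\epsilon)$-quasi-geodesic with parameters depending only on $F$, it has a well-defined Floyd length and we can estimate $\delta_{\lambda,b}(g,h)$ from below by the Floyd $b$-length of the subsegment $[b, bf]$ of $b\gamma$: the edges of this subsegment are at bounded distance (at most $d(1,f)$) from $b$, so each contributes at least $\lambda^{d(1,f)}$ to the $b$-rescaled length, giving a lower bound $\delta_{\lambda,b}(g,h)\ge d(1,f)\lambda^{d(1,f)}$, which is a positive constant depending only on $\lambda$ and $f\in F^{n_1}$.

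There is a subtlety I should address: $\delta_{\lambda,b}(g,h)$ is the infimum of Floyd lengths over \emph{all} paths from $g$ to $h$, not just over $b\gamma$, so I cannot simply use the length of one particular path. The clean way around this is to use the visibility lemma in the form already quoted: apply Lemma~\ref{karlssonlem} to $b\gamma$, which is a $(1,4\epsilon)$-quasi-geodesic with fixed parameters, obtaining the function $\varphi$ such that if $\delta_{\lambda,b}(b\gamma)<\kappa$ then $d(b, b\gamma)\le \varphi(\kappa)$ — but $b$ lies on $b\gamma$, so this gives nothing directly. Instead the right tool is the \emph{bottleneck} hypothesis combined with the lower bound on a curve: every path $\sigma$ from $g$ to $h$ must enter the $r$-ball around some point of $b[1,f]$, hence $\sigma$ contains a point $x$ with $d(b, x)\le d(1,f)+r$; then the portion of $\sigma$ joining $g$ to $\xi_-$-side and back forces $\sigma$ to travel from near $b$ out past distance roughly $d(1,f)$ and its Floyd $b$-length is at least the length of the shortest sub-arc crossing the $r$-neighborhood of $b[1,f]$, which near $b$ is rescaled by at least $\lambda^{d(1,f)+r}$, yielding $\delta_{\lambda,b}(g,h)\ge \lambda^{d(1,f)+r}$ after noting that $\sigma$ cannot stay entirely within a bounded ball around $b$ unless $g$ and $h$ are both inside it, contradicting that $b$ separates in a nontrivial way. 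I expect the main obstacle to be making this last point fully rigorous: one must rule out the degenerate case where $g$ or $h$ itself is within bounded distance of $b$ and argue that the path is forced to traverse an arc of definite Floyd $b$-length; this is handled by observing that a $(r,F)$-bottleneck point $b$ between $g,h$ lies, by the remark preceding the statement, in an honest $M$-bottleneck with $M=d(1,F^{n_1})+2r$ separating $g$ from $h$, so any path from $g$ to $h$ must cross this $M$-set which is at uniformly bounded distance from $b$, and crossing a set of bounded diameter at bounded distance from the Floyd basepoint $b$ costs at least a definite amount of Floyd length. Assembling these estimates and minimizing over $f\in F^{n_1}$ gives the constant depending only on $\lambda, r, F$.
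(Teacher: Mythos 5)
Your final argument is correct and is exactly the paper's (the paper proves this lemma by the same one-line observation as in Lemma~\ref{FloydDomVisu}): the bottleneck hypothesis forces every path from $g$ to $h$ to contain an edge within distance at most $\max_{f\in F^{n_1}}d(1,f)+r$ of the basepoint $b$, and such an edge alone contributes rescaled length at least $\lambda^{\max_{f\in F^{n_1}}d(1,f)+r+1}$, so the infimum over all paths is bounded below by this constant. The ``main obstacle'' you flag at the end is not one --- you do not need $g$ or $h$ to be far from $b$, nor the path to traverse an arc of definite length, since a single edge near the Floyd basepoint already suffices --- and accordingly the first two attempts (lower-bounding by the length of one chosen quasi-geodesic, and invoking the visibility lemma), which you correctly abandoned, are unnecessary.
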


\subsection{Construction of rooted geodesic trees with bottlenecking property}
The strategy in proving Proposition~\ref{LbdHdimEndsProp} is similar to \cite{PY}.
We shall construct a sequence of  rooted quasi-geodesic trees so that the Hausdorff dimension of their ends  tends to the Hausdorff dimension of $\partial_{\mathcal E} G$.  

Before getting into the construction, we introduce the following definition of ends with the uniform bottlenecking property, which are uniform conical points in the sense of \cite{PY}. 
{We fix $r$, $F$ and $n_1$ as in Section~\ref{Sectionpreparatorylemmas}.}
\begin{definition}
A path $\gamma$ in $\Gamma$ has the \textit{$L$-bottlenecking property} for some $L>0$ if there exists a sequence of $(r, F^{n_1})$-bottleneck points $b_i\in \gamma$ between the endpoints of $\gamma$ such that $\sup\{d(b_i, b_{i+1}): i\ge 1\}\le L$ and $d(x,\{ b_{i+1}: i\ge 1\})\le L/2$ for any $x\in \gamma$.

An  end $\xi$ has the \textit{$L$-bottlenecking property} for some $L>0$ if there exists a geodesic ray $[1, \xi]$ with the $L$-bottlenecking property.
\end{definition}

\begin{remark}
By definition, an  end   with   $L$-bottlenecking property must be thin, but the converse is false. 
\end{remark}
 
If $H$ is a free semigroup with a free basis $B$, then the standard Cayley graph $T$ of $H$ with respect to $B$  is a rooted tree at $1$, where every edge with unit length is labeled by a letter in $B$.

The construction of rooted trees in the proof of Proposition~\ref{LbdHdimEndsProp} will be given by the standard Cayley graphs of a sequence of free semigroups described in the following lemma. Given $n, \Delta>0,$  define the annulus set
$$
A(n, \Delta) =\{g\in G: |d(1, g)-n|\le \Delta\}.
$$
 \begin{lem}\label{freesemigroup}
For any   $0<v<v_G$, there exists a free semigroup $H$ with a free basis $B$ and a constant $L=L(v)>0$ with $\lim_{v\to v_G} L(v)=\infty$ so that the following hold.
\begin{enumerate}
\item
There exists a quasi-isometric embedding map $\Phi$ from the standard Cayley graph $T$ of $H$ into the Cayley graph $\Gamma$ of $G$, which is induced  by the inclusion $H\subset  G$ and such that each geodesic issuing from the identity  in $T$ is sent to a path with the $L$-bottlenecking property.
\item
The map $\Phi$ extends to a topological embedding $\partial \Phi$ of the end boundary $\partial_{\mathcal E} H$ of $H$ into $\partial_{\mathcal E}G$.
\item
We have $v_H>v$ and $C_1\exp(Lv_H) \le\sharp H\cap A(1, n, \Delta) \le C_2\exp(Lv_H)$ for $n\ge 1$, where $C_1$ and $C_2$ depend on $\Delta$.
\end{enumerate}
\end{lem}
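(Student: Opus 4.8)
The plan is to realise $H$ as the sub-semigroup of $G$ generated by a large finite set $B$ of long geodesic words of $\Gamma$, each carrying a long power $f^{n_1}$ of one of the fixed hyperbolic elements $f\in F$, and to use the enhanced bottleneck machinery of Section~\ref{Sectionpreparatorylemmas} to force every finite product of elements of $B$ to be a uniform quasi-geodesic with prescribed bottleneck points. First I would fix $v<v'<v''<v_G$; by the definition of $v_G$ there is $\ell_0$ so that for all $\ell\ge\ell_0$ there are at least $\exp(v''\ell)$ geodesic words of length $\ell$ in $\Gamma$. Since there are only finitely many ``types'' of such a word relative to the bounded data $(F,\{E_f\}_{f\in F},n_1,r)$, a pigeonhole argument together with Lemma~\ref{ExtensionLem} (which is designed precisely to pick, adaptively, the right $f\in F$ at each junction so that a bottleneck edge set appears) lets me extract a subset, append a tail $f^{n_1}$ to each chosen word, and obtain $B$ with $\sharp B\ge\exp(v'\ell)$ freely generating a sub-semigroup $H$. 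Setting $L=L(v):=\ell+O(1)$, the corrections coming from $F^{n_1}$ and from the density clause of the bottlenecking property are additive constants, so $v_H$ can only approach $v_G$ when $\ell$ is large; this forces $\ell\to\infty$, hence $L(v)\to\infty$, as $v\to v_G$.

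Next I would establish (1) and (2). Given a finite product $b_{i_1}\cdots b_{i_m}$, let $P$ be the concatenation of geodesic representatives of the $b_{i_j}$; since each contains the contracting segment $f^{n_1}$ and hyperbolic elements are contracting, the local-to-global principle \cite[Lemma~2.14]{YangSCC}, used exactly as in the proof of Lemma~\ref{ExtensionLem}, shows that $P$ is a $(\kappa,C)$-quasi-geodesic with constants independent of the product. Hence all partial products are distinct, so $B$ is a free basis and the map $\Phi$ sending the $T$-geodesic of $b_{i_1}\cdots b_{i_m}$ to $P$ is a quasi-isometric embedding $T\hookrightarrow\Gamma$. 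Invoking Lemma~\ref{ExtensionLem} at each junction places inside every generator block a bottleneck edge set of diameter $\le r$, so the points of $P$ near these sets are $(r,F^{n_1})$-bottleneck points between any ancestor and any descendant on $T$; adding a bounded number of extra such points inside the long geodesic subwords yields the $L$-bottlenecking property after enlarging $L$ by a bounded factor, which gives (1). For (2): two distinct ends of $T$ diverge at some junction, and a bottleneck edge set placed just beyond the divergence separates their $\Phi$-images in $\Gamma$; so $\Phi$ induces an injective continuous map $\partial\Phi\colon\partial_{\mathcal E}H\to\partial_{\mathcal E}G$, which is a topological embedding since $\partial_{\mathcal E}H$ is compact and $\partial_{\mathcal E}G$ Hausdorff.

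For (3) I would use that $\Phi$ is a uniform quasi-isometric embedding and that each generator has $\Gamma$-length in the bounded window $[\ell,\ell+O(1)]$: together these give $d_\Gamma(1,p)\asymp L\,d_T(1,p)$ with controlled multiplicative and additive error for every $p\in H$. Since there are exactly $(\sharp B)^{m}$ vertices of $T$ at depth $m$, summation over $m$ yields $\sharp\{h\in H:d_\Gamma(1,h)\le n\}\asymp(\sharp B)^{n/L}$, hence $v_H\ge v'(1-o(1))>v$ and $v_H\to v_G$ as $v\to v_G$; taking $\Delta$ large enough to swallow the window turns this into the two-sided count of $H\cap A(n,\Delta)$ asserted in (3), with constants depending only on $\Delta$.

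The hard part is the tension already present in the first step: to make $\sharp B$ exponential with exponent arbitrarily close to $v_G$ one must let the ``free'' geodesic parts of the generators be long and nearly exhaust a whole sphere $S_\ell$, whereas controlling the quasi-geodesic constant and the bottleneck positions of \emph{all} products wants those parts short compared with the contracting pieces $f^{n_1}$. Reconciling the two — which is exactly what the enhanced bottleneck notion and the contracting-element estimates of \cite{YangSCC} achieve, following the template of \cite[Section~3]{PY} — is the technical core, and it is what makes $L(v)\to\infty$ unavoidable.
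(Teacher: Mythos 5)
There is a genuine gap at the point where you conclude that $B$ freely generates a semigroup. You write that since each concatenation $P$ is a uniform quasi-geodesic, ``all partial products are distinct, so $B$ is a free basis.'' Quasi-geodesicity of each individual path only shows that the prefixes of a \emph{single} word map to distinct group elements (the path is simple); it says nothing about two \emph{different} words in the letters of $B$ representing the same element of $G$, which is exactly what freeness requires. Since $\Gamma$ is not hyperbolic, you cannot even invoke fellow-travelling of two quasi-geodesics with the same endpoints to compare their initial letters. The paper closes this gap with two ingredients you omit: (i) the basis is built from a $C$-separated subset $A\subset A(n,\Delta)$ (this is the content of \cite[Lemma~2.19]{YangSCC}, which simultaneously gives proportional cardinality, a common $f$, and the separation), and (ii) Claim~\ref{shortFellowTravelClaim}, which uses the fact that the first letter $a_1$ is an $(r,F)$-bottleneck point together with Lemma~\ref{BNisFloydLargeLem} and the visibility Lemma~\ref{karlssonlem} for the Floyd metric based at $a_1$ to show that if two such paths end $D$-close then their initial letters are $C(D)$-close; $C$-separation then forces $a_1=a_1'$ and one inducts. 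Your construction of $B$ by pigeonhole never imposes any separation between the generators, so even this argument is unavailable as stated, and the same separation is needed later (in Lemma~\ref{cpvismetricLem}) to get the lower bound on visual distances.

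Apart from this, your outline matches the paper's strategy (adaptive insertion of powers $f^{n_1}$ via Lemma~\ref{ExtensionLem}, bottleneck points at the junctions, topological embedding of ends via separation at the first divergence, and counting to get $v_H>v$), and your identification of the tension between making $\sharp B$ large and keeping the quasi-geodesic/bottleneck control uniform is accurate. But the injectivity of $\Phi$ on words is the technical core of part (1), not a consequence of quasi-geodesicity, and as written your proof does not establish it.
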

\begin{proof} 

(1)
We set $L_\pm=n\pm(\Delta+d(1,F^{n_1}))$.
Let  $\mathbb W(A)$ be the set of   all finite words over an alphabet set  $A$.  Given a set $A \subset \Gamma$,   we can define an \textit{extension map} $\Phi: \mathbb W(A) \to G$ as follows: for any word $W=a_1a_2\cdots a_n \in \mathbb W(A)$, there exists a sequence $f_i\in F^{n_1}$ such that  the path $\gamma$ labeled by 
\begin{equation}\label{AdmisEQ}
\Phi(W)=  a_1  \cdot f_1\cdot  a_2 \cdot f_2\cdot \cdots \cdot   a_{n-1} \cdot f_{n-1} \cdot  a_n \cdot f_n  \in G,
\end{equation}
is a $(\lambda, c)$-quasi-geodesic for fixed constants $\lambda, c>0$ depending only on $F$.
Moreover, the path $\gamma$ labeled by $\Phi(W)$ has the $L_+$-bottlenecking property.
The choice of the points $f_i$ is made by iterating the construction given by Lemma~\ref{ExtensionLem}.
Precisely, by \cite[Lemma 2.16]{YangSCC}, one can choose the $f_i$ so that $\gamma$ is a $(\lambda, c)$-quasi-geodesic with fixed $\lambda$ and $c$.
Then, by the Claim \ref{SeparationClaim},  the terminal endpoints of the segments labeled by $a_i$ ($1\le i\le n$) give $n$ distinct $(r, F)$-bottleneck points between $1$ and $\Phi(W)$ with pairwise distance between $L_-$ and $L_+$.
Those bottlenecks shall be called  \textit{canonical}.

To generate a free semigroup, we need the following property, proved in \cite[Lemma 2.19]{YangSCC}. There exists  a  fixed constant  $C_0>0$ such that for every large enough $n$ and every $C\ge C_0$, there exists a $C$-separated set $A{\subset }A(n, \Delta)$ with the following properties: \begin{enumerate}[(a)]
    \item
    $\sharp A \succ_{C} \sharp A(n, \Delta)$,
    \item 
    there is a common $f\in F^{n_1}$  for  each pair $(a, a') \in A\times A$ in   the   path (\ref{AdmisEQ}) with the $L$-bottlenecking property.
\end{enumerate}

If $C$ is taken sufficiently large, then the  map $\Phi: (Af)^n\to G$ as above is injective. Indeed, consider two paths  $\beta_1=\Phi(W_1)$ and $\beta_1'=\phi(W_1')$, where $W_1=a_1f\cdots a_mf$ and $W_1'=a_1'f\cdots a_n'f$. We shall prove the following stronger fact, used later on in Lemma \ref{cpvismetricLem}.
Denote by $(\beta_1)_+$, respectively $(\beta_1')_+$ the terminal endpoint of $\beta_1$, respectively $\beta_1'$.
\begin{claim}\label{shortFellowTravelClaim}
If $d((\beta_1)_+,(\beta_1')_+)\le D$, then $d(a_1, a_1') < C$ for some $C=C(D)$.
\end{claim}
By choosing $C\geq C(0)$ the claim proves the injectivity of $\Phi$. Hence,  the set $B:=Af$ generates a free semi-group denoted by $H$.

\begin{proof}[Proof of the Claim]
Write $g=\Phi(W_1)=(\beta_1)_+$ and $g'=\Phi(W_1')=(\beta_1')_+$.
Since  the point $a_1$ in $\beta_1$   is a $(r, F)$-bottleneck point between $1$ and $g$, by Lemma~\ref{BNisFloydLargeLem} and Lemma~\ref{karlssonlem} applied to the Floyd distance $\delta_{\lambda, a_1}$ based at $a_1$, there exists a constant $C$ depending only on $r, F$ such that $d(a_1, [1,g])< C$.
Since $d(g,g')\leq D$, concatenating the geodesic $[1,g]$ with a geodesic from $g$ to $g'$ yields a quasi-geodesic $\alpha$ whose parameters only depend on $D$.
Hence, Lemma~\ref{karlssonlem} shows that $d(a_1',\alpha)<C'$ where $C'$ only depends on $r,F$ and $D$.
Using again that $d(g,g')\leq D$, we finally obtain that $d(a_1',[1,g])\leq C''$. By the choice of $a_1, a_1'\in A \subset A(n, \Delta)$, we have $|d(1,a_1)-d(1,a_1')|\le 2\Delta$.  We deduce  that $d(a_1, a_1')< C+C''+2\Delta$, which concludes the proof.
\end{proof}

(2) Note that the image of each geodesic ray $\gamma$ in $T$ is a path with the bottlenecking property which converges to an end in $\partial_{\mathcal E}G$. We deduce that the map $\gamma\mapsto \phi(\gamma)$ induces the desired topological  embedding.    
 
(3) For a given $v$, one can choose $n$ big enough such that $v_H>v$ for a free semi-group $H=\langle Af\rangle$ constructed as above.
This is possible since $\sharp A \succ_{C} \sharp A(n, \Delta)$, and $$v=\lim_{n\to \infty}\frac{\log \sharp A(n,\Delta)}{n}.$$ 
See \cite[Section 3]{YangSCC} for the details. 
The purely exponential growth
$$\sharp H\cap N(o, r)\asymp \exp(v_H r)$$
follows by standard arguments using the fact that the subgroup $H$ is  contracting, as stated there. See an argument in the proof of \cite[Lemma 3.9]{PY}.  
\end{proof}

\subsection{Completion of proof of Proposition \ref{LbdHdimEndsProp}}
We shall first prove that the embedding $\partial \Phi: \partial_{\mathcal E} H\to \partial_{\mathcal E}G$ is bi-Lipschitz with respect to visual metrics for appropriate choice of parameters. The Hausdorff dimension of $\partial_{\mathcal E}G$ will then be bounded from below by that of the ends of the rooted tree $T$ which will be easier to compute. 

Recall  $T$ is the standard Cayley graph of $H$ with respect to the free base $B=Af$ whose edges with unit length are labeled by a letter in $B$.  

\begin{lem}\label{cpvismetricLem}
Under the assumption of Lemma \ref{freesemigroup}, assume that $\Phi(p)=\xi$ and $\Phi(q)=\eta$ where $p, q\in \partial_{\mathcal E} T$.  Let $m$ be the length of the intersection $[1,p]\cap[1,q]$ where $[1, p], [1,q]$ are geodesic rays in $T$. Then 
$$
\rho_\lambda(\xi, \eta) \asymp_L \lambda^{mL}.
$$
\end{lem}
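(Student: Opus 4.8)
The plan is to compare the visual metric $\rho_\lambda$ on $\partial_{\mathcal E}G$ with the ``combinatorial'' quantity $\lambda^{mL}$, where $m$ counts common generators of $p$ and $q$ in the tree $T$. The point $b$ at the end of the initial common segment $[1,p]\cap[1,q]$ of length $m$ is, by construction in Lemma~\ref{freesemigroup}, one of the \emph{canonical} $(r,F)$-bottleneck points, and by the $L_-/L_+$ spacing estimates it satisfies $d(1,b)\asymp_L mL$. First I would establish the \textbf{upper bound} $\rho_\lambda(\xi,\eta)\preceq_L \lambda^{mL}$. By Lemma~\ref{freesemigroup}(1), the image path $\Phi([1,p])$ has the $L_+$-bottlenecking property with canonical bottlenecks at the terminal endpoints of the letter-segments. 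At the $m$-th such endpoint $b$, any path from $\xi$ to $\eta$ that stays outside $B(1, d(1,b)-M)$ (for suitable $M=M(L,r,F)$) cannot separate $\xi$ from $\eta$: this uses that $b$ is a bottleneck between $1$ and each of $\xi$, $\eta$, so a geodesic from $\xi$ to $b$ followed by a geodesic from $b$ to $\eta$ gives a connecting path staying in $C_{B_n}$ for $n\approx d(1,b)$. Hence $\rho_\lambda(\xi,\eta)\le \lambda^{d(1,b)-M}$, and $d(1,b)\ge mL_- = m(n-(\Delta+d(1,F^{n_1})))$ gives the bound with an $L$-dependent multiplicative constant.

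Next I would prove the \textbf{lower bound} $\rho_\lambda(\xi,\eta)\succeq_L \lambda^{mL}$, which is where the main work lies. Let $m$ be the length of $[1,p]\cap[1,q]$, so $p$ and $q$ diverge at step $m+1$ using \emph{distinct} letters $a f, a'f\in B=Af$ with $a\ne a'$, hence $d(a,a')\ge C$ since $A$ is $C$-separated. I want to show $\xi$ and $\eta$ are \emph{not} separated by $B_n$ for $n$ noticeably smaller than $d(1,b)\asymp mL$ — equivalently, to produce a path from $\xi$ to $\eta$ avoiding a ball $B(1,n)$ with $n\asymp mL$. Write $\xi=bw$, $\eta=bw'$ where $w,w'$ are ends of $\Phi$ applied to the two divergent sub-rays, starting with $af$ and $a'f$ respectively. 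The key is Claim~\ref{shortFellowTravelClaim} applied contrapositively: since $d(a,a')\ge C$ (choosing $C=C(0)$ there, or rather using the quantitative version), the terminal endpoints of the first letter-segments on the two divergent branches of $\Phi$ are far apart, and more importantly the whole divergent parts of $\Phi([b,p])$ and $\Phi([b,q])$ fellow-travel $[1,b]$ only up to a bounded neighborhood and then spread. Concretely: $b a[1,f]$ is a bottleneck between $1$ and $\xi$, $ba'[1,f]$ is a bottleneck between $1$ and $\eta$, these two bottleneck segments are $\Omega(C)$-separated, and both lie at distance $\asymp mL$ from $1$; one then connects $\xi$ to $\eta$ by going out along $\Phi([b,p])$ past the bottleneck, moving over to the other branch through a uniformly bounded detour near $b$ but \emph{outside} a ball of radius $\asymp mL$, and coming back along $\Phi([b,q])$. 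This exhibits a connecting path outside $B(1, d(1,b)-M')$, so $\rho_\lambda(\xi,\eta)\ge\lambda^{d(1,b)}\cdot\lambda^{M'}\ge \lambda^{m L_+}\lambda^{M'}$, giving the lower bound up to an $L$-dependent constant.

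The \textbf{main obstacle} is the lower bound: I must rule out that $\xi$ and $\eta$ get separated by some small ball $B_n$ with $n=o(mL)$, i.e.\ that after diverging they nonetheless both funnel back close to $1$. This cannot happen because each branch of $\Phi$ passes through its own canonical bottleneck sequence with the $L$-bottlenecking property, and Claim~\ref{shortFellowTravelClaim} (or rather \cite[Lemma~2.16, Lemma~2.19]{YangSCC} feeding into it) guarantees the two branches, once they use different letters from the $C$-separated set $A$, stay boundedly close to the common ray $[1,b]$ only within a fixed neighborhood of $b$ and are otherwise genuinely separated; combined with the contracting property of $H$ (Lemma~\ref{freesemigroup}), any path realizing $\rho_\lambda(\xi,\eta)=\lambda^n$ must reach within bounded distance of $b$, forcing $n\ge d(1,b)-O_L(1)$. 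Assembling the two bounds, and absorbing all the additive $O_L(1)$ errors and the discrepancy between $L_-$ and $L_+$ into multiplicative constants depending on $L$, yields $\rho_\lambda(\xi,\eta)\asymp_L\lambda^{mL}$ as claimed. \qed
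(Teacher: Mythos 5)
Your upper bound $\rho_\lambda(\xi,\eta)\preceq_L\lambda^{mL}$ is essentially the paper's argument: the common prefix of length $m$ forces the branch point $u=\Phi(s_1\cdots s_m)$ to lie at distance $\gtrsim mL$ from $1$ (via the $m$ canonical bottlenecks), and one exhibits a path from $\xi$ to $\eta$ through a neighbourhood of $u$ that avoids a ball of radius comparable to $d(1,u)$; the paper uses the explicit path $\beta_1\cup\beta_2\setminus\tau$ formed by the two image quasi-geodesic rays minus their common initial segment, which sidesteps your unproved assertion that a geodesic from $b$ to $\xi$ stays outside $B(1,d(1,b)-M)$, but the idea is the same.

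The lower bound, however, contains a genuine logical inversion. By definition $\rho_\lambda(\xi,\eta)=\lambda^n$ where $n$ is the \emph{minimal} radius of a ball $B_n$ separating $\xi$ from $\eta$. Exhibiting a path from $\xi$ to $\eta$ that avoids $B(1,k)$ shows that $B_k$ does \emph{not} separate them, hence $n>k$ and $\rho_\lambda(\xi,\eta)<\lambda^{k}$ --- this is an \emph{upper} bound on $\rho_\lambda$. Your ``lower bound'' paragraph constructs exactly such a connecting path (out along one branch, detour near $b$ outside a ball of radius $\approx mL$, back along the other branch) and then concludes $\rho_\lambda(\xi,\eta)\ge\lambda^{d(1,b)+M'}$; that implication is backwards, and the same inversion appears in your ``main obstacle'' paragraph: if every path from $\xi$ to $\eta$ must come within bounded distance of $b$, the conclusion is $n\le d(1,b)+O(1)$, not $n\ge d(1,b)-O_L(1)$. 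What is actually required for the lower bound is to show that a ball of radius $(m+1)L+O(1)$ \emph{does} separate $\xi$ from $\eta$, i.e.\ that \emph{every} path between them meets it. The paper does this by taking a canonical $(r,F)$-bottleneck point $b$ on $\beta_1$ strictly after the divergence (the terminal endpoint of $a_{m+2}$ --- not the branch point itself, which lies on both rays and is therefore useless for this purpose), using the $C$-separation of $A$ together with Claim~\ref{shortFellowTravelClaim} to prove $\beta_2\cap N_r(b[1,f])=\emptyset$, and then deriving a contradiction: a path $Q$ from $\xi$ to $\eta$ avoiding $B_R\supseteq N_r(b[1,f])$ could be concatenated with $\beta_2$ to produce a path from $\xi$ to $1$ missing the bottleneck neighbourhood of $b$. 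You do mention the right separation ingredient (that $ba[1,f]$ and $ba'[1,f]$ are far apart), but you deploy it only to build a connecting path rather than to obstruct all paths, so the lower bound is not established as written.
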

\begin{proof}
In the proof, let us write $$[1,p]=(a_1f)(a_2f)\cdots (a_mf)(a_{m+1}f)\cdots$$ and $$[1,p]=(a'_1f)(a'_2f)\cdots (a'_mf)(a'_{m+1}f)\cdots$$ and denote  $\beta_1=\Phi([1,p])$ and $\beta_2=\Phi([1,q])$.
Those are two rays with the $L$-bottlenecking property ending at $\xi$ and $\eta$.
Let   $[1,p]\cap[1,q]=s_1\cdots s_m$ be a geodesic of length $m$ where $s_i\in Af$.
In other words, $a_i=a_i'$ for $1\le i\le m$.

By definition, $\rho_\lambda(\xi, \eta)=\lambda^n$ where  $n$ is the minimal integer such that $\xi$ and $\eta$ belongs to different components of the complement of the ball $B_n$.
Note  that $n\ge mL/2$.
Indeed, the path $\tau=\Phi(s_1\cdots s_m)$ issuing from $1$ has the $L$-bottlenecking property, ends at $u=s_1\cdots s_m$ and is contained in both $\beta_1$ and $\beta_2$. Thus, there are $m$ bottleneck points between   $1$ and $u$ with pairwise distance at least $L/2$, so that the geodesic $[1, u]$ in the Cayley graph $\Gamma$ of $G$  has to pass through them in order.
We then derive that  $d(1, u)\ge mL/2$.
Since the path obtained from $\beta_1\cup\beta_2\setminus \tau$ connects $\xi$ to $\eta$,       the definition of $n$ implies  that $n>mL/2$.  

We shall prove the upper-bound $n\le R:=(m+1)L+r$, which will conclude the proof. By definition, it suffices to show  that $\xi$ and $\eta$ belongs to the distinct components of $G\setminus B_R$.

Let $b$   be the second     $(r, F)$-bottleneck point on $\beta_1$ after the initial subpath $\tau$ of $\beta_1$: it is the terminal endpoint of $a_{m+2}$ in $\beta_1$. Thus, any path from $1$ to $\xi$ has to cross the $r$-neighborhood of $b[1,f]$.  
Let $C=C(r+d(1,F^{n_1}))$ be the constant   given by the Claim~\ref{shortFellowTravelClaim}.
If $A$ is chosen to be $C$-separated, then  $d(b, \beta_2)>r+d(1,F^{n_1})$  and thus,  $\beta_2\cap N_r(b[1,f]) =\emptyset$.  
 
It remains to prove that any path from  $\xi$ to $\eta$ has to intersect the ball $B_R$. If not, let $Q$ be a path from     $\xi$ to $\eta$ so that $Q\cap B_R=\emptyset$. Recall that $b$ is the $(m+1)$-th $(r, F)$-bottleneck point and thus $d(1, bf)\le (m+1)L$. The value of $R$ implies that  $B_R$ contains $N_r(b[1,f])$.
Also recall that $\beta_2\cap N_r(b[1,f]) =\emptyset$, so concatenating $Q$ and $\beta_2$ yields a path $Q\cdot \beta_2$ from $\xi$ to $1$   avoiding the set $N_r(b[1,f])$. This is a contradiction, since $b$ is a $(r, F)$-bottleneck point between $1$ and $\xi$.
As desired,  $\xi$ and $\eta$ lie in different   components in the complement of $B_R$. The proof of the upper bound $n\le R$ follows.
\end{proof}

\begin{prop}\label{BiLipembedPop}
Let $\alpha=\lambda^L$ for given $\lambda\in (0,1)$.  Then the   embedding   $\partial \Phi$   from the ends boundary $(\partial_{\mathcal E} T, \rho_\alpha)$  into the ends boundary $(\partial_{\mathcal E}G, \rho_\lambda)$  is bi-Lipschitz.
\end{prop}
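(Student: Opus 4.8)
The plan is to deduce the bi-Lipschitz statement directly from the comparison estimate in Lemma~\ref{cpvismetricLem} together with the standard description of the visual metric on the end boundary of a free semigroup. First I would recall that $T$ is the standard Cayley graph of the free semigroup $H$ with respect to the free basis $B = Af$, so that $\partial_{\mathcal E} T$ is the space of infinite reduced words in $B$, and the visual metric $\rho_\alpha$ on $\partial_{\mathcal E} T$ (with parameter $\alpha$, basepoint $1$) satisfies $\rho_\alpha(p,q) = \alpha^{m}$, where $m$ is the length of the longest common prefix $[1,p]\cap[1,q]$ of the geodesic rays representing $p$ and $q$ in $T$. This is immediate from the definition of the visual metric in Section~\ref{EndsSection}: the minimal radius of a ball at $1$ in $T$ separating $p$ from $q$ is exactly $m+O(1)$, since distinct infinite words branch precisely after their common prefix, and each letter of $B$ has unit length in $T$.

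Next I would combine this with Lemma~\ref{cpvismetricLem}. Setting $\xi = \partial\Phi(p)$ and $\eta = \partial\Phi(q)$, the lemma gives $\rho_\lambda(\xi,\eta) \asymp_L \lambda^{mL}$, i.e.\ there is a constant $K = K(L) \ge 1$ with
\[
K^{-1}\lambda^{mL} \le \rho_\lambda(\partial\Phi(p),\partial\Phi(q)) \le K \lambda^{mL}.
\]
On the other hand, with $\alpha = \lambda^{L}$ we have $\rho_\alpha(p,q) = \alpha^{m} = \lambda^{mL}$ (up to the bounded additive ambiguity in $m$, which only changes $K$). Therefore
\[
K^{-1}\rho_\alpha(p,q) \le \rho_\lambda(\partial\Phi(p),\partial\Phi(q)) \le K \rho_\alpha(p,q),
\]
which is exactly the assertion that $\partial\Phi \colon (\partial_{\mathcal E} T,\rho_\alpha) \to (\partial_{\mathcal E} G,\rho_\lambda)$ is bi-Lipschitz onto its image. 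That $\partial\Phi$ is a well-defined topological embedding was already established in Lemma~\ref{freesemigroup}(2), so only the metric distortion bound remains, and it is supplied by the display above.

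The only genuinely delicate point, which I would want to handle carefully rather than wave away, is reconciling the bounded additive discrepancy: the quantity $m$ in Lemma~\ref{cpvismetricLem} is the combinatorial length of the common prefix in $T$, whereas the separation radius computing $\rho_\lambda(\xi,\eta)$ on the $G$-side is only pinned down up to an additive constant (the lemma gives $mL/2 < n \le (m+1)L + r$). I would absorb all of these bounded errors into the multiplicative constant $K$, noting that $\lambda^{n}$ and $\lambda^{mL}$ differ by a factor bounded in terms of $L$ and $r$ only, and similarly that the identification $\rho_\alpha(p,q) = \lambda^{mL}$ holds up to a factor $\lambda^{\pm O(1)}$. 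Since $L$ and $r$ are fixed once the free semigroup $H$ (equivalently, the parameter $v < v_G$) has been chosen, $K$ depends only on these fixed data, and the bi-Lipschitz conclusion follows. I do not expect any obstruction beyond this bookkeeping; the substantive geometric content — that canonical bottleneck points along $\Phi$-images force the correct separation radii — has already been isolated in Lemma~\ref{cpvismetricLem}.
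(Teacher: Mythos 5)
Your proposal is correct and follows essentially the same route as the paper: the bi-Lipschitz bound is read off directly from Lemma~\ref{cpvismetricLem} via the identity $\rho_\alpha(p,q)=\alpha^m=\lambda^{mL}\asymp_L \rho_\lambda(\partial\Phi(p),\partial\Phi(q))$, with all additive discrepancies absorbed into the multiplicative constant depending only on $L$ and $r$. The paper's proof is just a terser version of the same argument.
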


\begin{proof}
By Lemma \ref{cpvismetricLem}, we see $\rho_\alpha(\xi, \eta) = \alpha^{m} = \lambda^{mL} \asymp_L   \rho_\lambda(\Phi(p),\Phi(q))$.  Thus, the map from $\partial_{\mathcal E} T$   is bi-Lipschitz onto its image in $\partial_{\mathcal E} G$.  
\end{proof}
 
Let $T$ be any infinite rooted tree, with vertex set $V$ partitioned by depth (distance from the root vertex): $V = \cup_{n=0} V_n$. Let $N_n$ and $M_n$ be increasing sequences of positive integers. Say that $T$ is \textit{$\{M_n\}$-regular relative to $\{N_n\}$} if for every $n \ge 1$ every vertex $x$ at depth $N_n$ has exactly $M_{n+1}$ descendant vertices at depth $N_{n+1}$.  
\begin{lem}\cite[Lemma 2]{LS00}\label{RTreeHdimLem}
Let $T$ be an infinite rooted tree with space of ends $\partial_{\mathcal E} T$ equipped with the visual metric of parameter $\alpha\in (0,1)$. If there is an increasing sequence of integers $N_n$ satisfying $\lim_{n\to\infty} \frac{ N_{n+1}}{N_n} = 1$ such that, for
some sequence $\{M_n\}$ of positive integers, $T$ is $\{M_n\}$-regular with respect to $\{N_n\}$, then the Hausdorff dimension of $\partial_{\mathcal E} T$ (relative to the visual metric $\rho_\alpha$) is given by
$$
Hdim_{\rho_\alpha} (\partial_{\mathcal E} T ) = \liminf_{n\to\infty} \frac{\log |V_{N_n}|}{-N_n\log \alpha}
= \liminf_{n\to\infty}\frac{\log  \prod_{j=1}^n M_j}{-N_n\log \alpha}.
$$
\end{lem}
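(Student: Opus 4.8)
The plan is to prove the two inequalities $Hdim_{\rho_\alpha}(\partial_{\mathcal E}T)\le D$ and $Hdim_{\rho_\alpha}(\partial_{\mathcal E}T)\ge D$, where $D:=\liminf_{n\to\infty}a_n$ with $a_n:=\frac{\log|V_{N_n}|}{-N_n\log\alpha}$; an immediate induction from $\{M_n\}$-regularity gives $|V_{N_n}|=\prod_{j=1}^nM_j$, so the two formulas in the statement agree, and one records $|V_{N_n}|=\alpha^{-a_nN_n}$. The structural input I would use repeatedly is that, since $\rho_\alpha$ is the ultrametric with $\rho_\alpha(\xi,\eta)=\alpha^{m(\xi,\eta)}$, where $m(\xi,\eta)$ is the depth at which the two rays from the root first split, every ball $B_{\rho_\alpha}(\xi,r)$ coincides with the cylinder set $[x]$ at the vertex $x$ of depth $\ell:=\lceil\log r/\log\alpha\rceil$ on the ray to $\xi$, and moreover $\alpha^\ell\le r$.

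For the upper bound I would cover $\partial_{\mathcal E}T$ by the $|V_{N_n}|$ cylinders at the vertices of $V_{N_n}$, each of diameter at most $\alpha^{N_n}$; this yields $\mathcal H^s_{\alpha^{N_n}}(\partial_{\mathcal E}T)\le|V_{N_n}|\,\alpha^{sN_n}=\alpha^{N_n(s-a_n)}$ for every $s>0$ and every $n$. Given $s>D$, choosing a subsequence $n_k$ with $a_{n_k}\to D$ makes $N_{n_k}(s-a_{n_k})\to+\infty$, hence $\mathcal H^s_{\alpha^{N_{n_k}}}(\partial_{\mathcal E}T)\to0$, so $\mathcal H^s(\partial_{\mathcal E}T)=0$ and $Hdim_{\rho_\alpha}(\partial_{\mathcal E}T)\le s$; letting $s\downarrow D$ gives the bound. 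This half does not use the hypothesis $N_{n+1}/N_n\to1$.

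For the lower bound I would apply the mass distribution principle to the canonical probability measure $\nu$ on $\partial_{\mathcal E}T$ obtained by picking a vertex of $V_{N_1}$ uniformly and then, recursively, one of the $M_{n+1}$ descendants in $V_{N_{n+1}}$ of the current vertex of $V_{N_n}$ uniformly. Then $\nu([x])=1/|V_{N_n}|=\alpha^{a_nN_n}$ for $x\in V_{N_n}$, and crucially $\nu([x])\le\alpha^{a_nN_n}$ for every vertex $x$ of depth $\ell$ with $N_n\le\ell<N_{n+1}$, since such an $x$ lies below a unique vertex of $V_{N_n}$. Fix $s<D$ and pick $s'$ with $s<s'<D$: for all large $n$ one has $a_n>s'$ and, using $N_{n+1}/N_n\to1$, also $N_{n+1}/N_n<s'/s$, so $\ell s<N_{n+1}s<N_ns'<N_na_n$ and therefore $\nu([x])\le\alpha^{a_nN_n}\le\alpha^{\ell s}$. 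Combined with the cylinder description of balls and $\alpha^\ell\le r$, this gives $\nu(B_{\rho_\alpha}(\xi,r))\le\alpha^{\ell s}\le r^s$ for all sufficiently small $r$, so the mass distribution principle yields $\mathcal H^s(\partial_{\mathcal E}T)>0$, hence $Hdim_{\rho_\alpha}(\partial_{\mathcal E}T)\ge s$; letting $s\uparrow D$ concludes.

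The main obstacle is exactly the interpolation between the control levels $N_n$ in the lower bound: the regularity hypothesis says nothing about how branching is distributed between depths $N_n$ and $N_{n+1}$, and the bound $\nu([x])\le1/|V_{N_n}|$ for an intermediate depth $\ell$ only beats $r^s$ because $\ell/N_n\le N_{n+1}/N_n\to1$. This is the one place where the assumption $\lim_{n\to\infty}N_{n+1}/N_n=1$ is used, and dropping it would genuinely change the value of the dimension.
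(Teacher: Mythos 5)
Your argument is correct, and since the paper only quotes this lemma from \cite{LS00} without reproving it, there is nothing in-text to compare against: the covering-by-depth-$N_n$ cylinders for the upper bound and the mass distribution principle for the uniform branching measure for the lower bound (with the hypothesis $N_{n+1}/N_n\to1$ used exactly where you say, to control cylinders at intermediate depths) is the standard proof and matches the cited source's approach. The only cosmetic point is that the stated regularity condition does not force $|V_{N_1}|=M_1$, so $|V_{N_n}|$ and $\prod_{j=1}^n M_j$ may differ by the fixed factor $|V_{N_1}|/M_1$; this does not affect either $\liminf$ since $N_n\to\infty$, so your identification of the two formulas is harmless.
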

 
We apply  Lemma \ref{RTreeHdimLem} to the standard Cayley graph $T$ of $H$, where $M_n=\sharp Af$ and $N_n=n$.  We obtain $$Hdim_{\rho_\alpha} (\partial_{\mathcal E} T ) = \liminf_{n\to\infty}\frac{\log \prod_{j=1}^n \sharp Af}{-n\log \alpha}=\frac{\log \sharp Af}{-L\log \lambda}.$$

By  Lemma \ref{freesemigroup}, $C_1\exp(Lv_H) \le \sharp Af \le C_2\exp(Lv_H)$, so
$$Hdim_{\rho_\alpha} (\partial T )\geq -\frac{\log C_2 + Lv_H}{L\log \lambda},$$
where $v<v_H\leq v_G$.

Since a bi-Lipschitz map preserves the Hausdorff dimension, by Proposition \ref{BiLipembedPop}, we have 
$$Hdim_{\rho_\lambda} (\partial_{\mathcal{E}}\Gamma)\ge -\frac{\log C_2 + Lv}{L\log \lambda}.$$
As $v$ tends to $v_G$, $L$ goes to infinity, so we finally obtain
$$Hdim_{\rho_\lambda} (\partial_{\mathcal{E}}\Gamma)\ge -\frac{v_G}{\log \lambda}.$$
The proof of Proposition \ref{LbdHdimEndsProp} and thus Theorem \ref{Hausdorffdimensionsetofends} is complete. \qed

\section{Characterizing the Doubling property: proof of Theorem \ref{CharDoublingThm}}\label{Sectiondoubling}
It is well-known that a virtually free group is hyperbolic and that its end boundary endowed with a visual metric is bi-H\"older to its Gromov boundary endowed with a Gromov's visual metric.
The latter has the doubling property, since the Patterson-Sullivan measure is doubling (even Ahlfors regular) by the work of Coorneart \cite{Coornaert}.   
We prove here the following result.
\begin{prop}\label{propinaccessible}
If a finitely generated group admits a splitting over finite edge groups as a finite graph of groups with at least one one-ended vertex group, then the visual metric is not doubling.
\end{prop}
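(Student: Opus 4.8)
The plan is to exhibit a single point of $\partial_{\mathcal E}G$ near which the visual metric fails to be doubling, as indicated in the overview. Fix a vertex group $H$ of the given graph of groups that is one-ended, together with the Bass--Serre tree $T$, on which $G$ acts with finite edge stabilisers, $H$ being the stabiliser of a vertex $v$. First I would record two facts about $H$. Since $H$ is a vertex group of a splitting over finite groups, it is quasi-isometrically embedded in $\Gamma$, hence quasi-convex; and being one-ended, any sequence in $H$ leaving every finite subset of $\Gamma$ accumulates to a single end $\xi\in\partial_{\mathcal E}G$, which is fixed by $H$ (an element $h\in H$ sends a geodesic ray of $H$ towards its unique end to another such ray). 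I would also note that the hypothesis forces $G$ to be infinitely-ended: a one-ended group has no nontrivial splitting over a finite group by Stallings, and a two-ended group is virtually cyclic and has no one-ended vertex group; so $\partial_{\mathcal E}G$ is nonempty and uncountable.

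Next I would describe the structure of $\Gamma$ around $H$ as a tree of spaces: $\Gamma$ is the union of the vertex space $H$ with a family of \emph{branches}, one for each edge of $T$ incident to $v$, each attached to $H$ along a finite coset of a finite edge group. There are only finitely many $H$-orbits of branches, but infinitely many branches, since each edge group has infinite index in $H$. A key step is that at least one $H$-orbit of branches consists of \emph{infinite} branches: otherwise all branches are translates of finitely many finite graphs, hence of uniformly bounded diameter, so $\Gamma$ would be quasi-isometric to $H$ and $G$ one-ended, a contradiction. Fix one such orbit; its branches $\mathcal B_h$ are all infinite, attached along pairwise disjoint finite cosets $D_h$ with $\diam(D_h)$ uniformly bounded and $d(1,D_h)\to\infty$.

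Now comes the reduction to a covering estimate. For $\theta=\lambda^k$ with $k\ge 1$, the ball $B_{\rho_\lambda}(\xi,\lambda^n)$ is the component of $\xi$ in $\mathcal C(B_{n-1})$, a ball of radius $\theta\lambda^n=\lambda^{n+k}$ centred at an end is a single infinite component of $\Gamma\setminus B_{n+k-1}$, and these components partition the ends of $B_{\rho_\lambda}(\xi,\lambda^n)$. Hence the minimal number of $\theta\lambda^n$-balls covering $B_{\rho_\lambda}(\xi,\lambda^n)$ equals the number of infinite components of $\Gamma\setminus B_{n+k-1}$ contained in $C_{B_{n-1}}(\xi)$. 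Each infinite branch $\mathcal B_h$ whose attaching coset $D_h$ lies in $B_{n+k-1}$ and in $C_{B_{n-1}}(\xi)$ contributes at least one such component: the tree structure forces any path between two branches, or between a branch and the $H$-direction of $\xi$, to pass through the relevant attaching cosets, hence through $B_{n+k-1}$ once those cosets are swallowed, so distinct branches yield distinct components, all different from $C_{B_{n+k-1}}(\xi)$. Thus the covering number is at least $1+\#\{h:\ D_h\subset B_{n+k-1}\cap C_{B_{n-1}}(\xi)\}$, and it remains to make this grow. Here one uses that $H$, being one-ended, is not virtually cyclic, hence has super-linear growth; by quasi-convexity $\#(B_N^\Gamma\cap H)/N\to\infty$, so for any fixed window width the number of elements of $H$ at $\Gamma$-distance in that window around $n$ is unbounded along a suitable subsequence of $n$ — and along that subsequence the covering numbers tend to infinity, contradicting doubling.

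The main obstacle is exactly the geometric book-keeping in the last step: one must verify that sufficiently many of the branches attached at distance $\approx n$ genuinely lie in the component $C_{B_{n-1}}(\xi)$ of $\xi$, rather than in some sibling component of $\Gamma\setminus B_{n-1}$. This is where one-endedness of $H$ must be used quantitatively — the far-out part of $H$ lies in a single ``direction'', namely that of $\xi$, and a geodesic ray $[1,\xi]$ stays within a bounded neighbourhood of $H$ by quasi-convexity, its tail beyond radius $n-1$ certifying membership of the nearby part of $H$ in $C_{B_{n-1}}(\xi)$. A minor additional point is that non-trivial edge groups cost a bounded additive constant, absorbed by taking $k$ large, and that super-linear growth only gives the blow-up of covering numbers along a subsequence of radii — which is all that a violation of the doubling property requires.
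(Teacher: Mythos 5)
Your proposal is correct and follows essentially the same route as the paper: both single out the unique end $\xi$ of the one-ended vertex group $H$, use the tree-of-spaces/normal-form picture to see that the branches attached to $H$ along finite edge-group cosets near the sphere of radius $n$ give ends lying in $B_{\rho_\lambda}(\xi,\lambda^n)$ yet pairwise $\lambda^{n+k+1}$-separated, and invoke Gromov's theorem to get superlinear growth of $H$ and hence an unbounded number of such ends. The only cosmetic difference is that you phrase the failure of doubling as a count of components of $\Gamma\setminus B_{n+k-1}$ (the exact covering number for an ultrametric), while the paper exhibits a $\theta s$-separated set directly; these are equivalent.
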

Theorem~\ref{CharDoublingThm} is then a consequence of this proposition.
Indeed, accessible groups admit a splitting over finite edge groups as a finite graph of groups $\mathcal{G}$, so that the vertex groups either are finite or one-ended.
In the former case, the group is virtually free, in the later case, it satisfies the assumptions of Proposition~\ref{propinaccessible}.

\begin{proof}
Consider a splitting over finite edge groups as a finite graph of groups $\mathcal{G}$ and let $H$ be a one-ended vertex group.
Notice that a metric space $(X, d)$ is   {doubling} if and only if for any (or some) $\theta\in (0,1)$, there exists $N=N(\theta)>0$ such that  every ball of radius $s>0$ can be covered by at most $N$ balls of radius $\theta s$.
To prove that the end boundary is not doubling, our strategy will be as follows.
We will consider for every $n$ the ball of radius $s=\lambda^n$ centered at the end $\xi$ of $G$ corresponding to the unique end of $H$.
We will prove that for some fixed $\theta$, this ball cannot be covered by $N(n)$ balls of radius $\theta s(n)$, where $N(n)$ goes to infinity.
This will conclude the proof.
We present the details below.

{By the Bass-Serre theory,
$G$ is isomorphic to either an amalgamated product $H*_F K$ or a HNN extension $H*_{F}$ over a finite group $F$, where $K$ might not be one-ended. 
In what follows, we only consider the amalgamated product case, the HNN extension case being similar.
Let $G=H*_F K$ so that $H$ and $K$ are generated by two finite sets $S$ and $T$ respectively and assume for simplicity that both sets $S$ and $T$ contain $F$. 
The normal form given by \cite[(9.2),(9.3)]{Woessends} shows that the Cayley graph $\Gamma$ of $G$ with respect to $S\cup T$ is obtained as the disjoint union
$$
\displaystyle \bigsqcup_{g\in G} \{g\cdot Cay(H, S), g\cdot Cay(K,T)\}.
$$
glued along cosets $gF$ between $g\cdot Cay(H, S), g\cdot Cay(K,T)$.
In particular, for any end $\eta\neq \xi$, any path from $\eta$ to $\xi$ has to pass through a finite set $F_\eta$ corresponding to some $F$-coset in $H$ and any path from $1$ to $\eta$ also has to pass through $F_\eta$.
Since we made the assumption that $F\subset S\cap T$, the diameter of $F_\eta$ is at most 1, that is any two distinct vertices in $F_\eta$ are connected by an edge.}

Let $S_n:=\{g\in G: d(1, g)=n\}$ be the $n$-sphere in the Cayley graph $\Gamma$ of $G$. By definition of the visual metric, if $\rho_\lambda(\xi, \eta)=\lambda^n$ for $n\ge 1$, then every path  from $\xi$ and $\eta$ stays within distance $n$ of the identity and  exits $H$ at some vertex in $F_\eta$. Moreover there exists such a path  from $\xi$ and $\eta$ which is disjoint from $B_{n-1}$.
Recalling that the diameter of $F_\eta$ is at most 1, we see that $d(1, F_\eta)\ge n-1$.
Assume by contradiction that $d(1,F_\eta)>n$. Then, the sub-path from $\eta$ to $F_\eta$ stays in the complement of the ball $B_n$ of radius $n$.
Indeed, if this were not the case, this sub-path would pass through a point $g\in B(1,n)$ and so concatenating a path from $1$ to $g$ and the path from $g$ to $\eta$ would yield a path from $1$ to $\eta$ not passing through $F_\eta$.
Now, since $\mathrm{Cay}(H,S)$ is one-ended, one can join $F_\eta$ with $\xi$ by a path which also stays in the complement of $B_n$, contradicting the fact that any path from $\eta$ to $\xi$ needs to pass through this ball.
Thus, $d(1,F_\eta)\leq n$.
Similarly, if $ d(1, F_\eta)\ge n$, then $\rho_\lambda(\xi, \eta)\le \lambda^n$.  Set $s=\lambda^n$.
We can reformulate the above discussion by the following inclusions :
\begin{equation}\label{equationdoublingproperty}
\{\eta\neq \xi\in \partial_{\mathcal E}\Gamma: d(1, F_\eta)\ge n\}\subseteq B^{\rho_\lambda}(\xi, s)\setminus \xi \subseteq \{\eta\neq \xi\in \partial_{\mathcal E}\Gamma: d(1, F_\eta)\ge n-1\}.
\end{equation}

Since $H$ is one-ended, it is infinite and it is not virtually cyclic. By Gromov's polynomial growth theorem, the growth function of $H$ is at least quadratic, hence super-linear.
Therefore, fixing $k\ge 2$, the number of elements in $$S_{n+k}(H):= S_{n+k}\cap H$$ grows at least linearly in $n$ and in particular goes to infinity as $n$ goes to infinity.

For any   point $h\in S_{n+k}(H)$, choose an end $\eta=\eta(h)\ne \xi \in \partial_{\mathcal E}\Gamma$ so that $h$ lies in $F_\eta$. By definition of $h$, $d(1,F_\eta)\leq n+k$ and since the diameter of $F_\eta$ is at most 1, we also have $d(1, F_\eta)\ge n+k-1.$
According to~(\ref{equationdoublingproperty}), $\rho_\lambda(\xi, \eta)\le \lambda^{n+k-1}$.
Thus, $\eta\in B^{\rho_\lambda}(\xi, s)$.

Observe  that for any two elements $h_1\ne h_2\in S_{n+k}(H)$ with $d(h_1, h_2)\ge 2$, we claim that
$$
\rho_\lambda(\eta_1,\eta_2) \ge \lambda^{n+k+1}
$$
where $\eta_1=\eta({h_1}),\eta_2=\eta({h_2})$.
Indeed, $d(h_1, h_2)\ge 2$ implies that $F_{\eta_1}\cap F_{\eta_2}=\emptyset$. Notice that any path from $\eta_1$ to $\eta_2$ has to pass through both $F_{\eta_1}$ and $F_{\eta_2}$, otherwise one would produce a path from $1$ to $\eta_i$ not passing through $F_{\eta_i}$ for some $i\in \{1,2\}$.
Hence, any path from $\eta_1$ to $\eta_2$ has to intersect the ball $B_{n+k+1}$ and the claim follows.

We are ready to finish the proof. 
Set $\theta=\lambda^{k+1}$ and let $N(n)$ be the maximal size of a set $\Sigma\subset S_{n+k}(H)$ such that any two elements $h,h'$ in $\Sigma$ satisfy that $d(h,h')\geq 2$.
Then, $N(n)$ goes to infinity as $n$ goes to infinity.
We can thus produce $N(n)$ points which are $\theta s$-separated  in $B_{\rho_\lambda}(\xi, s)$.
In other words, the ball $B_{\rho_\lambda}(\xi, s)$ cannot be covered by $N(n)$ balls of radius $\theta s$.
Since $k$ is fixed and $N(n)$ goes to infinity as $n$ goes to infinity, this provides a contradiction with the definition of the doubling property.
This concludes the proof.
\end{proof}

\bibliographystyle{plain}
\bibliography{HD}

\end{document}